\theoremstyle{plain}
\newtheorem{thm}{Theorem}[section]
\theoremstyle{plain}
\newtheorem{lem}[thm]{Lemma}
\newtheorem{prop}[thm]{Proposition}
\newtheorem{cor}[thm]{Corollary}
\theoremstyle{definition}
\newtheorem{defi}{Definition}[section]
\newtheorem{rem}{Remark}[section]
\newtheorem{cond}{Condition}[section]
\newtheorem*{maintheorem*}{Main Theorem}
\newenvironment{Assumptions}
{
\setcounter{enumi}{0}

\begin{enumerate}}
{\end{enumerate} }
\newcommand{\R}{\ensuremath{\mathbb{R}}}
\newcommand{\goto}{\ensuremath{\rightarrow}}
\newcommand{\eps}{\ensuremath{\varepsilon}}
\def\N{{I\!\!N}}
\numberwithin{equation}{section} \allowdisplaybreaks
\title[LDP, CLT and MDP for non-local stochastic balance laws]
{Stochastic Fractional Conservation Laws: Large deviation principle, Central limit theorem and Moderate deviation principle}
\date{}
\author[ S. R. Behera ]{Soumya Ranjan Behera}
\address[Soumya Ranjan Behera] {\newline 
Department of Mathematics,
Indian Institute of Technology Delhi,
Hauz Khas, New Delhi, 110016, India.}
\email[] {maz198759@iitd.ac.in}
\author[A. K. Majee]{Ananta K. Majee}
\address[Ananta K. Majee]{\newline
Department of Mathematics,
Indian Institute of Technology Delhi,
Hauz Khas, New Delhi, 110016, India. }
\email[]{majee@maths.iitd.ac.in}
\keywords{Stochastic fractional Conservation Laws; Kinetic formulation; Weak convergence approach; Large deviation principle; Central limit theorem; Moderate deviation principle}
\thanks{}
\begin{document}
\begin{abstract}
  In this article, we establish the Freidlin-Wentzell type large deviation principle and central limit theorem for stochastic fractional conservation laws with small multiplicative noise in kinetic formulation framework. The weak convergence method and doubling variables method play a crucial role. As a consequence, we also establish moderate deviation principle for the underlying problem.
\end{abstract}
\maketitle
\section{Introduction}
Let $(\Omega, \mathcal{F}, \mathcal{P}, (\mathcal{F}_t)_{t \in [0,T]})$ be a stochastic basis and $T >0$. We are interested in the study of small noise asymptotic behaviour of the fractional scalar conservation laws. We consider the following  fractional conservation laws with stochastic forcing,
\begin{equation}\label{eq:fractionalSCL}
\begin{cases}
     \displaystyle du + \text{div}_x F(u(t,x))\,dt + (-\Delta)^\theta[{\tt \Phi}(u(t,\cdot))](x)\,dt = {\tt h}(u(t,x))\,dW(t), \\
    u(0,x) = u_0(x),
\end{cases}
\end{equation}
with $x \in \mathbb{T}^d,  t \in [0,T]$, and $\mathbb{T}^d \subset \R^d$ 
denotes the $d-$dimensional torus with periodic length $1$. In \eqref{eq:fractionalSCL}, $u_0$ is the given initial function, $F: \R \rightarrow \R^d$, ${\tt \Phi}: \R \rightarrow \R$ are given sufficiently smooth function and fulfil certain conditions specified later. Moreover, ${\tt h}$ is the given noise coefficient , $W$ is a cylindrical Wiener process defined in a separable Hilbert space $\mathcal{H}$ with the form $W(t) = \sum_{n \ge 1}\beta_n(t)e_n$, where $(e_n)_{n \ge 1}$ is a complete orthonormal basis in $\mathcal{H}$ and $(\beta_n)_{n \ge 1}$ is a sequence of independent real valued Brownian motion. In \eqref{eq:fractionalSCL}, $(-\Delta)^\theta$ denotes the fractional Laplacian operator of order $\theta \in (0,1)$, defined by
\begin{align}
    (-\Delta)^\theta[\Gamma](x) := -{\rm P.V.} \int_{\R^d}\big(\Gamma(x+z)-\Gamma(x)\big)\gamma(z)\,dz, \quad \forall \quad x \in \mathbb{T}^d,\notag
\end{align}
where $\gamma(z) = \frac{1}{|z|^{d+2\theta}}$, $z \neq 0$, $\gamma(0) = 0$ and $\Gamma$  is a sufficiently regular function.
\vspace{0.1cm}

    The problem \eqref{eq:fractionalSCL} could be seen as a stochastic perturbation of non-local conservation laws. The equation \eqref{eq:fractionalSCL} becomes standard conservation laws in the absence of non-local and noise terms i.e. when ${\tt \Phi} = 0 = {\tt h}$. The well-posedness theory for hyperbolic conservation laws is well established; see \cite{dafermos,godu,Kruzkov} and the references there in. The entropy solution theory for fractional balance laws was studied by Alibaud in \cite{Alibaud 2007} and Cifani et al. in \cite{cifani}. Lions et al. \cite{Lions} was first to introduce the kinetic solution theory for conservation laws. Later on, Chen et al. \cite{Chen-2003} studied the wellposedness of kinetic solution for non-isotropic degenerate parabolic-hyperbolic equation. Alibaud et al. \cite{Alibaud 2020} developed the well-possedness theory for non-local conservation laws in kinetic formulation framework.
    \vspace{0.1cm}

The study of conservation laws driven by stochastic forcing has grown significantly over the past few years. The well-posedness analysis for conservation laws driven by noise have been researched by many authors, see \cite{Bauzet-2012,Bauzet-2015,Majee-2015,Majee-2014, BKM-2015,Majee-2019, Chen:2012fk,Hofmanova-2016, Vovelle2010,Vovelle-2018, nualart:2008,kim,Majee-2017}. Koley et al.  in \cite{frac lin, frac non}  proved the existence and uniqueness of entropy solution  for stochastic non-local conservation laws in unbounded spatial domain. In \cite{AC_1}, the author has just lately studied the degenerate fractional conservation laws driven by noise and established the well-posedness of  kinetic solution and the $L^1$-continuous dependence estimations by means of BV-estimates. By using of kinetic formulation, Chaudhary \cite{AC_2} has explored the existence-uniqueness and long time behavior of solution for \eqref{eq:fractionalSCL}.
\vspace{0.1cm}

The asymptotic behaviour of solution of stochastic PDEs while vanishing the noise force is fascinating to study from the perspective of statistical mechanics. Therefore, the establishment of the large and moderate deviation principle (in short LDP/MDP) and the central limit theorem (in short CLT) are  crucial for finer understanding of the description of certain physical phenomena. For more insights regarding LDP, MDP and there applications,  we refer \cite{J-stat-12,Ann-stat-91,Varadhan-1,Varadhan-2, Ellis-85} and the references there in. There have been many advancements in the field of asymptotic analysis of stochastic PDEs during the past few decades; for LDP, see \cite{Zhang-JD-17,Liu-10,Matoussi,kavin-22} and for CLT and MDP, see \cite{dong-2017,Fatheddin-16,zhang-mdp-21,
zhang-21-mdp-clt}. To the best of our knowledge, Mariani \cite{Mariani} was the first to examine the large deviation for stochastic conservation laws, in which the author treated a family of stochastic balance laws as  parabolic SPDEs with an additional viscosity term. The weak convergence approach obtained by Dupuis et al. \cite{Dupuis-97}, is a crucial tool in establishing LDP, CLT and MDP. Using weak convergence approach Zhang et al. \cite{Zhang-2020} established LDP for hyperbolic scalar conservation laws perturbed by small multiplicative noise. In \cite{Dong-21}, Zhang et al. established LDP for quasi-linear parabolic stochastic PDE. Very recently, in \cite{CLT} the authors have studied CLT and MDP for problem \eqref{eq:fractionalSCL} with ${\tt \Phi} = 0$ and initial value $u_0$ is equal to constant.
\subsection{Aim and outline of the paper}
Our main goal is to study the large deviation principle, central limit theorem and moderate deviation principle for \eqref{eq:fractionalSCL} driven by small multiplicative noise. More precisely, we are concerned in asymptotic behavior of the trajectories, 
$$z^\eps(t):= \frac{1}{\sqrt{\eps}\Lambda(\eps)}\big(u^\eps(t) -\hat{u}(t)\big), \quad t \in [0,T], \quad \eps \in (0,1),$$
for some deviation scale $\Lambda(\eps)$ which affects the asymptotic behaviour of $z^\eps$, where $u^\eps$ is the kinetic solution of \eqref{eq:small-noise} and $\hat{u}$ is the solution corresponding to deterministic fractional conservation laws. Specifically, the following three cases are involved.
\begin{itemize}
    \item [i)] When $\Lambda(\eps) = \frac{1}{\sqrt{\eps}}$, it gives the LDP which is proved in Section \ref{sec:LDP}
    \item [ii)] When $\Lambda(\eps) =1$, it gives the CLT which is established in Section \ref{sec:CLT}.
    \item[iii)] When $\Lambda(\eps) \rightarrow +\infty$ and  $\sqrt{\eps}\Lambda(\eps) \rightarrow 0$ as $ \eps \rightarrow 0$, it gives the MDP---which in principle fill the gap 
between CLT scale~($\Lambda(\eps)=1$) and LDP scale~($\Lambda(\eps) = \frac{1}{\sqrt{\eps}}$). We have demonstrated MDP for \eqref{eq:fractionalSCL} in Section \ref{sec:MDP}.
\end{itemize}
\vspace{0.1cm}

In order to establish LDP, it is crucial to obtain the well-possedness of skeleton equation \eqref{eq:skeleton}. We apply doubling variables method to prove the uniqueness of skeleton equation and to show, existence we apply vanishing viscosity method which depends upon the extraction of a kinetic solution from the approximate sequence. In this regard, one requires the existence and uniqueness of regularized/viscous problem \eqref{eq:regularized-skeleton} which is typically based upon monotone method or semi-implicit time discretization technique. However, the presence of non-linear fractional diffusion term in \eqref{eq:regularized-skeleton}, makes it impossible to use such methods. Thus, we consider a fourth order singular perturbation problem \eqref{eq:singular perturbation-skeleton} and use compactness argument along with a-priori estimates to get existence of a viscous solution--- which is a critical and novel feature of this article. Again, doubling of variables method gives the uniqueness of viscous problem. Thanks to Zhang et al. \cite{Matoussi}, to prove LDP, it is enough to validate the Condition \ref{cond:2}. For validation of ${\rm ii)}$, Condition \ref{cond:2}, we prove the strong convergence of $u_{\ell_\eps}$ to $u_\ell$ in $L^1([0,T]; L^1(\mathbb{T}^d))$ where $u_{{\ell}_\eps}$ and $u_\ell$ are the kinetic solution of skeleton equation \eqref{eq:skeleton} corresponding to ${\ell}_\eps$ and $\ell$ respectively with ${\ell}_\eps \rightarrow \ell$ weakly in $L^2([0,T];\mathcal{H}))$. In obtaining the convergence result, an essential part is to show, for fix $\Bar{\eps} > 0$, $u_{{\ell}_\eps}^{\Bar{\eps}} \rightarrow u_{\ell}^{\Bar{\eps}}$ in $L^1([0,T]; L^1(\mathbb{T}^d))$ as $\eps \rightarrow 0$ and to prove this one couldn't follow exactly the same line of arguments as done in  \cite{Zhang-2020} due to the presence of non-liner fractional term in \eqref{eq:regularized-skeleton}. Therefore, we use  compactness argument and  uniqueness of  \eqref{eq:regularized-skeleton} to get the required result; see Proposition \ref{prop:convergence}. In view of Girsanov's theorem and well-posedness theory of \eqref{eq:skeleton} and \eqref{eq:fractionalSCL}, to validate ${\rm i)}$ of condition \ref{cond:2}, we prove that $\|\bar{u}^\eps(\cdot) - v^\eps(\cdot)\|_{\Large L^1([0,T];L^1(\mathbb{T}^d))} \rightarrow 0 \quad \text{in probability},$ using doubling the variables method, where $\bar{u}^\eps$ and $v^\eps$ are kinetic solution of \eqref{eq:small-noise-ldp} and \eqref{eq:skeleton} with ${\ell}$ replaced by ${\ell}_\eps$ respectively.

We establish the CLT and MDP for \eqref{eq:fractionalSCL} perturbed by small noise, with initial function $u_0$ as constant (for simplicity we take $u_0 = 1$). Due to the lack of viscous term in \eqref{eq:fractionalSCL}, the kinetic solution $u$ of \eqref{eq:fractionalSCL} exists in a rather irregular space. Only $W^{\mu,1}$-regularity for $u$ in a spatial variable with $\mu \in (0,1)$ could be anticipated, which is insufficient to prove the well-possedness theory for \eqref{eq:star-FSCL} and \eqref{eq:mdp-skeleton}, limiting our ability to investigate the CLT and MDP with general initial data $u_0$. To obtain CLT, our aim is to show the convergence of $\frac{1}{\sqrt{\eps}}(u^\eps -\hat{u})$ to  $u^\star$, where $u^\eps, \hat{u}$ and  $u^\star$ are the unique kinetic solutions to problems \eqref{eq:small-noise}, \eqref{eq:deterministic-FCL}
and \eqref{eq:star-FSCL} respectively. As discussed in \cite{CLT}, the associated equations of $\frac{1}{\sqrt{\eps}}(u^\eps -\hat{u})$ and  $u^\star$ lacks symmetry, making it impossible to obtain the convergence result by directly applying doubling variables techniques. Thus, we introduce some auxiliary approximation processes and establish corresponding convergence results. Likewise in \cite[Proposition $3.3$]{CLT}, one couldn't apply It\^o-formula to conclude the convergence of $w^{\eps, \eta}$ to $u^{\star, \eta}$ in $L^1(\Omega \times [0,T]; L^1(\mathbb{T}^d))$, where $w^{\eps, \eta} := \frac{u^{\eps,\eta}-\hat{u}^\eta}{\sqrt{\eps}}$ and $u^{\star, \eta}$
satisfy \eqref{eq:w-eps-eta} and \eqref{eq:viscous-star-FSCL} respectively. This approach demands higher regularity of $w^{\eps, \eta}$ as compare to the existing one (see, Remark \ref{rem:about-phi-theta}) due to the non-linear fractional term. To tackle this technical difficulty, we use compactness argument and uniform estimates to infer the tightness of sequence of laws of $w^{\eps, \eta}$. Then using Prokhorov compactness theorem and a modified version of Skorokhod representation theorem, we get  a.s. convergence of a new random variable $\tilde{w}^{\eps,\eta}$ to $\tilde{w}^{\eta}$ in a new probability space $(\tilde{\Omega}, \tilde{\mathcal{F}}, \Tilde{\mathbb{P}})$. With this in hand, we apply Vitali convergence theorem along with a-priori estimates of $\tilde{w}^\eta$ to conclude 
existence of a martingale solution to \eqref{eq:viscous-star-FSCL}. Gy\"ongy-Krylov's characterization of convergence in probability and uniqueness of \eqref{eq:viscous-star-FSCL} yield the desired convergence result for $w^{\eps, \eta}$.
\vspace{0.1cm}

To establish MDP, we show that $\frac{u^\eps - \hat{u}}{\sqrt{\eps}\Lambda(\eps)}$ satisfies LDP in $L^1([0,T];L^1(\mathbb{T}^d))$, with $\sqrt{\eps}\Lambda(\eps) \rightarrow 0$ as $\eps \rightarrow 0$. We encounter the same challenges in the proof of MDP as seen in CLT. Thus, we introduce some suitable parabolic approximation equations. Using existence uniqueness of skeleton equation and mimicking the proof of CLT, we obtain MDP for \eqref{eq:fractionalSCL} for constant initial data.
\vspace{0.1cm}

The article is structured as follows. The kinetic formulation framework, the assumptions, previously obtained results, mathematical framework of LDP, CLT and MDP and the main results of this paper are stated in Section \ref{sec:technical}. We establish the well-posedness of regularized skeleton equation and skeleton equation in Section \ref{sec:skeleton}. The proofs of LDP, CLT and MDP are provided in Sections \ref{sec:LDP}, \ref{sec:CLT} and \ref{sec:MDP} respectively.

\section{Preliminaries And Technical Framework}\label{sec:technical} 
In this paper, we use the letters $C$ to denote various generic constants. Let  $\mathcal{L}(H_1, H_2)$ (resp. $\mathcal{L}_2(H_1, H_2)$) be the space of bounded (resp. Hilbert Schmidt) linear operator from a Hilbert spaces $H_1$ to $H_2$ with norm $\|\cdot\|_{\mathcal{L}(H_1, H_2)}$(resp. $\|\cdot\|_{\mathcal{L}_2(H_1, H_2)}$). For all $\alpha \ge 1$, $H^\alpha(\mathbb{T}^d) = W^{\alpha,2}(\mathbb{T}^d)$ be the usual Sobolev space of order $\alpha$ and  $H^{-\alpha}(\mathbb{T}^d)$ be the topological dual of $H^{\alpha}(\mathbb{T}^d)$, with norm $\|\cdot\|_{H^{-\alpha}}$.  Moreover, the brackets $\langle\cdot,\cdot \rangle$ denotes the duality between $C_c^\infty(\mathbb{T}^d \times \R)$ and the space of distribution over $\mathbb{T}^d \times \R$. For $1 \le p \le \infty$ and $q : = \frac{p}{p-1},$ the conjugate exponent of $p$, we denote
\begin{align*}
    \langle g, \Bar{g} \rangle := \int_{\mathbb{T}^d}\int_{\R}g(x,\xi)\Bar{g}(x, \xi)\,dx\,d\xi, \quad g \in L^p(\mathbb{T}^d \times \R), \Bar{g} \in L^q(\mathbb{T}^d \times \R), 
\end{align*}
and also for a measure $m$ on the Borel measurable space  $\mathbb{T}^d \times [0,T] \times \R$,
\begin{align}\label{eq:measure-m}
    m(\phi) := \langle m, \phi \rangle = \int_{\mathbb{T}^d \times [0,T] \times \R} \phi(x,t,\xi)\,dm(x,t,\xi), \quad \phi \in C_b(\mathbb{T}^d \times [0,T] \times \R),
\end{align}
where $C_b$ represents the space of bounded continuous function. Furthermore, we define  the canonical space $\mathcal{H} \subset \mathcal{H}_0$ via, 
\begin{align*}
     \mathcal{H}_0 = \Big\{w = \underset{n \ge 1}{\sum} w_ne_n \,\Big | \,\underset{n \ge 1}{\sum}\frac{w_n^2}{n^2} < \infty\Big\}\quad \text{with the norm} \quad \|w\|_{\mathcal{H}_0}^2 = \underset{n \ge 1}{\sum}\frac{w_n^2}{n^2}.
\end{align*}
Observe that the embedding $\mathcal{H} \hookrightarrow \mathcal{H}_0$ is Hilbert-Schmidt. The Wiener process $W$ has $\mathcal{P}$-a.s. continuous trajectories in $C([0,T]; \mathcal{H}_0)$. We consider the map ${\tt h}: \mathcal{H} \rightarrow L^2(\mathbb{T}^d)$ defined by ${\tt h}(u)e_n = {\tt h}_n(\cdot, u)$. Thus we define, 
\begin{align*}
    {\tt h}(u) = \underset{n \ge 1}{\sum} {\tt h}_n(\cdot, u)e_n.
\end{align*}
Now we make the following assumptions which are essential in obtaining our desire results.
\begin{Assumptions}
\item \label{A1} $F: \R \rightarrow \R^d$ is $C^2$ and Lipschitz continuous function. 
\item  \label{A2}${\tt \Phi} : \R \rightarrow \R$ is a non-decreasing Lipschitz continuous function. 
\item  \label{A3}${\tt h}_k \in C(\mathbb{T}^d \times \R)$ satisfies the following bound: for $x,\Bar{x} \in \mathbb{T}^d$ and $y, \Bar{y} \in \R$,
\begin{align*}
   &\hspace{1cm}G^2(x,y):= \underset{k \ge 1}{\sum}\big|{\tt h}_k(x,y)\big|^2 \le C(1 + |y|^2), \\
   &\underset{k \ge 1}{\sum} \big|{\tt h}_k(x,y)-{\tt h}_k(\Bar{x},\Bar{y})\big|^2 \le C\big(|x-\Bar{x}|^2  + |y -\Bar{y}|^2 
   \big).
\end{align*} 
\end{Assumptions}
We recall the basic definition of kinetic solution and kinetic function from \cite{AC_2}. Let $\mathcal{R}^+([0,T] \times \mathbb{T}^d \times \R)$ be the collection of non-negative Radon measures over 
$[0,T] \times \mathbb{T}^d \times \R$.
\begin{defi}\label{def:kinetic-mes} A map $m$ from $\Omega$ to $\mathcal{R}^+([0,T] \times \mathbb{T}^d \times \R)$ is a kinetic measure, if
\begin{itemize}
\item[i)] For each $\pi \in C_0([0,T] \times \mathbb{T}^d \times \R)$, the map $m(\pi): \Omega \rightarrow \R$ is measurable.
\item[ii)] $\mathbb{E}[m([0,T] \times \mathbb{T}^d \times B_R^c)] \rightarrow 0$ as $R \rightarrow +\infty,$ for  $B_R^c := \{z \in \R : |z| \ge R\}$.
\end{itemize}
\end{defi}

\begin{defi}[Kinetic solution]\label{def:kinetic-sol} Let $u_0 \in L^2(\mathbb{T}^d)$ be the deterministic initial data. A $L^1(\mathbb{T}^d)-$valued $\{\mathcal{F}_t\}$-adapted stochastic process $(u(t))_{t \in [0,T]}$ is said to be a kinetic solution of \eqref{eq:fractionalSCL} with initial data $u_0$, if for $f(t):= {\bf 1}_{u(t) > \xi} $, there holds
\begin{itemize}
\item[i)] for all $\psi \in C_c(\mathbb{T}^d \times \R)$, $\mathcal{P}$-a.s, $t \mapsto \langle f(t), \psi \rangle$ is a c\'adl\'ag,
\item[ii)]  there exists  a constant $C_2 \ge 0$ such that
$\mathbb{E}\Big[\underset{0 \le t \le T}{sup}\|u(t)\|_{L^2(\mathbb{T}^d)}^2\Big] \le C_2$,
\item[iii)] there exists a random kinetic measure $m$ in the sense of Definition \ref{def:kinetic-mes} such that $\mathcal{P}$-a.s., $m \ge \eta_1$, where $\eta_1\in \mathcal{R}^+([0,T] \times \mathbb{T}^d \times \R)$ is defined by
\begin{align}
    \eta_1(x,t,\xi) := \int_{\R^d}\big|{\tt \Phi}(u(t, x+z)) - {\tt \Phi}(\xi)\big|{\bf 1}_{{\rm Conv}\{u(t,x), u(t,x+z)\}}(\xi)\gamma(z)\,dz,  \label{def:eta_1}
\end{align}
with ${\rm Conv}\{c,d\} := \big(\min\{c,d\}, \max\{c,d\}\big)$,
\item[iv)] the pair $(f, m)$ satisfies the following formulation: for all $\psi \in C_c^2(\mathbb{T}^d \times \R)$, $t \in [0, T]$, 
\begin{align}\label{eq:kinetic-formulation}
 &\langle f(t), \psi \rangle = \,\langle f_0, \psi \rangle  + \int_0^t \langle f(s), F'(\xi)\cdot \nabla\psi \rangle \,ds - \int_0^t  \langle f(s), {\tt \Phi}'(\xi)(-\Delta)^\theta[\psi] \rangle \,ds \\
& +\, \sum_{k=1}^{\infty}\int_0^t\int_{\mathbb{T}^d} {\tt h}_k(x,u(x,s))\psi(x, u(x,s))\,dx\,d\beta_k(s) \notag \\
& + \, \frac{1}{2}\int_0^t\int_{\mathbb{T}^d} \partial_\xi\psi(x,u(x,s))G^2(x, u(x,s))\,dx\,ds - m(\partial_\xi\psi)([0,t]), \quad \mathcal{P} \mbox{-a.s},\notag
\end{align}
where $f_0 = {\bf 1}_{u_0 > \xi},$ and $m(\psi)$ is defined as in \eqref{eq:measure-m}. 
\end{itemize}
\end{defi}
\begin{defi}[Kinetic function] Let $(\mathcal{X}, \nu)$ be a finite measure space. A measurable function $f 
: \mathcal{X} \times \R \rightarrow [0,1]$ is said to be a {\bf kinetic function} if there exists a Young measure $\mathcal{V}$ (see, \cite[Definition 3]{Vovelle2010}) on $\mathcal{X}$ vanishing at infinity such that, for $\nu$-a.e. $\omega \in \mathcal{X}$ and for all $\zeta \in \R$,  $ f(\omega, \zeta) = \mathcal{V}_\omega(\zeta, \infty).$
\end{defi}
\begin{rem}\label{rem:kintetic-formulation}Let $u$ be a kinetic solution of \eqref{eq:fractionalSCL}. If $f(x,t, \xi) = {\bf 1}_{u(x,t) > \xi}$, then we have $\partial_{\xi}f(x,t,\xi) = \delta_{u(x,t) = \xi}$, where $\mathcal{V} = \delta_{u(x,t) = \xi}$ is a Young measure on $\Omega \times [0,T] \times \mathbb{T}^d$. Thus, we can re-write \eqref{eq:kinetic-formulation} as follows: for all $t \in [0,T]$, for all $\psi \in C_c^2(\mathbb{T}^d \times \R)$,
\begin{align}\label{eq:revised-kinetic-formulation}
    &\langle f(t), \psi \rangle = \,\langle f_0, \psi \rangle  + \int_0^t \langle f(s), F'(\xi)\cdot \nabla\psi \rangle \,ds - \int_0^t  \langle f(s), {\tt \Phi}'(\xi)(-\Delta)^\theta[\psi] \rangle \,ds  \\
& +\, \sum_{k=1}^{\infty}\int_0^t\int_{\mathbb{T}^d} {\tt h}_k(x,\xi)\psi(x, \xi)\,d\mathcal{V}_{s,x}(\xi)\,dx\,d\beta_k(s) \notag \\
& + \, \frac{1}{2}\int_0^t\int_{\mathbb{T}^d} \partial_\xi\psi(x,\xi)G^2(x,\xi)\,d\mathcal{V}_{s,x}(\xi)\,dx\,ds - m(\partial_\xi\psi)([0,t]), \quad \mathcal{P}\mbox{-a.s}\,.\notag
\end{align}
\end{rem}
We recall the result of well-posedness of kinetic solution for \eqref{eq:fractionalSCL}; see \cite[Thorem 2.4]{AC_2}.
\begin{thm}[Existence and uniqueness]\label{thm:Existence and uniqueness}
Let the assumptions \ref{A1}-\ref{A3} be true. Let $\theta \in (0,1)$ and $u_0 \in L^2( \mathbb{T}^d)$. Then there exists a unique kinetic solution to \eqref{eq:fractionalSCL} in the sense of Definition \ref{def:kinetic-sol} and it has almost surely continuous trajectories in $L^2(\mathbb{T}^d)$. Moreover, if $u_1, u_2$ be kinetic solutions to \eqref{eq:fractionalSCL} with initial data $u_{1,0}$ and $u_{2,0}$ respectively, then for all $t \in [0,T]$, 
    \begin{align*}
        \mathbb{E}\big[\|u_1(t) - u_2(t)\|_{L^1(\mathbb{T}^d)}\big] \le  \mathbb{E}\big[\|u_{1,0} - u_{2,0}\|_{L^1(\mathbb{T}^d)}\big].
    \end{align*}
\end{thm}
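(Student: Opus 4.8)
The plan is to obtain uniqueness together with the $L^1$-contraction estimate by the doubling-of-variables method in the kinetic formulation, then to prove existence by a vanishing-viscosity approximation combined with a reduction step upgrading a \emph{generalized} kinetic solution to a genuine one, and finally to extract the a.s.\ $L^2(\mathbb{T}^d)$-continuity of trajectories from an energy identity implied by \eqref{eq:revised-kinetic-formulation}. For the contraction estimate, let $u_1,u_2$ be kinetic solutions with data $u_{1,0},u_{2,0}$, kinetic functions $f_i=\mathbf{1}_{u_i>\xi}$ and kinetic measures $m_i\ge\eta_1^{(i)}$. I would write \eqref{eq:revised-kinetic-formulation} for $f_1$ at the point $(x,t,\xi)$ and for $f_2$ at $(y,t,\zeta)$, test both against the tensorized kernel $\psi(x,\xi;y,\zeta)=\varrho_\delta(x-y)\,\phi_\varepsilon(\xi-\zeta)$, and apply It\^o's formula to $\mathcal{I}_{\delta,\varepsilon}(t):=\iint f_1(x,t,\xi)\big(1-f_2(y,t,\zeta)\big)\varrho_\delta\,\phi_\varepsilon\,dx\,dy\,d\xi\,d\zeta$ (and, symmetrically, to $\iint\big(1-f_1\big)f_2\,\varrho_\delta\phi_\varepsilon$). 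Taking expectations kills the stochastic integral; the It\^o correction merges with the noise-bracket term into a quantity controlled, via the second bound in \ref{A3}, by an error that vanishes on sending $\delta\to0$ then $\varepsilon\to0$. The transport contribution $\iint f_1\big(1-f_2\big)\,(F'(\xi)-F'(\zeta))\cdot\nabla_x\varrho_\delta$ tends to $0$ after $\delta\to0$ then $\varepsilon\to0$, since $F'$ is continuous by \ref{A1} and $|\xi-\zeta|\lesssim\varepsilon$ on the support; the measure terms $-m_i(\partial_\xi\psi)$ enter with the favourable (non-positive) sign.

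The genuinely new difficulty is the nonlocal term: after tensorization one must show that $-\langle f_1,{\tt \Phi}'(\xi)(-\Delta)^\theta_x[\psi]\rangle-\langle 1-f_2,{\tt \Phi}'(\zeta)(-\Delta)^\theta_y[\psi]\rangle$ is dominated by $\eta_1^{(1)}+\eta_1^{(2)}\le m_1+m_2$ up to errors vanishing in $\delta,\varepsilon$. I would split $(-\Delta)^\theta$ into its contributions over $\{|z|<r\}$ and $\{|z|\ge r\}$: the far part has the integrable kernel $\gamma(z)\mathbf{1}_{|z|\ge r}$, hence acts as a bounded operator and only contributes an $O(r^{-2\theta})$ lower-order term absorbed by Gr\"onwall; the near part is where the dissipation measure is essential --- expanding ${\tt \Phi}(u_i(\cdot+z))-{\tt \Phi}(u_i(\cdot))$ and comparing with the definition \eqref{def:eta_1} of $\eta_1$ shows that the near part together with the matching piece of $m_i$ carries the correct sign. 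Letting $\delta\to0$, then $\varepsilon\to0$, then $r\to0$, and using $\int_{\mathbb{R}}\big(f_1(1-f_2)+(1-f_1)f_2\big)\,d\xi=|u_1-u_2|$, one arrives at $\mathbb{E}\|u_1(t)-u_2(t)\|_{L^1(\mathbb{T}^d)}\le\mathbb{E}\|u_{1,0}-u_{2,0}\|_{L^1(\mathbb{T}^d)}$; uniqueness follows by taking $u_{1,0}=u_{2,0}$.

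For existence I would regularize by viscosity: solve $du^\varepsilon+\mathrm{div}_x F(u^\varepsilon)\,dt+(-\Delta)^\theta[{\tt \Phi}(u^\varepsilon)]\,dt=\varepsilon\Delta u^\varepsilon\,dt+{\tt h}(u^\varepsilon)\,dW$ with $u^\varepsilon(0)=u_0$, which is well posed in $L^2(\mathbb{T}^d)$ by a Galerkin/monotonicity scheme ($F$ Lipschitz by \ref{A1}, $(-\Delta)^\theta\!\circ{\tt \Phi}$ monotone by \ref{A2}, noise controlled by \ref{A3}). It\^o's formula for $\|u^\varepsilon(t)\|_{L^2}^2$ and its powers gives the $\varepsilon$-uniform bounds $\mathbb{E}\sup_{t\le T}\|u^\varepsilon(t)\|_{L^2}^2\le C$ and $\varepsilon\,\mathbb{E}\int_0^T\|\nabla u^\varepsilon\|_{L^2}^2\,dt\le C$, together with a uniform bound on the total mass of the kinetic measure $m^\varepsilon\ge\eta_1^{(\varepsilon)}$ of the viscous problem (which also dominates $\varepsilon|\nabla u^\varepsilon|^2\delta_{u^\varepsilon=\xi}$). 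From the kinetic formulation of $f^\varepsilon=\mathbf{1}_{u^\varepsilon>\xi}$ I would extract, via Young-measure and Banach--Alaoglu compactness, a subsequence with $f^\varepsilon\rightharpoonup f$ weakly-$\ast$ to a kinetic function and $m^\varepsilon\rightharpoonup m\ge\eta_1$ (the lower bound surviving by lower semicontinuity together with strong $L^1$-compactness of $(u^\varepsilon)$, which itself follows from the contraction of the viscous problem), and pass to the limit termwise in \eqref{eq:revised-kinetic-formulation} to obtain a generalized kinetic solution. Running the doubling-of-variables computation above for this generalized solution against itself forces the defining Young measure to be a Dirac, $\mathcal{V}_{t,x}=\delta_{u(t,x)}$, so that $f=\mathbf{1}_{u>\xi}$ for a genuine kinetic solution $u$ and the full family converges, $u^\varepsilon\to u$ in $L^1([0,T]\times\mathbb{T}^d)$, $\mathcal{P}$-a.s. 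Finally, testing \eqref{eq:revised-kinetic-formulation} against functions approximating $\psi(x,\xi)=\xi\,\mathbf{1}_{|\xi|\le R}$ yields an It\^o energy identity for $\|u(t)\|_{L^2}^2$, which combined with the c\`adl\`ag weak continuity from Definition~\ref{def:kinetic-sol} upgrades $t\mapsto u(t)$ to an a.s.\ continuous $L^2(\mathbb{T}^d)$-valued path. The principal obstacle throughout is this nonlocal term in the doubling-of-variables estimate --- the near/far splitting of $(-\Delta)^\theta$ and its reconciliation with the dissipation measure $\eta_1$ --- which is exactly where the argument departs from the classical local theory and where hypothesis \ref{A2} and the precise form \eqref{def:eta_1} of $\eta_1$ are used.
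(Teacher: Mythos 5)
This theorem is not proved in the paper at all: it is quoted verbatim from \cite{AC_2}, so there is no in-paper argument to compare against line by line. Your outline does follow the strategy of that cited source and of the paper's own parallel treatment of the skeleton equation (Proposition \ref{prop:doubling-variable}, Theorem \ref{thm:contraction-principle}, Section \ref{sec:existance-skeleton}): doubling of variables in the kinetic formulation for the $L^1$-contraction, with the nonlocal term split into a near part reconciled with the dissipation measure \eqref{def:eta_1} (where \ref{A2} enters) and a far part with integrable kernel; vanishing viscosity plus Young-measure and kinetic-measure compactness for existence; and a reduction step forcing the Young measure to be a Dirac mass. Your identification of the nonlocal term as the essential departure from the local theory is correct.

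Two steps, however, would fail as written. First, your order of limits is wrong. The near-field part of $(-\Delta)^\theta$ produces an error of size $r^{2-2\theta}\delta^{-2}$ (cf.\ \eqref{eq:bound-s-12}), which diverges if you send $\delta\to 0$ before $r\to 0$; likewise the transport error $\delta^{-1}\lambda$ diverges if $\delta\to0$ precedes the velocity-mollifier limit. Moreover the far-field contribution is not a bare $O(r^{-2\theta})$ term ``absorbed by Gr\"onwall'' --- a nonvanishing constant cannot be absorbed --- it is $O(r^{-2\theta}\lambda)$ and is killed by sending the velocity parameter to zero \emph{first}, after which one either sends $r\to0$ for fixed $\delta$ or couples $r=\delta^{c}$ with $c>1/(1-\theta)$ before letting $\delta\to0$. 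Second, the assertion that the viscous regularization of \eqref{eq:fractionalSCL} is ``well posed by a Galerkin/monotonicity scheme'' is precisely the step the authors single out as breaking down in the presence of the nonlinear fractional diffusion: in the skeleton analogue they must add a fourth-order singular perturbation $\gamma\Delta^{2}$ (see \eqref{eq:singular perturbation-skeleton}) so that the Pr\'evot--R\"ockner conditions can be verified in $H^2(\mathbb{T}^d)$, derive $\gamma$-uniform a priori bounds, and remove $\gamma$ by compactness. An analogous construction (or the one in \cite[Section 8]{AC_2}) is needed for the stochastic viscous problem; as written, this is a genuine gap rather than a routine application of the monotone method.
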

\subsection{Freidlin-Wentzell type LDP} In this subsection, we start by recalling some standard definitions and results for the large deviation theory. Let $\{X^{\eps}\}$ be a family of random variables defined on  $(\Omega, \mathcal{F}, \mathcal{P})$ taking value in a Polish space $(\mathcal{E}, {\tt d})$ with metric ${\tt d}$.
\begin{defi}
A function ${\tt I}$ : $\mathcal{E}$ $\rightarrow$ [0,$\infty$] is called a rate function if ${\tt I}$ is lower semi-continuous. ${\tt I}$ is called a good rate function if the level set $\{x\in \mathcal{E}: {\tt I}(x)\leq N \}$ is compact for each $N <\infty$.
\end{defi}
\begin{defi}[Large deviation principle] The  $\{X^\eps\}_{\eps > 0}$  is said to satisfy the LDP with rate function ${\tt I}$ if for each Borel subset $B$ of $\mathcal{E}$ 
\[\underset{x\in B^0}{-\text{inf}} ({\tt I}(x)) \leq \underset{\eps \longrightarrow 0}{\liminf} (\eps\log\mathbb{P}(X^\eps \in B)) \leq    \underset{\eps \longrightarrow 0}{\limsup} (\eps \log \mathbb{P}(X^\eps \in B)) \leq  \underset{x\in \bar{B}}{-\text{inf}}({\tt I}(x)),\]
where $B^0$ and $\bar{B}$ are interior and closure of $B$ in $\mathcal{E}$ respectively.
\end{defi}
Recall that, $W(t)$ is a Wiener process on a Hilbert space $\mathcal{H}$ and the path of $W$ takes values in $C([0,T], \mathcal{H}_0)$. To proceed further, we define the following sets (see, \cite{Zhang-2020}):
\begin{align}
      &\mathcal{A} := \big\{ \upsilon : \mbox{ $\upsilon$ is a $\mathcal{H}$-valued $\{\mathcal{F}_t\}$-predictable process with $\mathcal{P}$-a.s. $\int_0^T \|\upsilon(s)\|_\mathcal{H}^2\,ds <  \infty$ } \big\}, \notag \\ &S_N := \big\{{\ell} \in L^2([0,T];\mathcal{H}) : \int_0^T\|{\ell}(s)\|_\mathcal{H}^2\,ds\leq N\big\},\notag \\
    &\mathcal{A}_N :=\big\{\upsilon \in \mathcal{A} : \upsilon(\omega) \in S_N ~~\text{for $\mathcal{P}$-a.s. $\omega \in \Omega$}\big\}. \notag
\end{align}
Suppose for each  $\eps> 0$, $\mathcal{G^{\eps}} : C([0,T], \mathcal{H}_0) \rightarrow \mathcal{E}$ is a measurable map and $X^{\eps} := \mathcal{G}^{\eps}(W)$. Now, we present the following sufficient condition for LDP of the sequence $X^\eps$ as $\eps$ $\rightarrow$ 0. 
\begin{cond}\label{cond:1}
There exists a measurable function  $\mathcal{G}^{0} : C([0,T], \mathcal{H}_0) \rightarrow \mathcal{E}$ such that 
\begin{itemize}
\item[i)] for every $N < \infty$, let $\{{\ell}_\eps : \eps > 0\} \subset \mathcal{A}_N$. If ${\ell}_\eps$ converges to ${\ell}$ as $S_N$-valued random element in distribution, then $\mathcal{G}^\eps\Big(W(\cdot) + \frac{1}{\sqrt{\eps}}\displaystyle\int_0^\cdot {\ell}_\eps(s)\,ds\Big)$ converges in distribution to $\mathcal{G}^0\Big(\displaystyle\int_0^\cdot {\ell}(s)\,ds\Big)$,
\item[ii)] for every $N  < \infty$, the set $\Big\{\mathcal{G}^0\Big(\displaystyle\int_0^\cdot {\ell}(s)\,ds\Big):~~ {\ell} \in S_N\Big\}$ is a compact subset of $\mathcal{E}.$
\end{itemize}
\end{cond}
The following theorem regarding the LDP is due to Budhiraja et al. in \cite{Budhiraja}.
\begin{thm}
 If $\mathcal{G}^{\eps}$ satisfies Condition \ref{cond:1}, then $X^\eps$ satisfies LDP on $\mathcal{E}$ with the good rate function ${\tt I}$ defined by
 \begin{equation}\label{eq:good-rate-fun}
     {\tt I}(f) = \begin{cases}
 \underset{\{{\ell} \in L^2([0,T];\mathcal{H}):~~ f =  \mathcal{G}^0(\int_0^{\cdot}{\ell}(s)ds)\}}{\inf}  \Big \{ \frac{1}{2} \displaystyle\int_0^T\|{\ell}(s)\|_\mathcal{H}^2\,ds \Big \}, \quad \forall f \in \mathcal{E}\\
 \infty, \quad \text{if} \quad \big\{{\ell} \in L^2([0,T];\mathcal{H}):~~ f = \mathcal{G}^0(\int_0^{\cdot}{\ell}(s)ds)\big\} = \emptyset.
\end{cases}
 \end{equation}
\end{thm}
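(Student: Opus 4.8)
The statement is the abstract sufficiency criterion of Budhiraja--Dupuis for the Freidlin--Wentzell LDP obtained via the weak convergence method, so the plan is to reproduce the standard argument from \cite{Budhiraja,Dupuis-97}. The starting point is the equivalence, valid for a good rate function, between the large deviation principle and the Laplace principle: it is enough to prove that for every bounded continuous $h : \mathcal{E} \to \R$,
\[
\lim_{\eps \to 0} \Big( -\eps \log \mathbb{E}\big[\exp\big(-\tfrac{1}{\eps} h(X^\eps)\big)\big] \Big) = \inf_{f \in \mathcal{E}} \big\{ h(f) + {\tt I}(f) \big\},
\]
together with the fact that ${\tt I}$ is a good rate function. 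The latter is a direct consequence of Condition \ref{cond:1}(ii): the sublevel set $\{f \in \mathcal{E} : {\tt I}(f) \le N\}$ is contained in $\{\mathcal{G}^0(\int_0^\cdot {\ell}(s)\,ds) : {\ell} \in S_{M}\}$ for a suitable $M = M(N)$, which is compact by Condition \ref{cond:1}(ii); lower semicontinuity of ${\tt I}$ then follows from the weak compactness of the balls $S_N$ in $L^2([0,T];\mathcal{H})$ together with Condition \ref{cond:1}(i).

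For the Laplace limit one invokes the Bou\'e--Dupuis / Budhiraja--Dupuis variational representation for exponential functionals of the cylindrical Wiener process: for each $\eps > 0$,
\[
-\eps \log \mathbb{E}\Big[\exp\Big(-\tfrac{1}{\eps} h(\mathcal{G}^\eps(W))\Big)\Big] = \inf_{\upsilon \in \mathcal{A}} \mathbb{E}\Big[ \tfrac{1}{2}\int_0^T \|\upsilon(s)\|_\mathcal{H}^2\,ds + h\Big(\mathcal{G}^\eps\Big(W(\cdot) + \tfrac{1}{\sqrt{\eps}}\int_0^\cdot \upsilon(s)\,ds\Big)\Big)\Big].
\]
Since $h$ is bounded, a routine argument shows the infimum on the right is unchanged, up to an error that vanishes as the level $N\to\infty$, if $\mathcal{A}$ is replaced by $\mathcal{A}_N$ with $N$ depending only on $\|h\|_\infty$.

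From here there are the two matching bounds. For the \emph{Laplace upper bound}, fix $\delta>0$, choose $f_\delta \in \mathcal{E}$ and ${\ell}_\delta \in L^2([0,T];\mathcal{H})$ with $f_\delta = \mathcal{G}^0(\int_0^\cdot {\ell}_\delta(s)\,ds)$ and $h(f_\delta) + \tfrac12\int_0^T\|{\ell}_\delta\|_\mathcal{H}^2\,ds \le \inf_{f}\{h(f) + {\tt I}(f)\} + \delta$; plugging the deterministic control ${\ell}_\delta$ into the representation and applying Condition \ref{cond:1}(i) with ${\ell}_\eps \equiv {\ell}_\delta$ (so $\mathcal{G}^\eps(W+\tfrac{1}{\sqrt{\eps}}\int_0^\cdot {\ell}_\delta)\to \mathcal{G}^0(\int_0^\cdot {\ell}_\delta) = f_\delta$ in distribution) together with the boundedness of $h$ gives $\limsup_{\eps\to0}\big(-\eps\log\mathbb{E}[\cdots]\big)\le \inf_f\{h(f)+{\tt I}(f)\}+\delta$. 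For the \emph{Laplace lower bound}, for each $\eps$ pick $\upsilon_\eps \in \mathcal{A}_N$ that is $\eps$-near-optimal in the (truncated) representation; by weak compactness of $S_N$ the laws of $\upsilon_\eps$ on $S_N$ endowed with the weak topology are tight, so along a subsequence $\upsilon_\eps$ converges in distribution to some $S_N$-valued $\upsilon$, and Condition \ref{cond:1}(i) yields $\mathcal{G}^\eps(W+\tfrac{1}{\sqrt{\eps}}\int_0^\cdot\upsilon_\eps)\to\mathcal{G}^0(\int_0^\cdot\upsilon)$ in distribution; combining this with the weak lower semicontinuity of $\upsilon\mapsto\tfrac12\int_0^T\|\upsilon\|_\mathcal{H}^2\,ds$, Fatou's lemma, and the definition \eqref{eq:good-rate-fun} of ${\tt I}$ gives $\liminf_{\eps\to0}\big(-\eps\log\mathbb{E}[\cdots]\big)\ge \inf_f\{h(f)+{\tt I}(f)\}$. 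Letting $\delta\to0$ closes the argument, and by Bryc's inverse Varadhan lemma the Laplace principle with the good rate function ${\tt I}$ is equivalent to the claimed LDP.

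I expect the genuinely delicate point to be the lower bound: one must make rigorous the joint passage to the limit of the random controls $\upsilon_\eps$ and the driving noise $W$ (working on a suitable product space, or invoking a Skorokhod-type representation so that the convergences hold almost surely) and verify that adding the drift $\tfrac{1}{\sqrt{\eps}}\int_0^\cdot\upsilon_\eps(s)\,ds$ does not destroy the convergence supplied by Condition \ref{cond:1}(i); the reduction from $\mathcal{A}$ to $\mathcal{A}_N$, which hinges on the boundedness of $h$, is what makes the required tightness available in the first place. All of these steps are carried out in full generality in \cite{Budhiraja,Dupuis-97}, so in the present paper it suffices to quote the result.
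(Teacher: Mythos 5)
The paper offers no proof of this statement at all: it is quoted verbatim as the Budhiraja--Dupuis sufficiency criterion from \cite{Budhiraja}, exactly as you note at the end of your proposal. Your outline is a correct rendition of the standard argument behind that cited result (Laplace-principle equivalence, the variational representation, the reduction to $\mathcal{A}_N$, and the two bounds via Condition \ref{cond:1}), so it is consistent with, and fills in, what the paper simply takes off the shelf.
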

Thanks to \cite{Matoussi}, we state another set of sufficient conditions to establish LDP.
\begin{cond}\label{cond:2}
There exists a measurable function  $\mathcal{G}^{0} : C([0,T], \mathcal{H}_0)
\rightarrow \mathcal{E}$ equip with metric ${\tt d}$ such that 
\begin{itemize}
\item[i)] for every  $N < \infty$, any family \{${\ell}_\eps:~\eps>0\} \subset \mathcal{A}_N$ and any $\delta> 0$, 
$\underset{\eps \rightarrow 0}{\text{lim}}\mathcal{P}\big(d(Y_{\eps}, Z_{\eps}) > \delta \big) = 0,$ where $Y_{\eps} := \mathcal{G^{\eps}} \Big( W(\cdot) + \frac{1}{\sqrt{\eps}} \displaystyle\int_0^{\cdot} {\ell}_{\eps}(s)\,ds \Big)$, $Z_\eps := \mathcal{G}^0\Big(\displaystyle \int_0^{.}{\ell}_{\eps}(s)\,ds\Big).$
\item[ii)] for every $N < \infty$ and any family $\big\{{\ell}_\eps; \eps > 0\big\} \subset S_N$ that converges to some element ${\ell}$ as $\eps\rightarrow 0$, $\mathcal{G}^0\Big(\displaystyle\int_0^{\cdot}{\ell}_{\eps}(s)\,ds\Big)$ converges to $\mathcal{G}^0\Big(\displaystyle\int_0^{\cdot} {\ell}(s)\,ds\Big)$ in the space $\mathcal{E}$.
\end{itemize}
\end{cond}
In this article, we wish to study LDP for the following equation driven by small multiplicative noise:
given $\eps>0$, for $(x,t)\in \mathbb{T}^d \times [0,T]$
\begin{equation}\label{eq:small-noise}
 du^\eps + \text{div}_xF(u^\eps(t,x))\,dt + (-\Delta)^\theta[{\tt \Phi}(u^\eps(t,\cdot))](x)\,dt = \sqrt{\eps}{\tt h}(u^\eps(t,x))\,dW(t),  
\end{equation}
with $ u^\eps(0,x) = u_0(x),$ and $u_0 \in L^2(\mathbb{T}^d)$. Under the assumptions \ref{A1}-\ref{A3}, there exists a unique kinetic solution $u^\eps$ of \eqref{eq:small-noise} 
  (cf.~Theorem \ref{thm:Existence and uniqueness}) and a Borel-measurable function 
$$\mathcal{G}^\eps: C([0,T]; \mathcal{H}_0) \rightarrow  \mathcal{E}_1:=L^1([0,T];L^1(\mathbb{T}^d))$$
exists such that $u^\eps(\cdot) = \mathcal{G}^\eps\big(W(\cdot)\big).$ For ${\ell} \in L^2([0,T],\mathcal{H})$, we consider the following skeleton equation
\begin{equation}\label{eq:skeleton}
     \displaystyle du_{\ell}+ \text{div}_xF(u_{\ell}(t,x))\,dt + (-\Delta)^\theta[{\tt \Phi}(u_{\ell}(t,\cdot))](x)\,dt = {\tt h}(u_{\ell}(t,x)){\ell}(t)\,dt,
\end{equation}
with $ u_{\ell}(0,x) = u_0(x)$. The solution $u_{\ell}$ of \eqref{eq:skeleton} (for existence and uniqueness, see Section \ref{sec:skeleton}) gives a measurable mapping
$$\mathcal{G}^0: C([0,T]; \mathcal{H}_0) \rightarrow \mathcal{E}_1$$
such that $\mathcal{G}^0\Big(\displaystyle\int_0^{\cdot}{\ell}(s)\,ds\Big):= u_{\ell}(\cdot)$.
Now we state one of the main results of this paper.
\begin{thm}\label{thm:ldp}
Let the assumptions \ref{A1}-\ref{A3} be true and $u_0 \in L^2(\mathbb{T}^d)$. Then $u^\eps$ satisfies the large deviation principle on $\mathcal{E}_1$ with good rate function ${\tt I}$ defined in \eqref{eq:good-rate-fun}.   
\end{thm}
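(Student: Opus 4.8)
The plan is to verify Condition~\ref{cond:2} for the pair $(\mathcal{G}^\eps,\mathcal{G}^0)$ with $\mathcal{E}=\mathcal{E}_1=L^1([0,T];L^1(\mathbb{T}^d))$ and ${\tt d}$ the induced metric; by the sufficient condition recalled above, this will yield the LDP with the good rate function \eqref{eq:good-rate-fun}. The whole scheme rests on the well-posedness of the skeleton equation \eqref{eq:skeleton} and of its viscous approximations, to be proved in Section~\ref{sec:skeleton}; granting that, $\mathcal{G}^0$ is well defined and only the two convergence statements (i) and (ii) of Condition~\ref{cond:2} remain.

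To verify (ii), fix $N<\infty$ and a sequence $\{\ell_\eps\}\subset S_N$ converging to $\ell$, i.e.\ weakly in $L^2([0,T];\mathcal{H})$; I would show $u_{\ell_\eps}\to u_\ell$ in $\mathcal{E}_1$, where $u_{\ell_\eps}$, $u_\ell$ are the kinetic solutions of \eqref{eq:skeleton} associated with $\ell_\eps$, $\ell$. Since the two kinetic formulations are not symmetric enough to be compared directly by doubling of variables, the idea is to route through the regularized skeleton equation \eqref{eq:regularized-skeleton}: for fixed $\bar{\eps}>0$, let $u_{\ell_\eps}^{\bar{\eps}}$, $u_\ell^{\bar{\eps}}$ be its solutions. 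Uniform (in $\eps$ and $\bar{\eps}$) energy and entropy-dissipation bounds, together with the compactness supplied by the viscous term, allow one to pass to the limit; because of the nonlinear fractional term these viscous solutions themselves will be obtained not by a monotone or semi-implicit scheme but through the fourth-order singular perturbation \eqref{eq:singular perturbation-skeleton} and a compactness argument, with uniqueness recovered again by doubling of variables. Next I would prove $u_{\ell_\eps}^{\bar{\eps}}\to u_\ell^{\bar{\eps}}$ in $\mathcal{E}_1$ as $\eps\to0$ for each fixed $\bar{\eps}$, by passing to the limit in \eqref{eq:regularized-skeleton} and invoking uniqueness of the viscous problem (Proposition~\ref{prop:convergence}). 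Finally, the vanishing-viscosity convergences $u_{\ell_\eps}^{\bar{\eps}}\to u_{\ell_\eps}$ and $u_\ell^{\bar{\eps}}\to u_\ell$ as $\bar{\eps}\to0$, uniform in $\eps$, give (ii) by a standard approximation argument.

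To verify (i), fix $N<\infty$, $\{\ell_\eps\}\subset\mathcal{A}_N$ and $\delta>0$, and set $Y_\eps:=\mathcal{G}^\eps\big(W(\cdot)+\frac{1}{\sqrt{\eps}}\int_0^\cdot\ell_\eps(s)\,ds\big)$, $Z_\eps:=\mathcal{G}^0\big(\int_0^\cdot\ell_\eps(s)\,ds\big)$. By Girsanov's theorem, $Y_\eps=\bar u^\eps$ is the kinetic solution of the controlled equation \eqref{eq:small-noise-ldp}, that is \eqref{eq:small-noise} with the extra drift ${\tt h}(\bar u^\eps)\ell_\eps\,dt$, while $Z_\eps=v^\eps$ is the kinetic solution of \eqref{eq:skeleton} with $\ell$ replaced by $\ell_\eps$; both are well posed by Theorem~\ref{thm:Existence and uniqueness} and Section~\ref{sec:skeleton}. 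I would compare $\bar u^\eps$ and $v^\eps$ via the doubling-of-variables method applied to their kinetic formulations: the deterministic fluxes and the common control drift cancel at leading order, the fractional-diffusion contributions are absorbed using the measures $m\ge\eta_1$ as in the uniqueness proof, and the only genuinely stochastic term carries the prefactor $\sqrt{\eps}$ and hence vanishes in $L^2(\Omega)$; a Gronwall argument then yields $\mathbb{E}\big[\|\bar u^\eps-v^\eps\|_{L^1([0,T];L^1(\mathbb{T}^d))}\big]\to0$, hence $\mathcal{P}\big({\tt d}(Y_\eps,Z_\eps)>\delta\big)\to0$. Once (i) and (ii) hold, Condition~\ref{cond:2} is met and the theorem follows.

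The hard part will be the PDE analysis underlying both steps rather than the abstract weak-convergence machinery, and it is the nonlinear non-local diffusion $(-\Delta)^\theta[{\tt \Phi}(\cdot)]$ that creates it: constructing and proving uniqueness of the viscous skeleton equation \eqref{eq:regularized-skeleton} forces the detour through the fourth-order regularization \eqref{eq:singular perturbation-skeleton} together with $\bar{\eps}$-uniform estimates, and in every doubling-of-variables comparison (uniqueness of \eqref{eq:skeleton}, uniqueness of \eqref{eq:regularized-skeleton}, and the convergence in Step~(i)) one must control the non-local entropy-dissipation term and verify that the error terms generated by the fractional operator and by the regularizations vanish in the correct order.
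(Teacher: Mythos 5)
Your proposal is correct and follows essentially the same route as the paper: it validates Condition \ref{cond:2} via the weak convergence approach, proves part ${\rm ii)}$ by the triangle inequality through the viscous skeleton approximations (constructed via the fourth-order singular perturbation and compactness, with uniqueness by doubling of variables), and proves part ${\rm i)}$ by Girsanov plus a doubling-of-variables comparison of $\bar{u}^\eps$ and $v^\eps$ in which the stochastic term vanishes with $\eps$. This matches the paper's argument in all essentials.
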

\subsection{Central limit theorem} For any $\eps \in (0,1)$, let $u^{\eps}$ be the unique kinetic solution to \eqref{eq:small-noise} with a constant initial value $u_0={\pmb a}$~(for simplicity, we take ${\pmb a} =1$). Let $\hat{u}$ be the kinetic solution of the deterministic factional conservation laws
\begin{equation}\label{eq:deterministic-FCL}
     \displaystyle d\hat{u} + \text{div}F(\hat{u}(t,x))\,dt + (-\Delta)^\theta[{\tt \Phi}(\hat{u}(t,\cdot))](x)\,dt = 0, \quad 
    \hat{u}(0,x) = 1\,.
\end{equation}
Our aim is to study the fluctuation behaviour of $\frac{1}{\sqrt{\eps}}(u^\eps - \hat{u})$.
Consider the SPDE
\begin{equation}\label{eq:star-FSCL}
     \displaystyle du^\star + \text{div}(F^\prime(\hat{u})u^\star)\,dt + (-\Delta)^\theta[{\tt \Phi}'(\hat{u})u^\star(t,\cdot)](x)\,dt = {\tt h}(\hat{u})\,dW(t),\quad 
    u^\star(0,x) = 0\,,
\end{equation}
where $\hat{u}$ is the unique kinetic solution of \eqref{eq:deterministic-FCL}. Since $\hat{u}= 1$ (see, Section \ref{sec:CLT}), SPDE \eqref{eq:star-FSCL} becomes a fractional conservation laws driven by additive noise with flux function ${\pmb F}(\xi) = F'(1)\xi$ and the degenerate term ${\pmb \Phi}(\xi) = {\tt \Phi}'(1)\xi$. Hence, thanks to Theorem \ref{thm:Existence and uniqueness}, equation \eqref{eq:star-FSCL} has a unique kinetic solution $u^\star$. Now we state the main result regarding CLT for \eqref{eq:fractionalSCL}.
\begin{thm}\label{thm:clt}
Let the assumptions \ref{A1}-\ref{A3} be true and $F''$ and ${\tt \Phi''}$ are bounded. Let $u^\eps$ and $\hat{u}$ be the unique kinetic solution to \eqref{eq:small-noise} and \eqref{eq:deterministic-FCL} respectively with constant initial data. Then, it holds that
\begin{align}\label{clt:main-result}
   \underset{\eps \rightarrow 0}{\text{lim}}\,\mathbb{E}\Big[\big\|\frac{u^\eps - \hat{u}}{\sqrt{\eps}} - u^\star\big\|_{\mathcal{E}_1}\Big] = 0\,, 
\end{align}
where $u^\star$ is a kinetic solution to \eqref{eq:star-FSCL}.
\end{thm}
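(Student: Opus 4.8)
The plan is to follow the CLT strategy of \cite{CLT}, but to overcome the loss of regularity caused by the nonlinear fractional term $(-\Delta)^\theta[{\tt \Phi}(\cdot)]$ by replacing the direct It\^o-formula argument with a tightness/martingale-solution argument. First I would introduce a viscous (parabolic) regularization of \eqref{eq:star-FSCL}, say \eqref{eq:viscous-star-FSCL}, with a small parameter $\eta > 0$ adding $\eta \Delta u^{\star,\eta}$, together with the corresponding viscous versions $u^{\eps,\eta}$ of \eqref{eq:small-noise} and $\hat{u}^\eta$ of \eqref{eq:deterministic-FCL}, all with $u_0 = 1$. Setting $w^{\eps,\eta} := (u^{\eps,\eta} - \hat{u}^\eta)/\sqrt{\eps}$, a Taylor expansion of $F$ and ${\tt \Phi}$ around $\hat{u}^\eta$ (using that $F''$ and ${\tt \Phi}''$ are bounded) shows that $w^{\eps,\eta}$ solves \eqref{eq:w-eps-eta}, a parabolic SPDE that is a perturbation of \eqref{eq:viscous-star-FSCL} with remainder terms of size $O(\sqrt{\eps})$ in suitable norms. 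The convergence \eqref{clt:main-result} then follows from a three-parameter argument: control $\|(u^\eps - u^{\eps,\eta})/\sqrt{\eps}\|_{\mathcal{E}_1}$ and $\|u^\star - u^{\star,\eta}\|_{\mathcal{E}_1}$ uniformly in $\eps$ as $\eta \to 0$ (vanishing-viscosity estimates, using the $L^2$ a priori bounds and the $L^1$-contraction of Theorem \ref{thm:Existence and uniqueness}), and for fixed $\eta$ show $w^{\eps,\eta} \to u^{\star,\eta}$ in $L^1(\Omega\times[0,T];L^1(\mathbb{T}^d))$ as $\eps \to 0$.

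The core step is the last one. Since directly applying It\^o's formula to $\|w^{\eps,\eta} - u^{\star,\eta}\|^2$ would require more regularity of $w^{\eps,\eta}$ than the nonlinear fractional term allows (cf.\ Remark \ref{rem:about-phi-theta}), I would instead derive uniform-in-$\eps$ a priori estimates for $w^{\eps,\eta}$ (in $L^2(\Omega; L^\infty_t L^2_x)$ and $L^2(\Omega; L^2_t H^1_x)$, exploiting the $\eta\Delta$ regularization and the BDG and Gronwall inequalities, together with the bounds on $F'', {\tt \Phi}''$ and assumption \ref{A3}) and establish tightness of the laws of $(w^{\eps,\eta}, W)$ in an appropriate path space — e.g.\ $L^2([0,T];L^2(\mathbb{T}^d)) \cap C([0,T];H^{-1}(\mathbb{T}^d))$, with an Aubin--Lions/compactness input plus a Kolmogorov-type time-increment bound. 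By Prokhorov's theorem and a Skorokhod representation (in a Jakubowski-type form, since the path space need not be Polish in the strongest topology), I would pass to a new probability space $(\tilde\Omega,\tilde{\mathcal F},\tilde{\mathbb P})$ with a.s.\ convergent copies $\tilde w^{\eps,\eta} \to \tilde w^\eta$ and $\tilde W^\eps \to \tilde W^\eta$. Using Vitali's theorem (justified by the uniform estimates) I would then identify $\tilde w^\eta$ as a martingale solution of \eqref{eq:viscous-star-FSCL}; since that linear-coefficient viscous equation has a unique (pathwise) solution — here the linearity in the unknown is decisive — the Gy\"ongy--Krylov characterization upgrades convergence in law to convergence in probability of $w^{\eps,\eta}$ to $u^{\star,\eta}$ on the original space, hence in $L^1(\Omega\times[0,T];L^1(\mathbb{T}^d))$ after checking uniform integrability.

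The main obstacle, as flagged above, is precisely the passage to the limit in the nonlinear fractional term inside the martingale-solution identification: one must show $\int_0^t \langle (-\Delta)^\theta[{\tt \Phi}'(\hat{u}^\eta)\tilde w^{\eps,\eta}], \varphi\rangle\,ds \to \int_0^t \langle (-\Delta)^\theta[{\tt \Phi}'(\hat{u}^\eta)\tilde w^\eta],\varphi\rangle\,ds$ along the a.s.\ convergent subsequence, which requires the convergence to hold in a topology strong enough to handle the order-$2\theta$ operator — this is why the $\eta\Delta$ regularization (giving an $H^1_x$ bound, hence compactness in $H^s_x$ for $s < 1$, and $2\theta < 2$ can be accommodated since effectively only $\theta<1$ matters after integration by parts against smooth $\varphi$) is essential, and why the scheme cannot be run at $\eta = 0$ directly. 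The remaining estimates — the Taylor-remainder bounds showing the extra terms in \eqref{eq:w-eps-eta} vanish as $\eps\to 0$, and the vanishing-viscosity rates as $\eta\to 0$ uniformly in $\eps$ — are routine given \ref{A1}--\ref{A3}, the boundedness of $F''$ and ${\tt \Phi}''$, and Theorem \ref{thm:Existence and uniqueness}, and I would carry them out after the tightness argument is in place.
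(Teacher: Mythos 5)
Your overall architecture coincides with the paper's: the same three viscous approximations $u^{\eps,\eta}$, $\hat u^\eta$, $u^{\star,\eta}$, the same triangle-inequality splitting of $\mathbb{E}\big\|\frac{u^\eps-\hat u}{\sqrt{\eps}}-u^\star\big\|_{\mathcal{E}_1}$ into three terms, and for the middle term exactly the paper's route — uniform a priori estimates for $w^{\eps,\eta}$, tightness in $L^2_tL^2_x$ via a fractional-Sobolev-in-time bound, Prokhorov plus a modified Skorokhod representation, identification of the limit as a martingale solution of \eqref{eq:viscous-star-FSCL}, and Gy\"ongy--Krylov plus uniqueness of the linear viscous equation to upgrade to convergence in probability, then Vitali. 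You also correctly identify why It\^o's formula cannot be applied directly (Remark \ref{rem:about-phi-theta}) and why the $\eta\Delta$ term is what makes the fractional term passable in the limit.

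The genuine gap is in your treatment of the first term, $\mathbb{E}\big\|\frac{u^\eps-u^{\eps,\eta}}{\sqrt{\eps}}\big\|_{\mathcal{E}_1}$, which you dismiss as a routine vanishing-viscosity estimate obtained from ``the $L^2$ a priori bounds and the $L^1$-contraction of Theorem \ref{thm:Existence and uniqueness}.'' That theorem compares two solutions of the \emph{same} equation with different data, and any estimate of the form $\|u^\eps-u^{\eps,\eta}\|_{L^1}\le C\eta^{\alpha}$ obtained at the level of $u^\eps$, once divided by $\sqrt{\eps}$, gives $C\eta^{\alpha}/\sqrt{\eps}$ and is \emph{not} uniform in $\eps$ — but uniformity in $\eps$ is exactly what the triangle-inequality scheme requires, since $\eta\to0$ must be taken before (or uniformly over) $\eps\to0$. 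The paper handles this in Proposition \ref{prop:double-varibale-1st-term-clt} by running the doubling-of-variables argument directly on the kinetic formulations of $w^\eps$ and $w^{\eps,\eta}$ (i.e.\ on \eqref{eq:w-eps-eta} and its inviscid counterpart, with the rescaled nonlinearities $\xi\mapsto\frac{1}{\sqrt{\eps}}(F(\sqrt{\eps}\xi+1)-F(1))$, etc.): there the boundedness of $F''$ and ${\tt \Phi}''$ produces an explicit factor $\sqrt{\eps}$ in the flux and fractional commutator terms (e.g.\ $|F'(\sqrt{\eps}\xi+1)-F'(\sqrt{\eps}\zeta+1)|\le\sqrt{\eps}\|F''\|_{L^\infty}|\xi-\zeta|$), and the resulting bound $C(\sqrt{\eps}\lambda\delta^{-1}+\delta^2+\eta\delta^{-2}+\bar r^{-2\theta}\lambda+\bar r^{2-2\theta}\delta^{-2}+\cdots)$ vanishes as $\eta\to0$ uniformly in $\eps$ after choosing $\delta=\eta^{1/4}$ and the other parameters as powers of $\delta$. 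You have all the ingredients for this (you already wrote down \eqref{eq:w-eps-eta} and invoked the boundedness of the second derivatives), but as stated your mechanism for this step would not close, and this estimate is a substantial piece of the proof rather than a deferred routine remainder.
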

\subsection{Moderate deviation principle}
For $\eps>0$, consider the deviation scale $\Lambda(\eps)$ satisfying 
$\Lambda(\eps) \rightarrow +\infty,~\sqrt{\eps}\Lambda(\eps) \rightarrow 0$ as $\eps \rightarrow 0$. Let $z^\eps:= \frac{u^\eps -\Hat{u}}{\sqrt{\eps}\Lambda(\eps)}$, where
$u^\eps, \hat{u}$ are stated in Theorem \ref{thm:clt}. Then $z^\eps$
satisfies the following SPDE:
\begin{equation}\label{eq:MDP-small-noise-1}
\begin{cases}
    \displaystyle dz^\eps + \text{div}\Big(\frac{F(u^\eps) -F(\Hat{u})}{\sqrt{\eps}\Lambda(\eps)}\Big)\,dt + (-\Delta)^\theta\Big[\frac{{\tt \Phi}(u^\eps) -{\tt \Phi}(\Hat{u})}{\sqrt{\eps}\Lambda(\eps)}\Big](x)\,dt\\ = \Lambda(\eps)^{-1}{\tt h}\big(\sqrt{\eps}\Lambda(\eps)z^\eps + \Hat{u}\big)\,dW(t), \quad
    z^\eps(0) = 0\,.
\end{cases}  
\end{equation}
Since $\Hat{u} = 1$,
we define
\begin{align*}
    &\Bar{F}(\xi) := \frac{F(\sqrt{\eps}\Lambda(\eps)\xi +1) -F(1)}{\sqrt{\eps}\Lambda(\eps)}, \quad \Bar{{\tt \Phi}}(\xi) := \frac{{\tt \Phi}(\sqrt{\eps}\Lambda(\eps)\xi +1) -{\tt \Phi}(1)}{\sqrt{\eps}\Lambda(\eps)}.
\end{align*}
With $\Bar{F}(\cdot)$ and $\Bar{{\tt \Phi}}(\cdot)$ in hand, we re-write the SPDE \eqref{eq:MDP-small-noise-1} as 
\begin{align}\label{eq:mdp-small-noise-2}
    \displaystyle dz^\eps + \text{div}\Bar{F}(z^\eps)\,dt + (-\Delta)^\theta[\Bar{{\tt \Phi}}(z^\eps)](x)\,dt = \Lambda(\eps)^{-1}{\tt h}\big(\sqrt{\eps}\Lambda(\eps)z^\eps + \Hat{u}\big)\,dW(t),
\end{align}
with $z^\eps(0) = 0$. Since $\Bar{F},\Bar{{\tt \Phi}}$ satisfy the assumptions \ref{A1} and \ref{A2} respectively, by Theorem \ref{thm:Existence and uniqueness}, we infer the well-posedness of kinetic solution of \eqref{eq:mdp-small-noise-2}--- which then implies existence of a measurable map $\mathcal{\tt G}^\eps : C([0,T]; \mathcal{H}_0) \rightarrow \mathcal{E}_1$ such that $\mathcal{\tt G}^\eps(W(\cdot)):= z^\eps(\cdot).$ For any ${\ell} \in L^2([0,T]; \mathcal{H})$, we consider the following skeleton equation,
\begin{equation}\label{eq:mdp-skeleton}
        \displaystyle dz_{\ell} + \text{div}(F'(\Hat{u})z_{\ell})\,dt + (-\Delta)^\theta[{\tt \Phi}'(\Hat{u})(z_{\ell})](x)\,dt = {\tt h}(\hat{u}){\ell}(t)\,dt, \quad 
        z_{\ell}(0) = 0.
\end{equation}
Note that \eqref{eq:mdp-skeleton} is a special case of \eqref{eq:skeleton} and hence exhibits a unique kinetic solution which then yields a measurable map $\mathcal{\tt G}^0:C([0,T]; \mathcal{H}_0) \rightarrow \mathcal{E}_1$ such that $\mathcal{\tt G}^0\big(\int_0^{\cdot}{\ell}(s)\,ds\big) := z_{\ell}(\cdot).$ Now we are in a position to state one of the main results of this article regarding the MDP of \eqref{eq:fractionalSCL}
\begin{thm}\label{thm:mdp} Let the assumptions \ref{A1}-\ref{A3} be true, the initial data $u_0 = 1$ and $F''$ and ${\tt \Phi''}$ are bounded. Then $z^\eps$
satisfies the LDP on $\mathcal{E}_1$ with speed $\Lambda^2(\eps)$ and with rate function ${\tt I}$ defined in \eqref{eq:good-rate-fun}, i.e.
\begin{itemize}
    \item [i)] For any closed subset $A$ of $\mathcal{E}_1$, 
    $ \quad \underset{\eps \rightarrow 0}{\limsup}\,\frac{1}{\Lambda^2(\eps)}\log \mathbb{P}\big(z^\eps \in A\big) \le -\underset{x \in A}{\inf}{\tt I}(x)$,
    \item [ii)] For any open subset $B$ of $\mathcal{E}_1$, 
    $\quad \underset{\eps \rightarrow 0}{\limsup}\,\frac{1}{\Lambda^2(\eps)}\log \mathbb{P}\Big(z^\eps \in B\Big) \ge -\underset{x \in B}{\inf} {\tt I}(x)$.
\end{itemize}
\end{thm}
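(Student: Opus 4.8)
\textbf{Proof proposal for Theorem \ref{thm:mdp}.}

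The plan is to realize the MDP as an instance of the Freidlin--Wentzell LDP framework already in place, applied to the rescaled process $z^\eps$ solving \eqref{eq:mdp-small-noise-2}. By Theorem \ref{thm:Existence and uniqueness} (applied to the flux $\Bar F$ and degenerate term $\Bar{\tt\Phi}$, which satisfy \ref{A1}, \ref{A2} uniformly in $\eps$ because $F''$, ${\tt\Phi}''$ are bounded), $z^\eps = \mathcal{\tt G}^\eps(W(\cdot))$ for a Borel map $\mathcal{\tt G}^\eps$, and the candidate limit map is $\mathcal{\tt G}^0(\int_0^\cdot {\ell}(s)\,ds) = z_{\ell}$, the unique kinetic solution of the skeleton equation \eqref{eq:mdp-skeleton}, whose well-posedness follows from Section \ref{sec:skeleton}. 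To obtain the LDP with speed $\Lambda^2(\eps)$ it suffices, by the version of the weak convergence criterion from \cite{Matoussi} (the analogue of Condition \ref{cond:2} adapted to the MDP scaling, as used for Theorem \ref{thm:ldp}), to verify the two points: (ii) continuity of $\mathcal{\tt G}^0$ along sequences ${\ell}_\eps \to {\ell}$ in $S_N$, and (i) the convergence in probability $\mathcal{P}(\|Y_\eps - Z_\eps\|_{\mathcal{E}_1} > \delta) \to 0$, where $Y_\eps := \mathcal{\tt G}^\eps\big(W(\cdot) + \sqrt{\Lambda^2(\eps)}^{-1}\int_0^\cdot {\ell}_\eps(s)\,ds\big)$, i.e. $Y_\eps$ is the kinetic solution of \eqref{eq:mdp-small-noise-2} with $W$ shifted by $\frac{1}{\Lambda(\eps)}\int_0^\cdot {\ell}_\eps$, and $Z_\eps := \mathcal{\tt G}^0(\int_0^\cdot {\ell}_\eps(s)\,ds) = z_{{\ell}_\eps}$.

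For point (ii) I would argue exactly as in the proof of part (ii) of Condition \ref{cond:2} for the LDP (Proposition \ref{prop:convergence} and its consequences): the skeleton equation \eqref{eq:mdp-skeleton} is linear in $z_{\ell}$ with fixed coefficients $F'(\hat u) = F'(1)$, ${\tt\Phi}'(\hat u) = {\tt\Phi}'(1)$, so the only $\eps$-dependence enters through the source term ${\tt h}(\hat u){\ell}_\eps(t) = {\tt h}(1){\ell}_\eps(t)$; since $\hat u \equiv 1$, ${\tt h}(1)$ is a fixed Hilbert--Schmidt-type coefficient and ${\ell}_\eps \to {\ell}$ weakly in $L^2([0,T];\mathcal{H})$ yields $\int_0^\cdot {\tt h}(1){\ell}_\eps(s)\,ds \to \int_0^\cdot {\tt h}(1){\ell}(s)\,ds$ strongly; then the viscous approximation \eqref{eq:regularized-skeleton}, the uniform a priori estimates, a compactness argument for the vanishing-viscosity limit and the uniqueness (doubling of variables) for \eqref{eq:mdp-skeleton} give $z_{{\ell}_\eps} \to z_{\ell}$ in $\mathcal{E}_1 = L^1([0,T];L^1(\mathbb{T}^d))$, exactly mirroring Proposition \ref{prop:convergence}.

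For point (i), which is the heart of the matter, the strategy is the one sketched for CLT: form $z^\eps - z_{{\ell}_\eps}$ and note that $z^\eps$ (driven by the shifted Wiener process) satisfies \eqref{eq:mdp-small-noise-2} with the extra drift $\frac{1}{\Lambda(\eps)}{\tt h}(\sqrt\eps\Lambda(\eps)z^\eps+1){\ell}_\eps(t)$ plus the genuine noise $\frac{1}{\Lambda(\eps)}{\tt h}(\sqrt\eps\Lambda(\eps)z^\eps+1)\,dW$. The noise term is $O(1/\Lambda(\eps)) \to 0$ and, because $\sqrt\eps\Lambda(\eps) \to 0$, the coefficient ${\tt h}(\sqrt\eps\Lambda(\eps)z^\eps+1)$ is a perturbation of ${\tt h}(1)$ of size controlled by $\sqrt\eps\Lambda(\eps)$ times $z^\eps$, while $\Bar F(\xi) = F'(1)\xi + O(\sqrt\eps\Lambda(\eps)\,\xi^2)$ and $\Bar{\tt\Phi}(\xi) = {\tt\Phi}'(1)\xi + O(\sqrt\eps\Lambda(\eps)\,\xi^2)$ via the boundedness of $F''$, ${\tt\Phi}''$. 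As in CLT one cannot apply It\^o's formula directly to the difference in $L^1$ because of the nonlinear fractional term, so I would instead: establish uniform-in-$\eps$ a priori bounds for $z^\eps$ in the relevant space (using \ref{A3}, $\Lambda(\eps)^{-1} \to 0$ and the energy/kinetic estimates), prove tightness of the laws of $z^\eps$ (and of the auxiliary viscous approximations $z^{\eps,\eta}$ introduced exactly as $w^{\eps,\eta}$, $u^{\star,\eta}$ in the CLT section) in $\mathcal{E}_1$, pass to a limit on a new probability space via Prokhorov and a Skorokhod-type representation, identify the limit as a martingale solution of the skeleton equation \eqref{eq:mdp-skeleton} (here crucially the quadratic remainders vanish because $\sqrt\eps\Lambda(\eps)\to 0$, so the limit equation is linear and coincides with \eqref{eq:mdp-skeleton} with the fixed source ${\tt h}(1){\ell}$), and finally invoke Gy\"ongy--Krylov together with the uniqueness of \eqref{eq:mdp-skeleton} to upgrade to convergence in probability of $z^\eps - z_{{\ell}_\eps} \to 0$ in $\mathcal{E}_1$. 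The main obstacle is precisely this step: controlling, uniformly in $\eps$ and in ${\ell}_\eps \in S_N$, the nonlinear fractional diffusion $(-\Delta)^\theta[\Bar{\tt\Phi}(z^\eps)]$ and its quadratic defect while simultaneously handling the $\eps$-dependent noise coefficient, which forces the compactness/martingale-solution route rather than a direct doubling-of-variables comparison; once point (i) and point (ii) are in hand, the stated LDP bounds for $z^\eps$ with speed $\Lambda^2(\eps)$ and rate function ${\tt I}$ follow immediately from the weak convergence theorem of \cite{Matoussi}, completing the proof. \qed
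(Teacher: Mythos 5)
Your overall strategy coincides with the paper's: verify the Matoussi-type sufficient condition (Condition \ref{cond:2}) at speed $\Lambda^2(\eps)$, inherit part ${\rm ii)}$ from the skeleton-equation continuity argument of Section \ref{subsec:Validation-of-continuity-of-g-nut}, and prove part ${\rm i)}$ by sandwiching the controlled process between parabolic approximations --- doubling of variables for the controlled process versus its viscous regularization (as in Proposition \ref{prop:double-varibale-1st-term-clt}), the compactness/Skorokhod/Gy\"ongy--Krylov machinery of the CLT section for the viscous controlled process versus the viscous skeleton solution, and vanishing viscosity for the skeleton equation itself. This is exactly the route the paper takes.

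There is, however, one concrete error in your setup of the controlled process. You define $Y_\eps$ via the shift $W \mapsto W + \Lambda(\eps)^{-1}\int_0^\cdot {\ell}_\eps(s)\,ds$. Since the noise coefficient in \eqref{eq:mdp-small-noise-2} is $\Lambda(\eps)^{-1}{\tt h}(\cdot)$, this shift produces the controlled drift $\Lambda(\eps)^{-2}{\tt h}(\cdot){\ell}_\eps\,dt$ (not the $\Lambda(\eps)^{-1}{\tt h}(\cdot){\ell}_\eps\,dt$ you later write, and in either case a drift that vanishes as $\eps \to 0$). With a vanishing controlled drift the limit of $Y_\eps$ would be $0$ rather than $z_{\ell}$, so your identification of the limit as the martingale solution of \eqref{eq:mdp-skeleton} with source ${\tt h}(1){\ell}$ would fail, and Condition \ref{cond:2}, ${\rm i)}$ could not hold for a nontrivial ${\ell}$. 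Because the LDP is sought at speed $\Lambda^2(\eps)$, the Budhiraja--Dupuis/Girsanov shift must be $W \mapsto W + \Lambda(\eps)\int_0^\cdot {\ell}_\eps(s)\,ds$, so that the controlled equation is precisely \eqref{eq:mdp-controlled-eq}, with the order-one drift ${\tt h}\big(\sqrt{\eps}\Lambda(\eps)\tilde{v}^\eps + \hat{u}\big){\ell}_\eps\,dt$ surviving in the limit. Once this is corrected, the remainder of your argument (uniform estimates, tightness, identification of the linear limit equation using $\sqrt{\eps}\Lambda(\eps)\to 0$ and the boundedness of $F''$ and ${\tt \Phi}''$, and uniqueness of \eqref{eq:mdp-skeleton}) goes through and matches the paper's proof.
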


\section{Wellposedness of Skeleton Equation}\label{sec:skeleton} In this section, we will establish the well-posedness result for the solution of skeleton equation \eqref{eq:skeleton}. To do so, we will introduce the definition of solution of skeleton equation.
\begin{defi}[Kinetic solution of skeleton equation] \label{defi:kinetic-solution-skeleton-equation}
Let $u_0 \in L^2(\mathbb{T}^d)$. A measurable function $u_{\ell} : \mathbb{T}^d \times [0,T] \rightarrow \R$ is said to be a kinetic solution to \eqref{eq:skeleton}, if 
\begin{itemize}
\item[i)]  there exist a constant $C_{{\ell}} > 0$ such that $\underset{0 \le t \le T}{sup} \|u_{\ell}(t)\|_{L^2(\mathbb{T}^d)}^2 \le C_{\ell}$ and $m_{\ell} \in \mathcal{R}^+(\mathbb{T}^d \times[0,T]\times \R)$ such that $m_{\ell}\ge \eta_{\ell}$, where 
$$ \eta_{\ell}(x,t,\xi):= \int_{\R^d}\big|{\tt \Phi}(u( x+z,t)) - {\tt \Phi}(\xi)\big|{\bf 1}_{{\rm Conv}\{u_{\ell}(x,t), u_{\ell}(x+z,t)\}}(\xi)\gamma(z)\,dz,$$
\item[ii)] for all $\psi \in C_c^2(\mathbb{T}^d \times \R)$, $t \in [0,T],$ there holds 
\begin{align}\label{eq:skeleton-k-f}
\langle f_{\ell}(t), \psi \rangle = &\,\langle f_0, \psi \rangle  + \int_0^t \langle f_{\ell}(s), F'(\xi)\cdot \nabla\psi \rangle \,ds - \int_0^t  \langle f_{\ell}(s), {\tt \Phi}'(\xi)(-\Delta)^\theta[\psi] \rangle \,ds \notag \\
& +\, \sum_{k=1}^{\infty}\int_0^t\int_{\mathbb{T}^d} {\tt h}_k(x,u(x,s))\psi(x, u(x,s)){\ell}_k(s)\,dx\,ds
- m_{\ell}(\partial_\xi\psi)([0,t]),
\end{align}
where $f_0:= {\bf 1}_{u_0 > \xi},~~f_{\ell}:= {\bf 1}_{u_{\ell} > \xi}$, for fix ${\ell} \in S_N$, ${\ell}(t)= \sum_{n\ge 1}{\ell}_n(t)e_n$.
\end{itemize}
\end{defi}
Thanks to the deterministic counter-part of \cite[Proposition 8]{Vovelle2010}, left limit of $f_{\ell}$, denoted by $f_{\ell}^-(t)$ is a kinetic function and  $f_{\ell}(t) = f_{\ell}^-(t)$ for a.e. $t \in [0,T].$ We define the conjugate function $\Bar{f}_{\ell}$ of the kinetic function $f_{\ell}$ as $\Bar{f}_{\ell} = 1 - f_{\ell}$. We also define the right limit of $f_{\ell}$, denoted by $f_{\ell}^+$, which is simply $f_{\ell}$. Similar to Remark \ref{rem:kintetic-formulation}, we can re-write \eqref{eq:skeleton-k-f} in the following form: for all $\psi \in C_c^2(\mathbb{T}^d \times \R)$ and $t \in [0,T]$,
\begin{align}\label{eq:skeleton-k-ref}
&\langle f_{\ell}^+(t), \psi \rangle = \,\langle f_0, \psi \rangle  + \int_0^t \langle f_{\ell}(s), F'(\xi)\cdot \nabla\psi \rangle \,ds - \int_0^t  \langle f_{\ell}(s), {\tt \Phi}'(\xi)(-\Delta)^\theta[\psi] \rangle \,ds \\
&\quad +\, \sum_{k=1}^{\infty}\int_0^t\int_{\mathbb{T}^d}\int_{\R} {\tt h}_k(x,\xi))\psi(x,\xi){\ell}_k(s)\,d\mathcal{V}_{x,s}^{\ell}(\xi)\,dx\,ds
- m_{\ell}(\partial_\xi\psi)([0,t])\,,\notag
\end{align}
where $d\mathcal{V}_{x,s}^{\ell}(\xi) = \delta_{u_{\ell}(x,s) = \xi}$ is a Young measure on $\mathbb{T}^d \times [0,T]$.
\subsection{Uniqueness of Skeleton equation}
Following the doubling variables technique and similar lines of argument as given in proof of \cite[Proposition $3.2$]{AC_2}, \cite[Proposition $9$]{Vovelle2010} and \cite[Proposition $4.2$]{Zhang-2020}, we arrive at the following result.
\begin{prop}\label{prop:doubling-variable}
Let the assumptions \ref{A1}-\ref{A3} be true. 
Let $u_1(t)$ and $u_2(t)$ be solution to \eqref{eq:skeleton} with initial data $u_{1,0}$ and $u_{2,0}$ respectively and $f_1(t):= {\bf 1}_{u_1(t) > \xi}$ and $f_2(t):= {\bf 1}_{u_2(t) > \zeta}$. Then, for all  $t \in [0,T]$ and non-negative functions $\varrho \in C^\infty(\mathbb{T}^d)$ and $\rho \in C_c^\infty(\R)$,
\begin{align}\label{eq:doubling-variable}
   & \int_{(\mathbb{T}^d)^2}\int_{\R^2} \varrho(x-y)\rho(\xi-\zeta)\Big(f_1^{\pm}(x,t,\xi)\bar{f}_2^{\pm}(y,t,\zeta) + \bar{f}_1^{\pm}(x,t,\xi)f_2^{\pm}(y,t,\zeta)\Big)\,d\xi\,d\zeta\,dx\,dy \\
   &\,\le\, \int_{(\mathbb{T}^d)^2}\int_{\R^2} \varrho(x-y)\rho(\xi-\zeta)\Big(f_{1,0}(x,\xi)\bar{f}_{2,0}(y,\zeta) + \bar{f}_{1,0}(x,\xi)f_{2,0}(y,\zeta)\Big)\,d\xi\,d\zeta\,dx\,dy \notag\\
   & \quad + \mathcal{S}_1 + \Bar{\mathcal{S}_1} + \mathcal{S}_2 + \Bar{\mathcal{S}_2} + 2\mathcal{S}_3\,,\notag
   \end{align}
   where $f_{1,0} := {\bf 1}_{u_{1,0} > \xi}$, $f_{2,0} := {\bf 1}_{u_{2,0} > \zeta}$, and 
   \begin{align*}
       &\mathcal{S}_1 = \int_0^t\int_{(\mathbb{T}^d)^2 \times \R^2}f_1(x,s,\xi)\Bar{f}_2(y,s,\zeta)\big(F'(\xi)-F'(\zeta)\big)\rho(\xi-\zeta)\nabla_x\varrho(x-y)\,d\xi\,d\zeta\,dx\,dy\,ds\\
       &\Bar{\mathcal{S}_1} = \int_0^t\int_{(\mathbb{T}^d)^2\times\R^2}\Bar{f}_1(x,s,\xi)f_2(y,s,\zeta)\big(F'(\xi)-F'(\zeta)\big)\rho(\xi-\zeta)\nabla_x\varrho(x-y)\,d\xi\,d\zeta\,dx\,dy\,ds\\
       &\mathcal{S}_2 = - \int_0^t\int_{(\mathbb{T}^d)^2\times \R^2}f_1(x,s,\xi)\Bar{f}_2(y,s,\zeta)\rho(\xi-\zeta){\tt \Phi}'(\xi)(-\Delta)^\theta[\varrho(\cdot-y)](x)\,d\xi\,d\zeta\,dx\,dy\,ds\\
       &- \int_0^t\int_{(\mathbb{T}^d)^2\times\R^2}f_1(x,s,\xi)\Bar{f}_2(y,s,\zeta)\rho(\xi-\zeta){\tt \Phi}'(\zeta)(-\Delta)^\theta[\varrho(x-\cdot)](y)\,d\xi\,d\zeta\,dx\,dy\,ds\\  
       &- \int_0^t\int_{(\mathbb{T}^d)^2\times\R^2}f_1(x,s,\xi)\partial_\xi\rho(\xi-\zeta)\varrho(x-y)\,d\eta_{{\ell},2}(y,s,\zeta)\,dx\,d\xi\\ 
       &+ \int_0^t\int_{(\mathbb{T}^d)^2\times\R^2}\Bar{f}_2(y,s,\zeta)\partial_\zeta\rho(\xi-\zeta)\varrho(x-y)\,d\eta_{{\ell},1}(x,s,\xi)\,dy\,d\zeta, \\
        &\Bar{\mathcal{S}}_2 = - \int_0^t\int_{(\mathbb{T}^d)^2\times\R^2}\Bar{f}_1(x,s,\xi)f_2(y,s,\zeta)\rho(\xi-\zeta){\tt \Phi}'(\xi)(-\Delta)^\theta[\varrho(\cdot-y)](x)\,d\xi\,d\zeta\,dx\,dy\,ds\\
       &- \int_0^t\int_{(\mathbb{T}^d)^2\times\R^2}\Bar{f}_1(x,s,\xi)f_2(y,s,\zeta)\rho(\xi-\zeta){\tt \Phi}'(\zeta)(-\Delta)^\theta[\varrho(x-\cdot)](y)\,d\xi\,d\zeta\,dx\,dy\,ds\\  
       &+ \int_0^t\int_{(\mathbb{T}^d)^2\times\R^2}\Bar{f}_1(x,s,\xi)\partial_\xi\rho(\xi-\zeta)\varrho(x-y)\,d\eta_{{\ell},2}(y,s,\zeta)\,dx\,d\xi\\ 
       &- \int_0^t\int_{(\mathbb{T}^d)^2\times\R^2}f_2(y,s,\zeta)\partial_\zeta\rho(\xi-\zeta)\varrho(x-y)\,d\eta_{{\ell},1}(x,s,\xi)\,dy\,d\zeta,        
   \end{align*}
   with $m_1$ and $m_2$ corresponding kinetic measure of $u_1(t)$ and $u_2(t)$ respectively satisfying 
   $m_1\ge \eta_{{\ell},1},~ m_2 \ge \eta_{{\ell},2}$, where
   \begin{align*}
       &\eta_{{\ell},1}(x,t,\xi) = \int_{\R^d}\big|{\tt \Phi}(u_1( x+z,t)) - {\tt \Phi}(\xi)\big|{\bf 1}_{{\rm Conv}\{u_1(x,t), u_1(x+z,t)\}}(\xi)\gamma(z)\,dz,\\
       &\eta_{{\ell},2}(y,t,\zeta) = \int_{\R^d}\big|{\tt \Phi}(u_2(y+z,t)) - {\tt \Phi}(\zeta)\big|{\bf1}_{{\rm Conv}\{u_2(y,t), u_2(y+z,t)\}}(\zeta)\gamma(z)\,dz,
   \end{align*}
   and
   \begin{align*}
       &\mathcal{S}_3 = \sum_{k \ge 1}\int_0^t\int_{(\mathbb{T}^d)^2\times\R^2}\varrho\varphi\big({\tt h}_k(x,\xi)-{\tt h}_k(y,\zeta)\big){\ell}_k(s)\,d\mathcal{V}_{x,s}^1 \otimes d\mathcal{V}_{y,s}^2(\xi,\zeta)\,dx\,dy\,ds \\
  & \text{with}~~ \varphi(\xi,\zeta): = \int_{-\infty}^\xi\rho(\xi'-\zeta) \,d\xi' = \int_{-\infty}^{\xi-\zeta}\rho(y)\,dy, \notag \\
   & \mathcal{V}_{x,s}^1(\xi):  = \partial_\xi\Bar{f}_1^+(x,s,\xi),\quad 
   \mathcal{V}_{y,s}^2(\zeta):= -\partial_\zeta f_2^{+}(y,s,\zeta)\,.
   \end{align*}
\end{prop}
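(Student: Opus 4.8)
The plan is to prove the inequality \eqref{eq:doubling-variable} by the classical doubling-of-variables device adapted to the kinetic formulation, following closely \cite[Proposition 3.2]{AC_2}, \cite[Proposition 9]{Vovelle2010} and \cite[Proposition 4.2]{Zhang-2020}. First I would take the kinetic formulation \eqref{eq:skeleton-k-ref} for the first solution $u_1$ tested against a function of the variables $(x,\xi)$, and the analogous formulation for the conjugate function $\bar f_2$ of the second solution $u_2$ tested against a function of $(y,\zeta)$; here the key observation is that $\bar f_{\ell} = 1 - f_{\ell}$ satisfies a kinetic equation with the measure term entering with the opposite sign. Then I would multiply the two resulting identities and integrate, or — more precisely — apply the kinetic formulation for $f_1(x,t,\xi)$ against the test function $(y,\zeta) \mapsto \varrho(x-y)\rho(\xi-\zeta)\bar f_2(y,t,\zeta)$ (which is admissible after a standard mollification/truncation argument since $\bar f_2$ is only an $L^\infty$ kinetic function, not $C^2$), and symmetrically for $\bar f_2$ against $\varrho(x-y)\rho(\xi-\zeta)f_1(x,t,\xi)$, and add. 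This produces on the left-hand side the quantity $\int \varrho\rho\, f_1^{+}\bar f_2^{+}$ (the $+$ limits arising because $f_{\ell}^+ = f_{\ell}$), on the right the initial term $\int \varrho\rho\, f_{1,0}\bar f_{2,0}$, plus the flux contributions $\mathcal S_1$, the fractional-diffusion contributions collected into $\mathcal S_2$ (including the favorable sign of the defect measures $\eta_{{\ell},i}$ via $m_i \ge \eta_{{\ell},i}$), and the stochastic-type term $\mathcal S_3$ coming from the ${\tt h}_k{\ell}_k$ forcing; then one repeats the whole computation with the roles of $f$ and $\bar f$ interchanged to get the $\bar f_1 f_2$ piece with $\bar{\mathcal S_1},\bar{\mathcal S_2}$, and the factor $2$ in front of $\mathcal S_3$ comes from adding the two symmetric contributions.

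The key structural points I would emphasize in carrying this out are: (i) the chain rule $\partial_\xi f_1 = d\mathcal V^1_{x,s}$, $\partial_\zeta \bar f_2 = d\mathcal V^2_{y,s}$ lets one rewrite the forcing terms as integrals against Young measures, yielding $\mathcal S_3$ with the primitive $\varphi(\xi,\zeta) = \int_{-\infty}^{\xi-\zeta}\rho(y)\,dy$ after an integration by parts in $\xi$ (this is why $\varphi$, not $\rho$, appears); (ii) for the nonlocal term one uses the symmetry of the kernel $\gamma$ and the splitting of $(-\Delta)^\theta$ acting on $x$ versus $y$ through $\varrho(x-y)$, noting that $(-\Delta)^\theta_x[\varrho(\cdot - y)](x) = (-\Delta)^\theta_y[\varrho(x-\cdot)](y)$, and that the part of the nonlocal contribution not absorbed into $\mathcal S_2$ is precisely controlled from below by the defect measures $\eta_{{\ell},1},\eta_{{\ell},2}$, which is where the hypothesis $m_i \ge \eta_{{\ell},i}$ is used to drop a nonnegative remainder and obtain the inequality rather than an identity; (iii) the measure terms $m_i(\partial_\xi\psi)$ contribute the negative quantities that, combined with $m_i \ge \eta_{{\ell},i}$, allow one to keep only $\pm\eta_{{\ell},i}$ on the right-hand side as written inside $\mathcal S_2$ and $\bar{\mathcal S}_2$.

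I expect the main obstacle to be the rigorous justification of testing the kinetic formulation \eqref{eq:skeleton-k-ref} — which is only valid for $\psi \in C_c^2(\mathbb{T}^d \times \R)$ — against the non-smooth, non-compactly-supported function built from $\bar f_2$ (resp. $f_1$). This requires the standard but delicate approximation argument: regularize $\varrho\rho\,\bar f_2$ by convolution in $(y,\zeta)$ and in time, truncate in $\zeta$, apply the identity to the smooth approximants, and pass to the limit using the a priori $L^2_tL^\infty_x$-type bound $\sup_t\|u_{\ell}(t)\|_{L^2}^2 \le C_{\ell}$, the tightness at infinity of the Young/kinetic measures (vanishing-at-infinity property), and the finiteness of the kinetic measures $m_i$; one also needs to handle the time-continuity so that the limits $f_{\ell}^{\pm}(t)$ are correctly identified, invoking the deterministic analogue of \cite[Proposition 8]{Vovelle2010}. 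A secondary technical point is keeping track of all signs when moving between $f_{\ell}$ and $\bar f_{\ell}$ and between the left/right limits, so that the final bookkeeping matches the stated $\mathcal S_1 + \bar{\mathcal S_1} + \mathcal S_2 + \bar{\mathcal S_2} + 2\mathcal S_3$. The flux term $\mathcal S_1$ is routine once the test-function issue is settled, since $F' \in C^1$ and Lipschitz by \ref{A1}, and the forcing term $\mathcal S_3$ is handled pointwise in time (no stochastic integral, since \eqref{eq:skeleton} is a deterministic control equation), which is the one genuine simplification relative to \cite{Zhang-2020}.
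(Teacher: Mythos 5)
Your proposal is correct and follows essentially the same route as the paper, which itself gives no detailed argument for this proposition but simply invokes the doubling-of-variables technique of \cite[Proposition 3.2]{AC_2}, \cite[Proposition 9]{Vovelle2010} and \cite[Proposition 4.2]{Zhang-2020}. Your account of the key points — testing each kinetic formulation against the (mollified) conjugate kinetic function of the other solution, the sign of the measure terms together with $m_i\ge \eta_{\ell,i}$ yielding an inequality, the primitive $\varphi$ arising from combining the two forcing contributions via the Young measures, and the absence of an It\^o correction since the control term is a deterministic drift — matches the intended argument.
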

With the help of Proposition \ref{prop:doubling-variable}, we establish the uniqueness of skeleton equation.
\begin{thm}\label{thm:contraction-principle} Under the assumptions \ref{A1}-\ref{A3}, there exists at most one  kinetic solution to \eqref{eq:skeleton} with initial function $u_0$.
\end{thm}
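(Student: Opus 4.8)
The plan is to feed two kinetic solutions $u_1,u_2$ of \eqref{eq:skeleton} that share the same initial datum $u_0$ into the doubling-of-variables inequality \eqref{eq:doubling-variable} of Proposition \ref{prop:doubling-variable}, and then pass to the diagonal limit in the two mollification parameters. Concretely, I would take $\varrho=\varrho_\delta$ an even approximate identity on $\mathbb{T}^d$ and $\rho=\rho_\lambda$ an even approximate identity on $\R$, with $\lambda=\lambda(\delta)$ coupled to $\delta$ so that $\lambda/\delta\to0$ as $\delta\to0$. On the left-hand side of \eqref{eq:doubling-variable}, the elementary identity $\int_\R\big(\mathbf 1_{a>\xi}\mathbf 1_{b\le\xi}+\mathbf 1_{a\le\xi}\mathbf 1_{b>\xi}\big)\,d\xi=|a-b|$ together with $L^1$-continuity of translations shows that the term tends to $\|u_1(t)-u_2(t)\|_{L^1(\mathbb{T}^d)}$ for a.e.\ $t$ (where $f_i^\pm(t)=f_i(t)$), while the same computation makes the first term on the right-hand side tend to $\|u_{1,0}-u_{2,0}\|_{L^1(\mathbb{T}^d)}=0$. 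It then remains to show that $\mathcal S_1+\bar{\mathcal S}_1$ and $\mathcal S_2+\bar{\mathcal S}_2$ vanish in the limit while $2\mathcal S_3$ contributes only a Gr\"onwall-admissible term.

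For the flux terms I would integrate by parts in $\xi$ and $\zeta$ (using $\partial_\xi f_1=-d\mathcal V_{x,s}^1$ and $\partial_\zeta\bar f_2=d\mathcal V_{y,s}^2$) and exploit the $C^2$-regularity of $F$ from \ref{A1}: replacing $(F'(\xi)-F'(\zeta))\rho_\lambda(\xi-\zeta)$ by a $\xi$-primitive that is $O(\lambda)$ on $\supp\rho_\lambda$, and using $\int_{\mathbb{T}^d}|\nabla\varrho_\delta|=O(\delta^{-1})$, one sees that $\mathcal S_1+\bar{\mathcal S}_1=o(1)$ once $\lambda/\delta\to0$, exactly as in \cite[Proposition~9]{Vovelle2010} and \cite[Proposition~4.2]{Zhang-2020}. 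For the non-local terms $\mathcal S_2+\bar{\mathcal S}_2$ I would split the singular kernel, writing $(-\Delta)^\theta[\varrho_\delta(\cdot-y)](x)$ as its $\{|z|<r\}$ part plus its $\{|z|\ge r\}$ part: on $\{|z|\ge r\}$ the kernel is integrable and the corresponding contributions vanish as $\delta\to0$ and then $r\to0$; on $\{|z|<r\}$ one uses the second-order Taylor expansion of $\varrho_\delta$ together with the explicit form of $\eta_{\ell,1},\eta_{\ell,2}$ and the bounds $m_1\ge\eta_{\ell,1}$, $m_2\ge\eta_{\ell,2}$ to check that the most singular pieces cancel against the $d\eta_{\ell,i}$-terms already present in $\mathcal S_2,\bar{\mathcal S}_2$, leaving an $o(1)$ remainder. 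Making this principal-value singular integral of $\varrho_\delta$ rigorous and absorbing its dangerous part into the $\eta_{\ell,i}$-contributions is the step I expect to be the main obstacle; it is precisely the non-local mechanism of \cite[Proposition~3.2]{AC_2} and \cite{Alibaud 2020}.

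For the control term I would invoke \ref{A3}: by Cauchy--Schwarz in $k$ and the second bound there, $\big|\sum_{k\ge1}({\tt h}_k(x,\xi)-{\tt h}_k(y,\zeta)){\ell}_k(s)\big|\le C\big(|x-y|+|\xi-\zeta|\big)\|{\ell}(s)\|_{\mathcal H}$; since $0\le\varphi\le1$, $\mathcal V_{x,s}^1=\delta_{u_1(x,s)=\xi}$, $\mathcal V_{y,s}^2=\delta_{u_2(y,s)=\zeta}$, and $\varrho_\delta(x-y)$ forces $|x-y|\le\delta$, this gives
\[
|\mathcal S_3|\le C\int_0^t\|{\ell}(s)\|_{\mathcal H}\Big(\delta+\int_{(\mathbb{T}^d)^2}\varrho_\delta(x-y)\,|u_1(x,s)-u_2(y,s)|\,dx\,dy\Big)\,ds,
\]
and as $\delta\to0$ the inner integral tends to $\|u_1(s)-u_2(s)\|_{L^1(\mathbb{T}^d)}$ by $L^1$-continuity of translation. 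Collecting the four estimates in \eqref{eq:doubling-variable} and letting $\delta\to0$ should yield, for a.e.\ $t\in[0,T]$,
\[
\|u_1(t)-u_2(t)\|_{L^1(\mathbb{T}^d)}\le C\int_0^t\|{\ell}(s)\|_{\mathcal H}\,\|u_1(s)-u_2(s)\|_{L^1(\mathbb{T}^d)}\,ds,
\]
and since ${\ell}\in L^2([0,T];\mathcal H)\subset L^1([0,T];\mathcal H)$ the weight $s\mapsto\|{\ell}(s)\|_{\mathcal H}$ is integrable, so Gr\"onwall's inequality forces $u_1=u_2$ a.e.\ on $\mathbb{T}^d\times[0,T]$, which is the assertion.
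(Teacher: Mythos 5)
Your proposal follows essentially the same route as the paper's proof: substitute the mollifiers $\varrho_\delta,\rho_\lambda$ into Proposition \ref{prop:doubling-variable}, bound $\mathcal S_1+\bar{\mathcal S}_1$ by $C\lambda\delta^{-1}$, handle the non-local terms by splitting the kernel at a radius $r$ and invoking the cancellation against the $\eta_{\ell,i}$-measures (the paper obtains $C(r^{2-2\theta}\delta^{-2}+r^{-2\theta}\lambda)$ by citing \cite{AC_2} and \cite{Vovelle2010}, exactly as you defer to them), control $\mathcal S_3$ via \ref{A3} by a Gr\"onwall-admissible term plus $O(\delta+\lambda)$, and conclude with Gr\"onwall using the integrable weight $\|{\ell}(\cdot)\|_{\mathcal H}$ and the identity \eqref{inq:f1barf2}. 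The only difference is bookkeeping: the paper applies Gr\"onwall at fixed $(\delta,\lambda,r)$ and then sends $\lambda\to0$, $r\to0$, $\delta\to0$ in that order, whereas you pass to the diagonal limit first, so you should couple $r$ to $\delta$ (e.g.\ $r=\delta^c$ with $c>1/(1-\theta)$) rather than sending $\delta\to0$ with $r$ fixed, since otherwise the near-field remainder $r^{2-2\theta}\delta^{-2}$ blows up.
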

\begin{proof}Let $\varrho_\delta$  and $\rho_{\lambda}$ be the approximations to identity on $\mathbb{T}^d$ and $\R$ respectively. To be more precise, let $\varrho \in C^\infty(\mathbb{T}^d), \rho \in C_c^\infty(\R)$ be symmetric non-negative functions such that $\int_{\mathbb{T}^d}\varrho = 1, \int_{\R}\rho = 1$ and ${\rm supp}\,\rho \subset (-1,1).$  Define, for $\delta, \lambda>0$,
\begin{align*}
    \varrho_{\delta}(x) = \frac{1}{\delta^d}\varrho(\frac{x}{\delta}), \quad \rho_{\lambda}(\xi) = \frac{1}{\lambda}\rho(\frac{\xi}{\lambda}).
\end{align*}
Substituting $\varrho = \varrho_\delta$ and $\rho = \rho_\lambda$ in Proposition \ref{prop:doubling-variable}, we get, from \eqref{eq:doubling-variable}
\begin{align}\label{eq:-delta-lamada-doubling-variable}
   & \int_{(\mathbb{T}^d)^2}\int_{\R^2} \varrho_\delta(x-y)\rho_\lambda(\xi-\zeta)\Big(f_1^{\pm}(x,t,\xi)\bar{f}_2^{\pm}(y,t,\zeta) + \bar{f}_1^{\pm}(x,t,\xi)f_2^{\pm}(y,t,\zeta)\Big)\,d\xi\,d\zeta\,dx\,dy \\
   &\,\le\, \int_{(\mathbb{T}^d)^2}\int_{\R^2} \varrho_\delta(x-y)\rho_\lambda(\xi-\zeta)\Big(f_{1,0}(x,\xi)\bar{f}_{2,0}(y,\zeta) + \bar{f}_{1,0}(x,\xi)f_{2,0}(y,\zeta)\Big)\,d\xi\,d\zeta\,dx\,dy \notag \\ 
   & \hspace{2cm}+ \mathcal{S}_{1,\lambda, \delta} + \Bar{\mathcal{S}}_{1,\lambda, \delta} + \mathcal{S}_{2, \lambda, \delta} + \Bar{\mathcal{S}}_{2,\lambda, \delta} + 2\mathcal{S}_{3, \lambda, \delta}, \notag
   \end{align}
where $ \mathcal{S}_{i,\lambda, \delta}$, for $1 \le i \le 3$ and $\Bar{\mathcal{S}}_{j,\lambda, \delta}$, for $j =1,2$  are corresponding to  $\mathcal{S}_i$ and $\Bar{\mathcal{S}}_{j}$ as in \eqref{eq:doubling-variable}.
Following the calculations as done in \cite[Theorem 11]{Vovelle2010} and \cite[Theorem 3.3]{AC_2}, we estimate $\mathcal{S}_{i,\lambda, \delta }$ and  $\Bar{\mathcal{S}}_{i,\lambda, \delta}$  for $i = 1,2$ as 
\begin{align}\label{eq:bound-s-12}
    |\mathcal{S}_{1,\lambda, \delta }| + |\Bar{\mathcal{S}}_{1,\lambda, \delta}| \le C\delta^{-1}\lambda, \quad \text{and} \quad |\mathcal{S}_{2,\lambda, \delta}| + |\Bar{\mathcal{S}}_{2,\lambda, \delta}| \le C(r^{2-2\theta}\delta^{-2} + r^{-2\theta}\lambda)
\end{align}
for fixed but arbitrary $r>0$. We follow a similar lines of argument as in \cite[Theorem 4.3]{Zhang-2020} to bound $\mathcal{S}_{3, \lambda, \delta}$. We have 
\begin{align}\label{eq:bound-s-3}
  \mathcal{S}_{3, \lambda, \delta} \le & C\,\int_0^t|{\ell}(s)|_{\mathcal{H}}\int_{(\mathbb{T}^d)^2}\int_{\R^2}\varrho_\delta(x-y)\rho_\lambda(\xi-\zeta)\Big\{f_1^{\pm}(x,t,\xi)\bar{f}_2^{\pm}(y,t,\zeta) \\
  & \hspace{2cm}+ \bar{f}_1^{\pm}(x,t,\xi)f_2^{\pm}(y,t,\zeta)\Big\}\,ds\,d\xi\,d\zeta\,dx\,dy  + C(\delta + 2\lambda)\,.\notag
\end{align}
Combining \eqref{eq:bound-s-12} and \eqref{eq:bound-s-3} in \eqref{eq:-delta-lamada-doubling-variable} and applying Gronwall's lemma, we have
\begin{align}\label{eq:application-grownwall}
     & \int_{(\mathbb{T}^d)^2}\int_{\R^2} \varrho_\delta(x-y)\rho_\lambda(\xi-\zeta)\Big(f_1^{\pm}(x,t,\xi)\bar{f}_2^{\pm}(y,t,\zeta) + \bar{f}_1^{\pm}(x,t,\xi)f_2^{\pm}(y,t,\zeta)\Big)\,d\xi\,d\zeta\,dx\,dy \\
&\le \, \int_{(\mathbb{T}^d)}\int_{\R}\Big(f_{1,0}(x,\xi)\bar{f}_{2,0}(x,\xi) + \bar{f}_{1,0}(x,\xi)f_{2,0}(x,\xi)\Big)\,d\xi\,dx  + \Upsilon_0(\delta, \lambda) \notag \\
 &+ C\big(\delta^{-1}\lambda + 
r^{2-2\theta}\delta^{-2} + r^{-2\theta}\lambda + \delta + \lambda\big)  \notag \\&+\, C\,\int_0^t|{\ell}(s)|_{\mathcal{H}}\int_{(\mathbb{T}^d)^2}\int_{\R^2}\varrho_\delta(x-y)\rho_\lambda(\xi-\zeta)\Big(f_1^{\pm}(x,t,\xi)\bar{f}_2^{\pm}(y,t,\zeta)\notag  \\ &+ \bar{f}_1^{\pm}(x,t,\xi)f_2^{\pm}(y,t,\zeta)\Big)\,d\xi\,d\zeta\,dx\,dy \notag \\ &\le \Big[\int_{(\mathbb{T}^d)}\int_{\R}\Big(f_{1,0}(x,\xi)\bar{f}_{2,0}(x,\xi) + \bar{f}_{1,0}(x,\xi)f_{2,0}(x,\xi)\Big)\,d\xi\,dx  + \Upsilon_0(\delta, \lambda) \notag \\
 &+ C\big(\delta^{-1}\lambda + 
r^{2-2\theta}\delta^{-2} + r^{-2\theta}\lambda + \delta + \lambda\big)\Big]\times \exp\Big\{C\,\int_0^t |{\ell}(s)|_{\mathcal{H}}\Big\},\notag
\end{align}
where 
\begin{align}\label{eq:Upsilon}
&\Upsilon_t(\delta, \lambda):= \int_{(\mathbb{T}^d)^2}\int_{\R^2}\Big(f_1^{\pm}(x,t,\xi)\bar{f}_2^{\pm}(y,t,\zeta)+ \bar{f}_1^{\pm}(x,t,\xi)f_2^{\pm}(y,t,\zeta)\Big)\varrho_\delta(x-y)\\ & \times\rho_\lambda(\xi-\zeta)\,d\xi\,d\zeta\,dx\,dy  - \int_{(\mathbb{T}^d)}\int_{\R}\Big(f_1^{\pm}(x,t,\xi)\bar{f}_2^{\pm}(x,t,\xi) + \bar{f}_1^{\pm}(x,t,\xi)f_2^{\pm}(x,t,\xi)\Big)\,d\xi\,dx\,,\notag 
\end{align}
for $t \in [0,T]$. One can easily check that (cf.~\cite[Equation 3.16]{Vovelle-2018})
$$ \underset{\delta, \lambda \rightarrow 0}{lim}\, \Upsilon_t(\delta, \lambda) = 0, \quad t\in [0,T].$$
Since $h \in S_N$, we have, from \eqref{eq:application-grownwall} 
\begin{align}\label{eq:contraction-1}
     &\int_{(\mathbb{T}^d)}\int_{\R}\Big(f_1^{\pm}(x,t,\xi)\bar{f}_2^{\pm}(x,t,\xi) + \bar{f}_1^{\pm}(x,t,\xi)f_2^{\pm}(x,t,\xi)\Big)\,d\xi\,dx  \\ &
\le C_{T,{\ell}} \Bigg\{ \int_{(\mathbb{T}^d)}\int_{\R}\Big(f_{1,0}(x,\xi)\bar{f}_{2,0}(x,\xi) + \bar{f}_{1,0}(x,\xi)f_{2,0}(x,\xi)\Big)\,d\xi\,dx  + \Upsilon_0(\delta, \lambda) \notag \\
 &+ C\big(\delta^{-1}\lambda + 
r^{2-2\theta}\delta^{-2} + r^{-2\theta}\lambda + \delta + \lambda\big) \Bigg\} - \Upsilon_t(\delta, \lambda)\,.\notag
\end{align}
First letting $\lambda \rightarrow 0$, then sending $r \rightarrow 0$ and $\delta \rightarrow 0$ in \eqref{eq:contraction-1} yields
\begin{align}\label{eq:contraction-2}
     &\int_{(\mathbb{T}^d)}\int_{\R}\Big(f_1^{\pm}(x,t,\xi)\bar{f}_2^{\pm}(x,t,\xi) + \bar{f}_1^{\pm}(x,t,\xi)f_2^{\pm}(x,t,\xi)\Big)\,d\xi\,dx  \\ &
     \le C_{T,h}\Big[\int_{(\mathbb{T}^d)}\int_{\R}\Big(f_{1,0}(x,\xi)\bar{f}_{2,0}(x,\xi) + \bar{f}_{1,0}(x,\xi)f_{2,0}(x,\xi)\Big)\,d\xi\,dx \Big].\notag
\end{align}
Since 
\begin{align}
  \int_{\R}f_1\Bar{f}_2\,d\xi = (u_1 -u_2)^+, \quad  \int_{\R}\Bar{f}_1f_2\,d\xi = (u_1 -u_2)^-, \label{inq:f1barf2} 
\end{align}
we conclude from \eqref{eq:contraction-2} that $
    \|u_1(t) - u_2(t)\|_{L^1(\mathbb{T}^d)} = 0$ for all $ t\in [0,T]$. 
This establishes the uniqueness of kinetic solution of \eqref{eq:skeleton}.
\end{proof}
\subsection{\bf Viscous problem of \eqref{eq:skeleton} and its well-posedness} In order to establish the existence of kinetic solution for skeleton equation, we first study associated viscous problem and then send viscous parameter to zero. For given $\Bar{\eps}>0$, viscous problem of \eqref{eq:skeleton} reads as: 
\begin{equation}\label{eq:regularized-skeleton}
\begin{cases}
     \displaystyle du_{{\ell}, \Bar{\eps}}+ \text{div}F(u_{{\ell}, \Bar{\eps}}(t,x))\,dt + (-\Delta)^\theta[{\tt \Phi}(u_{{\ell}, \Bar{\eps}}(t,x))]\,dt \\ = \Bar{\eps}\Delta u_{{\ell}, \Bar{\eps}}\,dt + {\tt h}(u_{{\ell}, \Bar{\eps}}(t,x)){\ell}(t)\,dt, \quad
    u_{{\ell},\Bar{\eps}}(0,x) = u_0^{\Bar{\eps}}(x),~~ (x,t)\in \mathbb{T}^d \times[0,T],
\end{cases}
\end{equation}
where initial function $u_0^{\Bar{\eps}} \in H^1(\mathbb{T}^d)$ such that $u_0^{\Bar{\eps}} \rightarrow u_0$ in $L^2(\mathbb{T}^d)$.
\subsubsection{\bf Singular perturbation problem of \eqref{eq:regularized-skeleton} and its well-posedness}
In order to establish well-posedness result for \eqref{eq:regularized-skeleton}, we first denote $u_{\gamma, \ell,\Bar{\eps}} := u_{\gamma,\Bar{\eps}}$ and consider the following singular perturbation problem of \eqref{eq:regularized-skeleton}.
\begin{align}\label{eq:singular perturbation-skeleton}
   & du_{\gamma,\Bar{\eps}}+ \text{div}F(u_{\gamma,\Bar{\eps}}(t,x))\,dt + (-\Delta)^\theta[{\tt \Phi}(u_{\gamma,\Bar{\eps}}(t,\cdot))](x)\,dt + \gamma\Delta^2u_{\gamma,\Bar{\eps}}\,dt \\ &\hspace{1cm}=\, \Bar{\eps}\Delta u_{\gamma,\Bar{\eps}}\,dt + {\tt h}(u_{\gamma,\Bar{\eps}}(t,x)){\ell}(t)\,dt, \quad u_{\gamma,\Bar{\eps}}(0) = u_0^{\Bar{\eps}} \in H^1(\mathbb{T}^d)\,. \notag
\end{align}
 Then, we derive suitable a-priori estimates and use compactness argument to prove the existence of viscous solution. To do so,  we adhere to the work of Pr\'evot and and Rockner \cite[Chapter 4.1]{Rockner}, in order to establish the  well-posedness of strong solution of \eqref{eq:singular perturbation-skeleton}. In this regard, we consider the operator 
\begin{align*}
  &A(t,u_{\gamma,\Bar{\eps}}): = \Bar{\eps}\Delta u_{\gamma,\Bar{\eps}} - \gamma\Delta^2u_{\gamma,\Bar{\eps}} -\text{div}F(u_{\gamma,\Bar{\eps}}(t,x)) \\&\hspace{6cm}- (-\Delta)^\theta[{\tt \Phi}(u_{\gamma,\Bar{\eps}}(t,\cdot))](x) + {\tt h}(u_{\gamma,\Bar{\eps}}(t,x)){\ell}(t),  
\end{align*} 
which could be understood in the following weak sense
\begin{align*}
& \langle A(t,u_{\gamma,\Bar{\eps}}), v \rangle := - \int_{\mathbb{T}^d}\Big[\Bar{\eps}\nabla u_{\gamma,\Bar{\eps}}\nabla v + \gamma\Delta u_{\gamma,\Bar{\eps}}\Delta v - F(u_{\gamma,\Bar{\eps}})\nabla v\notag \\
& \hspace{1cm} +(-\Delta)^\frac{\theta}{2}[{\tt \Phi}(u_{\gamma,\Bar{\eps}})](-\Delta)^\frac{\theta}{2}[v] + {\tt h}(u_{\gamma,\Bar{\eps}}(t,x)){\ell}(t)v\big]\,dx, \quad \forall~ u_{\gamma,\Bar{\eps}},v \in H^2(\mathbb{T}^d)\,.
\end{align*}
\begin{lem}
For any $\gamma \in (0,1)$, there exists a unique strong solution $u_{\gamma, \Bar{\eps}}$ to the regularized problem \eqref{eq:singular perturbation-skeleton} , such that $u_{\gamma, \Bar{\eps}} \in L^2([0,T];H^2(\mathbb{T}^d))$. 
\end{lem}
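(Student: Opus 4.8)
The plan is to observe that \eqref{eq:singular perturbation-skeleton} is in fact a \emph{deterministic} quasilinear evolution equation --- the forcing ${\tt h}(u_{\gamma,\Bar{\eps}})\ell(t)$ enters against $\,dt$, not $\,dW$ --- and to solve it in the variational (Gelfand triple) framework of \cite[Chapter 4.1]{Rockner}. I would work with the triple
\[
V := H^2(\mathbb{T}^d) \hookrightarrow H := L^2(\mathbb{T}^d) \hookrightarrow V^* := H^{-2}(\mathbb{T}^d),
\]
and reduce everything to checking that the operator $A(t,\cdot):V\to V^*$ introduced above is hemicontinuous, weakly monotone, coercive and of linear growth, with the constants in the last three conditions allowed to be functions of $t$ lying in $L^1([0,T])$; note that $\|\ell(\cdot)\|_{\mathcal{H}}\in L^2([0,T])$ since $\ell\in L^2([0,T];\mathcal{H})$, and that $u_0^{\Bar{\eps}}\in H^1(\mathbb{T}^d)\subset H$ is an admissible initial datum.

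Hemicontinuity of $\lambda\mapsto\langle A(t,u+\lambda v),w\rangle$ I expect to be immediate from the continuity of $F$, ${\tt \Phi}$, the bound \ref{A3}, and dominated convergence applied termwise. The heart of the argument is the monotonicity/coercivity estimate, where the biharmonic term is the only top-order contributor: it produces the dissipation $-\gamma\|\Delta w\|_{L^2}^2$, and on the torus $\|\Delta w\|_{L^2}^2\ge\tfrac12\|w\|_{H^2}^2-\|w\|_{L^2}^2$, while $\Bar{\eps}\Delta$ only adds $-\Bar{\eps}\|\nabla w\|_{L^2}^2\le0$. Every remaining piece of $\langle A(t,u)-A(t,v),u-v\rangle$ and of $\langle A(t,u),u\rangle$ is of order strictly below $2$ in the tested function and should be absorbed: using \ref{A1} the flux gives $|\int_{\mathbb{T}^d}(F(u)-F(v))\cdot\nabla(u-v)\,dx|\le L\|u-v\|_{L^2}\|\nabla(u-v)\|_{L^2}$; the fractional term gives $|\langle(-\Delta)^{\theta/2}[{\tt \Phi}(u)-{\tt \Phi}(v)],(-\Delta)^{\theta/2}[u-v]\rangle|\le L\|u-v\|_{H^\theta}^2$ because a Lipschitz ${\tt \Phi}$ contracts the Gagliardo seminorm; and the second bound in \ref{A3} gives $\|{\tt h}(u)-{\tt h}(v)\|_{\mathcal{L}_2(\mathcal{H},L^2(\mathbb{T}^d))}\le C\|u-v\|_{L^2}$, hence $|\langle({\tt h}(u)-{\tt h}(v))\ell(t),u-v\rangle|\le C\|\ell(t)\|_{\mathcal{H}}\|u-v\|_{L^2}^2$. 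Interpolating $\|w\|_{H^s}^2\le\|w\|_{H^2}^{s}\|w\|_{L^2}^{2-s}$ for $s\in\{1,\theta\}$ and applying Young's inequality absorbs each such contribution into $\tfrac{\gamma}{4}\|w\|_{H^2}^2$ up to a term $C(1+\|\ell(t)\|_{\mathcal{H}})\|w\|_{L^2}^2$, which yields the weak monotonicity $\langle A(t,u)-A(t,v),u-v\rangle\le C(1+\|\ell(t)\|_{\mathcal{H}})\|u-v\|_{L^2}^2$ and, together with the linear growth of $F$ from \ref{A1} and $G^2(x,u)\le C(1+|u|^2)$ from \ref{A3}, the coercivity $2\langle A(t,u),u\rangle+\gamma\|u\|_{H^2}^2\le C(1+\|\ell(t)\|_{\mathcal{H}})(1+\|u\|_{L^2}^2)$. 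The growth bound I would get by estimating termwise in $H^{-2}$: $\|\Delta^2u\|_{H^{-2}}\le\|u\|_{H^2}$, $\|\text{div}\,F(u)\|_{H^{-2}}\le\|F(u)\|_{L^2}\le C(1+\|u\|_{L^2})$, $\|(-\Delta)^\theta[{\tt \Phi}(u)]\|_{H^{-2}}\le\|{\tt \Phi}(u)\|_{L^2}\le C(1+\|u\|_{L^2})$ (valid since $2\theta<2$), and $\|{\tt h}(u)\ell(t)\|_{L^2}\le C\|\ell(t)\|_{\mathcal{H}}(1+\|u\|_{L^2})$, whence $\|A(t,u)\|_{H^{-2}}^2\le C(1+\|\ell(t)\|_{\mathcal{H}}^2)(1+\|u\|_{H^2}^2)$.

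Once these four properties are in place I would invoke the variational existence-uniqueness theorem \cite[Theorem 4.2.4]{Rockner} --- in the routine refinement that permits the integrable-in-time coefficients $\|\ell(t)\|_{\mathcal{H}}$ and $\|\ell(t)\|_{\mathcal{H}}^2$ in place of constants, obtained by applying Gronwall's inequality with an $L^1$ kernel --- to produce a unique $u_{\gamma,\Bar{\eps}}\in L^2([0,T];H^2(\mathbb{T}^d))\cap C([0,T];L^2(\mathbb{T}^d))$ with $\partial_t u_{\gamma,\Bar{\eps}}\in L^2([0,T];H^{-2}(\mathbb{T}^d))$ solving \eqref{eq:singular perturbation-skeleton}; in particular $u_{\gamma,\Bar{\eps}}\in L^2([0,T];H^2(\mathbb{T}^d))$, which is the assertion. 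Uniqueness can alternatively be read off directly: testing the difference of two solutions against itself and using the weak monotonicity bound gives $\frac{d}{dt}\|u^{(1)}-u^{(2)}\|_{L^2}^2\le C(1+\|\ell(t)\|_{\mathcal{H}})\|u^{(1)}-u^{(2)}\|_{L^2}^2$, so Gronwall together with $u^{(1)}(0)=u^{(2)}(0)=u_0^{\Bar{\eps}}$ forces $u^{(1)}=u^{(2)}$.

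The step I expect to be the main obstacle is the control of the nonlinear fractional term $(-\Delta)^\theta[{\tt \Phi}(u)]$ within the variational structure: here, in contrast to the kinetic formulation, no sign or monotonicity of this term against $u$ is available, so it must be dominated purely by the $\gamma$-biharmonic coercivity, which forces the use of the composition estimate $[{\tt \Phi}(w)]_{\dot H^\theta}\le L\,[w]_{\dot H^\theta}$ for Lipschitz ${\tt \Phi}$ together with the $H^2$--$L^2$ interpolation inequality. A secondary nuisance is the bookkeeping of the merely $L^1$-in-time coefficients generated by $\ell$, which obliges one to use the integrable-coefficient version of the classical variational theorem rather than its textbook statement.
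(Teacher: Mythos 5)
Your proposal is correct and follows essentially the same route as the paper: both cast \eqref{eq:singular perturbation-skeleton} as a deterministic evolution equation in the Gelfand triple $H^2(\mathbb{T}^d)\hookrightarrow L^2(\mathbb{T}^d)\hookrightarrow H^{-2}(\mathbb{T}^d)$ and verify hemicontinuity, weak monotonicity, coercivity and boundedness for the operator $A(t,\cdot)$ before invoking \cite[Theorem 4.2.4]{Rockner}. The only cosmetic differences are that the paper uses the nondecreasing property of ${\tt \Phi}$ (via \eqref{esti:nondecreasing-phi}) to give the fractional term a favourable sign in the coercivity estimate rather than absorbing it by interpolation, and your explicit interpolation--Young absorption in the monotonicity step is in fact cleaner than the paper's corresponding bound.
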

\begin{proof} Observe that, assuming the noise coefficient $B$ to be $0$ in \cite[Eq. 4.2.1]{Rockner}
and \cite[Theorem 4.2.4]{Rockner} in hand, it is sufficient to validate the conditions $(H1)$-$(H4)$ of \cite[page. 66]{Rockner} for operator $A(t,x)$, to obtain the well-possedness of strong solution of \eqref{eq:singular perturbation-skeleton}. The conditions are namely,  the hemicontinuity, the weak monotonicity, the coercivity and the boundedness.
\vspace{0.1cm}

\noindent To verify the  hemicontinuity property $(H1)$, for $u_{\gamma,\Bar{\eps}}, v, w \in H^2(\mathbb{T}^d)$, we see that the map
\begin{align*}
    &\lambda \mapsto \int_{\mathbb{T}^d}\Big[\Bar{\eps}\nabla (u_{\gamma,\Bar{\eps}} + \lambda w )\nabla v + \gamma\Delta (u_{\gamma,\Bar{\eps}} + \lambda w)\Delta v - F(u_{\gamma,\Bar{\eps}} + \lambda w)\nabla v \notag \\&\hspace{4cm}+(-\Delta)^\frac{\theta}{2}[{\tt \Phi}(u_{\gamma,\Bar{\eps}} + \lambda w)](-\Delta)^\frac{\theta}{2}[v]+ {\tt h}(u_{\gamma,\Bar{\eps}} + \lambda w){\ell}(t)v\Big]\,dx
\end{align*}
is continuous, since $\nabla,~\Delta,~(\Delta)^{\frac{\theta}{2}}$ are continuous linear operators, $ F,~{\tt \Phi}$ are  Lipschitz continuous functions and ${\tt h}_k \in C(\mathbb{T}^d \times \R).$
\vspace{0.1cm}

\noindent In order to verify the weak monotoncity condition $(H2)$, we observe that for any $u_{\gamma,\Bar{\eps}}, v \in H^2(\mathbb{T}^d)$, and a.e. $t\in [0,T]$
\begin{align*}
   & 2\langle A(t, u_{\gamma,\Bar{\eps}})- A(t, v), u_{\gamma,\Bar{\eps}}-v \rangle  \notag \\ & = -2\int_{\mathbb{T}^d} \Bigg\{\Bar{\eps}|\nabla( u_{\gamma,\Bar{\eps}}- v)|^2 + \gamma|\Delta (u_{\gamma,\Bar{\eps}}- v)|^2 - [F(u_{\gamma,\Bar{\eps}}) - F(v)]\nabla(u_{\gamma,\Bar{\eps}} -v) \notag \\& +(-\Delta)^\frac{\theta}{2}[{\tt \Phi}(u_{\gamma,\Bar{\eps}}) - {\tt \Phi}(v)](-\Delta)^\frac{\theta}{2}[(u_{\gamma,\Bar{\eps}} - v)]+ [\big({\tt h}(u_{\gamma,\Bar{\eps}}) -{\tt h}(v)\big){\ell}(t)][u_{\gamma,\Bar{\eps}}-v]\Bigg\}\,dx\notag \\
   \le &\,2\Big[-\Bar{\eps}\|\nabla( u_{\gamma,\Bar{\eps}}- v)\|_{L^2(\mathbb{T}^d)}^2 - \gamma\|\Delta u_{\gamma,\Bar{\eps}}- v\|_{L^2(\mathbb{T}^d)}^2   + \|F^\prime\|_{\infty}\int_{\mathbb{T}^d}|u_{\gamma,\Bar{\eps}} - v|\,|\nabla(u_{\gamma,\Bar{\eps}} -v)|\,dx \notag \\&  + \|{\tt \Phi}'\|_{L^\infty}\int_{\mathbb{T}^d} |u_{\gamma,\Bar{\eps}} - v|\big|(-\Delta)^\theta[(u_{\gamma,\Bar{\eps}} - v)]\big|\,dx \notag \\ &\hspace{2cm}+ |{\ell}(t)|_{\mathcal{H}} \|{\tt h}(u_{\gamma,\Bar{\eps}}) -{\tt h}(v)\|_{\mathcal{L}_2(\mathcal{H}, L^2(\mathbb{T}^d))}\|u_{\gamma,\Bar{\eps}} -v\|_{L^2(\mathbb{T}^d)}\Big]\notag \\
   \le &\, C\Big[\|u_{\gamma,\Bar{\eps}} - v\|_{L^2(\mathbb{T}^d)}\big(\|\nabla(u_{\gamma,\Bar{\eps}} -v)\|_{L^2(\mathbb{T}^d)} + \|u_{\gamma,\Bar{\eps}} -v\|_{H^2(\mathbb{T}^d)}\big) + \|u_{\gamma,\Bar{\eps}} -v\|_{L^2(\mathbb{T}^d)}^2\Big]\notag \\
   \le & \,\big(C(\Bar{\eps}, \gamma)+ C\big)\|u_{\gamma,\Bar{\eps}} -v\|_{L^2(\mathbb{T}^d)}^2,
\end{align*}
where we have used Cauchy-Schwartz inequality, the assumptions \ref{A1}-\ref{A3}, the condition ${\ell}\in S_N$ and  the fact that $$\|(-\Delta)^\theta[u_{\gamma,\Bar{\eps}} -v]\|_{L^2(\mathbb{T}^d)} \le C\|u_{\gamma,\Bar{\eps}} -v\|_{H^2(\mathbb{T}^d)} \le C(\Bar{\eps}, \gamma)\|u_{\gamma,\Bar{\eps}} -v\|_{L^2(\mathbb{T}^d)}.$$
\vspace{0.1cm}

\noindent For the verification of coercivity $(H3)$, first we observe that for a.e. $t\in [0,T]$
\begin{align}\label{eq:bound-sigma-h}
&\int_{\mathbb{T}^d}{\tt h}(u_{\gamma,\Bar{\eps}}){\ell}(t)u_{\gamma,\Bar{\eps}}\,dx = \int_{\mathbb{T}^d}\Big(\sum_{k \ge 1}{\tt h}_k(u_{\gamma,\Bar{\eps}}){\ell}_k(t)e_k\Big)u_{\gamma,\Bar{\eps}}\,dx \\ &
\le  |{\ell}(t)|_{\mathcal{H}}\int_{\mathbb{T}^d}\Big(\sum_{k \ge 1}|{\tt h}_k(u_{\gamma,\Bar{\eps}})|^2\Big)^\frac{1}{2}|u_{\gamma,\Bar{\eps}}|\,dx \le C\int_{\mathbb{T}^d}|u_{\gamma,\Bar{\eps}}|^2\,dx \le C\|u_{\gamma,\Bar{\eps}}\|_{L^2(\mathbb{T}^d)}^2.\notag
\end{align}
Using Lipschitz continuity of $F, {\tt \Phi}$ and \eqref{eq:bound-sigma-h}, we obtain
\begin{align*}
    &2\langle A(t, u_{\gamma,\Bar{\eps}}),  u_{\gamma,\Bar{\eps}}\rangle \notag \\ =
    & -2\int_{\mathbb{T}^d}\Bigg\{ \Bar{\eps}|\nabla u_{\gamma,\Bar{\eps}}|^2 + \gamma|\Delta u_{\gamma,\Bar{\eps}}|^2 - F(u_{\gamma,\Bar{\eps}})\nabla u_{\gamma,\Bar{\eps}} +(-\Delta)^\frac{\theta}{2}[{\tt \Phi}(u_{\gamma,\Bar{\eps}})](-\Delta)^\frac{\theta}{2}[u_{\gamma,\Bar{\eps}}] \notag \\
     &+ {\tt h}(u_{\gamma,\Bar{\eps}}){\ell}(t)u_{\gamma,\Bar{\eps}}\Bigg\}\,dx \le -2\,\text{min}(\Bar{\eps}, \gamma)\|u_{\gamma,\Bar{\eps}}\|_{H^2(\mathbb{T}^d)}^2 + \big(C + 2 \,\text{min}(\Bar{\eps}, \gamma)\big)\|u_{\gamma,\Bar{\eps}}\|_{L^2(\mathbb{T}^d)}^2\,.
\end{align*}
We proceed as follows to verify the boundedness property $(H4)$.
\begin{align}
   &\Big\|\Bar{\eps}\Delta u_{\gamma,\Bar{\eps}} - \gamma\Delta^2u_{\gamma,\Bar{\eps}} -\text{div}F(u_{\gamma,\Bar{\eps}}) - (-\Delta)^\theta[{\tt \Phi}(u_{\gamma,\Bar{\eps}})] + {\tt h}(u_{\gamma,\Bar{\eps}}){\ell}(t)\Big\|_{H^{-2}(\mathbb{T}^d)} \notag \\
   & = \underset{\|v\|_{H^2(\mathbb{T}^d)} \neq 0}{\sup}\frac{\Big|\Big\langle\Bar{\eps}\Delta u_{\gamma,\Bar{\eps}} - \gamma\Delta^2u_{\gamma,\Bar{\eps}} -\text{div}F(u_{\gamma,\Bar{\eps}}) - (-\Delta)^\theta[{\tt \Phi}(u_{\gamma,\Bar{\eps}})] + {\tt h}(u_{\gamma,\Bar{\eps}}){\ell}(t), v \Big\rangle\Big|}{\|v\|_{H^2(\mathbb{T}^d)}}\notag \\
   & \le \underset{\|v\|_{H^2(\mathbb{T}^d)} \neq 0}{\sup}\Bigg(\Bar{\eps}\frac{\|\nabla u_{\gamma,\Bar{\eps}}\| \|\nabla v\|}{\|v\|_{H^2(\mathbb{T}^d)}} + \gamma\frac{\|\Delta u_{\gamma,\Bar{\eps}}\| \|\Delta v\|}{\|v\|_{H^2(\mathbb{T}^d)}} + \|F'\|_{\infty}\frac{\|u_{\gamma,\Bar{\eps}}\| \|\nabla v\|}{\|v\|_{H^2(\mathbb{T}^d)}}\notag \\ & \hspace{4cm} + \|{\tt \Phi}'\|_{\infty}\frac{\|u_{\gamma,\Bar{\eps}}\|_{H^1(\mathbb{T}^d)}\|v\|_{H^1(\mathbb{T}^d)}}{\|v\|_{H^2(\mathbb{T}^d)}} + C|h(t)|_{\mathcal{H}}\frac{\|u_{\gamma,\Bar{\eps}}\|\|v\|}{\|v\|_{H^2(\mathbb{T}^d)}}\Bigg),\notag
\end{align}
thus, we have
$$\Big\|\Bar{\eps}\Delta u_{\gamma,\Bar{\eps}} - \gamma\Delta^2u_{\gamma,\Bar{\eps}} -\text{div}F(u_{\gamma,\Bar{\eps}}) - (-\Delta)^\theta[{\tt \Phi}(u_{\gamma,\Bar{\eps}})] + {\tt h}(u_{\gamma,\Bar{\eps}}){\ell}(t)\Big\|_{H^{-2}(\mathbb{T}^d)} \le C \|v\|_{H^2(\mathbb{T}^d)}.$$
Hence, in view of \cite[Theorem 4.2.4]{Rockner}, there exists a unique solution to \eqref{eq:singular perturbation-skeleton}.
\end{proof}
\subsubsection{\bf A-priori estimate:} 
Applying chain rule to the function $g(v) = \|v\|_{L^2(\mathbb{T}^d)}^2$, from \eqref{eq:singular perturbation-skeleton}, we have
\begin{align}\label{eq:eps-gamma}
&\|u_{\gamma,\Bar{\eps}}(t)\|_{L^2(\mathbb{T}^d)}^2 = \, \|u_0^{\Bar{\eps}}\|_{L^2(\mathbb{T}^d)}^2 + 2\int_0^t\langle \nabla u_{\gamma,\Bar{\eps}}, F( u_{\gamma,\Bar{\eps}}) \rangle\,ds \\ & - 2\int_0^t\langle u_{\gamma,\Bar{\eps}}, (-\Delta)^{\theta}{\tt \Phi}(u_{\gamma,\Bar{\eps}}) \rangle\,ds -\,2\,\Bar{\eps}\int_0^t\int_{\mathbb{T}^d}|\nabla u_{\gamma,\Bar{\eps}}|^2\,dx\,ds \notag \\
& -\,2\,\gamma\int_0^t\int_{\mathbb{T}^d}|\Delta u_{\gamma,\Bar{\eps}}|^2\,dx\,ds  + 2\int_0^t\|u_{\gamma,\Bar{\eps}}\|_{L^2(\mathbb{T}^d)}\|{\tt h}(u_{\gamma,\Bar{\eps}})\|_{\mathcal{L}_2(\mathcal{H};L^2(\mathbb{T}^d))}|{\ell}(s)|_{\mathcal{H}}\,ds\,.\notag
\end{align}
Observe that $\int_{\mathbb{T}^d}\nabla u_{\gamma,\Bar{\eps}}\cdot F( u_{\gamma,\Bar{\eps}})\,dx = 0$. Since ${\tt \Phi}$ is non-decreasing, we have
\begin{align} \label{esti:nondecreasing-phi}
    &\frac{1}{\|{\tt \Phi}'\|_{L^\infty}}\big|{\tt \Phi}(\xi_1)-{\tt \Phi}(\xi_2)\big|^2 \le \big({\tt \Phi}(\xi_1)-{\tt \Phi}(\xi_2)\big)\big(\xi_1-\xi_2\big) \quad \forall~ \xi_1, \xi_2 \in \R. 
\end{align}
Using \eqref{esti:nondecreasing-phi} along with the assumption \ref{A3}, \eqref{eq:eps-gamma} yields
\begin{align}
&\|u_{\gamma,\Bar{\eps}}(t)\|_{L^2(\mathbb{T}^d)}^2  +\,2\,\Bar{\eps}\int_0^t\int_{\mathbb{T}^d}|\nabla u_{\gamma,\Bar{\eps}}|^2\,dx\,ds + \,2\,\gamma\int_0^t\int_{\mathbb{T}^d}|\Delta u_{\gamma,\Bar{\eps}}|^2\,dx\,ds \\ & + \frac{1}{\|{\tt \Phi}'\|_{\infty}}\int_0^t \big[{\tt \Phi}(u_{\gamma, \Bar{\eps}})\big]_{H^\theta}^2\,ds \notag \le \|u_0^\eps\|_{L^2(\mathbb{T}^d)} + C\int_0^t\|u_{\gamma,\Bar{\eps}}\|_{L^2(\mathbb{T}^d)}^2|{\ell}(s)|_{\mathcal{H}}\,ds, \notag 
\end{align}
where $[\cdot]_{H^\theta}$ denotes the Gagliardo (semi) norm of the fractional Sobolev space $W^{\theta,2}$.
An application of Gronwall's Lemma gives, for all $t\in [0,T]$,
\begin{align}\label{eq:energy-estimation-eps-gamma}
&\|u_{\gamma,\Bar{\eps}}(t)\|_{L^2(\mathbb{T}^d)}^2  +\,2\,\Bar{\eps}\int_0^t\int_{\mathbb{T}^d}|\nabla u_{\gamma,\Bar{\eps}}|^2\,dx\,ds + \,2\,\gamma\int_0^t\int_{\mathbb{T}^d}|\Delta u_{\gamma,\Bar{\eps}}|^2\,dx\,ds\\ &+ \frac{1}{\|{\tt \Phi}'\|_{\infty}}\int_0^t \big[{\tt \Phi}(u_{\gamma, \Bar{\eps}})\big]_{H^\theta}^2\,ds \le C\|u_0^\eps\|_{L^2(\mathbb{T}^d)}\exp\Big[{C\int_0^t|{\ell}(s)|_{\mathcal{H}}\,ds}\Big] \le C \,. \notag
\end{align}
 A-priori estimates in \eqref{eq:energy-estimation-eps-gamma} is not sufficient to pass to the limit in the weak formulation of \eqref{eq:singular perturbation-skeleton}. It is required to use suitable compactness argument for $\{u_{\gamma,\Bar{\eps}}\}_{\gamma>0}$ for passing to the limit as $\gamma\goto 0$ and hence to show the existence of viscous solution for \eqref{eq:regularized-skeleton}. In this regard, let $\mathbb{X}$ be a separable Banach space endowed with norm $\|\cdot\|_{\mathbb{X}}$ and for given $\beta \in (0,1)$ and $p > 1$, $W^{\beta,p}([0,T]; \mathbb{X})$ be the Sobolev space of all functions $u \in L^p([0,T];\mathbb{X})$ such that
\begin{align*}
    \int_0^T\int_0^T\frac{\|u(t) - u(s)\|_{\mathbb{X}}^p}{|t-s|^{1+\beta p}}\,dt\,ds < \infty,
\end{align*}
equipped with the norm 
$$\|u\|_{W^{\beta,p}([0,T];\mathbb{X})}^p = \int_0^T\|u(t)\|_{\mathbb{X}}^p\,dt + \int_0^T\int_0^T\frac{\|u(t) - u(s)\|_{\mathbb{X}}^p}{|t-s|^{1+\beta p}}\,dt\,ds.$$
Recall the following theorem as in \cite{Flandoli:1995}.
\begin{thm}\label{thm:compactness}
    Let $\mathcal{Y}_0 \subset \mathcal{Y} \subset \mathcal{Y}_1$ be Banach spaces. Suppose $\mathcal{Y}_0$ and $\mathcal{Y}_1$ are reflexive  and $\mathcal{Y}_0$ is compactly embedded in $\mathcal{Y}$. Let $p \in (1,\infty)$, $\beta \in (0,1)$ and $\Bar{\mathcal{Y}} := L^p([0,T]; \mathcal{Y}_0) \cap W^{\beta, p}([0,T];\mathcal{Y}_1)$ equipped with the natural norm. Then the embedding of $\Bar{\mathcal{Y}}$ in $L^p([0,T]; \mathcal{Y})$ is compact.
\end{thm}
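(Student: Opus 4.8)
The statement is a fractional-in-time Aubin--Lions--Simon compactness lemma, and the plan is to prove it by combining a weak-compactness extraction (this is where reflexivity of $\mathcal{Y}_0,\mathcal{Y}_1$ enters), the classical ``integrate in time'' trick, and a mollification estimate in the time variable that uses $\beta\in(0,1)$ and $p\in(1,\infty)$; the compact embedding $\mathcal{Y}_0\hookrightarrow\hookrightarrow\mathcal{Y}$ enters through Arzel\`a--Ascoli and Ehrling's lemma. It suffices to show that an arbitrary bounded sequence $(u_n)_n\subset\bar{\mathcal{Y}}$ has a subsequence converging strongly in $L^p([0,T];\mathcal{Y})$. \textbf{Step 1.} Since $1<p<\infty$ and $\mathcal{Y}_0,\mathcal{Y}_1$ are reflexive, $L^p([0,T];\mathcal{Y}_0)$ and $W^{\beta,p}([0,T];\mathcal{Y}_1)$ are reflexive, hence so is $\bar{\mathcal{Y}}$; along a subsequence (not relabelled) $u_n\rightharpoonup u$ in $\bar{\mathcal{Y}}$ with $u\in\bar{\mathcal{Y}}$. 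Replacing $u_n$ by $u_n-u$, which is again bounded in $\bar{\mathcal{Y}}$ and converges weakly to $0$, it is enough to prove $u_n\to 0$ strongly in $L^p([0,T];\mathcal{Y})$. \textbf{Step 2.} Set $v_n(t):=\int_0^t u_n(s)\,ds$ and $q:=p/(p-1)$. By H\"older's inequality $\|v_n(t)-v_n(s)\|_{\mathcal{Y}_0}\le|t-s|^{1/q}\,\|u_n\|_{L^p([0,T];\mathcal{Y}_0)}$, so $(v_n)_n$ is bounded in $C^{0,1/q}([0,T];\mathcal{Y}_0)$; since $\mathcal{Y}_0\hookrightarrow\hookrightarrow\mathcal{Y}$, the Arzel\`a--Ascoli theorem gives a further subsequence with $v_n\to v$ in $C([0,T];\mathcal{Y})$, and $u_n\rightharpoonup 0$ forces $v\equiv 0$. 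Hence $v_n\to 0$ in $C([0,T];\mathcal{Y})$, and \emph{a fortiori} in $C([0,T];\mathcal{Y}_1)$.

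\textbf{Step 3.} By Ehrling's lemma (a consequence of $\mathcal{Y}_0\hookrightarrow\hookrightarrow\mathcal{Y}\hookrightarrow\mathcal{Y}_1$), for every $\eta>0$ there is $C_\eta>0$ with $\|w\|_{\mathcal{Y}}\le\eta\|w\|_{\mathcal{Y}_0}+C_\eta\|w\|_{\mathcal{Y}_1}$ for all $w\in\mathcal{Y}_0$; integrating in time, $\|u_n\|_{L^p([0,T];\mathcal{Y})}\le\eta\sup_m\|u_m\|_{L^p([0,T];\mathcal{Y}_0)}+C_\eta\|u_n\|_{L^p([0,T];\mathcal{Y}_1)}$. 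The first term is bounded by $C\eta$ uniformly in $n$, so it suffices to show $u_n\to 0$ in $L^p([0,T];\mathcal{Y}_1)$ and then let $\eta\downarrow 0$. \textbf{Step 4.} For $\kappa>0$ let $\rho_\kappa$ be a standard time mollifier of radius $\kappa$ (after extending the $u_n$ to a slightly larger interval by reflection to kill endpoint effects). Using $v_n'=u_n$, one has $u_n*\rho_\kappa=v_n*\rho_\kappa'$, hence $\|u_n*\rho_\kappa\|_{L^p([0,T];\mathcal{Y}_1)}\le C\kappa^{-1}\|v_n\|_{C([0,T];\mathcal{Y}_1)}\to 0$ as $n\to\infty$, for each fixed $\kappa$. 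On the other hand, from the definition of the Gagliardo seminorm one gets the uniform bound $\|u_n-u_n*\rho_\kappa\|_{L^p([0,T];\mathcal{Y}_1)}\le C\kappa^{\beta}\,[u_n]_{W^{\beta,p}([0,T];\mathcal{Y}_1)}\le C\kappa^{\beta}$. Therefore $\limsup_{n\to\infty}\|u_n\|_{L^p([0,T];\mathcal{Y}_1)}\le C\kappa^{\beta}$; letting $\kappa\downarrow 0$ gives $u_n\to 0$ in $L^p([0,T];\mathcal{Y}_1)$, and together with Step 3, $u_n\to 0$ in $L^p([0,T];\mathcal{Y})$, which is the claim.

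The only genuinely technical ingredient is the uniform fractional time-regularity estimate used in Step 4, that is $\|\tau_\kappa u-u\|_{L^p([0,T-\kappa];\mathcal{Y}_1)}\le C\kappa^{\beta}\,[u]_{W^{\beta,p}([0,T];\mathcal{Y}_1)}$ for the shift $(\tau_\kappa u)(t):=u(t+\kappa)$ (equivalently its mollified version): I would prove it by writing $u(t+\kappa)-u(t)$ via an intermediate point $u(t+\sigma)$ with $\sigma\in(\kappa,2\kappa)$, bounding the $p$-th power of each of the two increments by $C\kappa^{1+\beta p}$ times the corresponding Gagliardo integrand, integrating in $\sigma$ and then in $t$, and dealing with the residual strip near $t=T$ by the reflection extension. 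Everything else (reflexive extraction, H\"older and Arzel\`a--Ascoli for the primitives, Ehrling's lemma) is routine. As an alternative one may skip Steps 1--2 and appeal instead to Simon's characterisation of relatively compact subsets of $L^p([0,T];\mathcal{Y})$: boundedness in $L^p([0,T];\mathcal{Y}_0)$ together with $\mathcal{Y}_0\hookrightarrow\hookrightarrow\mathcal{Y}$ makes the time-averages $\int_{t_1}^{t_2}u(t)\,dt$ relatively compact in $\mathcal{Y}$, while the Ehrling and $\kappa^{\beta}$ estimates of Steps 3--4 yield the required uniform decay of the time-translation modulus; that route does not even use reflexivity.
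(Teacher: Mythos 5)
Your proposal is correct. Note first that the paper itself does not prove this theorem: it is quoted verbatim from Flandoli--Gatarek \cite{Flandoli:1995} (their Theorem 2.1), so there is no in-paper argument to compare against. Your proof reconstructs essentially the standard argument used there and in Simon's compactness theory: reduce, via Ehrling's lemma, to strong convergence in $L^p([0,T];\mathcal{Y}_1)$, and obtain that from the uniform translation/mollification estimate $\|u_n-u_n*\rho_\kappa\|_{L^p([0,T];\mathcal{Y}_1)}\le C\kappa^\beta[u_n]_{W^{\beta,p}([0,T];\mathcal{Y}_1)}$ together with the compactness of the primitives $v_n$ supplied by $\mathcal{Y}_0\hookrightarrow\hookrightarrow\mathcal{Y}$ and Arzel\`a--Ascoli. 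All the steps check out: reflexivity of $L^p([0,T];\mathcal{Y}_0)$ and of $W^{\beta,p}([0,T];\mathcal{Y}_1)$ (closed subspace of a product of reflexive $L^p$ spaces) justifies the weak extraction; testing $v_n(t)=\int_0^t u_n$ against $\mathbf{1}_{[0,t]}\ell$ identifies the uniform limit as zero; and the key $\kappa^\beta$ estimate follows most cleanly by Jensen's inequality with respect to the probability measure $\rho_\kappa(s)\,ds$, using $\rho_\kappa(s)\le C\kappa^{-1}$ and $|s|\le\kappa$ to compare with the Gagliardo double integral --- this is simpler than the intermediate-point device you sketch, though that works too. The only places requiring the routine care you already flag are the endpoint handling (the reflection extension must preserve, up to a constant, both the $L^p(\mathcal{Y}_0)$ norm and the Gagliardo seminorm, which it does) and, as you correctly observe, the alternative route through Simon's criterion dispenses with reflexivity altogether.
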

\noindent Setting $\mathcal{Y}_0 := H^2(\mathbb{T}^d), \mathcal{Y} := L^2(\mathbb{T}^d)$ and $\mathcal{Y}_1 := H^{-2}(\mathbb{T}^d)$, where $H^{-2}(\mathbb{T}^d)$ is the dual space of $H^2(\mathbb{T}^d)$ in Theorem \ref{thm:compactness}, we have the following result regarding the compactness of $\{u_{\gamma, \Bar{\eps}}\}$.
\begin{lem}\label{lem:compactness-gamma-eps}
 For any $\Bar{\eps} >0$, $\{u_{\gamma,\Bar{\eps}}: \gamma > 0\}$ is compact in $L^2([0,T]; L^2(\mathbb{T}^d)).$   
\end{lem}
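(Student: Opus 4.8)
The plan is to apply the abstract compactness criterion of Theorem~\ref{thm:compactness} with the triple $\mathcal{Y}_0 := H^2(\mathbb{T}^d)$, $\mathcal{Y} := L^2(\mathbb{T}^d)$ and $\mathcal{Y}_1 := H^{-2}(\mathbb{T}^d)$. The embedding $H^2(\mathbb{T}^d) \hookrightarrow L^2(\mathbb{T}^d)$ is compact (Rellich--Kondrachov on the torus) and both $H^2$ and $H^{-2}$ are reflexive Hilbert spaces, so the hypotheses on the spaces are met. We work with $p = 2$ and will choose $\beta \in (0,1/2)$. Thus it suffices to establish two families of uniform-in-$\gamma$ bounds: first, $\{u_{\gamma,\Bar\eps}\}_\gamma$ is bounded in $L^2([0,T];H^2(\mathbb{T}^d))$; and second, $\{u_{\gamma,\Bar\eps}\}_\gamma$ is bounded in $W^{\beta,2}([0,T];H^{-2}(\mathbb{T}^d))$. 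Once both are in hand, Theorem~\ref{thm:compactness} gives that $\{u_{\gamma,\Bar\eps}\}_\gamma$ is relatively compact in $L^2([0,T];L^2(\mathbb{T}^d))$, which is the assertion of the lemma.

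For the first bound, I would revisit the a-priori estimate \eqref{eq:energy-estimation-eps-gamma}. Since $\Bar\eps > 0$ is fixed, the term $2\Bar\eps \int_0^t \|\nabla u_{\gamma,\Bar\eps}\|_{L^2}^2\,ds$ is controlled uniformly in $\gamma$, giving a uniform bound on $\{u_{\gamma,\Bar\eps}\}_\gamma$ in $L^2([0,T];H^1(\mathbb{T}^d))$ together with a uniform $L^\infty([0,T];L^2(\mathbb{T}^d))$ bound. To upgrade to an $H^2$ bound, I would test \eqref{eq:singular perturbation-skeleton} against $-\Delta u_{\gamma,\Bar\eps}$ (equivalently apply the chain rule to $v \mapsto \|\nabla v\|_{L^2}^2$); the viscous term then produces $2\Bar\eps \int_0^t \|\Delta u_{\gamma,\Bar\eps}\|_{L^2}^2\,ds$ with a favourable sign, the biharmonic term gives $2\gamma \int_0^t \|\nabla \Delta u_{\gamma,\Bar\eps}\|_{L^2}^2 \ge 0$, the flux and fractional-diffusion terms are absorbed using Lipschitz continuity of $F$ and ${\tt \Phi}$ together with interpolation (note $\|(-\Delta)^\theta v\|_{L^2} \le \|v\|_{H^{2\theta}} \le \|v\|_{H^1} + \varepsilon\|\Delta v\|_{L^2}$ for $\theta < 1$, absorbing the top-order piece into the viscous term), and the noise term is handled via \ref{A3} and ${\ell}\in S_N$. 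Gronwall then yields $\int_0^T \|u_{\gamma,\Bar\eps}(t)\|_{H^2(\mathbb{T}^d)}^2\,dt \le C(\Bar\eps, N)$ independently of $\gamma$. I expect this second energy estimate to be the main obstacle: one must be careful that all constants are uniform in $\gamma$, that the $\gamma\Delta^2$ term is used only for its sign (never requiring $H^4$ bounds that degenerate as $\gamma \to 0$), and that the fractional term is genuinely lower order so it can be absorbed.

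For the fractional-in-time bound, I would read the equation \eqref{eq:singular perturbation-skeleton} as an identity in $H^{-2}(\mathbb{T}^d)$ and estimate each term of $du_{\gamma,\Bar\eps}$ in that space. The deterministic drift terms $\Bar\eps\Delta u_{\gamma,\Bar\eps}$, $-\gamma\Delta^2 u_{\gamma,\Bar\eps}$, $-\text{div}\,F(u_{\gamma,\Bar\eps})$ and $-(-\Delta)^\theta[{\tt \Phi}(u_{\gamma,\Bar\eps})]$ all map $L^2([0,T];H^2(\mathbb{T}^d))$-bounded families into $L^2([0,T];H^{-2}(\mathbb{T}^d))$-bounded families: for instance $\|\gamma\Delta^2 v\|_{H^{-2}} \le \gamma\|v\|_{H^2} \le \|v\|_{H^2}$ since $\gamma < 1$, and similarly for the others using the $H^2$ bound from Step~1 and Lipschitz continuity of $F,{\tt \Phi}$. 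Hence $t \mapsto u_{\gamma,\Bar\eps}(t) - u_0^{\Bar\eps} + \int_0^t(\text{drift})\,ds$ is, after this accounting, in $W^{1,2}([0,T];H^{-2}(\mathbb{T}^d))$ with uniform bound, and a function in $W^{1,2}$ lies in $W^{\beta,2}$ for every $\beta \in (0,1)$ with controlled norm. The remaining term $\int_0^t {\tt h}(u_{\gamma,\Bar\eps}){\ell}(s)\,ds$ is likewise absolutely continuous into $L^2(\mathbb{T}^d) \subset H^{-2}(\mathbb{T}^d)$: by \ref{A3} and Cauchy--Schwarz, $\big\|\int_s^t {\tt h}(u_{\gamma,\Bar\eps}){\ell}(r)\,dr\big\|_{L^2} \le C\int_s^t(1+\|u_{\gamma,\Bar\eps}(r)\|_{L^2})|{\ell}(r)|_{\mathcal{H}}\,dr \le C|t-s|^{1/2}$ using ${\ell}\in S_N$ and the uniform $L^\infty_t L^2_x$ bound, which directly gives a uniform $W^{\beta,2}([0,T];H^{-2})$ (indeed $C^{1/2}$-in-time) estimate for $\beta < 1/2$. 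Summing the contributions, $\{u_{\gamma,\Bar\eps}\}_\gamma$ is bounded in $W^{\beta,2}([0,T];H^{-2}(\mathbb{T}^d))$ for any $\beta \in (0,1/2)$, uniformly in $\gamma$. With both bounds established, an application of Theorem~\ref{thm:compactness} completes the proof.
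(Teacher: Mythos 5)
Your overall framework --- Theorem \ref{thm:compactness} with the Gelfand-type triple around $L^2(\mathbb{T}^d)$, a uniform spatial bound plus a uniform $W^{\beta,2}([0,T];H^{-2}(\mathbb{T}^d))$ time-regularity bound obtained by estimating each drift term in $H^{-2}$ --- is exactly the paper's, and your treatment of the time regularity (including the observation that $\|\gamma\Delta^2 v\|_{H^{-2}}\le\gamma\|v\|_{H^2}$, so the biharmonic term only needs the $\gamma$-weighted bound $\gamma\int_0^T\|\Delta u_{\gamma,\bar\eps}\|_{L^2}^2\,ds\le C$ from \eqref{eq:energy-estimation-eps-gamma}, not a genuine uniform $H^2$ bound) is sound. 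The problem is your Step 1. You correctly noticed that \eqref{eq:energy-estimation-eps-gamma} does \emph{not} give a $\gamma$-uniform bound in $L^2([0,T];H^2(\mathbb{T}^d))$ (the $H^2$ piece carries the weight $\gamma$), but your proposed repair --- a second energy estimate against $-\Delta u_{\gamma,\bar\eps}$ --- does not go through as written. The absorption of the fractional term hinges on $\|(-\Delta)^\theta[{\tt \Phi}(u)]\|_{L^2}\le\|{\tt \Phi}(u)\|_{H^{2\theta}}\le C_\varepsilon\|{\tt \Phi}(u)\|_{H^1}+\varepsilon\|\Delta{\tt \Phi}(u)\|_{L^2}$, but \ref{A2} only makes ${\tt \Phi}$ nondecreasing and Lipschitz: $\Delta{\tt \Phi}(u)$ involves ${\tt \Phi}''(u)|\nabla u|^2$, which is not available, and for $2\theta>1$ composition with a merely Lipschitz function does not preserve $H^{2\theta}$. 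There is also a foundational issue: the variational solution lives in $L^2([0,T];H^2)$ via the triple $H^2\subset L^2\subset H^{-2}$, so the chain rule for $v\mapsto\|\nabla v\|_{L^2}^2$ (which formally pairs $\gamma\Delta^2 u$ with $\Delta u$ and needs $u\in H^3$) is not justified by the framework you are given.

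The extra step is in any case unnecessary. Since $\bar\eps>0$ is fixed, \eqref{eq:energy-estimation-eps-gamma} already gives $\sup_{\gamma>0}\int_0^T\|u_{\gamma,\bar\eps}\|_{H^1(\mathbb{T}^d)}^2\,ds\le C(\bar\eps)$, and $H^1(\mathbb{T}^d)\hookrightarrow L^2(\mathbb{T}^d)$ is already compact. So you should run Theorem \ref{thm:compactness} with $\mathcal{Y}_0=H^1(\mathbb{T}^d)$ (or simply note that a uniform $L^2_tH^1_x$ bound suffices), keep $\mathcal{Y}_1=H^{-2}(\mathbb{T}^d)$ so that all drift terms, including $\gamma\Delta^2u_{\gamma,\bar\eps}$ and $(-\Delta)^\theta[{\tt \Phi}(u_{\gamma,\bar\eps})]$, can be measured there exactly as you do in your second step, and delete the $H^2$ energy estimate entirely. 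With that modification your argument closes and coincides with the paper's.
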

\begin{proof}
From \eqref{eq:singular perturbation-skeleton}, we have
\begin{align}
     &u_{\gamma,\Bar{\eps}}(t) = u_0^{\Bar{\eps}} - \int_0^t\text{div}F(u_{\gamma,\Bar{\eps}}(r,x))\,dr -\int_0^t (-\Delta)^\theta[{\tt \Phi}(u_{\gamma,\Bar{\eps}}(r,\cdot))](x)\,dr \notag \\ &\hspace{2cm}+ \, \Bar{\eps}\int_0^t\Delta u_{\gamma,\Bar{\eps}}\,dr -\gamma\int_0^t\Delta^2 u_{\gamma,\Bar{\eps}}\,dr + \int_0^t{\tt h}(u_{\gamma,\Bar{\eps}}(r,x)){\ell}(r)\,dr\,
     =: \sum_{i=1}^{6}\mathcal{A}_i^{\gamma, \Bar{\eps}}\notag.
\end{align}
Clearly, $\|\mathcal{A}_1^{\gamma, \Bar{\eps}}\|_{L^2(\mathbb{T}^d)} \le C.$
Using Lipschitz continuity of $F$ and integration by parts formula, we infer that
\begin{align}\label{inq:bound-div-f}
    \|\text{div}F(u_{\gamma, \Bar{\eps}}(s))\|_{H^{-2}(\mathbb{T}^d)}  = & \underset{\|v\|_{H^2(\mathbb{T}^d)} \le 1}{sup} \big|\langle v, \text{div}F(u_{\gamma, \Bar{\eps}}(s)) \rangle\big|\notag  
     =  \underset{\|v\|_{H^2(\mathbb{T}^d)} \le 1}{sup} \big|\langle \nabla v, F(u_{\gamma, \Bar{\eps}}(s)) \rangle\big| \notag \\&\hspace{1cm}\le C \underset{\|v\|_{H^2(\mathbb{T}^d)} \le 1}{sup}\int_{\mathbb{T}^d}|\nabla v||u_{\gamma, \Bar{\eps}}(s)|\,dx \le C\|u_{\gamma, \Bar{\eps}}(s)\|_{L^2(\mathbb{T}^d)}, 
\end{align}
and therefore thanks to Jensen inequality along with \eqref{inq:bound-div-f} and \eqref{eq:energy-estimation-eps-gamma} we get,
\begin{align*}
   & \|\mathcal{A}_2^{\gamma, \Bar{\eps}}(t) - \mathcal{A}_2^{\gamma, \Bar{\eps}}(s)\|_{H^{-2}}^2\le C(t-s)\int_s^t \|\text{div}F(u_{\gamma, \Bar{\eps}}(r))\|_{H^{-2}}^2\,dr \notag \\ & \hspace{4cm}\le C(t-s)\int_s^t \|u_{\gamma, \Bar{\eps}}(r)\|_{L^{2}(\mathbb{T}^d)}^2\,dr \le C(t-s).
\end{align*}
Hence, we have, for $\beta \in (0,\frac{1}{2})$
\begin{align*}
 &\,\|\mathcal{A}_2^{\gamma, \Bar{\eps}}\|_{W^{\beta,2}([0,T];\mathcal{Y}_1)}^2  \le \int_0^T\|\mathcal{A}_2^{\gamma, \Bar{\eps}}\|_{H^{-2}}^2\,dt + \int_0^T\int_0^T\frac{ \|\mathcal{A}_2^{\gamma, \Bar{\eps}}(t) - \mathcal{A}_2^{\gamma, \Bar{\eps}}(s)\|_{H^{-2}}^2}{|t-s|^{1+2\beta}}\,ds\,dt \le C(\beta).   
\end{align*}
In view of \ref{A2} and the fact that 
$\|(-\Delta)^\theta[v(\cdot)]\|_{L^2(\mathbb{T}^d)} \le C \|v\|_{H^2(\mathbb{T}^d)}$, we have
\begin{align}
    \|(-\Delta)^\theta[{\tt \Phi}(u_{\gamma,\Bar{\eps}}(t,\cdot)]\|_{H^{-2}(\mathbb{T}^d)} &= \underset{\|v\|_{H^2(\mathbb{T}^d)} \le 1}{sup} \big|\big\langle v, (-\Delta)^\theta[{\tt \Phi}(u_{\gamma, \Bar{\eps}}(t,\cdot))](\cdot)\big\rangle\big| \notag \\ & = \underset{\|v\|_{H^2(\mathbb{T}^d)} \le 1}{sup} \big|\big\langle (-\Delta)^\theta[v], {\tt \Phi}(u_{\gamma, \Bar{\eps}})\big\rangle\big| \le C\| u_{\gamma,\Bar{\eps}}(t,\cdot)\|_{L^2(\mathbb{T}^d)},\notag
\end{align}
which then implies 
\begin{align*}
    \|\mathcal{A}_3^{\gamma, \Bar{\eps}}(t) - \mathcal{A}_3^{\gamma, \Bar{\eps}}(s)\|_{H^{-2}(\mathbb{T}^d)}^2 
    &\le C(t-s)\int_s^t \|u_{\gamma,\Bar{\eps}}(r,\cdot))\|_{L^2(\mathbb{T}^d)}^2\,dr  \le C(t-s).
\end{align*}
Thus, for $\beta \in (0,\frac{1}{2})$
\begin{align*}
    &\,\|\mathcal{A}_3^{\gamma, \Bar{\eps}}\|_{W^{\beta,2}([0,T]; \mathcal{Y}_1)}^2  \le \int_0^T\|\mathcal{A}_3^{\gamma, \Bar{\eps}}\|_{H^{-2}}^2\,dt + \int_0^T\int_0^T\frac{ \|\mathcal{A}_3^{\gamma, \Bar{\eps}} - \mathcal{A}_3^{\gamma, \Bar{\eps}}\|_{H^{-2}}^2}{|t-s|^{1+2\beta}}\,ds\,dt \le C(\beta).
\end{align*} 
We invoke the integration by parts formula to have
\begin{equation} \label{inq:A34-eps-gammma}
\begin{aligned}
\|\Delta u_{\gamma, \Bar{\eps}}\|_{H^{-2}(\mathbb{T}^d)} & = \underset{\|v\|_{H^2(\mathbb{T}^d)} \le 1}{sup}\big|\langle \nabla v, \nabla u_{\gamma, \Bar{\eps}} \rangle\big| \le C \|\nabla u_{\gamma, \Bar{\eps}}\|_{L^2(\mathbb{T}^d)}, \\
 \|\Delta^2 u_{\gamma, \Bar{\eps}}\|_{H^{-2}(\mathbb{T}^d)} & = \underset{\|v\|_{H^2(\mathbb{T}^d)} \le 1}{sup}\big|\langle \Delta v, \Delta u_{\gamma, \Bar{\eps}} \rangle\big| \le C \|\Delta u_{\gamma, \Bar{\eps}}\|_{L^2(\mathbb{T}^d)}.    
 \end{aligned}
\end{equation}
Using \eqref{inq:A34-eps-gammma} and \eqref{eq:energy-estimation-eps-gamma} together with Jensen's inequality, we get
\begin{align}
    \|\mathcal{A}_4^{\gamma, \Bar{\eps}}(t) - \mathcal{A}_4^{\gamma, \Bar{\eps}}(s)\|_{H^{-2}(\mathbb{T}^d)}^2
    & \le  \Bar{\eps}C(t-s)\int_s^t\|\nabla u_{\gamma, \Bar{\eps}}\|_{L^2(\mathbb{T}^d)}^2\,dr \le C(t-s), \notag \\
     \|\mathcal{A}_5^{\gamma, \Bar{\eps}}(t) - \mathcal{A}_5^{\gamma, \Bar{\eps}}(s)\|_{H^{-2}(\mathbb{T}^d)}^2
     &  \le\gamma C(t-s)\int_s^t\|\Delta u_{\gamma, \Bar{\eps}}\|_{L^2(\mathbb{T}^d)}^2\,dr \le C(t-s), \notag
\end{align}
and therefore, for any $\beta \in (0, \frac{1}{2})$
\begin{align*}
    &\,\|\mathcal{A}_4^{\gamma, \Bar{\eps}}\|_{W^{\beta,2}([0,T];\mathcal{Y}_1)}^2 \le C(\beta)\quad  \text{and} \quad \,\|\mathcal{A}_5^{\gamma, \Bar{\eps}}\|_{W^{\beta,2}([0,T]; H^{-2}(\mathbb{T}^d))}^2 \le C(\beta).
\end{align*}
Using Cauchy-Schwartz inequality along with the assumption \ref{A3}, we have
\begin{align}\label{inq:A6-eps-gamma}
  &\|{\tt h}(u_{\gamma, \Bar{\eps}}){\ell}(r)\|_{H^{-2}(\mathbb{T}^d)} = \underset{\|v\|_{H^2(\mathbb{T}^d)} \le 1}{sup}\big|\langle v , {\tt h}(u_{\gamma, \Bar{\eps}}){\ell}(r) \rangle\big| \\ &   \le \underset{\|v\|_{H^2(\mathbb{T}^d)} \le 1}{sup}\Big|\Big\langle v , \Big(\sum_{k \ge 1}{\tt h}_k(u_{\gamma, \Bar{\eps}})\Big)^\frac{1}{2}\Big(\sum_{k \ge 1}|{\ell}_k(r)|^2\Big)^{\frac{1}{2}}\Big\rangle\Big| \le C|{\ell}(r)|_{\mathcal{H}}\|u_{\gamma, \Bar{\eps}}\|_{L^2(\mathbb{T}^d)}.\notag
\end{align}
Thanks to \eqref{inq:A6-eps-gamma}, one can easily see that
\begin{align}
 &\|\mathcal{A}_6^{\gamma, \Bar{\eps}}(t) - \mathcal{A}_6 ^{\gamma, \Bar{\eps}}(s)\|_{H^{-2}(\mathbb{T}^d)}^2  \le C(t-s)\int_s^t\|{\tt h}(u_{\gamma, \Bar{\eps}}){\ell}(r)\|_{H^{-2}(\mathbb{T}^d)}^2 \, dr \notag \\ &\le C(t-s)\int_s^t|{\ell}(r)|_{\mathcal{H}}^2\Big(\underset{s \le r \le t}{sup}\|u_{\gamma, \Bar{\eps}}\|_{L^2(\mathbb{T}^d)}^2\Big) \, dr \le C(t-s)\int_s^t|{\ell}(r)|_{\mathcal{H}}^2\,dr \le C(t-s),\notag
\end{align}
which then yields, for any $\beta \in (0, \frac{1}{2})$,
\begin{align*}
\|\mathcal{A}_6^{\gamma, \Bar{\eps}}\|_{W^{\beta,2}([0,T]; H^{-2}(\mathbb{T}^d))}^2 \le C(\beta).
\end{align*}
Thus, in view of \eqref{eq:energy-estimation-eps-gamma} and Theorem \ref{thm:compactness}, we obtain the compactness of $\{u_{\gamma, \Bar{\eps}}: \gamma>0\}$ in $L^2([0,T]; L^2(\mathbb{T}^d)).$ 
\end{proof}
\subsubsection{\bf Existence and uniqueness of viscous solution} \label{eq:Existence and uniqueness of viscous solution} With the help of {\it a-priori} estimates \eqref{eq:energy-estimation-eps-gamma} and Lemma \ref{lem:compactness-gamma-eps}, we now show existence of a weak solution for \eqref{eq:regularized-skeleton}.
\begin{prop}
Under the assumptions \ref{A1}-\ref{A3}, there exists a weak solution to \eqref{eq:regularized-skeleton}. 
\end{prop}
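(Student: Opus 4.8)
The aim is to pass to the limit $\gamma \to 0$ in the singular perturbation problem \eqref{eq:singular perturbation-skeleton} and recover a weak solution of the viscous problem \eqref{eq:regularized-skeleton}. First I would collect the uniform-in-$\gamma$ bounds already available from the a-priori estimate \eqref{eq:energy-estimation-eps-gamma}: for fixed $\Bar{\eps}>0$, the family $\{u_{\gamma,\Bar{\eps}}\}_{\gamma>0}$ is bounded in $L^\infty([0,T];L^2(\mathbb{T}^d))$, the term $\sqrt{\Bar{\eps}}\,\nabla u_{\gamma,\Bar{\eps}}$ is bounded in $L^2([0,T];L^2(\mathbb{T}^d))$, the term $\sqrt{\gamma}\,\Delta u_{\gamma,\Bar{\eps}}$ is bounded in $L^2([0,T];L^2(\mathbb{T}^d))$, and $\big[{\tt \Phi}(u_{\gamma,\Bar{\eps}})\big]_{H^\theta}$ is bounded in $L^2([0,T])$. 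In particular $\{u_{\gamma,\Bar{\eps}}\}$ is bounded in $L^2([0,T];H^1(\mathbb{T}^d))$. By Lemma \ref{lem:compactness-gamma-eps}, the family is moreover compact in $L^2([0,T];L^2(\mathbb{T}^d))$; so along a subsequence (not relabelled) we get $u_{\gamma,\Bar{\eps}} \to u_{\ell,\Bar{\eps}}$ strongly in $L^2([0,T];L^2(\mathbb{T}^d))$ and a.e. on $\mathbb{T}^d \times [0,T]$, weakly in $L^2([0,T];H^1(\mathbb{T}^d))$, and weakly-$\star$ in $L^\infty([0,T];L^2(\mathbb{T}^d))$, and additionally $\gamma\,\Delta u_{\gamma,\Bar{\eps}} \to 0$ strongly in $L^2([0,T];L^2(\mathbb{T}^d))$ since $\|\gamma\Delta u_{\gamma,\Bar{\eps}}\|_{L^2_tL^2_x} \le \sqrt{\gamma}\cdot\sqrt{\gamma}\|\Delta u_{\gamma,\Bar{\eps}}\|_{L^2_tL^2_x} \le C\sqrt{\gamma} \to 0$.

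Next I would identify the limits of the nonlinear terms in the weak formulation of \eqref{eq:singular perturbation-skeleton} tested against $v \in L^2([0,T];H^2(\mathbb{T}^d))$ (or against $v \in C^\infty$ in space and time, which suffices by density). The a.e.\ convergence of $u_{\gamma,\Bar{\eps}}$ together with the continuity and Lipschitz bounds in \ref{A1}--\ref{A2} and the uniform $L^2$ bound gives, via Vitali's theorem, $F(u_{\gamma,\Bar{\eps}}) \to F(u_{\ell,\Bar{\eps}})$ and ${\tt \Phi}(u_{\gamma,\Bar{\eps}}) \to {\tt \Phi}(u_{\ell,\Bar{\eps}})$ strongly in $L^2([0,T];L^2(\mathbb{T}^d))$, which handles $\mathrm{div}\,F(\cdot)$ and, after moving one factor $(-\Delta)^{\theta/2}$ onto the test function, the fractional term $(-\Delta)^\theta[{\tt \Phi}(\cdot)]$. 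For the noise-like term $\int_0^\cdot {\tt h}(u_{\gamma,\Bar{\eps}}(s)){\ell}(s)\,ds$, assumption \ref{A3} gives the Lipschitz bound $\|{\tt h}(u_{\gamma,\Bar{\eps}})-{\tt h}(u_{\ell,\Bar{\eps}})\|_{\mathcal{L}_2(\mathcal{H};L^2(\mathbb{T}^d))} \le C\|u_{\gamma,\Bar{\eps}}-u_{\ell,\Bar{\eps}}\|_{L^2(\mathbb{T}^d)}$, and since ${\ell}\in L^2([0,T];\mathcal{H})$ with $u_{\gamma,\Bar{\eps}} \to u_{\ell,\Bar{\eps}}$ in $L^2([0,T];L^2(\mathbb{T}^d))$ we may pass to the limit by Cauchy--Schwarz in time. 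The viscous term $\Bar{\eps}\int_0^\cdot \Delta u_{\gamma,\Bar{\eps}}\,ds$ passes to the limit by weak convergence of $\nabla u_{\gamma,\Bar{\eps}}$ in $L^2([0,T];L^2(\mathbb{T}^d))$ (integrating the Laplacian by parts onto $v$), and the biharmonic term drops out by the strong convergence $\gamma\Delta u_{\gamma,\Bar{\eps}}\to0$ noted above. Finally, the linear term $\langle u_{\gamma,\Bar{\eps}}(t),v(t)\rangle$ and the initial datum $u_0^{\Bar{\eps}}$ pass to the limit by weak-$\star$ convergence (after a standard test-function-with-time-derivative argument to make sense of $u_{\ell,\Bar{\eps}}(t)$ as a trace). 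Assembling these, $u_{\ell,\Bar{\eps}}$ satisfies the weak formulation of \eqref{eq:regularized-skeleton}, and the bound $\sup_{t}\|u_{\ell,\Bar{\eps}}(t)\|_{L^2}^2 \le C$ is inherited from \eqref{eq:energy-estimation-eps-gamma} via lower semicontinuity.

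I expect the main obstacle to be the nonlinear fractional term $(-\Delta)^\theta[{\tt \Phi}(u_{\gamma,\Bar{\eps}})]$: although the formal manipulation $\langle (-\Delta)^\theta[{\tt \Phi}(u_{\gamma,\Bar{\eps}})],v\rangle = \langle (-\Delta)^{\theta/2}[{\tt \Phi}(u_{\gamma,\Bar{\eps}})],(-\Delta)^{\theta/2}[v]\rangle$ reduces matters to $L^2$-convergence of ${\tt \Phi}(u_{\gamma,\Bar{\eps}})$, one must make sure the $H^\theta$-bound on ${\tt \Phi}(u_{\gamma,\Bar{\eps}})$ (rather than merely $L^2$) is used consistently so that the pairing against $(-\Delta)^{\theta/2}[v]$ is justified for $v\in H^2(\mathbb{T}^d) \hookrightarrow H^\theta(\mathbb{T}^d)$, and that the weak limit of $(-\Delta)^{\theta/2}[{\tt \Phi}(u_{\gamma,\Bar{\eps}})]$ is indeed $(-\Delta)^{\theta/2}[{\tt \Phi}(u_{\ell,\Bar{\eps}})]$ — which follows by combining the weak $H^\theta$ convergence of ${\tt \Phi}(u_{\gamma,\Bar{\eps}})$ (from the uniform Gagliardo bound) with its a.e.\ limit identification. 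A secondary, more bookkeeping-type difficulty is giving a rigorous meaning to the time-trace $u_{\ell,\Bar{\eps}}(t)$ and the continuity in time needed to read off the initial condition; this is handled in the usual way by noting from the equation that $\partial_t u_{\gamma,\Bar{\eps}}$ is uniformly bounded in $L^2([0,T];H^{-2}(\mathbb{T}^d))$, so $u_{\ell,\Bar{\eps}} \in C([0,T];H^{-2}(\mathbb{T}^d))$ and, interpolating with the $L^\infty_t L^2_x$ bound, $u_{\ell,\Bar{\eps}} \in C_w([0,T];L^2(\mathbb{T}^d))$, which makes $u_{\ell,\Bar{\eps}}(0)=u_0^{\Bar{\eps}}$ meaningful. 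Uniqueness of the viscous solution is then obtained separately by the doubling-of-variables argument (or, since $\Bar{\eps}>0$ provides a genuine parabolic regularization, by a direct energy/$L^1$-contraction estimate), as indicated in the text preceding the statement.
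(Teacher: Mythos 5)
Your proposal is correct and follows essentially the same route as the paper: extract a strongly $L^2_tL^2_x$-convergent subsequence from the a-priori bound \eqref{eq:energy-estimation-eps-gamma} and Lemma \ref{lem:compactness-gamma-eps}, then pass to the limit term by term in the weak formulation, with the biharmonic term killed by the $\sqrt{\gamma}\|\Delta u_{\gamma,\Bar{\eps}}\|_{L^2_tL^2_x}\le C$ bound. The only (immaterial) deviations are that the paper puts the full $(-\Delta)^\theta$ onto the spatial test function and uses the Lipschitz bound on ${\tt \Phi}$ directly, so only $L^2$-convergence of $u_{\gamma,\Bar{\eps}}$ is needed for the nonlocal term, whereas you split symmetrically via $(-\Delta)^{\theta/2}$ and invoke the $H^\theta$-bound on ${\tt \Phi}(u_{\gamma,\Bar{\eps}})$; your extra remarks on the time trace are standard bookkeeping the paper leaves implicit.
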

\begin{proof}
Thanks to \eqref{eq:energy-estimation-eps-gamma} and Lemma \ref{lem:compactness-gamma-eps}, we have for any sequence $\gamma_n \rightarrow 0$ as $n \rightarrow \infty$, there exists a subsequence $\{\gamma_{n_k}\}_{k \ge 1}$ (still denote as $\gamma_n$) and an element $u_{\Bar{\eps}} \in L^\infty([0,T]; L^2(\mathbb{T}^d)) \cap L^2([0,T];L^2(\mathbb{T}^d))$ such that 
\begin{align}\label{eq:convergence-gamma}
 u_{\gamma_n, \Bar{\eps}} \rightarrow u_{\Bar{\eps}} \quad \text{in}~~~ L^2([0,T]; L^2(\mathbb{T}^d)), \quad 
    u_{\gamma_n, \Bar{\eps}} \rightharpoonup u_{\Bar{\eps}} \quad \text{in}~~ L^\infty([0,T]; L^2(\mathbb{T}^d)).
\end{align}
Since $u_{\gamma, \Bar{\eps}}(\cdot)$ is a strong solution of \eqref{eq:singular perturbation-skeleton},  for any test function $\psi \in C_c^\infty(\mathbb{T}^d)$, it holds that 
\begin{align}\label{eq:convergence-gamma-test-psi}
    &\langle u_{\gamma_n, \Bar{\eps}}(t), \psi \rangle - \langle u_0^{\Bar{\eps}}, \psi \rangle = \int_0^t\langle F(u_{\gamma_n,\Bar{\eps}}(r,\cdot)), \nabla \psi \rangle\,dr \\ & -\int_0^t \langle{\tt \Phi}(u_{\gamma_n,\Bar{\eps}}(r,\cdot)), (-\Delta)^\theta[\psi](\cdot)\rangle\,dr-\gamma\int_0^t\langle u_{\gamma_n,\Bar{\eps}}, \Delta^2\psi\rangle\,dr 
   \notag \\ & +  \Bar{\eps}\int_0^t\langle u_{\gamma_n,\Bar{\eps}}, \Delta\psi\rangle\,dr + \int_0^t\langle{\tt h}(u_{\gamma_n,\Bar{\eps}}(r,x)){\ell}(r), \psi\rangle\,dr.\notag
\end{align}
In view of \eqref{eq:convergence-gamma}, we have
$ \big|\langle u_{\gamma_n,\Bar{\eps}}(t)- u_{\Bar{\eps}}(t), \psi \rangle \big| \rightarrow 0$ as $n \rightarrow \infty$. 
An application of Lipschitz continuity  of $F$ and \eqref{eq:convergence-gamma} yields
\begin{align*}
    \int_0^t\langle F(u_{\gamma_n,\Bar{\eps}}(r,\cdot))- F(u_{\Bar{\eps}}(r,\cdot)), \nabla \psi \rangle\,dr \le C\|\nabla \psi\|_{L^\infty(\mathbb{T}^d)}\int_0^t\|u_{\gamma_n,\Bar{\eps}} - u_{\Bar{\eps}}\|_{L^2(\mathbb{T}^d)}^2\,dr \rightarrow 0.
\end{align*}
 Note that,  for $\psi(x) \in C_c^\infty(\mathbb{T}^d)$
 \begin{align}\label{fractionlbound-gamma}
     \|(-\Delta)^\theta[\psi](\cdot)\|_{L^\infty(\mathbb{T}^d)} <  \infty. 
 \end{align}
With \eqref{fractionlbound-gamma}, \eqref{eq:convergence-gamma},  and the assumption \ref{A2} in hand,  we get
\begin{align*}
    &\int_0^t \langle{\tt \Phi}(u_{\gamma_n,\Bar{\eps}}(r,\cdot)) - {\tt \Phi}(u_{\Bar{\eps}}(r,\cdot)), (-\Delta)^\theta[\psi(\cdot)](\cdot)\rangle\,dr  \notag \\& \hspace{1cm} \le C \,\|(-\Delta)^\theta[\psi](\cdot)\|_{L^\infty(\mathbb{T}^d)}\int_0^T\|u_{\gamma_n,\Bar{\eps}} - u_{\Bar{\eps}}\|_{L^2(\mathbb{T}^d)}^2\,dt \rightarrow 0.
\end{align*}
Similarly, using \eqref{eq:convergence-gamma}, we get
\begin{align*}
     &\Bar{\eps}\int_0^t\langle u_{\gamma_n,\Bar{\eps}}- u_{\Bar{\eps}}, \Delta\psi\rangle\,dr \le C(\Bar{\eps})\|\Delta\psi\|_{L^\infty(\mathbb{T}^d)}\int_0^t\|u_{\gamma_n,\Bar{\eps}} - u_{\Bar{\eps}}\|_{L^2(\mathbb{T}^d)}^2\,dr \rightarrow 0,\notag \\
     & \gamma_n\int_0^t\langle u_{\gamma_n,\Bar{\eps}}- u_{\Bar{\eps}}, \Delta^2\psi\rangle\,dr \le C(\gamma_n)\|\Delta^2\psi\|_{L^\infty(\mathbb{T}^d)}\int_0^t\|u_{\gamma_n,\Bar{\eps}} - u_{\Bar{\eps}}\|_{L^2(\mathbb{T}^d)}^2\,dr \rightarrow 0\,.
\end{align*}
We use the assumption \ref{A3} and the fact that ${\ell}(\cdot)\in S_N$ to conclude
\begin{align*}
&\int_0^t\langle\big({\tt h}(u_{\gamma_n,\Bar{\eps}})- {\tt h}(u_{\Bar{\eps}})\big){\ell}(r), \psi\rangle\,dr  \notag \\ &\hspace{2cm}\le C(\|\psi\|_{L^\infty}^2)\Big(\int_0^t\|{\ell}(r)\|^2_{\mathcal{H}}\,dr\Big)^{\frac{1}{2}}\Big(\int_0^t\|u_{\gamma_n, \Bar{\eps}} -u_{\Bar{\eps}}\|_{L^2(\mathbb{T}^d)}^2\,dr\Big)^{\frac{1}{2}} \rightarrow 0.
\end{align*}
In view of the above convergence results, we send $n \rightarrow 0$ in \eqref{eq:convergence-gamma-test-psi} and have
\begin{align*}
    &\langle u_{\Bar{\eps}}(t), \psi \rangle - \langle u_0^{\Bar{\eps}}, \psi \rangle = \int_0^t\langle F(u_{\Bar{\eps}}(r)), \nabla \psi \rangle\,dr -\int_0^t \langle{\tt \Phi}(u_{\Bar{\eps}}(r)), (-\Delta)^\theta[\psi](\cdot)\rangle\,dr \notag \\ &\hspace{4cm}+ \, \Bar{\eps}\int_0^t\langle u_{\Bar{\eps}}, \Delta\psi\rangle\,dr + \int_0^t\langle{\tt h}(u_{\Bar{\eps}}(r,\cdot)){\ell}(r), \psi\rangle\,dr.
\end{align*}
This shows that $u_{\Bar{\eps}} \in L^2([0,T]; L^2(\mathbb{T}^d))$  is a weak solution of \eqref{eq:regularized-skeleton}. 
\end{proof}
To prove uniqueness of viscous solution, we follow a similar line of arguments as done in Theorem \ref{thm:contraction-principle}.  
\begin{prop}
Let the assumptions \ref{A1}-\ref{A3} hold true. Suppose that $u_{\Bar{\eps}}(t)$ and $\Bar{u}_{\Bar{\eps}}(t)$ are weak solutions to \eqref{eq:regularized-skeleton} with initial data $u_{0}$ and $\Bar{u}_{0}$ respectively. Then, for all $t \in [0,T]$ 
\begin{align}
 \|u_{\Bar{\eps}}(t)- \Bar{u}_{\Bar{\eps}}(t)\|_{L^1(\mathbb{T}^d)} \le \|u_{0} -\Bar{u}_{0}\|_{L^1(\mathbb{T}^d)}.   \label{eq:uniquness-vis-eq}
\end{align}
\end{prop}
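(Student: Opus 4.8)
The plan is to transcribe, almost verbatim, the proof of Theorem~\ref{thm:contraction-principle}, the only new ingredient being the presence of the viscous Laplacian $\Bar{\eps}\Delta u_{\Bar{\eps}}$. First I would record a kinetic formulation for the weak solutions of \eqref{eq:regularized-skeleton}. Passing the energy estimate \eqref{eq:energy-estimation-eps-gamma} to the limit gives $u_{\Bar{\eps}}\in L^\infty([0,T];L^2(\mathbb{T}^d))\cap L^2([0,T];H^1(\mathbb{T}^d))$ together with ${\tt \Phi}(u_{\Bar{\eps}})\in L^2([0,T];H^\theta(\mathbb{T}^d))$; this is precisely the regularity needed to make sense of the parabolic dissipation measure $\Bar{\eps}|\nabla u_{\Bar{\eps}}|^2\delta_{u_{\Bar{\eps}}=\xi}$ and of the fractional defect $\eta_{\ell,\Bar{\eps}}$ of Definition~\ref{defi:kinetic-solution-skeleton-equation}. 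Arguing as in \cite{AC_2}, one then shows that $f_{\Bar{\eps}}:={\bf 1}_{u_{\Bar{\eps}}>\xi}$ satisfies, for every $\psi\in C_c^2(\mathbb{T}^d\times\R)$ and $t\in[0,T]$, the identity \eqref{eq:skeleton-k-f} with one extra term $\Bar{\eps}\int_0^t\langle f_{\Bar{\eps}}(s),\Delta\psi\rangle\,ds$ on the right-hand side and with a kinetic measure $m_{\Bar{\eps}}$ that dominates $\eta_{\ell,\Bar{\eps}}+\Bar{\eps}|\nabla u_{\Bar{\eps}}|^2\delta_{u_{\Bar{\eps}}=\xi}$.

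Next I would run the doubling-of-variables computation of Proposition~\ref{prop:doubling-variable} with $u_1:=u_{\Bar{\eps}}$ and $u_2:=\Bar{u}_{\Bar{\eps}}$. This reproduces the inequality \eqref{eq:doubling-variable} with two additional families of terms coming from the viscous part: a viscous transport term
\[
2\Bar{\eps}\int_0^t\int_{(\mathbb{T}^d)^2}\int_{\R^2}\big(f_1\Bar{f}_2+\Bar{f}_1 f_2\big)\,\Delta\varrho(x-y)\,\rho(\xi-\zeta)\,d\xi\,d\zeta\,dx\,dy\,ds,
\]
together with the cross terms obtained by pairing the dissipation parts $\Bar{\eps}|\nabla u_1|^2\delta_{u_1=\xi}$ and $\Bar{\eps}|\nabla u_2|^2\delta_{u_2=\zeta}$ of $m_{\Bar{\eps}}$ against $\partial_\xi\rho$ and $\partial_\zeta\rho$ respectively. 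Moving the Laplacian onto $\varrho$ and regrouping it with the dissipation cross terms — the classical computation for viscous scalar conservation laws carried out, e.g., in \cite[Theorem~11]{Vovelle2010} and \cite{AC_2} — shows that the sum of all these viscous contributions is non-positive; since it sits on the right-hand side of the doubled inequality, it may simply be discarded. (As the estimate is only needed for a fixed $\Bar{\eps}>0$, even a crude bound absorbing this sum would suffice.)

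With the viscous terms removed, the right-hand side is exactly $\mathcal{S}_1+\Bar{\mathcal{S}}_1+\mathcal{S}_2+\Bar{\mathcal{S}}_2+2\mathcal{S}_3$ as in Proposition~\ref{prop:doubling-variable}, and from here the argument coincides with that of Theorem~\ref{thm:contraction-principle}: substitute $\varrho=\varrho_\delta$ and $\rho=\rho_\lambda$, use the bounds $|\mathcal{S}_{1,\lambda,\delta}|+|\Bar{\mathcal{S}}_{1,\lambda,\delta}|\le C\delta^{-1}\lambda$ and $|\mathcal{S}_{2,\lambda,\delta}|+|\Bar{\mathcal{S}}_{2,\lambda,\delta}|\le C(r^{2-2\theta}\delta^{-2}+r^{-2\theta}\lambda)$ of \eqref{eq:bound-s-12} together with the bound \eqref{eq:bound-s-3} for $\mathcal{S}_{3,\lambda,\delta}$ (all of which use only \ref{A1}--\ref{A3} and ${\ell}\in S_N$), apply Gronwall's lemma, and then let $\lambda\to0$, $r\to0$, $\delta\to0$, invoking $\Upsilon_t(\delta,\lambda)\to0$ from \eqref{eq:Upsilon}. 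Combined with \eqref{inq:f1barf2} and the analogous identities for the initial data, this yields the $L^1$-stability estimate \eqref{eq:uniquness-vis-eq} and, in particular, the uniqueness of the viscous solution of \eqref{eq:regularized-skeleton}.

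The genuinely new point, and the main obstacle, is the second step: one has to verify that the additional contributions of $\Bar{\eps}\Delta u_{\Bar{\eps}}$ in the doubled kinetic identity — the viscous transport term and the dissipation cross terms — assemble, after integration by parts in $x$ and $y$, into a non-positive quantity, so that they cannot spoil the contraction. This requires keeping the parabolic piece $\Bar{\eps}|\nabla u_{\Bar{\eps}}|^2\delta_{u_{\Bar{\eps}}=\xi}$ of the kinetic measure alive throughout the computation, exactly as in the degenerate-parabolic theory; everything else is a direct transcription of the estimates already established for the skeleton equation \eqref{eq:skeleton}.
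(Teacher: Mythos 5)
Your proposal is correct and follows essentially the same route as the paper: run the doubling-of-variables inequality of Proposition \ref{prop:doubling-variable} with the two extra viscous contributions, observe that the viscous transport term and the parabolic dissipation cross terms combine (after integrating the Laplacian onto $\varrho_\delta$) into the non-positive quantity $-2\Bar{\eps}\int\varrho_\delta\rho_\lambda|\nabla_x u_{\Bar{\eps}}-\nabla_y\Bar{u}_{\Bar{\eps}}|^2\le 0$, and then repeat the proof of Theorem \ref{thm:contraction-principle} verbatim. The paper does exactly this, citing \cite[Section 6]{Hofmanova-2016} for the same classical regrouping you invoke.
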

\begin{proof} Based on the proof of Theorem \ref{thm:contraction-principle}, we can conclude that for all $t \in [0,T]$
    \begin{align}
   & \int_{(\mathbb{T}^d)^2}\int_{\R^2} \varrho_\delta(x-y)\rho_\lambda(\xi-\zeta)\Big(f_1^{\pm}(x,t,\xi)\bar{f}_2^{\pm}(y,t,\zeta) + \bar{f}_1^{\pm}(x,t,\xi)f_2^{\pm}(y,t,\zeta)\Big)\,d\xi\,d\zeta\,dx\,dy \notag\\
   &\,\le\, \int_{(\mathbb{T}^d)^2}\int_{\R^2} \varrho_\delta(x-y)\rho_\lambda(\xi-\zeta)\Big(f_{1,0}(x,\xi)\bar{f}_{2,0}(y,\zeta) + \bar{f}_{1,0}(x,\xi)f_{2,0}(y,\zeta)\Big)\,d\xi\,d\zeta\,dx\,dy \notag\\
   &+ \mathcal{S}_{1,\Bar{\eps}} + \Bar{\mathcal{S}}_{1,\Bar{\eps}} + \mathcal{S}_{2, \Bar{\eps}} + \Bar{\mathcal{S}}_{2,\Bar{\eps}} + 2\mathcal{S}_{3, \Bar{\eps}} + \mathcal{S} + \Bar{\mathcal{S}},  \notag
   \end{align}
where $f_1 = {\bf 1}_{u_{\Bar{\eps}}(t) > \xi}$, $f_2 = {\bf 1}_{\Bar{u}_{\Bar{\eps}}(t) > \zeta}$, $f_{1,0} = {\bf 1}_{u_0 > \xi}$ and $f_{2,0} = {\bf 1}_{\Bar{u}_0 > \zeta}$. Moreover, $\mathcal{S}$  and $\Bar{\mathcal{S}}$ are corresponding to the terms $\Bar{\eps}\Delta_x u_{\Bar{\eps}}$ and $\Bar{\eps}\Delta_y\Bar{u}_{\Bar{\eps}}$ respectively (see \cite[Section 6]{Hofmanova-2016}). For example $\mathcal{S}$ takes the form
\begin{align}
    \mathcal{S} = &\,2\Bar{\eps}\int_0^t \int_{(\mathbb{T}^d)^2}\int_{\R^2}f_1\Bar{f}_2\Delta_x\varrho_\delta(x-y)\rho_\lambda(\xi-\zeta)\,d\xi\,d\zeta\,dx\,dy\,ds \notag \\
    & -\int_0^t \int_{(\mathbb{T}^d)^2}\int_{\R^2}\varrho_\delta(x-y)\rho_\lambda(\xi-\zeta)\,d\mathcal{V}_{x,s}^{\Bar{\eps}}(\xi)\,dx\,d\Bar{\eta}^{\Bar{\eps}}(y,s,\zeta)\notag \\
    & -\int_0^t \int_{(\mathbb{T}^d)^2}\int_{\R^2}\varrho_\delta(x-y)\rho_\lambda(\xi-\zeta)\,d\mathcal{V}_{y,s}^{\Bar{\eps}}(\zeta)\,dx\,d\eta^{\Bar{\eps}}(x,s,\xi)\notag \\
     = &-2\Bar{\eps}\int_0^t \int_{(\mathbb{T}^d)^2\times \R^2}\varrho_\delta\rho_\lambda(u_{\Bar{\eps}}(x)- \Bar{u}_{\Bar{\eps}}(y))|\nabla_x u_{\Bar{\eps}}(x) - \nabla_y \Bar{u}_{\Bar{\eps}}(y)|^2\,d\xi\,d\zeta\,dx\,dy\,ds \le 0,\notag 
\end{align}
where 
$d\eta^{\Bar{\eps}}(x,s,\xi):= |\eps\nabla u_{\Bar{\eps}}|^2d\delta_{u_{\Bar{\eps}} = \xi}\,dx\,ds$ and $d\Bar{\eta}^{\Bar{\eps}}(y,s,\zeta):= |\Bar{\eps}\nabla \Bar{u}_{\Bar{\eps}}|^2 d\delta_{\Bar{u}_{\Bar{\eps}} = \zeta}\,dy\,ds$.
Similarly, one has $\Bar{\mathcal{S}} \le 0$. A verbatim copy
of Theorem \ref{thm:contraction-principle} then gives \eqref{eq:uniquness-vis-eq}.
\end{proof}
\subsection{\bf Existence of Skeleton Equation}\label{sec:existance-skeleton}
We are ready to show the existence of kinetic solution for \eqref{eq:skeleton}.
However, the method is fairly technical, so we proceed with the following many steps. Before going into the technical details, let us define $u_{{\ell}, \Bar{\eps}} := u^{\Bar{\eps}}$, solution to \eqref{eq:regularized-skeleton}. Moreover, we have the following  kinetic formulation of \eqref{eq:regularized-skeleton}.
\begin{align}\label{eq:kinetic-formulation-regularized-viscous}
    &\langle f^{\Bar{\eps}}(t), \psi \rangle =\,\langle f_0^{\Bar{\eps}}, \psi\rangle + \int_0^t  \langle f^{\Bar{\eps}}(s), F'(\xi)\cdot\nabla_x\psi \rangle\,ds -\int_0^t\langle f^{\Bar{\eps}}(s), {\tt \Phi}'(\xi)(\Delta)^\theta[\psi] \rangle\,ds \\ &  + \Bar{\eps}\int_0^t \langle f^{\Bar{\eps}}(t), \Delta\psi \rangle\,ds\notag + \sum_{k=1}^{\infty}\int_0^t\int_{\mathbb{T}^d}\int_{\R}{\tt h}_k(x,\xi){\ell}_k(s)\partial_{\xi}\psi(x,\xi)\,d\mathcal{V}_{x,s}^{\Bar{\eps}}\,dx\,ds \\ &- m^{\Bar{\eps}}(\partial_\xi\psi)([0,t]),\notag
\end{align}
where $f^{\Bar{\eps}} (x,t,\xi):={\bf 1}_{u^{\Bar{\eps}}(x,t) > \xi}$, $m^{\Bar{\eps}} \ge \Bar{m}^{\Bar{\eps}} + \eta^{\Bar{\eps}}$ with $\Bar{m}^{\Bar{\eps}}(x,t,\xi): = \Bar{\eps}|\nabla u^{\Bar{\eps}}|^2\delta_{u^{\Bar{\eps}}(x,t) = \xi}$ and
\begin{align*}
\eta^{\Bar{\eps}}(x,t,\xi):&= \int_{\R}\big|{\tt \Phi}(u^{\Bar{\eps}}(x+z,t)) - {\tt \Phi}(\xi)\big|{\bf 1}_{{\rm Conv}\{u^{\Bar{\eps}}(x+z, t), u^{\Bar{\eps}}(x,t)\}}\,(\xi)\gamma(z)\,dz\,.
\end{align*}
 \noindent{\bf Step I:~ Strong Convergence.} In this step, our goal is to validate the strong convergence for the sequence $\{u^{\Bar{\eps}}\}_{\Bar{\eps} > 0}.$ Following the proof of Theorem \ref{thm:contraction-principle}, one can easily observe that for all $t \in [0,T]$
 \begin{align*}
&\int_{(\mathbb{T}^d)^2}\int_{\R}\varrho_\delta(x-y)f^{\Bar{\eps}}(x,t,\xi)\bar{f}^{\Bar{\eps}}(y,t,\xi)\,d\xi\,dx\,dy
    \notag \\ & \le \int_{(\mathbb{T}^d)^2}\int_{\R}\varrho_\delta(x-y)f_0(x,t,\xi)\bar{f}_0(y,t,\xi)\,d\xi\,dx\,dy + 2\lambda + \mathcal{S}_{1,\Bar{\eps}} + \mathcal{S}_{2,\Bar{\eps}} + \mathcal{S}_{3,\Bar{\eps}} + \mathcal{S},
 \end{align*}
 where $\mathcal{S}_{i,\Bar{\eps}}$ for $1 \le i \le 3$, are similar to $\mathcal{S}_{i,\eps}$ of Theorem \ref{thm:contraction-principle} and $\mathcal{S}$ is due to the term $\Bar{\eps}\Delta u^{\Bar{\eps}}$, which is non-positive, (see, \cite[Step 1, Theorem 3.5]{AC_2}).
 Thus, for all $t \in [0,T]$, we get
 \begin{equation}\label{eq:G1-bound}
 \begin{aligned}
     &\int_{(\mathbb{T}^d)^2}\varrho_\delta(x-y)\big|u^{\Bar{\eps}}(x,t)- u^{\Bar{\eps}}(y,t)\big|\,dx\,dy \le \int_{(\mathbb{T}^d)^2}\varrho_\delta\big|u_0^{\Bar{\eps}}(x)- u_0^{\Bar{\eps}}(y)\big|\,dx\,dy \\
     & \hspace{1cm}+C\big(\delta^{-1}\lambda + 
r^{2-2\theta}\delta^{-2} + r^{-2\theta}\lambda + \delta + \lambda\big).   
 \end{aligned}
 \end{equation}
Following the calculations as done in the proof of Theorem \ref{thm:contraction-principle}, for any two viscous solution $u^{\Bar{\eps}}$ and $ u^{\hat{\eps}}$, we have
\begin{align}\label{eq:Upsilon-1}
    &\int_{\mathbb{T}^d} \big(u^{\Bar{\eps}}(t)- u^{\hat{\eps}}(t)\big)^+ \,dx = \int_{\mathbb{T}^d}\int_{\R}f^{\Bar{\eps}}(x,t,\xi)\bar{f}^{\hat{\eps}}(x,t,\xi)\,d\xi\,dx  \\ &
=  \int_{(\mathbb{T}^d)^2}\int_{\R^2} \varrho_\delta(x-y)\rho_\lambda(\xi-\zeta)f^{\Bar{\eps}}(x,t,\xi)\bar{f}^{\hat{\eps}}(y,t,\zeta) \,d\xi\,d\zeta\,dx\,dy + \Bar{\Upsilon}(\delta, \lambda),\notag
\end{align}
where the error term $\Bar{\Upsilon}(\delta, \lambda)$ is given by
\begin{align*}
    &\Bar{\Upsilon}(\delta, \lambda):=  \int_{\mathbb{T}^d}\int_{\R}f^{\Bar{\eps}}(t)\bar{f}^{\hat{\eps}}(t)\,d\xi\,dx 
    - \int_{(\mathbb{T}^d)^2}\int_{\R^2} \varrho_\delta\rho_\lambda f^{\Bar{\eps}}(x,t,\xi)\bar{f}^{\hat{\eps}}(y,t,\zeta) \,d\xi\,d\zeta\,dx\,dy\,.
    \end{align*}
Following \cite[Step 2, Theorem $3.5$]{AC_2} together with \eqref{eq:G1-bound}, we have
\begin{align*}
\big| \Bar{\Upsilon}(\delta, \lambda)\big| \le 
 \int_{(\mathbb{T}^d)^2}\varrho_\delta|u_0^{\Bar{\eps}}(y) - u_0^{\Bar{\eps}}(x) |\,dx\,dy
   + C\big(\delta^{-1}\lambda + 
r^{2-2\theta}\delta^{-2} + r^{-2\theta}\lambda + \delta + \lambda\big)\,.
\end{align*}
In view of Theorem \ref{thm:contraction-principle} and above computations, \eqref{eq:Upsilon-1} yields, 
\begin{align}\label{eq:strong-convergence}
& \int_{\mathbb{T}^d}\Big(u^{\Bar{\eps}}(t) - u^{\hat{\eps}}(t)\Big)^+ \,dx \le \int_{(\mathbb{T}^d)^2}\varrho_\delta(x-y)\big|u_0^{\Bar{\eps}}(y) - u_0^{\Bar{\eps}}(x) \big|\,dx\,dy
  \\ & + C\big(\delta^{-1}\lambda + 
r^{2-2\theta}\delta^{-2} + r^{-2\theta}\lambda + \delta + \lambda\big)+ \mathcal{S}_{1, \Bar{\eps}} +  \mathcal{S}_{2, \Bar{\eps}} +  \mathcal{S}_{3, \Bar{\eps}} + \mathcal{S}^*,\notag
\end{align}
where $\mathcal{S}_{i, \Bar{\eps}}$ as defined previously and can be bounded as done in Theorem \ref{thm:contraction-principle}. The term $\mathcal{S}^*$ is defined as 
\begin{align*}
    \mathcal{S}^* = &(\Bar{\eps} + \hat{\eps})\int_0^t\int_{(\mathbb{T}^d)^2}\int_{\R^2} f^{\Bar{\eps}}\Bar{f}^{\hat{\eps}}\varrho_\delta(x-y)\rho_{\lambda}(\xi-\zeta)\,d\xi\,d\zeta\,dx\,dy\,ds\notag \\
    &- \int_0^t\int_{(\mathbb{T}^d)^2}\int_{\R^2} \varrho_\delta(x-y)\rho_{\lambda}(\xi-\zeta)\,d\mathcal{V}_{x,s}^{\Bar{\eps}}(\xi)\,dx\,d\eta^{\hat{\eps}}(y,s,\zeta)\notag \\
    &- \int_0^t\int_{(\mathbb{T}^d)^2}\int_{\R^2}\varrho_\delta(x-y)\rho_{\lambda}(\xi-\zeta)\,d\mathcal{V}_{y,s}^{\hat{\eps}}(\zeta)\,dy\,d\eta^{\Bar{\eps}}(x,s,\xi)\,.
\end{align*}
We can bound $\mathcal{S}^*$ as (see, \cite[Step 2, Thorem 3.2]{AC_2})
\begin{align}\label{eq:bound-s^*}
    |\mathcal{S}^*| \le C(\Bar{\eps}-\hat{\eps})^2\delta^{-2}.
\end{align}
With \eqref{eq:bound-s-12}, \eqref{eq:bound-s^*} and \eqref{eq:bound-s-3} in hand, we can further bound \eqref{eq:strong-convergence} as:
\begin{align*}
    & \int_{\mathbb{T}^d}\Big(u^{\Bar{\eps}}(t) - u^{\hat{\eps}}(t)\Big)^+ \,dx \le \int_{(\mathbb{T}^d)^2}\varrho_\delta(x-y)\big|u_0^{\Bar{\eps}}(y) - u_0^{\Bar{\eps}}(x) \big|\,dx\,dy\notag \\
  &\hspace{3cm} + C\big(\delta^{-1}\lambda + 
r^{2-2\theta}\delta^{-2} + r^{-2\theta}\lambda + \delta + \lambda\big) + C(\Bar{\eps} + \hat{\eps})\delta^{-2},
\end{align*}
and letting $\lambda \rightarrow 0$, for all $t \in [0,T]$, we get
\begin{align}\label{eq:strong-convergence-I}
     \int_{\mathbb{T}^d}\Big(u^{\Bar{\eps}}(t) - u^{\hat{\eps}}(t)\Big)^+ \,dx &\le \int_{(\mathbb{T}^d)^2}\varrho_\delta(x-y)\big|u_0^{\Bar{\eps}}(y) - u_0^{\Bar{\eps}}(x) \big|\,dx\,dy
   \\ &+ C\big( r^{2-2\theta}\delta^{-2}  + \delta\big)
+ C(\Bar{\eps} + \hat{\eps})\delta^{-2}\,.\notag
\end{align}
Set $ r = \delta^c$, with $c > \frac{1}{1-\theta}$ in \eqref{eq:strong-convergence-I}. Given $\Bar{\delta} > 0$, we can fix  $\delta$ small enough such that the first and second term in the right hand side of \eqref{eq:strong-convergence-I} is bounded by $\frac{\Bar{\delta}}{2}$. Also,  we can find $\hat{\delta} > 0$ such that  for any $\Bar{\eps}, \hat{\eps} < \hat{\delta}$, the third term of \eqref{eq:strong-convergence-I} is bounded by $\frac{\Bar{\delta}}{2}.$ It infers that the sequence $\{u^{\Bar{\eps}}\}$ is Cauchy sequence in $L^1([0,T];L^1(\mathbb{T}^d))$. Thus, we arrive at the following proposition.
\begin{prop}\label{prop:strong-convergence}
There exists $u \in \mathcal{E}_1$ such that $u^{\Bar{\eps}} \rightarrow u$ as $\Bar{\eps} \rightarrow 0$ in $\mathcal{E}_1$. 
\end{prop}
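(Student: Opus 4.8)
The plan is to make rigorous the sketch already carried out in the lines preceding the statement: the estimate \eqref{eq:strong-convergence-I}, valid for every $t\in[0,T]$ with a right-hand side independent of $t$, contains all the quantitative information needed, and what remains is simply to organise the choice of the parameters $\delta, r, \Bar{\eps},\hat{\eps}$ so as to exhibit $\{u^{\Bar{\eps}}\}_{\Bar{\eps}>0}$ as a Cauchy net in $\mathcal{E}_1=L^1([0,T];L^1(\mathbb{T}^d))$, and then to invoke completeness of $\mathcal{E}_1$.

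First I would symmetrise: running the derivation of \eqref{eq:strong-convergence-I} with the roles of $\Bar{\eps}$ and $\hat{\eps}$ interchanged bounds $\int_{\mathbb{T}^d}(u^{\hat{\eps}}(t)-u^{\Bar{\eps}}(t))^+\,dx$ by the same right-hand side with $u_0^{\Bar{\eps}}$ replaced by $u_0^{\hat{\eps}}$; adding the two inequalities and using $(a-b)^+ +(b-a)^+=|a-b|$ gives, for all $t\in[0,T]$,
\begin{align*}
\|u^{\Bar{\eps}}(t)-u^{\hat{\eps}}(t)\|_{L^1(\mathbb{T}^d)}
&\le \int_{(\mathbb{T}^d)^2}\varrho_\delta(x-y)\big|u_0^{\Bar{\eps}}(x)-u_0^{\Bar{\eps}}(y)\big|\,dx\,dy
+\int_{(\mathbb{T}^d)^2}\varrho_\delta(x-y)\big|u_0^{\hat{\eps}}(x)-u_0^{\hat{\eps}}(y)\big|\,dx\,dy \\
&\quad+C\big(r^{2-2\theta}\delta^{-2}+\delta\big)+C(\Bar{\eps}+\hat{\eps})\delta^{-2}.
\end{align*}
Taking $r=\delta^{c}$ with $c>\tfrac1{1-\theta}$ makes $r^{2-2\theta}\delta^{-2}=\delta^{c(2-2\theta)-2}$, with positive exponent, so this term vanishes as $\delta\to0$.

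Next I would control each initial-data term uniformly in the viscosity parameter. By the change of variables $z=x-y$, the translation invariance of Lebesgue measure, and the triangle inequality,
\begin{align*}
\int_{(\mathbb{T}^d)^2}\varrho_\delta(x-y)\big|u_0^{\Bar{\eps}}(x)-u_0^{\Bar{\eps}}(y)\big|\,dx\,dy
\le 2\|u_0^{\Bar{\eps}}-u_0\|_{L^1(\mathbb{T}^d)}+\int_{\mathbb{T}^d}\varrho_\delta(z)\,\|u_0(\cdot-z)-u_0\|_{L^1(\mathbb{T}^d)}\,dz,
\end{align*}
where the first summand tends to $0$ as $\Bar{\eps}\to0$, since $u_0^{\Bar{\eps}}\to u_0$ in $L^2(\mathbb{T}^d)\hookrightarrow L^1(\mathbb{T}^d)$, and the second tends to $0$ as $\delta\to0$ because $\varrho_\delta$ is an approximate identity and $z\mapsto\|u_0(\cdot-z)-u_0\|_{L^1(\mathbb{T}^d)}$ is continuous and vanishes at $z=0$. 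Consequently, given $\Bar{\delta}>0$ I would first fix $\delta$ small enough that $C\delta$, $C\delta^{c(2-2\theta)-2}$ and $\int_{\mathbb{T}^d}\varrho_\delta(z)\|u_0(\cdot-z)-u_0\|_{L^1(\mathbb{T}^d)}\,dz$ are each $\le\Bar{\delta}/(8T)$, and then, keeping this $\delta$ fixed, choose $\hat{\delta}>0$ such that $2\|u_0^{\Bar{\eps}}-u_0\|_{L^1(\mathbb{T}^d)}\le\Bar{\delta}/(8T)$ and $C(\Bar{\eps}+\hat{\eps})\delta^{-2}\le\Bar{\delta}/(8T)$ whenever $\Bar{\eps},\hat{\eps}<\hat{\delta}$. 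Integrating the displayed inequality in $t$ then yields $\|u^{\Bar{\eps}}-u^{\hat{\eps}}\|_{\mathcal{E}_1}\le\Bar{\delta}$ for all $\Bar{\eps},\hat{\eps}<\hat{\delta}$; hence $\{u^{\Bar{\eps}}\}$ is Cauchy in $\mathcal{E}_1$, and since $\mathcal{E}_1$ is a Banach space it converges to some $u\in\mathcal{E}_1$, which is the claim (the identification of $u$ as the kinetic solution of \eqref{eq:skeleton} being the object of the subsequent steps).

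There is no serious obstacle at this stage, the heavy analysis (doubling of variables and the estimates \eqref{eq:bound-s-12}, \eqref{eq:bound-s-3}, \eqref{eq:bound-s^*}) having already been carried out. The one point demanding a little care is the uniform-in-$\Bar{\eps}$ smallness of the translation term $\int\varrho_\delta(x-y)|u_0^{\Bar{\eps}}(x)-u_0^{\Bar{\eps}}(y)|$: this cannot be achieved by shrinking $\delta$ alone, since the $L^1$-modulus of continuity of $u_0^{\Bar{\eps}}$ need not be uniform in $\Bar{\eps}$; one must peel off $u_0^{\Bar{\eps}}-u_0$ and exploit that the approximants $u_0^{\Bar{\eps}}$ all cluster around the single fixed datum $u_0$, which is precisely why $\delta$ must be fixed before $\hat{\delta}$ in the argument above.
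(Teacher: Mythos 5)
Your proof is correct and follows essentially the same route as the paper: it takes the Cauchy estimate \eqref{eq:strong-convergence-I}, chooses $r=\delta^{c}$ with $c>\frac{1}{1-\theta}$, fixes $\delta$ before the viscosity parameters, and concludes by completeness of $\mathcal{E}_1$. Your additional care with the initial-data term — peeling off $\|u_0^{\Bar{\eps}}-u_0\|_{L^1}$ so that the translation modulus is taken of the fixed datum $u_0$ rather than of $u_0^{\Bar{\eps}}$ — is a legitimate and welcome tightening of the paper's quantifier order, which as written asks $\delta$ alone to control a term that still depends on $\Bar{\eps}$.
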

\noindent{\bf Step II:~Uniform $L^2$-estimate.} In this step, we establish the uniform bound for  $u^{\Bar{\eps}}$ in  $L^\infty([0,T]; L^2(\mathbb{T}^d))$. Applying chain rule to the function $g(v) = \|v\|_{L^2(\mathbb{T}^d)}^2$, we obtain
\begin{align}\label{eq:chain-rule}
    &\|u^{\Bar{\eps}}(t)\|_{L^2(\mathbb{T}^d)}^2 =\, \|u_0\|_{L^2(\mathbb{T}^d)}^2 - 2\int_0^t
    \langle u^{\Bar{\eps}}, \text{div}F( u^{\Bar{\eps}}) \rangle\,ds \\
    & - 2\int_0^t
    \langle u^{\Bar{\eps}}, (-\Delta)^\theta{\tt \Phi}(u^{\Bar{\eps}}) \rangle\,ds\ +\,2\Bar{\eps}\int_0^t
    \langle u^{\Bar{\eps}}, \Delta u^{\Bar{\eps}} \rangle\, ds \notag \\
    & + 2\int_0^t\|u^{\Bar{\eps}}\|_{L^2(\mathbb{T}^d)}\|{\tt h}(u^{\Bar{\eps}})\|_{\mathcal{L}_2(\mathcal{H};L^2(\mathbb{T}^d))}|{\ell}(s)|_{\mathcal{H}}\,ds\notag =: \|u_0\|_{L^2(\mathbb{T}^d)}^2 + \sum_{i=1}^4{\mathcal{J}_i}.
\end{align}
By setting $\mathcal{F}(z) = \int_0^z F(\Bar{z})\,d\Bar{z}$, we see that $\mathcal{J}_1=0$. 
By \eqref{esti:nondecreasing-phi}, we see that
\begin{align}\label{eq:boundJ2}
    \mathcal{J}_2 & 
    \le -\frac{1}{\|{\tt \Phi}'\|_{L^\infty}}\int_0^t 
    \big[{\tt \Phi}(u^{\Bar{\eps}})\big]_{H^\theta}^2\,ds, \quad \mathcal{J}_3 = -2\Bar{\eps}\, \int_0^t\int_{\mathbb{T}^d}
    |\nabla u^{\Bar{\eps}}|^2\,dx\,ds. 
\end{align}
Since $\|{\tt h}(u)\|_{\mathcal{L}_2(\mathcal{H};L^2(\mathbb{T}^d))} \le C(1 + \|u\|_{L^2(\mathbb{T}^d)})$, by using Young's inequality and the condition ${\ell}\in S_N$, we bound $\mathcal{J}_4$:
\begin{align}\label{eq:boundJ4}
  \mathcal{J}_4  \le C\Big(1 + \int_0^t\|u^{\Bar{\eps}}\|_{L^2(\mathbb{T}^d)}^2|{\ell}(s)|_{\mathcal{H}}\,ds\Big).  
\end{align}
Since ${\ell}\in S_N$, an application of Gronwall's lemma along with \eqref{eq:boundJ2}, 
and \eqref{eq:boundJ4}, \eqref{eq:chain-rule} yields
\begin{align}\label{eq:bound-fractional-1}
&\|u^{\Bar{\eps}}\|_{L^2(\mathbb{T}^d)}^2 + \frac{1}{\|{\tt \Phi}'\|_{L^\infty}}\int_0^t 
\big[{\tt \Phi}(u^{\Bar{\eps}})\big]_{H^\theta}^2\,ds + 2\Bar{\eps}\, \int_0^t
\|\nabla u^{\Bar{\eps}}\|_{L^2(\mathbb{T}^d)}^2\,ds \le C.
\end{align}
Again, in view of \eqref{eq:bound-fractional-1}, we have 
\begin{align}\label{eq:bound-fractional}
\sup_{0\le t\le T}\|u^{\Bar{\eps}}\|_{L^2(\mathbb{T}^d)}^2  + 
\int_0^T 
\|{\tt \Phi}(u^{\Bar{\eps}})\|_{H^\theta}^2\,ds \le C.
\end{align}
Moreover, we  also need to show that the kinetic measure $m^{\Bar{\eps}}$ corresponding to $u^{\Bar{\eps}}$  have uniform decay. In this regard, we follow \cite{Hofmanova-2016} and define a sequence of functions $\varphi_{n}(y) \in C^2(\R),~n \in \N$  having quadratic growth at infinity: for $p\in [2, \infty)$
\begin{equation}\label{eq:smooth-approx-kinetic}
    \varphi_{n}(y) =
    \begin{cases}
        |y|^p, \quad |y| \le n,
        \\
        n^{p-2}\Big[\frac{p(p-2)}{2}y^2 - p(p-2)n|y| + \frac{(p-1)(p-2)}{2}n^2\Big], \quad |y| > n.
    \end{cases}
\end{equation}
One can easily deduce that
\begin{equation}\label{bounds-for-varphi}
\begin{cases}
    |y\varphi_n^\prime(y)| \le C\varphi_n(y),~ |\varphi_n^\prime(y)| \le p(1 +\varphi_n(y) ),~
     |\varphi_n^\prime(y)| \le |y|\varphi_n^{\prime\prime}(y),  \\
      y^2\varphi_n^{\prime\prime}(y) \le p(p-1)\varphi_n(y),~
    \varphi_n^{\prime\prime}(y) \le p(p-1)(1 + \varphi_n(y)),~\\( \varphi_n^{\prime\prime}(y))^{\frac{p}{p-2}} \le C\varphi_n(y).
\end{cases} 
\end{equation}
Applying chain rule to $\varphi_n(u^{\bar{\eps}})$, for all $t \in [0,T]$ we get
\begin{align}\label{eq:chain-rule-varphi}
    &\int_{\mathbb{T}^d}\varphi_n(u^{\bar{\eps}})\,dx = \int_{\mathbb{T}^d}\varphi_n(u_0)\,dx - \int_0^t\big\langle \varphi_n^{\prime}(u^{\bar{\eps}}), \text{div}F(u^{\bar{\eps}})\big\rangle\,ds  \\ & - \int_0^t\big\langle \varphi_n^{\prime}(u^{\bar{\eps}}), (-\Delta)^\theta(u^{\bar{\eps}})\big\rangle\,ds + 
    \Bar{\eps}\int_0^t \big\langle \varphi_n^{\prime}(u^{\bar{\eps}}), \Delta u^{\Bar{\eps}}\big\rangle\,ds \notag \\ &+ \int_0^t \big\langle \varphi_n^{\prime}(u^{\bar{\eps}}), {\tt h(u^{\bar{\eps}})}{\ell}(s) \big\rangle\,ds\, := \int_{\mathbb{T}^d}\varphi_n(u_0)\,dx + \sum_{i =1}^4\Bar{\mathcal{J}}_i\notag
\end{align}
Similarly, $\Bar{\mathcal{J}}_1 = 0$ and $\mathcal{J}_4$ can be bounded as in \eqref{eq:boundJ4}. Also, we have
\begin{align}
    &-\int_0^t \big\langle \varphi_n^{\prime}(u^{\bar{\eps}}), (-\Delta)^\theta(u^{\bar{\eps}})\big\rangle\,ds  = -\underset{\kappa \rightarrow 0}{\text{lim}}\int_0^t \int_{\mathbb{T}^d}\int_{\R}
    \varphi_n^{\prime\prime}(\xi)\,d\eta_\kappa^{\Bar{\eps}}(x,\xi,s),\notag \\
    & \Bar{\eps}\int_0^t \big\langle \varphi_n^{\prime}(u^{\bar{\eps}}), \Delta u^{\Bar{\eps}}\big\rangle\,ds = - \Bar{\eps}\int_0^t \int_{\mathbb{T}^d}\varphi_n^{\prime\prime}(u^{\bar{\eps}})|\nabla u^{\bar{\eps}}|^2\,dx\,ds,\notag
\end{align}
where $\eta_{\kappa}^{\Bar{\eps}} = \int_{\R^d}\big|{\tt \Phi}(u_\kappa^{\Bar{\eps}}(x+z,t)) - {\tt \Phi}(\xi)\big|{\bf 1}_{{\rm Conv}\{u_\kappa^{\Bar{\eps}}(x+z, t), u_\kappa^{\Bar{\eps}}(x,t)\}}\,(\xi)\gamma(z)\,dz$
 and $u_\kappa^{\Bar{\eps}}$ is the mollification of $u^{\Bar{\eps}}$ in the space variable (see, \cite[Propostion $3.6$]{AC_2}).  With these estimations in hand, an application of Gronwall's lemma  and sending $n \rightarrow \infty$, for $p =2$, from \eqref{eq:chain-rule-varphi} we get
 \begin{align*}
     \sup_{0\le t\le T}\|u^{\Bar{\eps}}(t)\|_{L^2(\mathbb{T}^d)}^2 + 2\int_0^t \int_{\mathbb{T}^d}\int_{\R}
   d\eta^{\Bar{\eps}}(x,\xi,s) + 2\Bar{\eps}\int_0^t \int_{\mathbb{T}^d}|\nabla u^{\bar{\eps}}|^2\,dx\,ds \le C,
 \end{align*}
 where $\eta^{\Bar{\eps}}$ is the $weak^*$ limit of $\eta_{\kappa}^{\Bar{\eps}}$ in $\mathcal{R}^+(\mathbb{T}^d \times [0,T] \times \R)$ (see \cite[Appendix A]{AC_2}). Setting
$m^{\Bar{\eps}} := \Bar{m}^{\Bar{\eps}} + \eta^{\Bar{\eps}}$, with $\Bar{m}^{\Bar{\eps}} = \Bar{\eps}|\nabla u^{\Bar{\eps}}|^2\delta_{u^{\Bar{\eps}} = \xi}$, we  have 
\begin{align}\label{eq:bound-kinetic-measure}
\underset{0 \le t \le T}{\text{sup}}\|u^{\Bar{\eps}}(t)\|_{L^2(\mathbb{T}^d)}^2  +\int_0^t\int_{\mathbb{T}^d}\int_{\R}\,dm^{\Bar{\eps}}(x,s,\xi) \le C, \quad \forall~t\in [0,T]\,.
\end{align}
\noindent{\bf Step III:~Convergence of approximate kinetic function.} Let us define Young measure on $\mathbb{T}^d \times [0,T]$ as $\mathcal{V}^{\Bar{\eps}_n}_{x,t}(\xi) = \delta_{u^{\Bar{\eps}_n}(t,x)= \xi}$ and $\mathcal{V}_{x,t}(\xi) = \delta_{u(x,t)= \xi}$, where $u$ is given in Proposition \ref{prop:strong-convergence}. By the uniform $L^2$-bound on $u^{\Bar{\eps}}$ in \eqref{eq:bound-fractional-1}, we have
\begin{align}
    \underset{t \in [0,T]}{sup}\int_{\mathbb{T}^d}\int_{\R}|\xi|^2\,d\mathcal{V}_{x,t}^{\Bar{\eps}_n}(\xi)\,dx\,\le C.\label{eq:youngs-measure-bound}
\end{align}
Define a kinetic function $f(x,t,\xi):=\mathcal{V}_{x,t}(\xi, \infty).$
A direct consequence of compactness result of Young measure and kinetic function from \cite[Theorem 5, Corollary 6, $\&$~ Propostion 4.3]{Vovelle2010} yields, $\mathcal{V}^{\Bar{\eps}_n} \rightarrow \mathcal{V}$ 
and $f^{\Bar{\eps}_n} \rightarrow f$ in $L^\infty(\mathbb{T}^d \times [0,T] \times \R)-weak^*,$ and  $\mathcal{V}$ satisfies \eqref{eq:youngs-measure-bound}. 

\noindent{\bf Step IV:~Convergence of Kinetic measure.}  Recall that, $\mathcal{R}^+(\mathbb{T}^d \times [0,T] \times \mathbb{R})$ is space of bounded non-negative Borel measures on $\mathbb{T}^d \times [0,T] \times \mathbb{R}$ equipped with the total variation norm of measure, and it is the dual space to $C_0(\mathbb{T}^d \times [0,T] \times \mathbb{R})$, the collection of all continuous function vanishing at infinity endowed with supremum norm. Due to \eqref{eq:bound-kinetic-measure}, we have for $p=2$,
$$|m^{\Bar{\eps}_n}|([0,T] \times \mathbb{T}^d \times \R) \le \int_0^T\int_{\mathbb{T}^d}\int_{\R}\,dm^{\Bar{\eps}_n}(x,s,\xi) \le C ,$$
where $|m^{\Bar{\eps}_n}|$ is the total variation of measure. Thus, $m^{\Bar{\eps}_n} \rightarrow m$~(up to a subsequence) in $\mathcal{R}^+(\mathbb{T}^d \times [0,T] \times \R)-weak^*$. It only remains to prove that $m$ is a kinetic measure. For this purpose, first we establish the vanishing property of $m$ for large enough $\xi$. Let $(\mathcal{Z}_{\delta_0})$ be a truncation on $\R$. From \eqref{eq:bound-kinetic-measure}, we have
\begin{align*}
&\int_0^t\int_{\mathbb{T}^d}\int_{\R}\,dm(x,s,\xi)  \le \underset{\delta_0 \rightarrow 0}{\lim\inf} \int_0^t\int_{\mathbb{T}^d}\int_{\R}\mathcal{Z}_{\delta_0}\,dm(x,s,\xi)\notag \\
& =  \underset{\delta_0 \rightarrow 0}{\lim\inf}\,  \underset{\Bar{\eps}_n \rightarrow 0}{\lim\inf} \int_0^t\int_{\mathbb{T}^d}\int_{\R}\mathcal{Z}_{\delta_0}\,dm^{\Bar{\eps}_n}(x,s,\xi) \le C.
\end{align*}
It infers that $m$ vanishes for large $\xi$. A similar arguments as in \cite[Lemma 3.8]{AC_2} yields that
$$ m(x,t,\xi) \ge \eta(x,t,\xi):=\int_{\R}\big|{\tt \Phi}(u(x+z,t)) - {\tt \Phi}(\xi)\big| {\bf 1}_{{\rm Conv}\{u(x+z, t), u(x,t)\}}\,(\xi)\lambda(z)\,dz\,.$$
\noindent\textbf{ Step-V:~Passing to the limit along a subsequence:} In view of strong convergence of $u^{\Bar{\eps}}$ (cf.~ Proposition \ref{prop:strong-convergence}), the convergence results form Steps ${\rm III}\& {\rm IV}$, and \cite[Lemma $2.1 ~\&$ Propostition $4.9$]{Vovelle-2020}, we pass to the limit in \eqref{eq:kinetic-formulation-regularized-viscous} and conclude that $f(x,t,\xi) ={\bf 1}_{u(x,t) > \xi}$  satisfies \eqref{eq:skeleton-k-ref}. In other words, $u(t)$ is a kinetic solution of skeleton equation \eqref{eq:skeleton}. Thus, we conclude the following theorem.
\begin{thm}
    Let $u_0 \in L^2(\mathbb{T}^d)$ and the assumptions \ref{A1}-\ref{A3} be true. Then for any $t \in [0,T]$, \eqref{eq:skeleton} has a kinetic solution $u_{\ell}$.
\end{thm}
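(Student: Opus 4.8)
The plan is to construct a kinetic solution of \eqref{eq:skeleton} as the vanishing--viscosity limit of the solutions $u^{\Bar{\eps}}=u_{{\ell},\Bar{\eps}}$ of the regularized problem \eqref{eq:regularized-skeleton}, whose existence and uniqueness have already been established (existence through the fourth-order singular perturbation \eqref{eq:singular perturbation-skeleton} plus the compactness of Lemma \ref{lem:compactness-gamma-eps}, uniqueness through doubling of variables). First I would record the two uniform bounds enjoyed by the family $\{u^{\Bar{\eps}}\}_{\Bar{\eps}>0}$: the energy estimate \eqref{eq:bound-fractional}, which gives $\sup_{\Bar{\eps}}\big(\sup_{t}\|u^{\Bar{\eps}}(t)\|_{L^2(\mathbb{T}^d)}^2+\int_0^T\big[{\tt \Phi}(u^{\Bar{\eps}})\big]_{H^\theta}^2\,ds\big)\le C$, and the uniform decay of the associated kinetic measures $m^{\Bar{\eps}}$, namely \eqref{eq:bound-kinetic-measure}. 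Both come from the chain-rule computations of Step II together with the monotonicity inequality \eqref{esti:nondecreasing-phi}, the sublinear growth of ${\tt h}$ from \ref{A3}, the constraint ${\ell}\in S_N$, and Gronwall's lemma (the kinetic-measure bound additionally uses the truncation functions $\varphi_n$ of \eqref{eq:smooth-approx-kinetic} and the mollified defect $\eta_\kappa^{\Bar{\eps}}$).

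Next I would establish strong convergence. Mimicking the proof of Theorem \ref{thm:contraction-principle}, apply the doubling-of-variables inequality from Proposition \ref{prop:doubling-variable} to two viscous solutions $u^{\Bar{\eps}}$, $u^{\hat{\eps}}$; the extra viscous contributions $\mathcal S$, $\mathcal S^*$ are either non-positive or of order $(\Bar{\eps}+\hat{\eps})\delta^{-2}$ (see \eqref{eq:bound-s^*}), so combining with the bounds \eqref{eq:bound-s-12} and \eqref{eq:bound-s-3} one arrives at \eqref{eq:strong-convergence-I}. Choosing $r=\delta^c$ with $c>\tfrac1{1-\theta}$, then sending $\lambda\to0$, optimizing over $\delta$, and finally taking $\Bar{\eps},\hat{\eps}$ small shows that $\{u^{\Bar{\eps}}\}$ is Cauchy in $\mathcal E_1=L^1([0,T];L^1(\mathbb{T}^d))$; hence $u^{\Bar{\eps}}\to u$ in $\mathcal E_1$ for some $u\in\mathcal E_1$, which is Proposition \ref{prop:strong-convergence}.

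Then I would pass to the limit in the kinetic formulation \eqref{eq:kinetic-formulation-regularized-viscous}. Writing $\mathcal V^{\Bar{\eps}}_{x,t}=\delta_{u^{\Bar{\eps}}(x,t)=\xi}$, the uniform bound \eqref{eq:bound-fractional-1} gives \eqref{eq:youngs-measure-bound}, so by the compactness of Young measures and kinetic functions (\cite[Theorem 5, Corollary 6, Proposition 4.3]{Vovelle2010}) one gets $\mathcal V^{\Bar{\eps}}\to\mathcal V$ and $f^{\Bar{\eps}}\to f:={\bf 1}_{u>\xi}$ in $L^\infty(\mathbb{T}^d\times[0,T]\times\R)$-weak$^\star$; since $m^{\Bar{\eps}}$ is bounded in total variation by \eqref{eq:bound-kinetic-measure}, along a subsequence $m^{\Bar{\eps}}\to m$ in $\mathcal R^+(\mathbb{T}^d\times[0,T]\times\R)$-weak$^\star$. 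A truncation and lower-semicontinuity argument shows $m$ vanishes at infinity, and arguing as in \cite[Lemma 3.8]{AC_2} one checks $m\ge\eta_{\ell}$ with $\eta_{\ell}$ as in Definition \ref{defi:kinetic-solution-skeleton-equation}. Passing to the limit term by term in \eqref{eq:kinetic-formulation-regularized-viscous}---using strong convergence of $u^{\Bar{\eps}}$ with the Lipschitz continuity of $F$ and ${\tt \Phi}$ for the flux and fractional terms, the vanishing of $\Bar{\eps}\int_0^t\langle f^{\Bar{\eps}}(s),\Delta\psi\rangle\,ds$, the Young-measure convergence for the $\sum_k{\tt h}_k{\ell}_k$ term, and the weak$^\star$ convergence of $m^{\Bar{\eps}}$ (together with \cite[Lemma 2.1, Proposition 4.9]{Vovelle-2020} to treat the terminal time $t$)---shows that $f={\bf 1}_{u>\xi}$ satisfies \eqref{eq:skeleton-k-ref}, i.e. $u=u_{\ell}$ is a kinetic solution of \eqref{eq:skeleton}.

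The main obstacle is the non-linear fractional diffusion term. At the viscous level it forbids monotone-operator or semi-implicit time-discretization arguments, which is why the fourth-order perturbation \eqref{eq:singular perturbation-skeleton} is introduced and then sent to zero via a-priori estimates and compactness. In the strong-convergence step the fractional contribution $\mathcal S_{2,\lambda,\delta}$ must be controlled through the $H^\theta$-regularity of ${\tt \Phi}(u^{\Bar{\eps}})$ with the delicate three-parameter balance between the macroscopic scale $\delta$, the velocity scale $\lambda$ and the auxiliary radius $r$; getting the exponents in $C(r^{2-2\theta}\delta^{-2}+r^{-2\theta}\lambda)$ so that everything closes after the successive limits $\lambda\to0$, $r\to0$, $\delta\to0$ is the technical heart of the argument. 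The second delicate point is recovering the defect-measure inequality $m\ge\eta_{\ell}$ in the limit, which requires the mollification argument for the non-local term and a lower-semicontinuity passage under weak$^\star$ convergence.
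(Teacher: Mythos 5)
Your proposal follows essentially the same route as the paper's own proof: vanishing viscosity from the regularized problem \eqref{eq:regularized-skeleton}, the Cauchy property in $\mathcal{E}_1$ via doubling of variables with the $(\delta,\lambda,r)$ balance and the $C(\Bar{\eps}+\hat{\eps})\delta^{-2}$ viscous error, the uniform $L^2$ and kinetic-measure bounds from Step II, Young-measure/kinetic-function compactness, weak$^\star$ convergence of $m^{\Bar{\eps}}$ with the lower-semicontinuity check that $m\ge\eta_\ell$, and the term-by-term limit passage in \eqref{eq:kinetic-formulation-regularized-viscous} using \cite{Vovelle-2020}. The only differences are cosmetic (ordering of the a-priori-estimate and strong-convergence steps), so the argument is correct as proposed.
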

\section{Large deviation principle}\label{sec:LDP}
In this section, we prove large deviation principle for $u^\eps$ i.e., Theorem \ref{thm:ldp}. To do so, we wish to validate ${\rm i)}$ and ${\rm ii)}$ of Condition \ref{cond:2}. 
\subsection{Validation of ${\rm ii)}$ of Condition \ref{cond:2}}\label{subsec:Validation-of-continuity-of-g-nut}
Let ${\ell}, \{{\ell}_\eps\} \subset S_N$ be such that ${\ell}_\eps$ converges to ${\ell}$ weakly in $L^2([0,T]; \mathcal{H})$. Let $u_{{\ell}_\eps}$ be the kinetic solution of \eqref{eq:skeleton} for ${\ell}={\ell}_\eps$, and $u_{{\ell}_\eps}^{\Bar{\eps}}$ be the unique weak solution of \eqref{eq:regularized-skeleton}
with ${\ell}$ replaced by ${\ell}_\eps$. To validate ${\rm ii)}$ of Condition \ref{cond:2}, it suffices to  show the convergence of $u_{{\ell}_\eps}$ to the kinetic solution $u_{\ell}$ of \eqref{eq:skeleton} in $\mathcal{E}_1$. Since, for any $\Bar{\eps}, \eps > 0$
\begin{align}\label{inq:trinagle-imp}
    &\|u_{{\ell}_\eps} - u_{\ell}\|_{\mathcal{E}_1} \le   \|u_{{\ell}_\eps}^{\Bar{\eps}} - u_{{\ell}_\eps}\|_{\mathcal{E}_1} + \|u_{{\ell}_\eps}^{\Bar{\eps}} - u_{{\ell}}^{\Bar{\eps}}\|_{\mathcal{E}_1} + \|u_{{\ell}}^{\Bar{\eps}} - u_{\ell}\|_{\mathcal{E}_1},
\end{align}
we need to show that right hand side of \eqref{inq:trinagle-imp} goes to zero as $\eps, \Bar{\eps}$ tend to zero. 
\begin{prop}\label{prop:compactness}
    For any $\Bar{\eps} >0$, $\{u_{{\ell}_\eps}^{\Bar{\eps}}; \eps > 0\}$ is compact in $L^2([0,T]; L^2(\mathbb{T}^d)).$
\end{prop}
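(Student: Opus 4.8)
The plan is to reuse the compactness result of Theorem \ref{thm:compactness}, exactly as in the proof of Lemma \ref{lem:compactness-gamma-eps}, but now with the triple $\mathcal{Y}_0 := H^1(\mathbb{T}^d) \subset \mathcal{Y} := L^2(\mathbb{T}^d) \subset \mathcal{Y}_1 := H^{-2}(\mathbb{T}^d)$; here $H^1(\mathbb{T}^d) \hookrightarrow L^2(\mathbb{T}^d)$ is compact and $H^1(\mathbb{T}^d)$, $H^{-2}(\mathbb{T}^d)$ are reflexive. Thus it suffices to bound $\{u_{{\ell}_\eps}^{\Bar{\eps}}\}_{\eps>0}$, \emph{uniformly in} $\eps$, in $L^2([0,T];H^1(\mathbb{T}^d)) \cap W^{\beta,2}([0,T];H^{-2}(\mathbb{T}^d))$ for some $\beta \in (0,\frac{1}{2})$. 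The recurring point is that every constant below must be independent of $\eps$ (it may depend on the fixed $\Bar{\eps}$ and on $N$), which is precisely where the constraint ${\ell}_\eps \in S_N$, hence $\int_0^T \|{\ell}_\eps(s)\|_{\mathcal{H}}^2\,ds \le N$, is used.

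First I would derive the uniform energy estimate. Applying the chain rule to $v \mapsto \|v\|_{L^2(\mathbb{T}^d)}^2$ along \eqref{eq:regularized-skeleton} with ${\ell}$ replaced by ${\ell}_\eps$, the transport term drops out since $\int_{\mathbb{T}^d} \nabla u_{{\ell}_\eps}^{\Bar{\eps}} \cdot F(u_{{\ell}_\eps}^{\Bar{\eps}})\,dx = 0$, the nonlocal term produces $-\frac{1}{\|{\tt \Phi}'\|_\infty}\int_0^t [{\tt \Phi}(u_{{\ell}_\eps}^{\Bar{\eps}})]_{H^\theta}^2\,ds$ by \eqref{esti:nondecreasing-phi}, the viscous term produces $-2\Bar{\eps}\int_0^t \|\nabla u_{{\ell}_\eps}^{\Bar{\eps}}\|_{L^2(\mathbb{T}^d)}^2\,ds$, and the control term is absorbed via Young's inequality together with \ref{A3} and ${\ell}_\eps \in S_N$. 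Gronwall's lemma then yields, exactly as in \eqref{eq:bound-fractional-1},
\[
\sup_{0\le t\le T}\|u_{{\ell}_\eps}^{\Bar{\eps}}(t)\|_{L^2(\mathbb{T}^d)}^2 + \Bar{\eps}\int_0^T\|\nabla u_{{\ell}_\eps}^{\Bar{\eps}}(s)\|_{L^2(\mathbb{T}^d)}^2\,ds + \int_0^T [{\tt \Phi}(u_{{\ell}_\eps}^{\Bar{\eps}})]_{H^\theta}^2\,ds \le C,
\]
with $C$ independent of $\eps$. Since $\Bar{\eps}$ is fixed, $\{u_{{\ell}_\eps}^{\Bar{\eps}}\}_{\eps>0}$ is bounded in $L^\infty([0,T];L^2(\mathbb{T}^d)) \cap L^2([0,T];H^1(\mathbb{T}^d))$ uniformly in $\eps$.

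Next I would establish the fractional-in-time estimate. Writing \eqref{eq:regularized-skeleton} in integral form, $u_{{\ell}_\eps}^{\Bar{\eps}}(t) = u_0^{\Bar{\eps}} + \sum_{i=1}^4 \mathcal{B}_i^\eps(t)$, where $\mathcal{B}_1^\eps,\dots,\mathcal{B}_4^\eps$ are the time integrals of $-\text{div}F(u_{{\ell}_\eps}^{\Bar{\eps}})$, $-(-\Delta)^\theta[{\tt \Phi}(u_{{\ell}_\eps}^{\Bar{\eps}})]$, $\Bar{\eps}\Delta u_{{\ell}_\eps}^{\Bar{\eps}}$ and ${\tt h}(u_{{\ell}_\eps}^{\Bar{\eps}}){\ell}_\eps$, I would estimate each increment in the $H^{-2}(\mathbb{T}^d)$ norm verbatim as for $\mathcal{A}_2^{\gamma,\Bar{\eps}}$, $\mathcal{A}_3^{\gamma,\Bar{\eps}}$, $\mathcal{A}_4^{\gamma,\Bar{\eps}}$, $\mathcal{A}_6^{\gamma,\Bar{\eps}}$ in the proof of Lemma \ref{lem:compactness-gamma-eps} (the $\Delta^2$-term being absent here): by duality with test functions in the unit ball of $H^2(\mathbb{T}^d)$ and integration by parts, $\|\text{div}F(u_{{\ell}_\eps}^{\Bar{\eps}})(s)\|_{H^{-2}} \le C(1+\|u_{{\ell}_\eps}^{\Bar{\eps}}(s)\|_{L^2})$ from \ref{A1}, $\|(-\Delta)^\theta[{\tt \Phi}(u_{{\ell}_\eps}^{\Bar{\eps}})](s)\|_{H^{-2}} \le C\|u_{{\ell}_\eps}^{\Bar{\eps}}(s)\|_{L^2}$ from \ref{A2} and $\|(-\Delta)^\theta[v]\|_{L^2}\le C\|v\|_{H^2}$, $\|\Delta u_{{\ell}_\eps}^{\Bar{\eps}}(s)\|_{H^{-2}} \le C\|\nabla u_{{\ell}_\eps}^{\Bar{\eps}}(s)\|_{L^2}$, and $\|{\tt h}(u_{{\ell}_\eps}^{\Bar{\eps}}){\ell}_\eps(s)\|_{H^{-2}} \le C\|{\ell}_\eps(s)\|_{\mathcal{H}}(1+\|u_{{\ell}_\eps}^{\Bar{\eps}}(s)\|_{L^2})$ from \ref{A3}. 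Combining these with the energy bound, Jensen's inequality and $\int_0^T \|{\ell}_\eps\|_{\mathcal{H}}^2\,ds \le N$ gives $\|\mathcal{B}_i^\eps(t)-\mathcal{B}_i^\eps(s)\|_{H^{-2}}^2 \le C|t-s|$ for each $i$, whence $\|u_{{\ell}_\eps}^{\Bar{\eps}}\|_{W^{\beta,2}([0,T];H^{-2}(\mathbb{T}^d))} \le C$ uniformly in $\eps$ for every $\beta \in (0,\frac{1}{2})$. Applying Theorem \ref{thm:compactness} with $\mathcal{Y}_0 = H^1(\mathbb{T}^d)$, $\mathcal{Y} = L^2(\mathbb{T}^d)$, $\mathcal{Y}_1 = H^{-2}(\mathbb{T}^d)$, $p = 2$, the $\eps$-uniformly bounded family $\{u_{{\ell}_\eps}^{\Bar{\eps}}\}_{\eps>0}$ of $L^2([0,T];H^1(\mathbb{T}^d)) \cap W^{\beta,2}([0,T];H^{-2}(\mathbb{T}^d))$ is relatively compact in $L^2([0,T];L^2(\mathbb{T}^d))$, which is the claim.

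As for difficulties, there is no genuine obstacle: the proof is a transcription of Lemma \ref{lem:compactness-gamma-eps} with the $\gamma\Delta^2$-term deleted and the fixed control replaced by the family $\{{\ell}_\eps\}$. The only mild points requiring care are (i) keeping all constants independent of $\eps$, which is guaranteed by ${\ell}_\eps \in S_N$, and (ii) handling the nonlinear nonlocal term $(-\Delta)^\theta[{\tt \Phi}(u_{{\ell}_\eps}^{\Bar{\eps}})]$ in the negative Sobolev norm, which is done by shifting $(-\Delta)^\theta$ onto the smooth test function by self-adjointness and using the $L^2$-bound on ${\tt \Phi}(u_{{\ell}_\eps}^{\Bar{\eps}})$ coming from \ref{A2} and the energy estimate above.
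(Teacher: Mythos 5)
Your proof is correct and follows essentially the same route as the paper: the uniform energy estimate \eqref{eq:bound-fractional-1} for the $L^2([0,T];H^1)$ bound, term-by-term increment estimates on the integral form of \eqref{eq:regularized-skeleton} for the $W^{\beta,2}$ bound, and Theorem \ref{thm:compactness}. The only (harmless) deviation is that you place the time-regularity estimates in $H^{-2}(\mathbb{T}^d)$, whereas the paper works in $H^{-1}(\mathbb{T}^d)$ and controls the nonlocal term via the uniform $H^\theta$ bound on ${\tt \Phi}(u_{{\ell}_\eps}^{\Bar{\eps}})$ from \eqref{eq:bound-fractional}; both choices of $\mathcal{Y}_1$ are admissible in Theorem \ref{thm:compactness}.
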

\begin{proof}
From \eqref{eq:regularized-skeleton}, we have
\begin{align}
     &u_{{\ell}_\eps}^{\Bar{\eps}}(t) = u_0 - \int_0^t\text{div}F(u_{{\ell}_\eps}^{\Bar{\eps}}(r,x))\,dr -\int_0^t (-\Delta)^\theta[{\tt \Phi}(u_{{\ell}_\eps}^{\Bar{\eps}}(r,\cdot))](x)\,dr \notag \\ &\hspace{2cm}+ \, \Bar{\eps}\int_0^t\Delta u_{{\ell}_\eps}^{\Bar{\eps}}(r)\,dr + \int_0^t{\tt h}(u_{{\ell}_\eps}^{\Bar{\eps}}(r,x)){\ell}_\eps(r)\,dr\,
     =: \sum_{i=1}^{5}\mathcal{A}_i^\eps(t)\,.\notag
\end{align}    
Observe that 
\begin{align}\label{eq:bound-A3}
    & \|(-\Delta)^\theta[{\tt \Phi}(u_{{\ell}_\eps}^{\Bar{\eps}}(r,\cdot))]\|_{H^{-1}(\mathbb{T}^d)} = \underset{\|v\|_{H^1(\mathbb{T}^d)} \le 1}{\sup} \big|\big\langle v, (-\Delta)^\theta[{\tt \Phi}(u_{{\ell}_\eps}^{\Bar{\eps}}(r,\cdot))](\cdot)\big\rangle\big|\\
        & \le \underset{\|v\|_{H^1(\mathbb{T}^d)} \le 1}{\sup}C(d,\theta)\Big(\|v\|_{H^1(\mathbb{T}^d)} \times \|{\tt \Phi}(u_{{\ell}_\eps}^{\Bar{\eps}}(r,\cdot))\|_{H^\theta(\mathbb{T}^d)}\Big)\le C \|{\tt \Phi}(u_{{\ell}_\eps}^{\Bar{\eps}}(r,\cdot))\|_{H^\theta(\mathbb{T}^d)}. \notag  
\end{align}
Invoking Jensen's inequality, \eqref{eq:bound-fractional} and \eqref{eq:bound-A3}, we have 
\begin{align*}
    \|\mathcal{A}_3^\eps(t) - \mathcal{A}_3^\eps(s)\|_{H^{-1}(\mathbb{T}^d)}^2
    & \le C(t-s)\int_s^t \|{\tt \Phi}(u_{{\ell}_\eps}^{\Bar{\eps}}(r,\cdot))\|_{H^{\theta}}^2\,dr  \le C(t-s),
\end{align*}
and therefore, for $\beta \in (0,\frac{1}{2})$, $~~ \underset{\eps}{\sup}\,\|\mathcal{A}_3^\eps\|_{W^{\beta,2}([0,T]; H^{-1}(\mathbb{T}^d))}^2  \le C(\beta, \|u_0\|)$.
By the assumption \ref{A3}, we have
\begin{align}\label{eq:bound-nosie-coefficent}
\|{\tt h}(u_{{\ell}_\eps}^{\Bar{\eps}}){\ell}_\eps(r)\|_{H^{-1}} 
& \le C \|{\tt h}(u_{{\ell}_\eps}^{\Bar{\eps}})\|_{\mathcal{L}_2(\mathcal{H},L^2(\mathbb{T}^d))}|{\ell}_\eps(r)|_{\mathcal{H}} \le C(1 + \|u_{{\ell}_\eps}^{\Bar{\eps}}\|_{L^2(\mathbb{T}^d)})|{\ell}_\eps(r)|_{\mathcal{H}}.
\end{align} 
Thanks to \eqref{eq:bound-nosie-coefficent}, Jensen inequality and the fact that ${\ell}_\eps \in S_N$,
\begin{align*}
    \|\mathcal{A}_5^\eps(t) - \mathcal{A}_5^\eps(s)\|_{H^{-1}}^2 
    & \le C(t-s)\Big(1 + \underset{t \in [0,T]}{sup}\|u_{{\ell}_\eps}^{\Bar{\eps}}(t)\|_{L^2(\mathbb{T}^d)}^2\Big)\int_s^t|{\ell}_\eps(r)|_{\mathcal{H}}^2\,dr \le C(t-s).
\end{align*}
Thus, there exists a constant $C>0$, independent of $\Bar{\eps}, \eps$ such that 
\begin{align*}
    \underset{\eps}{\sup}\|\mathcal{A}_5^\eps\|_{W^{\beta,2}([0,T]; H^{-1}(\mathbb{T}^d))} \le C(\beta) \quad \beta \in (0, \frac{1}{2}). 
\end{align*}
With \eqref{eq:bound-fractional-1} in hand, one can follow similar line of arguments as done in Lemma \ref{lem:compactness-gamma-eps} to bound  the remaining terms. Thus, 
for $\beta \in (0, \frac{1}{2})$, we have
$$\underset{\eps}{\sup}\,\|u_{{\ell}_\eps}^{\Bar{\eps}}\|_{W^{\beta,2}([0,T]; H^{-1}(\mathbb{T}^d))}^2 \le C(\beta).$$
By using \eqref{eq:bound-fractional-1} and applying Theorem \ref{thm:compactness}, we conclude the desire compactness. 
\end{proof}
\begin{prop}\label{prop:convergence}
    Let ${\ell}_\eps \rightarrow {\ell}$ weakly in $L^2([0,T];\mathcal{H})$, then, for fixed $\Bar{\eps}>0$, $$\underset{\eps \rightarrow 0}{\lim}\|u_{{\ell}_\eps}^{\Bar{\eps}} - u_{\ell}^{\Bar{\eps}}\|_{L^1([0,T];L^1(\mathbb{T}^d))}= 0.$$
\end{prop}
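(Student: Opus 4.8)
The plan is to deduce the result from the compactness of $\{u_{{\ell}_\eps}^{\Bar{\eps}}\}$ established in Proposition~\ref{prop:compactness}, the uniform a-priori bound \eqref{eq:bound-fractional-1}, and the uniqueness of the viscous problem \eqref{eq:regularized-skeleton}, via a subsequence argument. Fix $\Bar{\eps}>0$. Since ${\ell}_\eps\in S_N$, the estimate \eqref{eq:bound-fractional-1} holds uniformly in $\eps$, so $\{u_{{\ell}_\eps}^{\Bar{\eps}}\}$ is bounded in $L^\infty([0,T];L^2(\mathbb{T}^d))$ and (as $\Bar{\eps}$ is fixed) $\{\nabla u_{{\ell}_\eps}^{\Bar{\eps}}\}$ is bounded in $L^2([0,T];L^2(\mathbb{T}^d))$; together with Proposition~\ref{prop:compactness}, any subsequence of $\{u_{{\ell}_\eps}^{\Bar{\eps}}\}$ admits a further subsequence (not relabelled) converging to some $v^{\Bar{\eps}}$ strongly in $L^2([0,T];L^2(\mathbb{T}^d))$ and weak-$\ast$ in $L^\infty([0,T];L^2(\mathbb{T}^d))$, with $\nabla u_{{\ell}_\eps}^{\Bar{\eps}}\rightharpoonup\nabla v^{\Bar{\eps}}$ in $L^2([0,T];L^2(\mathbb{T}^d))$.

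Next I would pass to the limit $\eps\to0$ in the weak formulation of \eqref{eq:regularized-skeleton} satisfied by $u_{{\ell}_\eps}^{\Bar{\eps}}$ (with ${\ell}$ replaced by ${\ell}_\eps$): for every $\psi\in C_c^\infty(\mathbb{T}^d)$ and $t\in[0,T]$,
\begin{align*}
\langle u_{{\ell}_\eps}^{\Bar{\eps}}(t),\psi\rangle-\langle u_0^{\Bar{\eps}},\psi\rangle
&=\int_0^t\langle F(u_{{\ell}_\eps}^{\Bar{\eps}}(r)),\nabla\psi\rangle\,dr-\int_0^t\langle{\tt \Phi}(u_{{\ell}_\eps}^{\Bar{\eps}}(r)),(-\Delta)^\theta[\psi]\rangle\,dr\\
&\quad+\Bar{\eps}\int_0^t\langle u_{{\ell}_\eps}^{\Bar{\eps}}(r),\Delta\psi\rangle\,dr+\int_0^t\langle{\tt h}(u_{{\ell}_\eps}^{\Bar{\eps}}(r)){\ell}_\eps(r),\psi\rangle\,dr.
\end{align*}
The first three integrals converge to the corresponding quantities for $v^{\Bar{\eps}}$: the flux and fractional-diffusion terms by the Lipschitz continuity of $F$ and ${\tt \Phi}$ (assumptions \ref{A1}--\ref{A2}), the bound $\|(-\Delta)^\theta[\psi]\|_{L^\infty(\mathbb{T}^d)}<\infty$ and the strong $L^2$-convergence, exactly as in the existence proof for \eqref{eq:regularized-skeleton}; the viscous term is linear in $u_{{\ell}_\eps}^{\Bar{\eps}}$. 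The delicate point is $\int_0^t\langle{\tt h}(u_{{\ell}_\eps}^{\Bar{\eps}}(r)){\ell}_\eps(r),\psi\rangle\,dr$, where we only have weak convergence of ${\ell}_\eps$. Splitting ${\tt h}(u_{{\ell}_\eps}^{\Bar{\eps}}){\ell}_\eps-{\tt h}(v^{\Bar{\eps}}){\ell}=\big({\tt h}(u_{{\ell}_\eps}^{\Bar{\eps}})-{\tt h}(v^{\Bar{\eps}})\big){\ell}_\eps+{\tt h}(v^{\Bar{\eps}})\big({\ell}_\eps-{\ell}\big)$, the first piece is controlled by $\|{\tt h}(u_{{\ell}_\eps}^{\Bar{\eps}})-{\tt h}(v^{\Bar{\eps}})\|_{L^2([0,T];\mathcal{L}_2(\mathcal{H},L^2(\mathbb{T}^d)))}\,\|{\ell}_\eps\|_{L^2([0,T];\mathcal{H})}$, which vanishes by \ref{A3}, the strong convergence of $u_{{\ell}_\eps}^{\Bar{\eps}}$ and ${\ell}_\eps\in S_N$; the second piece equals $\int_0^T{\bf 1}_{[0,t]}(r)\langle{\ell}_\eps(r)-{\ell}(r),{\tt h}(v^{\Bar{\eps}}(r))^\ast\psi\rangle_{\mathcal{H}}\,dr$, which tends to $0$ because ${\ell}_\eps\rightharpoonup{\ell}$ in $L^2([0,T];\mathcal{H})$ and, by \ref{A3} together with $v^{\Bar{\eps}}\in L^\infty([0,T];L^2(\mathbb{T}^d))$, the map $r\mapsto{\bf 1}_{[0,t]}(r){\tt h}(v^{\Bar{\eps}}(r))^\ast\psi$ lies in $L^2([0,T];\mathcal{H})$. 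Passing to the limit shows that $v^{\Bar{\eps}}$ is a weak solution of \eqref{eq:regularized-skeleton} with parameter ${\ell}$ and initial datum $u_0^{\Bar{\eps}}$.

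Finally, the uniqueness of the viscous solution — the $L^1$-contraction \eqref{eq:uniquness-vis-eq} applied with identical initial data $u_0^{\Bar{\eps}}$ — forces $v^{\Bar{\eps}}=u_{\ell}^{\Bar{\eps}}$. Since this limit is independent of the chosen subsequence, a standard subsequence argument gives $u_{{\ell}_\eps}^{\Bar{\eps}}\to u_{\ell}^{\Bar{\eps}}$ in $L^2([0,T];L^2(\mathbb{T}^d))$ as $\eps\to0$; as $\mathbb{T}^d$ has finite measure, H\"older's inequality upgrades this to convergence in $L^1([0,T];L^1(\mathbb{T}^d))$, which is the claim. I expect the limit passage in the noise term — reconciling the strong convergence of $u_{{\ell}_\eps}^{\Bar{\eps}}$ with the merely weak convergence of ${\ell}_\eps$ through the strong-times-weak splitting above — to be the only genuinely non-routine step; the rest mirrors the well-posedness theory for \eqref{eq:regularized-skeleton} developed earlier in this section.
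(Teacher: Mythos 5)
Your proposal is correct and follows essentially the same route as the paper: extraction of a strongly $L^2$-convergent subsequence via Proposition \ref{prop:compactness} and the uniform bounds, limit passage in the weak formulation with the strong-times-weak splitting of the controlled drift term, and identification of the limit through the uniqueness of \eqref{eq:regularized-skeleton}. The only cosmetic difference is in the second piece of the drift: you test the weak convergence ${\ell}_\eps\rightharpoonup{\ell}$ against the fixed element $r\mapsto{\bf 1}_{[0,t]}(r)\,{\tt h}(v^{\Bar{\eps}}(r))^{\ast}\psi$ of $L^2([0,T];\mathcal{H})$, whereas the paper invokes compactness of the Hilbert--Schmidt operator ${\tt h}({\pmb u})$ — two formulations of the same fact.
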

\begin{proof}
The required result can be accomplished if we infer that for any sequence $\eps_m$, there exist a subsequence $\eps_{m_k}$ such that 
$
   \underset{k \rightarrow \infty}{\lim}\,\|u_{{\ell}_{\eps_{m_k}}}^{\Bar{\eps}}- u_{\ell}^{\Bar{\eps}}\|_{\mathcal{E}_1}= 0$,
where $u_{{\ell}}^{\Bar{\eps}}$ is the weak solution of \eqref{eq:regularized-skeleton}.
By \eqref{eq:bound-fractional} and Proposition \ref{prop:compactness}, we have for any sequence $\eps_m \rightarrow 0$, there exist a subsequence $\{\eps_{m_k}\}_{\{k \ge 1\}}$ and an element ${\pmb u} \in L^\infty([0,T]; L^2(\mathbb{T}^d))$
$ \cap L^2([0,T];H^\theta(\mathbb{T}^d))$ such that 
\begin{align}\label{eq:uhat-convergence}
     &u_{{\ell}_{\eps_{m_k}}}^{\Bar{\eps}} \rightarrow {\pmb u}~~~\text{in}~~~L^2([0,T]; L^2(\mathbb{T}^d)), \\ & 
     u_{{\ell}_{\eps_{m_k}}}^{\Bar{\eps}} \rightharpoonup {\pmb u}~~~\text{in}~~~ L^\infty([0,T]; L^2(\mathbb{T}^d)) \cap  L^2([0,T]; H^\theta(\mathbb{T}^d))\,.\notag
\end{align}    
For any test function $\psi \in C_c^2(\mathbb{T}^d)$, it holds that 
\begin{align}\label{eq:prop-convergence}
    &\langle u_{{\ell}_{\eps_{m_k}}}^{\Bar{\eps}}(t), \psi \rangle - \langle u_0, \psi \rangle = -\int_0^t\langle F(u_{{\ell}_{\eps_{m_k}}}^{\Bar{\eps}}(r,\cdot)), \nabla \psi \rangle\,dr +  \Bar{\eps}\int_0^t\langle u_{{\ell}_{\eps_{m_k}}}^{\Bar{\eps}}, \Delta\psi\rangle\,dr \\ &-\int_0^t \langle{\tt \Phi}(u_{{\ell}_{\eps_{m_k}}}^{\Bar{\eps}}(r,\cdot)), (-\Delta)^\theta[\psi(\cdot)](\cdot)\rangle\,dr + \int_0^t\langle{\tt h}(u_{{\ell}_{\eps_{m_k}}}^{\Bar{\eps}}(r,\cdot)){\ell}_{{\eps_{m_k}}}(r), \psi\rangle\,dr\,.\notag
\end{align}
In view of \eqref{eq:uhat-convergence}, we have, for a.e. $t\in [0,T]$
\begin{align*}
   \big|\langle u_{{\ell}_{\eps_{m_k}}}^{\Bar{\eps}}(t)- {\pmb u}(t), \psi \rangle \big| \longrightarrow 0 \quad \text{as} \quad k \rightarrow \infty. 
\end{align*}
An application of Lipschitz continuity property of $F$ and \eqref{eq:uhat-convergence} yields
\begin{align*}
    \int_0^t\langle F(u_{{\ell}_{\eps_{m_k}}}^{\Bar{\eps}}(r,\cdot))- F({\pmb u}(r,\cdot)), \nabla \psi \rangle\,dr \le C\|u_{{\ell}_{\eps_{m_k}}}^{\Bar{\eps}} - {\pmb u}\|_{L^2([0,T];L^2(\mathbb{T}^d))}^2\rightarrow 0.
\end{align*}
In view of \eqref{eq:uhat-convergence},  and the assumption \ref{A2}, we see that 
\begin{align*}
&\int_0^t \langle{\tt \Phi}(u_{{\ell}_{\eps_{m_k}}}^{\Bar{\eps}}(r,x)) - {\tt \Phi}({\pmb u}(r,x)), (-\Delta)^\theta[\psi(\cdot)](x)\rangle\,dr  \notag \\& \hspace{1cm} \le C \,\|(-\Delta)^\theta[\psi](\cdot)\|_{L^\infty(\mathbb{T}^d)}\|u_{{\ell}_{\eps_{m_k}}}^{\Bar{\eps}} - {\pmb u}\|_{L^2([0,T]; L^2(\mathbb{T}^d))}^2 \rightarrow 0\,, \\
 & \Bar{\eps}\int_0^t\langle u_{{\ell}_{\eps_{m_k}}}^{\Bar{\eps}}- {\pmb u}, \Delta\psi\rangle\,dt \le C(\Bar{\eps})\|\nabla\psi\|_{L^\infty(\mathbb{T}^d)}\int_0^t\|u_{{\ell}_{\eps_{m_k}}}^{\Bar{\eps}} - {\pmb u}\|_{L^2([0,T];L^2(\mathbb{T}^d))}^2\, \rightarrow 0.
\end{align*}
We now focus on the controlled drift term. Observe that 
\begin{align}
&\int_0^t\big\langle{\tt h}(u_{{\ell}_{\eps_{m_k}}}^{\Bar{\eps}}){\ell}_{{\eps_{m_k}}}(r)-{\tt h}({\pmb u}){\ell}(r), \psi\big\rangle\,dr\notag  \\&= \int_0^t\big\langle\big({\tt h}(u_{{\ell}_{\eps_{m_k}}}^{\Bar{\eps}})-{\tt h}({\pmb u})\big){\ell}_{{\eps_{m_k}}}(r), \psi\big\rangle\,dr +  \int_0^t\big\langle{\tt h}({\pmb u})\big({\ell}_{{\eps_{m_k}}}(r)-{\ell}(r)\big),\psi\big\rangle\,dr\equiv \mathcal{D}_1 + \mathcal{D}_2.\notag
\end{align}
Since ${\tt h}({\pmb u})$ is a Hilbert-Schmidt operator from $\mathcal{H}$ to $L^2(\mathbb{T}^d)$, it is compact and therefore ${\tt h}({\pmb u}){\ell}_{{\eps_{m_k}}} \goto {\tt h}({\pmb u}){\ell}$ in 
$L^2([0,T]; L^2(\mathbb{T}^d))$. In particular $\mathcal{D}_2\goto 0$. 
Since ${\ell}_\eps \in S_N$, we have, thanks to \eqref{eq:uhat-convergence}
\begin{align*}
\mathcal{D}_1 \le\, C(\psi)\|u_{{\ell}_{\eps_{m_k}}}-{\pmb u}\|_{L^2([0,T];L^2(\mathbb{T}^d))}\|{\ell}_{{\eps_{m_k}}}\|_{L^2([0,T]; \mathcal{H})}
 \rightarrow 0.
\end{align*}
In view of above convergence results, sending $k \rightarrow \infty$ in \eqref{eq:prop-convergence}, we have
\begin{align*}
    &\langle {\pmb u}(t), \psi \rangle - \langle u_0, \psi \rangle = -\int_0^t\langle F({\pmb u}(r,\cdot)), \nabla \psi \rangle\,dr -\int_0^t \langle{\tt \Phi}({\pmb u}(r,\cdot)), (-\Delta)^\theta[\psi(\cdot)](\cdot)\rangle\,dr \notag \\ &\hspace{4cm}+ \, \Bar{\eps}\int_0^t\langle {\pmb u}, \Delta\psi\rangle\,dr + \int_0^t\langle{\tt h}({\pmb u}(r,\cdot)){\ell}(r), \psi\rangle\,dr \quad \forall~\psi\in C_c^2(\mathbb{T}^d)\,.
\end{align*}
In other words, ${\pmb u}$ is a weak solution of \eqref{eq:regularized-skeleton}. By uniqueness of solution of \eqref{eq:regularized-skeleton}, we infer that ${\pmb u} = u_{\ell}^{\Bar{\eps}}$ and hence 
$\underset{\eps \rightarrow 0}{\lim}\|u_{{\ell}_\eps}^{\Bar{\eps}} - u_{\ell}^{\Bar{\eps}}\|_{\mathcal{E}_1}= 0$.
\end{proof}
In view of Propositions \ref{prop:strong-convergence} and \ref{prop:convergence}, the assertion Condition \ref{cond:2}, ${\rm ii)}$ follows from \eqref{inq:trinagle-imp}. 
\subsection{Validation of Condition \ref{cond:2}, ${\rm i)}$}
\noindent For any family $\{{\ell}_\eps; 0 < \eps < 1\} \subset\mathcal{A}_N$ with ${\ell}_\eps = \sum_{k \ge 1}{\ell}_{\eps, k}e_k$, let us consider the following SPDE:
\begin{equation}\label{eq:small-noise-ldp}
\begin{cases}
     \displaystyle d\Bar{u}^\eps + \text{div}F(\Bar{u}^\eps(t,x))\,dt + (-\Delta)^\theta[{\tt \Phi}(\Bar{u}^\eps(t,\cdot))](x)\,dt \\=  {\tt h}(\Bar{u}^\eps(t,x)){\ell}_\eps(t)\,dt + \sqrt{\eps}{\tt h}(\Bar{u}^\eps(t,x))\,dW(t),\quad
    \Bar{u}^\eps(0,x) = u_0(x)\,.
\end{cases}
\end{equation}
In view of Theorems \ref{thm:Existence and uniqueness}, \ref{thm:contraction-principle}, Section \ref{sec:existance-skeleton}
and Girsanov theorem, there exists a unique kinetic solution $\Bar{u}^{\eps}$ of equation \eqref{eq:small-noise-ldp}. Moreover, following \cite[Propostion 3.6]{AC_2} and Step ${\rm II}$ of Section \ref{sec:existance-skeleton}
\begin{align}\label{inq:ldp-cp}
   \mathbb{E}\Big[\underset{0 \le t \le T}{sup}\|\Bar{u}^\eps(t)\|_{L^2(\mathbb{T}^d)}^2\Big] \le C.
\end{align}
By definition of $\mathcal{G}^\eps$, we have $\mathcal{G}^\eps\Big(W(\cdot) + \frac{1}{\sqrt{\eps}}\int_0^{\cdot}{\ell}_\eps(s)\,ds\Big) = \Bar{u}^\eps(\cdot).$ In order to validate ${\rm i)}$ of Condition \ref{cond:2},  we only need to prove the following: for every $N < \infty$ and any family $\{{\ell}_\eps; \eps > 0\} \subset S_N$ 
\begin{align}
  \big\|\bar{u}^\eps(\cdot) - v^\eps(\cdot)\big\|_{\Large L^1([0,T];L^1(\mathbb{T}^d))} \longrightarrow 0 \quad \text{in Probability}, \label{conv:final-i0-LDP}
\end{align}
where $ v^\eps := \mathcal{G} ^0\Big(\int_0^{\cdot}{\ell}_\eps(s)\,ds\Big)$ is the kinetic solution of \ref{eq:skeleton} with ${\ell}$ replaced by ${\ell}_\eps$. To prove \eqref{conv:final-i0-LDP}, we use doubling the variables technique.  Let  $f_\eps(x,t,\xi) := {\bf 1}_{\Bar{u}^\eps(x,t) > \xi}$ and $g_\eps(y,t,\zeta):={\bf 1}_{{v}^\eps(y,t) > \zeta}$. Then, following Proposition \ref{prop:doubling-variable}, we have 
\begin{align*}
   & \langle\langle f_\eps^{\pm}(t)\Bar{g}_\eps^{\pm}(t), \psi \rangle \rangle\, =\, \langle\langle f_{\eps,0}\Bar{g}_{\eps,0}, \psi \rangle \rangle \notag \\ &+ \int_0^t\int_{(\mathbb{T}^d)^2}\int_{\R^2} f_\eps\Bar{g}_\eps\big(F'(\xi)\cdot \nabla_x + F'(\zeta)\cdot \nabla_y\big)\psi \,d\xi\,d\zeta\,dx\,dy\,ds\notag\\
   &- \int_0^t\int_{(\mathbb{T}^d)^2}\int_{\R^2}f_\eps\Bar{g}_\eps\big( {\tt \Phi}'(\xi)(-\Delta)^\theta [\psi](x)+ {\tt \Phi}'(\zeta)(-\Delta)^\theta[\psi](y)\big)\,d\xi\,d\zeta\,dx\,dy\,ds\notag \\
   &+ \sum_{k=1}^{\infty} \int_0^t\int_{(\mathbb{T}^d)^2}\int_{\R^2}\Bar{g}_\eps^{\pm}(s,y,\zeta){\tt h}_k(x,\xi)\psi {\ell}_{k,\eps}(s)\,d\mu_{x,\eps}^{1,k}(\xi)\,d\zeta\,dx\,dy\,ds\notag \\
   &-\sum_{k=1}^{\infty} \int_0^t\int_{(\mathbb{T}^d)^2}\int_{\R^2}f_\eps^{\pm}(s,x,\xi){\tt h}_k(y,\zeta)\psi {\ell}_{k,\eps}(s)\,d\mu_{y,\eps}^{2,k}(\zeta)\,d\xi\,dx\,dy\,ds \notag \\
   & + \frac{\eps}{2}\int_0^t\int_{(\mathbb{T}^d)^2}\int_{\R^2}\partial_{\xi}\psi \Bar{g}_\eps^{\pm}(s,y,\zeta)G^2(x,\xi)d\mu_{x,s}^{1,\eps}(\xi)\,d\zeta\,dx\,dy\,ds \notag \\
& + \sqrt{\eps}\sum_{k 
 \ge 1}\int_0^t\int_{(\mathbb{T}^d)^2}\int_{\R^2}\Bar{g}_\eps^{\pm}(s,y,\zeta){\tt h}_k(x,\xi)\psi\,d\zeta\,d\mu_{x,s}^{1,\eps}(\xi)\,dx\,dy\,d\beta_k(s)\notag \\
   &-\int_0^t\int_{(\mathbb{T}^d)^2}\int_{\R^2}\Bar{g}_\eps^+(s,y,\zeta)\partial_\xi\psi\,d\Bar{m}^\eps(x,\xi,s)\,d\zeta\,dy\notag \\
  & + \int_0^t\int_{(\mathbb{T}^d)^2}\int_{\R^2}f_\eps^-(s,x,\xi)\partial_\zeta\psi\,dm_{{\ell}_\eps}(y, \zeta, s)\,d\xi\,dx =: \langle\langle f_{\eps,0}\Bar{g}_{\eps,0}(t), \psi \rangle \rangle + \sum_{i=1}^{8}{\mathcal{M}_i}\quad  \mathcal{P} \text{-a.s}.
\end{align*}
In the above, $\psi(x,y,\xi,\zeta) = \varrho_\delta(x-y)\rho_\lambda(\xi -\zeta)$, $g_{\eps,0}:={\bf 1}_{u_0 > \zeta},~\mu_{y,s}^{2,k}(\zeta):= \partial_\zeta \Bar{g}_\eps^{+}(s,y,\zeta)$, $f_{\eps,0}:={\bf 1}_{u_0 > \xi}$, $\mu_{x,s}^{1,\eps}(\xi):= -\partial_\xi f_\eps^{+}(s,x,\xi) $, $m_{{\ell}_\eps}$ is the kinetic measure for $v^\eps$ and $\Bar{m}^\eps \in \mathcal{R}^+([0,T] \times \mathbb{T}^d \times \R)$ is the kinetic measure for $\Bar{u}^\eps$.
It is easy to observe that,
\begin{align}
&\sum_{i=7}^{8}\mathcal{M}_i\notag \le \, \int_0^t\int_{(\mathbb{T}^d)^2\times \R^2}\Bar{g}_\eps\partial_\zeta\psi\,d\Bar{\eta}^\eps_{1}(x,\xi,s)\,dy\,d\zeta \\ & \hspace{3cm}- \int_0^t\int_{(\mathbb{T}^d)^2\times \R^2}f_\eps\partial_\xi\psi \,d\eta_{2}^\eps(y, \zeta, s)\,dx\,d\xi \notag :=\mathcal{M}_{7,8}\,.
\end{align}
Similar to  Proposition \ref{prop:doubling-variable}, we get
\begin{align}\label{inq:similar-to pro-3.1}
& \int_{(\mathbb{T}^d)^2}\int_{\R^2} \varrho_\delta\rho_\lambda f_\eps^{\pm}(x,t,\xi)\bar{g}_\eps^{\pm}(y,t,\zeta) 
\,d\xi\,d\zeta\,dx\,dy \\
&\,\le \int_{(\mathbb{T}^d)^2}\int_{\R^2} \varrho_\delta\rho_\lambda f_{\eps,0}(x,\xi)\bar{g}_{\eps,0}(y,\zeta) 
\,d\xi\,d\zeta\,dx\,dy + \sum_{i=1}^{6}\mathcal{M}_i + \mathcal{M}_{7,8} 
\quad \mathcal{P}-a.s. \notag
\end{align}
By following similar line of arguments as done in \cite[Theorem $11$]{Vovelle2010}, \cite[Theorem 3.2]{AC_1}
\begin{align}
 |\mathcal{M}_1| \le C\delta^{-1}\lambda,\, (|\Bar{\mathcal{M}}_2| + |\mathcal{M}_{7,8}|) \le Cr^{2-2\theta}\delta^{-2} + Cr^{-2\theta}\lambda, \,\text{and} \, |\mathcal{M}_5| \le C\eps\lambda^{-1}.\notag
\end{align}
We follow \cite[Theorem 6.1]{Zhang-2020} to estimate $\mathcal{M}_3$ and $\mathcal{M}_4$ as follows.
\begin{align}
&\mathcal{M}_3 + \mathcal{M}_4 \notag \\
   & \le C \int_0^t|{\ell}_{\eps}(s)|_{\mathcal{H}}\int_{(\mathbb{T}^d)^2}\int_{(\R)^2}\varrho_\delta(x-y)\psi_{2,\lambda}(\xi,\zeta)|x-y|\,d\mu_{x,\eps}^{1,k}\otimes\,d\mu_{y,\eps}^{2,k}(\zeta, \xi)\,dx\,dy\,ds \notag \\
   &\,+  \,C \int_0^t|{\ell}_{\eps}(s)|_{\mathcal{H}}\int_{(\mathbb{T}^d)^2}\int_{(\R)^2}\varrho_\delta(x-y)\psi_{2,\lambda}(\xi,\zeta)|\xi-\zeta|\,d\mu_{x,\eps}^{1,k}\otimes\,d\mu_{y,\eps}^{2,k}(\zeta, \xi)\,dx\,dy\,ds \notag \\ &:= \mathcal{M}_{3,1} + \mathcal{M}_{4,1}, \quad \text{where} \quad \psi_{2,\lambda}(\xi,\zeta) := \int_{\zeta}^\infty\rho_\lambda(\xi-\zeta')\,d\zeta', \quad |\mathcal{M}_{3,1}| \notag \le C\delta,\\
    &\mathcal{M}_{4,1} \le C\lambda + C\int_0^t|{\ell}_{\eps}(s)|_{\mathcal{H}}\int_{(\mathbb{T}^d)^2}\int_{(\R)^2}\varrho_\delta\rho_\lambda\notag f_\eps^{\pm}(x,t,\xi)\bar{g}_\eps^{\pm}(y,t,\zeta) 
    \,dx\,dy\,d\xi\,d\zeta\,ds\,. 
\end{align}
Combining all the above estimations in \eqref{inq:similar-to pro-3.1}, it yields
\begin{align}
   & \int_{(\mathbb{T}^d)^2}\int_{\R^2} \varrho_\delta\rho_\lambda f_\eps^{\pm}(x,t,\xi)\bar{g}_\eps^{\pm}(y,t,\zeta) 
\,d\xi\,d\zeta\,dx\,dy\notag \le \Gamma(f_{\eps, 0},g_{\eps, 0}, \delta, \lambda) + |\Upsilon_0(\delta, \lambda)|+ |\mathcal{M}_6|(t) \notag \\ & \, + C\int_0^t|{\ell}_{\eps}(s)|_{\mathcal{H}}\int_{(\mathbb{T}^d)^2}\int_{(\R)^2}\varrho_\delta\notag\rho_\lambda\notag f_\eps^{\pm}(x,t,\xi)\bar{g}_\eps^{\pm}(y,t,\zeta)
\,dx\,dy\,d\xi\,d\zeta\,ds, \quad \mathcal{P}-a.s,
\end{align}
\begin{align*}
\text{where},\quad\Gamma(f_{\eps, 0},g_{\eps, 0}, \delta, \lambda) := &\int_{\mathbb{T}^d}\int_{\R}f_{\eps,0}(x,\xi)\bar{g}_{\eps,0}(x,\xi)
\,d\xi\,dx \notag \\ &+ C\big(\delta^{-1}\lambda + r^{2-2\theta}\delta^{-2} + r^{-2\theta}\lambda + \eps\lambda^{-1} + \delta + \lambda \big).\notag
\end{align*}
Applying Gronwall's lemma, we get $ \mathcal{P}-a.s.$
\begin{align*}
  & \int_{(\mathbb{T}^d)^2}\int_{\R^2} \varrho_\delta\rho_\lambda f_\eps^{\pm}(x,t,\xi)\bar{g}_\eps^{\pm}(y,t,\zeta)
\,d\xi\,d\zeta\,dx\,dy \le \Gamma(f_{\eps, 0},g_{\eps, 0}, \delta, \lambda) + |\Upsilon_0(\delta, \lambda)|+ |\mathcal{M}_6|(t)\notag \\ & + \int_0^t\big(\Gamma(f_{\eps, 0},g_{\eps, 0}, \delta, \lambda) + |\Upsilon_0(\delta, \lambda)| + |\mathcal{M}_6|(s)\big)|{\ell}_\eps(s)|_{\mathcal{H}}\exp\big(\int_s^t|{\ell}_\eps(r)|_{\mathcal{H}}\,dr\big)\,ds. 
\end{align*}
Since ${\ell}_\eps \in S_N$, we get
\begin{align}\label{eq:ldp-2}
&\int_{\mathbb{T}^d}\int_{\R}f_\eps^{\pm}(x,t,\xi)\bar{g}_\eps^{\pm}(x,t,\xi)
\,d\xi\,dx  \\ &= \int_{(\mathbb{T}^d))^2}\int_{\R^2} \varrho_\delta(x-y)\rho_\lambda(\xi-\zeta)f_\eps^{\pm}(x,t,\xi)\bar{g}_\eps^{\pm}(y,t,\zeta)
\,d\xi\,d\zeta\,dx\,dy\notag \\
&\, - \Upsilon_t(\delta, \lambda)\le C\big(\Gamma(f_{\eps, 0},g_{\eps, 0}, \delta, \lambda) + |\Upsilon_0(\delta, \lambda)|\big) + |\mathcal{M}_6|(t) + C\big(\underset{0 \le s \le t}{\text{sup}}\,|\mathcal{M}_6|(s)\big), \notag
\end{align}
where $\Upsilon_t(\delta, \lambda)$ is defined in \eqref{eq:Upsilon}. Following the proof of \cite[Theorem 6.1]{Zhang-2020}, we get
\begin{align}
    \underset{0 \le t \le T}{\text{sup}}\,\Upsilon_t(\delta, \lambda) \le C(\lambda + \delta + \lambda\delta^{-1} + \Upsilon_0(\delta, \lambda)), \quad \mathbb{E}\big[\underset{0 \le t \le T}{\text{sup}}\,|\mathcal{M}_6|(t)\big] \le C\eps^{\frac{1}{2(1+d)}},\label{eq:M-6}
\end{align}
where $\Upsilon_0(\delta, \lambda) \rightarrow 0$ as $\delta, \lambda \rightarrow 0$.  Taking expectation and using \eqref{eq:M-6} and \eqref{inq:f1barf2}, we deduce from \eqref{eq:ldp-2}
\begin{align}
&\mathbb{E}\Big[\|(\Bar{u}^{\eps}(t) - v^{\eps}(t))^+\|_{\mathcal{E}_1}\Big] \le T \mathbb{E}\Big[\underset{t \in [0,T]}{\text{ess sup}}\|(\Bar{u}^{\eps}(t) - v^{\eps}(t))^+\|_{L^1(\mathbb{T}^d)}\Big]\notag  \\ & \le 
 C(\delta^{-1}\lambda +
r^{-2\theta}\lambda +
r^{2-2\theta}\delta^{-2} + \delta +  \eps\lambda^{-1} + 
\lambda + |\Upsilon_0(\delta, \lambda)| + \eps^{\frac{1}{2(1+d)}}\big)\,.\label{inq:ldp-finla-2}
\end{align}
Setting $\lambda = \eps^{\frac{1}{2}}$, $\delta = \eps^b$ and $r = \eps^c$ for some $c \in (\frac{b}{1-\theta}, \frac{1}{4\theta})$ with $0 < b < \text{min}\{\frac{1}{2}, \frac{1-\theta}{4\theta}\}$,
and sending $\eps \rightarrow 0$ in \eqref{inq:ldp-finla-2} we conclude $\|(\Bar{u}^{\eps}(t) - v^{\eps}(t))^+\|_{\mathcal{E}_1} \rightarrow 0$ in probability. Similarly one can infer $\|(\Bar{u}^{\eps}(t) - v^{\eps}(t))^-\|_{\mathcal{E}_1} \rightarrow 0$ in probability, which essentially establish \eqref{conv:final-i0-LDP}.
\section{Central limit theorem}\label{sec:CLT}
In this section, we study central limit theorem. 
As discussed earlier, one cannot directly prove \eqref{clt:main-result} by doubling variables method although the kinetic formulation for $\frac{1}{\sqrt{\eps}}(u^\eps - \hat{u})$ and $u^\star$ are available. To overcome this technical difficulty, we introduce some  auxiliary PDE/SPDEs. Consider the following viscous approximations of \eqref{eq:deterministic-FCL}: for $\eta >0$,
\begin{equation}\label{eq:viscous-deterministic-FCL}
     \displaystyle d\hat{u}^\eta + \text{div}F(\hat{u}^\eta(t,x))\,dt + (-\Delta)^\theta[{\tt \Phi}(\hat{u}^\eta(t,\cdot))](x)\,dt = \eta\Delta\hat{u}^\eta\,dt, \quad 
    \hat{u}^\eta(0,x) = 1.
\end{equation}
Observe that, $ \hat{u}^\eta = 1$ is the unique strong solution of \eqref{eq:viscous-deterministic-FCL}. As a direct consequence of vanishing viscosity method, we infer that \eqref{eq:deterministic-FCL} has a unique kinetic solution $\hat{u}= 1$. Let $u^{\eps, \eta}$ be the unique weak solution of 
\begin{equation}\label{eq:viscous-small-noise-scl}
\begin{cases}
     \displaystyle du^{\eps, \eta} + \text{div}F(u^{\eps, \eta}(t,x))\,dt + (-\Delta)^\theta[{\tt \Phi}(u^{\eps, \eta}(t,\cdot))](x)\,dt \\ = \eta\Delta u^{\eps, \eta}\,dt + \sqrt{\eps}{\tt h}(u^{\eps, \eta}(t,x))dW(t),\quad
    u^{\eps, \eta}(0,x) = 1,
\end{cases}
\end{equation}
---which exists due to \cite[Section 8]{AC_2}. 
Then, $u^{\eps, \eta}$ satisfies the following  uniform estimate~(cf.~\cite[Propostion 3.6]{AC_2}): for $p\in [2, \infty)$  
$$\mathbb{E}\big[\underset{0 \le t \le T}{\text{sup}}\|u^{\eps, \eta}\|_{L^p(\mathbb{T}^d)}^p\big] \le C_p\,.$$
 Consider a viscous approximations of \eqref{eq:star-FSCL}: for $(x,t)\in \mathbb{T}^d \times [0,T]$
\begin{equation}\label{eq:viscous-star-FSCL}\begin{cases}
     \displaystyle du^{\star,\eta} + \text{div}(F^\prime(\hat{u})u^{\star,\eta})\,dt + (-\Delta)^\theta[{\tt \Phi}^\prime(\hat{u})u^{\star, \eta}(t,\cdot)](x)\,dt \\ = \eta\Delta u^{\star,\eta}\,dt  + {\tt h}(\hat{u})\,dW(t), \quad
    u^{\star, \eta}(0,x) = 0\,.
\end{cases}
\end{equation}
In view of \cite[Theorem 2.1]{frac non} (see also \cite[Section 8]{AC_2}), SPDE \eqref{eq:viscous-star-FSCL} admits a unique weak solution $u^{\star,\eta}$.
Moreover, thanks to \cite[Theorem 3.5]{AC_2}, one has 
\begin{align}\label{eq:strong-cong-star-eta}
    \underset{\eta \rightarrow 0}{\text{lim}}\,\mathbb{E}\big[\|u^{\star,\eta} - u^{\star}\|_{\mathcal{E}_1}\big] = 0.
\end{align}
In view of \eqref{eq:strong-cong-star-eta}, and the triangle inequality
\begin{align}\label{inq:triangle-CLT}
\mathbb{E}\Big[\big\|\frac{u^\eps - \hat{u}}{\sqrt{\eps}} - u^\star\big\|_{\mathcal{E}_1}\Big]  &\le 
\mathbb{E}\Big[\big\|\frac{u^\eps - \hat{u}}{\sqrt{\eps}} - \frac{u^{\eps, \eta} - \hat{u}^\eta}{\sqrt{\eps}}\big\|_{\mathcal{E}_1}\Big] \\&+ \mathbb{E}\Big[\big\| \frac{u^{\eps, \eta} - \hat{u}^\eta}{\sqrt{\eps}} - u^{\star, \eta}\big\|_{\mathcal{E}_1}\Big]+ \mathbb{E}\Big[\big\| u^{\star, \eta} - u^{\star}\big\|_{\mathcal{E}_1}\Big]\,,\notag
\end{align}
to prove Theorem \ref{clt:main-result}, we only need to show that the first and second terms of \eqref{inq:triangle-CLT} go to zero as $\eps \rightarrow 0$. 
\subsection{\bf A-priori estimates} In order to establish the convergence result for first two terms of \eqref{inq:triangle-CLT}, we need some a-priori estimates. 
\begin{lem}\label{lem:1-CLT}
For any $\eps  \in (0,1)$  and $p \ge 2$, there exists a constant $C>0$, independent of $\eps$ and $\eta$, such that 
\begin{align}
      \underset{t \in [0,T]}{sup}\,\underset{\eta > 0}{sup}\,\mathbb{E}\Big[\|u^{\eps, \eta}(t) - \hat{u}^\eta\|_{L^p(\mathbb{T}^d)}^{p} \Big]  \le C\eps^{\frac{p}{2}}. \label{eq:w-eps-eta-bound-by-eps}
\end{align}
\end{lem}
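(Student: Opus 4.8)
The plan is to work with $v^{\eps,\eta}:=u^{\eps,\eta}-\hat u^\eta = u^{\eps,\eta}-1$, which (since $\hat u^\eta\equiv 1$) solves, with zero initial datum,
\[
dv^{\eps,\eta}+\text{div}\big(F(v^{\eps,\eta}+1)-F(1)\big)\,dt+(-\Delta)^\theta\big[{\tt \Phi}(v^{\eps,\eta}+1)-{\tt \Phi}(1)\big]\,dt=\eta\Delta v^{\eps,\eta}\,dt+\sqrt{\eps}\,{\tt h}(v^{\eps,\eta}+1)\,dW(t),
\]
and to estimate $\phi(t):=\mathbb{E}\|v^{\eps,\eta}(t)\|_{L^p(\mathbb{T}^d)}^p$ by ``testing against $|v^{\eps,\eta}|^{p-2}v^{\eps,\eta}$''. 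Since the weak solution of \eqref{eq:viscous-small-noise-scl} lives in a low-regularity class, I would make this rigorous exactly as in Step~II of Section~\ref{sec:existance-skeleton}: apply the It\^o/chain rule to $\int_{\mathbb{T}^d}\varphi_n(v^{\eps,\eta})\,dx$ with the $C^2$, quadratically-growing functions $\varphi_n$ from \eqref{eq:smooth-approx-kinetic} (together with the bounds \eqref{bounds-for-varphi}), using the spatial mollification of $v^{\eps,\eta}$ for the non-local term, and then let the mollification parameter and then $n$ tend to their limits.

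In the resulting identity four of the five contributions are harmless. First, the convection term vanishes: after integration by parts it equals $\int_{\mathbb{T}^d}\varphi_n''(v^{\eps,\eta})\nabla v^{\eps,\eta}\cdot\big(F(v^{\eps,\eta}+1)-F(1)\big)\,dx=\int_{\mathbb{T}^d}\text{div}\,\widetilde H(v^{\eps,\eta})\,dx=0$ on the torus, where $\widetilde H_i(r):=\int_0^r\varphi_n''(\sigma)\big(F_i(\sigma+1)-F_i(1)\big)\,d\sigma$. Second, the fractional term has a favourable sign: both $\sigma\mapsto\varphi_n'(\sigma)$ and $\sigma\mapsto{\tt \Phi}(\sigma+1)-{\tt \Phi}(1)$ are non-decreasing (the first by convexity of $\varphi_n$, the second by \ref{A2}), so the L\'evy--Khintchine symmetrization of $(-\Delta)^\theta$ already used in Step~II gives $\langle\varphi_n'(v^{\eps,\eta}),(-\Delta)^\theta[{\tt \Phi}(v^{\eps,\eta}+1)-{\tt \Phi}(1)]\rangle\ge 0$. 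Third, the viscous term equals $-\eta\int_{\mathbb{T}^d}\varphi_n''(v^{\eps,\eta})|\nabla v^{\eps,\eta}|^2\,dx\le 0$. Fourth, the stochastic integral is a genuine martingale, by the uniform $L^p$-moment bound on $u^{\eps,\eta}$ recalled before the statement, and so drops under expectation. Discarding the two non-positive terms, passing to the limit by monotone/dominated convergence, and using $G^2(x,\sigma+1)\le C(1+\sigma^2)$ from \ref{A3}, I am left with a constant $C$ independent of $\eps,\eta$ such that
\[
\phi(t)\le \frac{p(p-1)}{2}\,\eps\int_0^t\mathbb{E}\!\int_{\mathbb{T}^d}|v^{\eps,\eta}|^{p-2}\big(1+|v^{\eps,\eta}|^2\big)\,dx\,ds\le C\eps\int_0^t\Big(\mathbb{E}\!\int_{\mathbb{T}^d}|v^{\eps,\eta}|^{p-2}\,dx+\phi(s)\Big)\,ds .
\]

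The final step is purely scalar. Since $|\mathbb{T}^d|=1$, H\"older's and Jensen's inequalities give $\mathbb{E}\int_{\mathbb{T}^d}|v^{\eps,\eta}|^{p-2}\,dx\le\phi(s)^{(p-2)/p}$, and because $\phi$ is bounded uniformly in $\eps,\eta$ (again by the $L^p$-moment bound on $u^{\eps,\eta}$) the summand $\phi(s)$ can be absorbed into $\phi(s)^{(p-2)/p}$, leaving $\phi(t)\le C\eps\int_0^t\phi(s)^{(p-2)/p}\,ds$ on $[0,T]$ for a new constant $C=C(p,T)$ still independent of $\eps,\eta$. A Bihari-type (nonlinear Gronwall) argument --- integrate $\tfrac{d}{dt}\Psi(t)^{2/p}\le\tfrac2p(C\eps)^{(p-2)/p}$ for $\Psi(t):=\int_0^t\phi(s)^{(p-2)/p}\,ds$ --- then gives $\Psi(t)\le\big(\tfrac2p\,C^{(p-2)/p}T\big)^{p/2}\eps^{(p-2)/2}$ and hence $\phi(t)\le C\eps\,\Psi(t)\le C'\eps^{p/2}$ uniformly in $t\in[0,T]$ and $\eta>0$, which is \eqref{eq:w-eps-eta-bound-by-eps}. (Equivalently, one may first prove the bound for even integers $p$ by induction --- the lower moment $\mathbb{E}\|v^{\eps,\eta}\|_{L^{p-2}}^{p-2}\le C\eps^{(p-2)/2}$ feeding the right-hand side --- and then interpolate to all $p\ge 2$.) I expect the only genuinely delicate point to be extracting the sharp exponent $\eps^{p/2}$ rather than the crude $\eps$ that a linear Gronwall estimate would yield: this forces one to keep the lower-order factor $|v^{\eps,\eta}|^{p-2}$ in the It\^o correction and to close with a nonlinear Gronwall inequality, instead of bounding $|v^{\eps,\eta}|^{p-2}\le C(1+|v^{\eps,\eta}|^p)$ at the outset; the vanishing of the convection term, the signs of the fractional and viscous terms, and the justification of the It\^o formula are all routine reprises of computations already carried out in Section~\ref{sec:existance-skeleton}.
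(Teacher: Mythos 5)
Your proof is correct, and its skeleton coincides with the paper's: set $v^{\eps,\eta}=u^{\eps,\eta}-1$, apply the generalized It\^o formula to $\int_{\mathbb{T}^d}\varphi_n(v^{\eps,\eta})\,dx$ with the truncations $\varphi_n$ of \eqref{eq:smooth-approx-kinetic}, kill the convection term by the divergence argument, discard the fractional and viscous terms by sign, and drop the martingale under expectation, so that only the It\^o correction $\tfrac{\eps}{2}\int_0^t\int\varphi_n''(v^{\eps,\eta})G^2(\cdot,v^{\eps,\eta}+1)$ survives. Where you diverge is in how the sharp rate $\eps^{p/2}$ is extracted from that term. The paper stays at the integrand level: combining $G^2(x,y)\le C(1+|y|^2)$ with the bounds $y^2\varphi_n''(y)\le p(p-1)\varphi_n(y)$ and $(\varphi_n''(y))^{p/(p-2)}\le C\varphi_n(y)$ from \eqref{bounds-for-varphi}, Young's inequality with exponents $p/2$ and $p/(p-2)$ gives pointwise $\eps\,\varphi_n''(v)\le C\eps^{p/2}+C\varphi_n(v)$, hence $\mathcal{B}_3\le C\eps^{p/2}+C\int_0^t\int\varphi_n(v^{\eps,\eta})$, and a \emph{linear} Gronwall inequality closes the estimate. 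You instead keep $\eps\,\mathbb{E}\int|v|^{p-2}$, bound it by $\eps\,\phi(s)^{(p-2)/p}$ via H\"older/Jensen, and close with a Bihari-type \emph{nonlinear} Gronwall argument (after absorbing $\phi(s)$ into $\phi(s)^{(p-2)/p}$ using the a priori uniform bound). Both routes are valid and yield the same constant structure; the paper's is slightly shorter and avoids invoking the uniform boundedness of $\phi$ and the (harmless, since the comparison goes the right way) non-uniqueness issues of the degenerate ODE $\Psi'=(C\eps\Psi)^{(p-2)/p}$, while yours makes transparent why a naive linear Gronwall on $\eps(1+\phi)$ would only give the rate $\eps$ rather than $\eps^{p/2}$.
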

\begin{proof}
    Let $v^{\eps, \eta} := u^{\eps, \eta} - \hat{u}^\eta$. Since $\hat{u}^\eta = 1$, it follows from \eqref{eq:viscous-deterministic-FCL} and \eqref{eq:viscous-small-noise-scl} that $v^{\eps, \eta}$ satisfies 
\begin{align*}
    &dv^{\eps, \eta} + \text{div}F(v^{\eps, \eta} + 1)  + (-\Delta)^\theta[{\tt \Phi}(v^{\eps, \eta}(t,\cdot) + 1 )](x)\,dt \\ & \hspace{4cm}= \eta\Delta v^{\eps, \eta}\,dt + \sqrt{\eps}{\tt h}(v^{\eps, \eta} + \hat{u}^\eta)dW(t), \quad v^{\eps, \eta}(0)= 0\,.
\end{align*}
Applying generalized It\^o formula \cite[Appendix A]{Hofmanova-2016} to $\varphi_n(v^{\eps, \eta})$ (defined in \eqref{eq:smooth-approx-kinetic}), we get
\begin{align}
   & \int_{\mathbb{T}^d}\varphi_n(v^{\eps, \eta}(t))\,dx = -\int_0^t \int_{\mathbb{T}^d}\varphi_n^\prime(v^{\eps, \eta})\text{div}F(v^{\eps, \eta} + 1) \,dx\,ds \notag \\ &- \int_{\mathbb{T}^d}\varphi_n^\prime(v^{\eps, \eta})(-\Delta)^\theta[{\tt \Phi}(v^{\eps, \eta}(s,\cdot)+1)](x)\,dx\,ds + \frac{\eps}{2} \int_0^t \int_{\mathbb{T}^d}\varphi_n^{\prime\prime}(v^{\eps, \eta})G^2(v^{\eps, \eta} + 1)\,dx\,ds\notag \notag\\&
    +  \int_0^t \int_{\mathbb{T}^d} \sum_{k=1}^\infty\varphi_n^\prime(v^{\eps, \eta}){\tt h}_k(x, v^{\eps, \eta} +1)\,dx\,d\beta_k(s) + \eta\int_0^t \int_{\mathbb{T}^d}\varphi_n^\prime(v^{\eps, \eta})\Delta v^{\eps, \eta}\,dx\,ds
    := \sum_{i =1}^5 \mathcal{B}_i\,. \notag
\end{align}
Similar to \textbf{Step II} of Section \ref{sec:existance-skeleton}, we have $\mathcal{B}_1=0$ and $\mathcal{B}_2, \mathcal{B}_3 \le 0$.
Using the assumption \ref{A3} and \eqref{bounds-for-varphi}, one has 
\begin{align*}
 \mathcal{B}_3 \le  C\eps^{\frac{p}{2}} + C \int_0^t\int_{\mathbb{T}^d}(\varphi_n(v^{\eps, \eta}))\,dx\,ds.
\end{align*}
Clearly, $\mathbb{E}[{\mathcal{B}_4}]= 0$. Taking expectation and then applying Gronwall's lemma, one arrive at \eqref{eq:w-eps-eta-bound-by-eps} after sending $n \rightarrow \infty$ in the resulting inequality. 
\end{proof}
\noindent In view of Lemma \ref{lem:1-CLT} and  \cite[Section 3.2]{AC_2}, we infer the following corollary.
\begin{cor}\label{cor:1-cor}
For $p \ge 2$, there exists a constant C, independent of $\eps$ and $\eta$, such that 
\begin{align}
    \mathbb{E}\big[\|u^\eps - \hat{u}\|_{L^p([0,T]; L^p(\mathbb{T}^d))}^p\big] = \underset{\eta \rightarrow 0}{\text{lim}}\,\mathbb{E}\big[\|u^{\eps, \eta} - \hat{u}^\eta\|_{L^p([0,T]; L^p(\mathbb{T}^d))}^p\big] \le C\eps^\frac{p}{2}.
\end{align}
\end{cor}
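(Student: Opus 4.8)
The plan is to obtain the corollary from the uniform estimate of Lemma \ref{lem:1-CLT} in two moves: integrating in time to get a bound on the full space--time norm, and then passing to the vanishing-viscosity limit $\eta\to 0$ with a uniform-integrability argument to identify the limit.

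First I would integrate \eqref{eq:w-eps-eta-bound-by-eps} over $t\in[0,T]$. Since that bound is uniform in $t$ and in $\eta>0$, Fubini's theorem gives, for every $p\ge 2$,
\[
\sup_{\eta>0}\,\mathbb{E}\big[\|u^{\eps,\eta}-\hat u^\eta\|_{L^p([0,T];L^p(\mathbb{T}^d))}^p\big]
=\sup_{\eta>0}\int_0^T\mathbb{E}\big[\|u^{\eps,\eta}(t)-\hat u^\eta\|_{L^p(\mathbb{T}^d)}^p\big]\,dt
\le C\,T\,\eps^{p/2}.
\]
Applying the same reasoning with $p$ replaced by $p+1$ shows that the family $\{\,|u^{\eps,\eta}-\hat u^\eta|^p:\eta>0\,\}$ is bounded in $L^{(p+1)/p}(\Omega\times[0,T]\times\mathbb{T}^d)$, hence uniformly integrable on this finite measure space.

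Next I would pass to the vanishing-viscosity limit. Note that $\hat u^\eta\equiv 1\equiv\hat u$, so only the convergence of $u^{\eps,\eta}$ is at issue. By the construction of the kinetic solution of \eqref{eq:small-noise} as a limit of the viscous approximations \eqref{eq:viscous-small-noise-scl}, as recorded in \cite[Section 3.2]{AC_2} (see also \cite[Section 8]{AC_2}), one has $u^{\eps,\eta}\to u^\eps$ in $L^1(\Omega\times[0,T];L^1(\mathbb{T}^d))$ as $\eta\to 0$; along a suitable subsequence $\eta_k\downarrow 0$ we may assume convergence almost everywhere on $\Omega\times[0,T]\times\mathbb{T}^d$. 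Combining this a.e.\ convergence with the uniform integrability from the first step and invoking Vitali's convergence theorem yields
\[
\lim_{k\to\infty}\mathbb{E}\big[\|u^{\eps,\eta_k}-\hat u^{\eta_k}\|_{L^p([0,T];L^p(\mathbb{T}^d))}^p\big]
=\mathbb{E}\big[\|u^{\eps}-\hat u\|_{L^p([0,T];L^p(\mathbb{T}^d))}^p\big].
\]
Since every sequence $\eta\to 0$ admits a further subsequence along which the same limit is attained, the full limit as $\eta\to0$ exists and equals $\mathbb{E}[\|u^\eps-\hat u\|_{L^p([0,T];L^p(\mathbb{T}^d))}^p]$ --- this is the asserted identity. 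Passing to the limit in the bound of the first step (equivalently, applying Fatou's lemma along $\eta_k$) then gives $\mathbb{E}[\|u^\eps-\hat u\|_{L^p([0,T];L^p(\mathbb{T}^d))}^p]\le C\,T\,\eps^{p/2}$, with a constant independent of $\eps$ and $\eta$.

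The only non-routine ingredient is the strong convergence $u^{\eps,\eta}\to u^\eps$ together with enough integrability to promote it to convergence of the $p$-th moments; both are already available --- the former from the contraction and compactness arguments behind the vanishing-viscosity limit in \cite{AC_2}, the latter from Lemma \ref{lem:1-CLT} applied at exponent $p+1$ --- so no new estimate is required and the work is purely in assembling these facts via Vitali's theorem.
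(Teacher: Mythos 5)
Your proposal is correct and follows essentially the same route as the paper, which derives the corollary directly from Lemma \ref{lem:1-CLT} together with the vanishing-viscosity convergence $u^{\eps,\eta}\to u^{\eps}$ from \cite[Section 3.2]{AC_2}; you have simply made explicit the time integration, the uniform integrability (via the estimate at exponent $p+1$), and the Vitali/subsequence argument that the paper leaves implicit.
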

Note that $w^\eps := \frac{u^\eps - \hat{u}}{\sqrt{\eps}}$ and $w^{\eps,\eta} := \frac{u^{\eps, \eta} - \hat{u}^\eta}{\sqrt{\eps}}$ satisfy the following SPDEs: 
\begin{align}
& dw^\eps + \frac{1}{\sqrt{\eps}}\text{div}\big(F(\sqrt{\eps}w^\eps +1) - F(1)\big)\,dt\notag \\ & + \frac{1}{\sqrt{\eps}}(-\Delta)^\theta[{\tt \Phi}((\sqrt{\eps}w^\eps +1)(t,\cdot)) - {\tt \Phi}(1)](x)\,dt = {\tt h}(\sqrt{\eps}w^\eps +1)dW(t)\,,\notag\\
\label{eq:w-eps-eta}
&dw^{\eps, \eta} + \frac{1}{\sqrt{\eps}}\text{div}\big(F(\sqrt{\eps}w^{\eps, \eta} +1) - F(1)\big)\,dt - \eta \Delta w^{\eps,\eta}\,dt \\ & + \frac{1}{\sqrt{\eps}}(-\Delta)^\theta[{\tt \Phi}((\sqrt{\eps}w^{\eps, \eta} +1)(t,\cdot)) - {\tt \Phi}(1)](x)\,dt =  {\tt h}(\sqrt{\eps}w^{\eps, \eta} +1)dW(t)\notag
\end{align}
with $w^\eps(0,x)=0=w^{\eps, \eta}$. 
Applying It\^o  formula to the function $f(u) = \|u\|_{L^2(\mathbb{T}^d)}^p$, we get from \eqref{eq:w-eps-eta} $\mathcal{P}$-a.s.
\begin{align}
     &\|w^{\eps, \eta}(t)\|_{L^2(\mathbb{T}^d)}^p = \,-\frac{p}{\sqrt{\eps}}\int_0^t\|w^{\eps, \eta}\|_{L^2(\mathbb{T}^d)}^{p-2}\big\langle w^{\eps, \eta}, \text{div}\big(F(\sqrt{\eps}w^{\eps, \eta} +1) - F(1)\big)\big\rangle\,dr \\ &\,-\frac{p}{\sqrt{\eps}}\int_0^t\|w^{\eps, \eta}\|_{L^2(\mathbb{T}^d)}^{p-2}\big\langle w^{\eps, \eta},(-\Delta)^\theta[{\tt \Phi}((\sqrt{\eps}w^{\eps, \eta} +1)(r,\cdot)) - {\tt \Phi}(1)](\cdot)\big\rangle\,dr \notag \\ & \,+ p\,\eta\int_0^t\|w^{\eps, \eta}\|_{L^2(\mathbb{T}^d)}^{p-2}\langle w^{\eps, \eta} ,\Delta w^{\eps,\eta}\rangle\,dr     \notag \\
     &\, + \frac{p(p-1)}{2}\sum_{k \ge 1}\int_0^t \|w^{\eps, \eta}\|_{L^2(\mathbb{T}^d)}^{p-4}\big\langle w^{\eps, \eta} ,{\tt h}_k(\sqrt{\eps}w^{\eps, \eta} +1)\big\rangle^2\,dr  \notag \\
      & \, + p\int_0^t\|w^{\eps, \eta}\|_{L^2(\mathbb{T}^d)}^{p-2}\big\langle w^{\eps, \eta}, {\tt h}_k(\sqrt{\eps}w^{\eps, \eta} +1)\big\rangle \,d\beta_k(r)
     =: \sum_{i =1}^{5}\mathcal{D}_i. \notag
\end{align}
Note that $\mathcal{D}_1=0$. Thanks to \eqref{esti:nondecreasing-phi}, we estimate $\mathcal{D}_2$ as
$$\mathcal{D}_2 \le -\frac{p}{\eps\|{\tt \Phi}'\|_{L^\infty}}\int_0^t\|w^{\eps, \eta}\|_{L^p(\mathbb{T}^d)}^{p-2}\big[{\tt \Phi}(\sqrt{\eps}w^{\eps, \eta} + 1)\big]_{H^\theta(\mathbb{T}^d)}^2\,dr\,. $$
$\mathcal{D}_3$ and $\mathcal{D}_4$ are estimated by
\begin{align*}
\mathcal{D}_3 = -\,p\eta\int_0^t\|w^{\eps, \eta}\|_{L^2(\mathbb{T}^d)}^{p-2}\|\nabla w^{\eps,\eta}\|_{L^2(\mathbb{T}^d)}^2\,dr, \quad \mathcal{D}_4 \le C\Big(1 + \int_0^t\|w^{\eps, \eta}\|_{L^2(\mathbb{T}^d)}^{p}\,dr\Big).
\end{align*}
Since $\mathbb{E}[\mathcal{D}_5] = 0$, using above estimations and Gronwall's lemma, we get
\begin{align}\label{eq:bound-w-eps-eta-gronwall}
    &\mathbb{E}\big[\|w^{\eps, \eta}(t)\|_{L^2(\mathbb{T}^d)}^p \big]  +\,p\eta\mathbb{E}\Big[ \int_0^t\|w^{\eps, \eta}\|_{L^2(\mathbb{T}^d)}^{p-2}\|\nabla w^{\eps,\eta}\|_{L^2(\mathbb{T}^d)}^2\,dr\Big]\\ &\,+ \frac{p}{\eps\|{\tt \Phi}'\|_{L^\infty}}\mathbb{E}\Big[ \int_0^t\|w^{\eps, \eta}\|_{L^2(\mathbb{T}^d)}^{p-2}\big[\|{\tt \Phi}(\sqrt{\eps}w^{\eps, \eta}+1)\|\big]_{H^\theta(\mathbb{T}^d)}^2\Big]\,dr \le C.\notag
\end{align}
\begin{rem}\label{rem:about-phi-theta}
Since ${\tt \Phi}$ is Lipschitz continuous, in view of \eqref{eq:bound-w-eps-eta-gronwall} for $p = 2$, we have
    $$\eta\mathbb{E}\Big[ \int_0^t\|\nabla w^{\eps,\eta}\|_{L^2(\mathbb{T}^d)}^2\,dr\Big] + \frac{1}{\eps}\mathbb{E}\Big[ \int_0^t\|{\tt \Phi}(\sqrt{\eps}w^{\eps, \eta}+1)\|_{H^\theta(\mathbb{T}^d)}^2\Big]\,dr\le C.$$
\end{rem}
\noindent With Lemma \ref{lem:1-CLT} and Corollary \ref{cor:1-cor} in hand, we have the following results.
\begin{lem}\label{lem:bound-w-eps-eta-1} For any $p \ge 2$, there exists a constant C, independent of $\eps$ and $\eta$, such that 
\begin{align}\label{lem:bound-w-eps-eta}
    &\underset{\eps \in (0,1)}{\text{sup}}\, \mathbb{E}\big[\|w^\eps\|_{L^p([0,T]; L^p(\mathbb{T}^d))}^p\big] \le C\,, \quad 
    \underset{\eps \in (0,1)}{\text{sup}}\, \mathbb{E}\big[\|w^{\eps, \eta}\|_{L^p([0,T]; L^p(\mathbb{T}^d))}^p\big] \le C\,.
\end{align}
\end{lem}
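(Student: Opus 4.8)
The plan is to deduce both bounds directly from the already-established estimates of Lemma~\ref{lem:1-CLT} and Corollary~\ref{cor:1-cor} by exploiting the scaling $w^{\eps,\eta}=\eps^{-1/2}(u^{\eps,\eta}-\hat{u}^\eta)$ and $w^\eps=\eps^{-1/2}(u^\eps-\hat{u})$. No new PDE analysis is needed: the assertion is essentially a bookkeeping consequence of the fact that the constants obtained there are uniform in $(\eps,\eta)$, and that the spatial $L^p$-bound in Lemma~\ref{lem:1-CLT} already carries the factor $\eps^{p/2}$ which cancels the prefactor coming from the scaling.

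For the viscous process I would first write
$$\mathbb{E}\big[\|w^{\eps,\eta}\|_{L^p([0,T];L^p(\mathbb{T}^d))}^p\big]=\frac{1}{\eps^{p/2}}\,\mathbb{E}\Big[\int_0^T\|u^{\eps,\eta}(t)-\hat{u}^\eta\|_{L^p(\mathbb{T}^d)}^p\,dt\Big],$$
then apply Tonelli's theorem to interchange $\mathbb{E}$ and $\int_0^T$, and bound the inner expectation uniformly in $t$ and $\eta$ via Lemma~\ref{lem:1-CLT}, i.e. $\sup_{t\in[0,T]}\sup_{\eta>0}\mathbb{E}\big[\|u^{\eps,\eta}(t)-\hat{u}^\eta\|_{L^p(\mathbb{T}^d)}^p\big]\le C\eps^{p/2}$. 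Integrating over $[0,T]$ and cancelling the $\eps^{p/2}$ factor yields $\mathbb{E}\big[\|w^{\eps,\eta}\|_{L^p([0,T];L^p(\mathbb{T}^d))}^p\big]\le CT$ with $C$ independent of $\eps$ and $\eta$, and taking the supremum over $\eps\in(0,1)$ gives the second inequality in \eqref{lem:bound-w-eps-eta}. For $w^\eps$ the argument is identical, with Corollary~\ref{cor:1-cor} replacing Lemma~\ref{lem:1-CLT}: since $\mathbb{E}\big[\|u^\eps-\hat{u}\|_{L^p([0,T];L^p(\mathbb{T}^d))}^p\big]\le C\eps^{p/2}$ with $C$ independent of $\eps$, dividing by $\eps^{p/2}$ gives $\sup_{\eps\in(0,1)}\mathbb{E}\big[\|w^\eps\|_{L^p([0,T];L^p(\mathbb{T}^d))}^p\big]\le C$. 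Alternatively, this can be obtained by passing to the limit $\eta\to0$ in the bound just derived for $w^{\eps,\eta}$, using the convergence statement in Corollary~\ref{cor:1-cor}.

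The only point needing (very mild) attention is that the constant in Lemma~\ref{lem:1-CLT}, and hence in Corollary~\ref{cor:1-cor}, is genuinely uniform in $\eta$, so that both the time integration and the $\eta\to0$ limit can be carried out without any deterioration of the estimate; since this uniformity is exactly how those two results are stated, there is no real obstacle here. In short, Lemma~\ref{lem:bound-w-eps-eta-1} is an immediate rescaling of the a~priori estimates established above, and I would present it as a one-paragraph computation.
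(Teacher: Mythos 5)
Your proposal is correct and is exactly the argument the paper intends: the lemma is stated there without proof as an immediate consequence of Lemma \ref{lem:1-CLT} and Corollary \ref{cor:1-cor}, and your rescaling by $\eps^{-p/2}$ combined with Tonelli's theorem is the implicit computation. No gaps.
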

\begin{lem}\label{lem:compactness-w-eps-eta}
The following estimation holds:  for $\beta \in (0, \frac{1}{2})$,
    $$\underset{\eps > 0}{\text{sup}}\,\mathbb{E}\big[\|w^{\eps, \eta}\|_{W^{\beta,2}([0,T]; H^{-1}(\mathbb{T}^d))}\big] < \infty.$$
\end{lem}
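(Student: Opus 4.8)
The plan is to establish the claimed fractional-in-time regularity of $w^{\eps,\eta}$ in $H^{-1}(\mathbb{T}^d)$ by reading off the mild/integral form of the SPDE \eqref{eq:w-eps-eta} and controlling each term uniformly in $\eps$. Writing
\begin{align*}
w^{\eps,\eta}(t) = &-\frac{1}{\sqrt{\eps}}\int_0^t \text{div}\big(F(\sqrt{\eps}w^{\eps,\eta}+1)-F(1)\big)\,dr + \eta\int_0^t \Delta w^{\eps,\eta}\,dr \\
&-\frac{1}{\sqrt{\eps}}\int_0^t (-\Delta)^\theta[{\tt \Phi}((\sqrt{\eps}w^{\eps,\eta}+1)(r,\cdot))-{\tt \Phi}(1)](x)\,dr + \int_0^t {\tt h}(\sqrt{\eps}w^{\eps,\eta}+1)\,dW(r) \\
=:&\ \mathcal{K}_1(t)+\mathcal{K}_2(t)+\mathcal{K}_3(t)+\mathcal{K}_4(t),
\end{align*}
I would treat the three Bochner integrals $\mathcal{K}_1,\mathcal{K}_2,\mathcal{K}_3$ together and the stochastic term $\mathcal{K}_4$ separately, exactly as in the compactness arguments of Lemma \ref{lem:compactness-gamma-eps} and Proposition \ref{prop:compactness}. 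For a term of the form $\mathcal{K}(t)=\int_0^t g(r)\,dr$ with $\sup_\eps\mathbb{E}\big[\int_0^T\|g(r)\|_{H^{-1}}^2\,dr\big]\le C$, Jensen's inequality gives $\mathbb{E}\|\mathcal{K}(t)-\mathcal{K}(s)\|_{H^{-1}}^2 \le C|t-s|$, hence $\mathbb{E}\int_0^T\int_0^T \frac{\|\mathcal{K}(t)-\mathcal{K}(s)\|_{H^{-1}}^2}{|t-s|^{1+2\beta}}\,dt\,ds < \infty$ for every $\beta\in(0,\tfrac12)$; this is the same mechanism used repeatedly above.

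The key point is therefore to verify the uniform $L^2_t H^{-1}_x$ bounds on the integrands. For $\mathcal{K}_1$: since $F$ is Lipschitz (\ref{A1}), $\frac{1}{\sqrt{\eps}}\|F(\sqrt{\eps}w^{\eps,\eta}+1)-F(1)\|_{L^2(\mathbb{T}^d)}\le C\|w^{\eps,\eta}\|_{L^2(\mathbb{T}^d)}$, and integration by parts gives $\|\text{div}(\cdots)\|_{H^{-1}}\le C\|w^{\eps,\eta}\|_{L^2(\mathbb{T}^d)}$, which is controlled in $L^2_t$ by Lemma \ref{lem:bound-w-eps-eta-1}. For $\mathcal{K}_2$: $\|\Delta w^{\eps,\eta}\|_{H^{-1}} = \|\nabla w^{\eps,\eta}\|_{L^2(\mathbb{T}^d)}$, but here I must be careful with the $\eta$ weight --- writing $\eta\|\nabla w^{\eps,\eta}\|_{L^2}^2$ and noting Remark \ref{rem:about-phi-theta} gives $\eta\,\mathbb{E}\int_0^T\|\nabla w^{\eps,\eta}\|_{L^2}^2\,dr\le C$, so the $\eta\Delta w^{\eps,\eta}$ contribution to $\mathcal{K}_2$ has $\mathbb{E}\int_0^T \eta^2\|\nabla w^{\eps,\eta}\|_{H^{-1}}^2\,dr = \eta\cdot\eta\mathbb{E}\int_0^T\|\nabla w^{\eps,\eta}\|_{L^2}^2\,dr\le C\eta$, uniformly bounded. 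For $\mathcal{K}_3$: using $\|(-\Delta)^\theta[v]\|_{H^{-1}}\le C\|v\|_{H^\theta}$ (as in \eqref{eq:bound-A3}) together with Lipschitz continuity of ${\tt \Phi}$ and Remark \ref{rem:about-phi-theta}, $\frac{1}{\eps}\mathbb{E}\int_0^T\|{\tt \Phi}(\sqrt{\eps}w^{\eps,\eta}+1)\|_{H^\theta}^2\,dr\le C$, which is precisely the uniform bound needed. For the stochastic term $\mathcal{K}_4$, I would invoke the Burkholder--Davis--Gundy inequality in the factorization/Flandoli--Gatarek style: by \ref{A3}, $\|{\tt h}(\sqrt{\eps}w^{\eps,\eta}+1)\|_{\mathcal{L}_2(\mathcal{H};L^2(\mathbb{T}^d))}^2\le C(1+\|w^{\eps,\eta}\|_{L^2}^2)$, and a standard estimate gives $\mathbb{E}\|\mathcal{K}_4(t)-\mathcal{K}_4(s)\|_{H^{-1}}^{2}\le C|t-s|$, or more directly the known fact that $\mathbb{E}\|\mathcal{K}_4\|_{W^{\beta,2}([0,T];L^2(\mathbb{T}^d))}^2<\infty$ for $\beta<\tfrac12$ whenever the integrand is uniformly bounded in $L^2(\Omega\times[0,T];\mathcal{L}_2)$.

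Assembling, $\mathbb{E}\|w^{\eps,\eta}\|_{W^{\beta,2}([0,T];H^{-1})}^2 \le \sum_{i=1}^4 C\,\mathbb{E}\|\mathcal{K}_i\|_{W^{\beta,2}([0,T];H^{-1})}^2 < \infty$ uniformly in $\eps$, which, after applying Jensen's inequality to pass from the second moment to the first, yields the stated estimate. I expect the main obstacle to be the bookkeeping of the $\frac{1}{\sqrt{\eps}}$ and $\frac{1}{\eps}$ prefactors: one must use exactly the $\eps$-uniform bounds of Lemma \ref{lem:bound-w-eps-eta-1} and Remark \ref{rem:about-phi-theta} (with $p=2$), and in particular recognize that the apparently singular $\frac1\eps$ in front of the fractional term is absorbed by the $\frac1\eps$-weighted Gagliardo-seminorm bound on ${\tt \Phi}(\sqrt{\eps}w^{\eps,\eta}+1)$ coming from the monotonicity inequality \eqref{esti:nondecreasing-phi}; everything else is a routine repetition of the compactness computations already carried out for $u_{\gamma,\bar\eps}$ and $u_{{\ell}_\eps}^{\bar\eps}$.
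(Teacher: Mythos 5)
Your proposal is correct and follows essentially the same route as the paper: the identical decomposition of $w^{\eps,\eta}$ into the three Bochner integrals plus the stochastic integral, Jensen's inequality to convert uniform $L^2_tH^{-1}_x$ bounds on the integrands into the $W^{\beta,2}$ seminorm for $\beta<\tfrac12$, the bound $\|(-\Delta)^\theta[v]\|_{H^{-1}}\le C\|v\|_{H^\theta}$ combined with Remark \ref{rem:about-phi-theta} to absorb the $\tfrac1\eps$ prefactor on the fractional term, and the It\^o isometry with \ref{A3} for the noise term. No gaps.
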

\begin{proof}
From \eqref{eq:w-eps-eta}, we have
\begin{align*}
    &w^{\eps, \eta}(t) =  - \frac{1}{\sqrt{\eps}}\int_0^t\text{div}\big(F(\sqrt{\eps}w^{\eps, \eta} +1) - F(1)\big)\,dr + \eta \int_0^t\Delta w^{\eps,\eta}\,dr \notag \\ &-\int_0^t\frac{1}{\sqrt{\eps}}(-\Delta)^\theta[{\tt \Phi}(\sqrt{\eps}w^{\eps, \eta} +1) - {\tt \Phi}(1)](x)\,dr + \int_0^t{\tt h}(\sqrt{\eps}w^{\eps, \eta} +1)dW(r)  =: \sum_{i=1}^4\mathcal{H}_i.
\end{align*}
In view of \eqref{eq:bound-w-eps-eta-gronwall} and Jensen's inequality, one can easily see that for $\beta \in (0,\frac{1}{2})$,
\begin{align}\label{eq:bound-for-H1-H3}
 \mathbb{E}\big[\|\mathcal{H}_1\|_{W^{\beta, 2}([0,T];H^{-1}(\mathbb{T}^d)}\big] \le C, \quad 
 \mathbb{E}\big[\|\mathcal{H}_2\|_{W^{\beta, 2}([0,T];H^{-1}(\mathbb{T}^d)}\big]   \le  C\,.
\end{align}
Since $
    \|(-\Delta)^\theta[{\tt \Phi}((\sqrt{\eps}w^{\eps, \eta} +1)(r,\cdot))](\cdot)\|_{H^{-1}(\mathbb{T}^d)}  \le C\|{\tt \Phi}((\sqrt{\eps}w^{\eps, \eta} +1)(r,\cdot))\|_{H^\theta(\mathbb{T}^d)}$, by using  Remark \ref{rem:about-phi-theta} and Jensen's inequality, we have 
\begin{align}
  &\mathbb{E}\big[\|\mathcal{H}_3(t)- \mathcal{H}_3(s)\|_{H^{-1}(\mathbb{T}^d)}^2\big]  \le \frac{C(t-s)}{\eps}\mathbb{E}\int_0^t\|{\tt \Phi}(\sqrt{\eps}w^{\eps, \eta} +1)\|_{H^\theta(\mathbb{T}^d)}^2\,dr \le C(t-s), \notag
\end{align}
which then implies, for $\beta \in (0, \frac{1}{2})$,
\begin{align}
\mathbb{E}\big[\|\mathcal{H}_3\|_{W^{\beta, 2}([0,T];H^{-1}(\mathbb{T}^d)}\big] \le C. \label{eq:bound-for-H2}
\end{align}
Thanks to It\^o isometry, the assumption \ref{A3}, the continuous embedding of $L^2(\mathbb{T}^d)$ in $H^{-1}(\mathbb{T}^d)$ together with \eqref{eq:bound-w-eps-eta-gronwall}, there exists a constant $C$ such that for $\beta \in (0, \frac{1}{2})$, 
\begin{align}
\,\mathbb{E}\big[\|\mathcal{H}_4\|_{W^{\beta, 2}([0,T];H^{-1}(\mathbb{T}^d)}\big]   \le  C.  \label{eq:bound-for-H4}
\end{align}
Combining \eqref{eq:bound-for-H1-H3}, \eqref{eq:bound-for-H2} and \eqref{eq:bound-for-H4}, we get the desired result.
\end{proof}
\subsection{\bf Proof of Theorem \ref{thm:clt}}
In view of Lemma \ref{lem:bound-w-eps-eta} and the doubling of variables technique, we show the convergence of first term on the right hand side of \eqref{inq:triangle-CLT}.
\begin{prop}\label{prop:double-varibale-1st-term-clt}
It holds that, 
\begin{align}
   \underset{\eta \rightarrow 0}{\text{lim}}\, \underset{\eps \in (0,1)}{\text{sup}} \mathbb{E}
\Big[\Big\|\frac{u^\eps - \hat{u}}{\sqrt{\eps}} -\frac{u^{\eps, \eta} - \hat{u}^\eta}{\sqrt{\eps}}\Big\|_{\mathcal{E}_1}\Big] = 0\,. \label{eq:result-prop-clt-1}
\end{align}
\end{prop}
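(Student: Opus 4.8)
The plan is to prove \eqref{eq:result-prop-clt-1} by a doubling of variables argument comparing $w^\eps := \frac{u^\eps-\hat u}{\sqrt\eps}$ and $w^{\eps,\eta}:=\frac{u^{\eps,\eta}-\hat u^\eta}{\sqrt\eps}$, which satisfy the kinetic formulations associated to the SPDEs in \eqref{eq:w-eps-eta} with matching flux function $\Bar F_\eps(\xi):=\frac{F(\sqrt\eps\xi+1)-F(1)}{\sqrt\eps}$ and diffusion function $\Bar{\tt\Phi}_\eps(\xi):=\frac{{\tt\Phi}(\sqrt\eps\xi+1)-{\tt\Phi}(1)}{\sqrt\eps}$ and matching noise coefficient $\xi\mapsto{\tt h}(\sqrt\eps\xi+1)$. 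The only structural difference between the two equations is the viscous term $\eta\Delta w^{\eps,\eta}$. Since $\Bar F_\eps$ and $\Bar{\tt\Phi}_\eps$ inherit, uniformly in $\eps\in(0,1)$, the Lipschitz bound of $F$ and the monotone Lipschitz bound of ${\tt\Phi}$ (using boundedness of $F''$ and ${\tt\Phi}''$ only at the $O(\sqrt\eps)$ correction level, which does not affect uniform constants), the computation is of exactly the same type as in \textbf{Step I} of Section \ref{sec:existance-skeleton} and as in the proof of Theorem \ref{thm:contraction-principle}: apply Proposition \ref{prop:doubling-variable}-type identity with regularizing kernels $\varrho_\delta(x-y)\rho_\lambda(\xi-\zeta)$, take expectation (the martingale term vanishes, the It\^o correction terms from the two noise coefficients cancel up to $O(\delta+\lambda)$ because the coefficients agree), and collect the error terms.

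First I would write the doubling identity for $f_\eps(x,t,\xi)={\bf 1}_{w^\eps>\xi}$ against $\bar g^\eta_\eps(y,t,\zeta)$ with $g^\eta_\eps={\bf 1}_{w^{\eps,\eta}>\zeta}$, and symmetrically. The flux terms $\mathcal S_1,\Bar{\mathcal S}_1$ are bounded by $C\delta^{-1}\lambda$ as in \eqref{eq:bound-s-12}; the fractional-diffusion terms $\mathcal S_2,\Bar{\mathcal S}_2$ by $C(r^{2-2\theta}\delta^{-2}+r^{-2\theta}\lambda)$ for arbitrary $r>0$, using $m^{\eps,\eta}\ge\eta^{\eps,\eta}$ and the uniform bound on the kinetic measures coming from \eqref{eq:bound-w-eps-eta-gronwall}; the stochastic correction terms contribute $O(\delta+\lambda)$ since the two noise coefficients coincide (this is where the argument is genuinely easier than in \eqref{eq:bound-s-3}: no $|{\ell}(s)|_{\mathcal H}$ Gronwall loop is needed, the noise terms difference is a pure It\^o-correction comparison). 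The viscous term $\eta\Delta w^{\eps,\eta}$ produces, after integration by parts against $\varrho_\delta$, a term of the form $2\eta\int_0^t\int f_\eps\bar g^\eta_\eps\Delta_x\varrho_\delta(x-y)\rho_\lambda\,dxdyd\xi d\zeta ds$ plus a negative measure contribution; the former is bounded by $C\eta\delta^{-2}T$, the latter is $\le 0$ and may be discarded. Adding the symmetric terms and sending $\lambda\to0$, then $r\to0$ (with $r=\delta^c$, $c>\frac1{1-\theta}$), I obtain
\[
\mathbb E\Big[\int_{\mathbb T^d}|w^\eps(t)-w^{\eps,\eta}(t)|\,dx\Big]\le \mathbb E[\Upsilon_0]+C\big(\delta + \eta\delta^{-2}\big)+\text{(vanishing kernel errors)},
\]
uniformly in $\eps$ and $t\in[0,T]$, where the initial error vanishes because $w^\eps(0)=w^{\eps,\eta}(0)=0$ so the initial kinetic functions coincide.

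The main obstacle, and the reason the auxiliary viscous approximation was introduced in the first place, is controlling the interaction between the viscous error $\eta\delta^{-2}$ and the fractional term error $r^{2-2\theta}\delta^{-2}$: one must choose $r=r(\delta)$ and then $\delta=\delta(\eta)$ so that, after integrating in $t$ over $[0,T]$, the right-hand side tends to $0$ as $\eta\to0$ \emph{uniformly in }$\eps\in(0,1)$. Concretely, with $r=\delta^c$ for fixed $c>\frac1{1-\theta}$ the fractional error is $o(\delta)$, so the bound reduces to $C(\delta+\eta\delta^{-2})$; choosing $\delta=\eta^{1/3}$ gives $C\eta^{1/3}\to0$, and crucially all constants $C$ are independent of $\eps$ thanks to Lemma \ref{lem:bound-w-eps-eta} and the uniform-in-$\eps$ estimate \eqref{eq:bound-w-eps-eta-gronwall} (and Remark \ref{rem:about-phi-theta}). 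Taking the supremum over $\eps\in(0,1)$ and then the limit $\eta\to0$ yields \eqref{eq:result-prop-clt-1}. A secondary technical point to verify is that the uniform-in-$\eps$ boundedness of the flux and diffusion coefficients $\Bar F_\eps,\Bar{\tt\Phi}_\eps$ and their derivatives holds on the relevant range, which follows from $F,{\tt\Phi}\in C^2$ with $F'',{\tt\Phi}''$ bounded; this guarantees the error constants do not blow up as $\eps\to0$, closing the argument.
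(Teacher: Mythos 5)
Your proposal follows essentially the same route as the paper's proof: a doubling-of-variables comparison of $w^\eps$ and $w^{\eps,\eta}$ with kernels $\varrho_\delta\rho_\lambda$, exploiting that the two equations share the same (rescaled) flux, fractional diffusion and noise coefficients so that only the viscous term $\eta\Delta w^{\eps,\eta}$ contributes a non-vanishing $O(\eta\delta^{-2})$ error, with the same error budget $C(\sqrt\eps\lambda\delta^{-1}+\delta^2+\eps\lambda+\eta\delta^{-2}+\bar r^{-2\theta}\lambda+\bar r^{2-2\theta}\delta^{-2})$ and an analogous final scaling of $\bar r,\lambda,\delta$ in terms of $\eta$ (the paper takes $\delta=\eta^{1/4}$ rather than $\eta^{1/3}$, which is immaterial). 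The argument is correct as proposed.
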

\begin{proof}
Let us denote ${\tt f}_1(x,t,\xi) :={\bf 1}_{w^\eps(x,t) > \xi}$ and ${\tt f}_2(x,t,\zeta) :={\bf 1}_{w^{\eps, \eta}(x,t) > \zeta}$ with corresponding kinetic measure ${\tt m}^{\eps}$ and ${\tt m}^{\eps, \eta}$ and initial value ${\tt f}_{1,0} ={\bf 1}_{0 > \xi}$ and ${\tt f}_{2,0} ={\bf 1}_{0 > \zeta}$. For $\psi(x,\xi,y,\zeta)=\varrho_\delta(x-y)\rho_\lambda(\xi -\zeta)$, we have
\begin{align}\label{eq:w-eps-1}
   &\mathbb{E}\langle\langle {\tt f}_1^{\pm}(t)\Bar{\tt f}_2^{\pm}(t), \psi \rangle \rangle\, =\, \langle\langle {\tt f}_{1,0}\Bar{\tt f}_{2,0}, \psi \rangle \rangle\\& + \mathbb{E}\int_0^t\int_{(\mathbb{T}^d)^2}\int_{\R^2} {\tt f}_1\Bar{\tt f}_2\big(F'(\sqrt{\eps}\xi +1)\cdot \nabla_x + F'(\sqrt{\eps}\zeta +1)\cdot \nabla_y\big)\psi \,d\xi\,d\zeta\,dx\,dy\,ds\notag\\
   &- \mathbb{E}\int_0^t\int_{(\mathbb{T}^d)^2}\int_{\R^2}{\tt f}_1\Bar{\tt f}_2\Big( {\tt \Phi}'(\sqrt{\eps}\xi +1)(-\Delta)^\theta[\psi](x)\notag \\ & \hspace{4cm} + {\tt \Phi}'((\sqrt{\eps}\zeta +1)(-\Delta)^\theta[\psi](y)\Big)\,d\xi\,d\zeta\,dx\,dy\,ds\notag \\
   & + \frac{1}{2}\mathbb{E}\int_0^t\int_{(\mathbb{T}^d)^2}\int_{\R^2}\partial_{\xi}\psi \Bar{\tt f}_2(s,y,\zeta)G^2(x,\sqrt{\eps}\xi +1)\,d\mathcal{V}_{x,s}^{\eps}(\xi)\,d\zeta\,dx\,dy\,ds \notag \\
   & - \frac{1}{2}\mathbb{E}\int_0^t\int_{(\mathbb{T}^d)^2}\int_{\R^2}\partial_{\zeta}\psi {\tt f}_1(s,x,\xi)G^2(y,\sqrt{\eps}\zeta +1)\,d\mathcal{V}_{y,s}^{\eps, \eta}(\zeta)\,d\xi\,dx\,dy\,ds \notag \\
& - \mathbb{E}\int_0^t\int_{(\mathbb{T}^d)^2}\int_{\R^2}G_{1,2}(x,y, \sqrt{\eps}\xi +1, \sqrt{\eps}\zeta +1)\psi\,d\mathcal{V}_{x,s}^{\eps} \otimes d\mathcal{V}_{x,s}^{\eps, \eta}(\xi, \zeta)\,dx\,dy\,ds\notag \\
   &-\mathbb{E}\int_0^t\int_{(\mathbb{T}^d)^2}\int_{\R^2}\Bar{\tt f}_2^+(s,y,\zeta)\partial_\xi\psi\,d {\tt m}^\eps(x,\xi,s)\,d\zeta\,dy \notag \\ &+ \mathbb{E}\int_0^t\int_{(\mathbb{T}^d)^2}\int_{\R^2}{\tt f}_1^-(s,x,\xi)\partial_\zeta\psi\,d{\tt m}^{\eps ,\eta}(y, \zeta, s)\,d\xi\,dx,\notag \\ & + \eta\mathbb{E}\int_0^t\int_{(\mathbb{T}^d)^2}\int_{\R^2}{\tt f}_1\Bar{\tt f}_2\nabla_y \psi\,d\xi\,d\zeta\,dx\,dy\,ds 
   =: \langle\langle {\tt f}_{1,0}\Bar{\tt f}_{2,0}, \psi \rangle \rangle + \sum_{i=1}^{8}{\mathcal{C}_i},\notag
\end{align}
where  $G_{1,2}(x,y,\xi,\zeta):= \sum_{k \ge 1}{\tt h}_k(x,\xi){\tt h}_k(y, \zeta)$, $\mathcal{V}_{x,s}^{\eps}(\xi):= -\partial_\xi {\tt f}_1^{+}(s,x,\xi)$, $\mathcal{V}_{y,s}^{\eps,\eta}(\zeta) := \partial_\zeta {\tt f}_2^{+}(s,y,\zeta)$,\, ${\tt m}^\eps(x,t,\xi) \ge \Bar{\eta}^\eps(x,t,\xi)$,\, ${\tt m}^{\eps, \eta}(y,t,\zeta)\ge 
  \Bar{\eta}^{\eps, \eta}(y,t,\zeta)$ with
\begin{align*}
  &\Bar{\eta}^\eps:= \frac{1}{\sqrt{\eps}}\int_{\R^d}\big|{\tt \Phi}(\sqrt{\eps}w^\eps(t, x+z) +1) - {\tt \Phi}(\sqrt{\eps}\xi +1)\big|{\bf 1}_{{\rm Conv}\{w^\eps(t,x), w^\eps(t,x+z)\}}(\xi)\gamma(z)\,dz\,, \notag
\end{align*}
and $\Bar{\eta}^{\eps, \eta}(y,t,\zeta)$ defined similarly as $\Bar{\eta}^\eps$ i.e. $w^{\eps}$ replaced with $w^{\eps, \eta}$.
Observe that, 
\begin{align*}
\sum_{i=6}^7\mathcal{C}_i \le \, &\mathbb{E}\Big[\int_0^t\int_{(\mathbb{T}^d)^2}\int_{\R^2}\Bar{\tt f}_2\partial_\zeta\psi\,d\Bar{\eta}^\eps\,dy\,d\zeta - \int_0^t\int_{(\mathbb{T}^d)^2}\int_{\R^2}{\tt f}_1\partial_\xi\psi \,d\Bar{\eta}^{\eps, \eta}\,dx\,d\xi\Big]:=\sum_{i=1}^2\mathcal{J}_{6,7,i}\, \\
& \mathcal{C}_8 \le \eta\,\mathbb{E}\int_0^t\int_{(\mathbb{T}^d)^2}\int_{\R^2}{\tt f}_1\Bar{\tt f}_2 |\Delta_y\varrho_\delta(x-y)|\rho_\lambda(\xi-\zeta)\,d\xi\,d\zeta\,dx\,dy\,ds \le C\eta\delta^{-2}.
\end{align*}
In view of the assumption \ref{A1}, we have
\begin{align*}
    \mathcal{C}_1 &\le \sqrt{\eps}\|F''\|_{L^\infty}\mathbb{E}\int_0^t\int_{(\mathbb{T}^d)^2}\int_{\R^2}{\tt f}_1\Bar{\tt f}_2 |\xi-\zeta||\nabla_x \varrho_\delta(x-y)|\rho_\lambda(\xi-\zeta)\,d\xi\,d\zeta\,dx\,dy\,ds \notag \\
    & \le \sqrt{\eps}C\mathbb{E}\int_0^t\int_{(\mathbb{T}^d)^2}|\varrho_\delta(x-y)|\int_{\R^2}h(\xi,\zeta)\, d\mathcal{V}_{x,s}^\eps \otimes \mathcal{V}_{y,s}^{\eps, \eta}(\xi,\zeta)\,dx\,dy\,ds,
\end{align*}
where $h(\xi,\zeta):= \int_\zeta^\infty\int_{-\infty}^\xi |\xi'-\zeta'|\rho_\lambda(\xi-\zeta)\,d\xi'\,d\zeta'.$ Since $h(\xi-\zeta) \le C\lambda$, we get
$$\mathcal{C}_1  \le C\sqrt{\eps}\lambda\delta^{-1}.$$
Thanks to \ref{A3}, we get
\begin{align*}
\sum_{i=3}^5 \mathcal{C}_i & \le C \mathbb{E}\int_0^t\int_{(\mathbb{T}^d)^2}\int_{\R^2}\varrho_\delta\rho_\lambda\big(|x-y|^2 + |\sqrt{\eps}(\xi-\zeta)|^2\big)d\mathcal{V}_{x,s}^\eps \otimes d\mathcal{V}_{y,s}^{\eps, \eta}(\xi,\zeta)\,dx\,dy\,ds \notag \\ & \le C(\delta^2 + \eps\,\lambda).
\end{align*}
Following \cite{AC_2},  we re-write the term $\mathcal{J}_{6,7,1}$ as
\begin{align*}
 \mathcal{J}_{6,7,1}= & \,\frac{1}{\sqrt{\eps}}\mathbb{E}\int_0^t\int_{(\mathbb{T}^d)^2}\int_{\R^{d+1}}({\tt \Phi}(\sqrt{\eps}w^\eps(s, x+z)+1) - {\tt \Phi}(\sqrt{\eps}w^\eps(s,x) +1))\notag \\
 &  \hspace{3cm}\times \rho_\lambda(w^\eps(s,x) - \zeta)\varrho_\delta(x-y)\Bar{\tt f}_2(y,s,\zeta)\gamma(z)\,dz\,d\zeta\,dx\,dy\,ds \notag \\
   + &\mathbb{E}\int_0^t\int_{(\mathbb{T}^d)^2}\int_{\R^{d+1}}\int_{\R}\varrho_\delta(x-y)\Bar{\tt f}_2(y,s,\zeta)\bigg(\int_{-\infty}^{w^\eps(s,x+z)}\rho_\lambda(\xi-\zeta){\tt \Phi}'(\sqrt{\eps}\xi +1)\,d\xi \notag \\
 &-\int_{-\infty}^{w^\eps(s,x)}\rho_\lambda(\xi-\zeta){\tt \Phi}'(\sqrt{\eps}\xi +1)\,d\xi\bigg)  \gamma(z)\,dz\,d\zeta\,dx\,dy\,ds =: \Bar{J}_{6,7,1} + \hat{J}_{6,7,1}\,. 
\end{align*}
Using the change of variables i.e. $x \mapsto x +z$ and $z \mapsto -z$ in $\hat{J}_{6,7,1}$, we have
\begin{align*}
    \hat{J}_{6,7,1} =  &\,\mathbb{E}\int_0^t\int_{(\mathbb{T}^d)^2}\int_{\R^{d+1}}\int_{\R}\Big(\int_{-\infty}^{w^\eps(s,x)}\rho_\lambda(\xi-\zeta){\tt \Phi}'(\sqrt{\eps}\xi +1)\,d\xi\Big)\notag \\&\hspace{2cm}\times\big(\varrho_\delta(x+z-y)-\varrho_\delta(x-y)\big)\Bar{\tt f}_2(y,s,\zeta)\gamma(z)\,dz\,d\zeta\,dx\,dy\,ds \notag \\ = &\, \mathbb{E}\int_0^t\int_{(\mathbb{T}^d)^2}\int_{\R^2} {\tt f}_1\Bar{\tt f}_2\rho(\xi-\zeta){\tt \Phi}'(\sqrt{\eps}\xi +1)(-\Delta)^\theta[\varrho(\cdot-y)](x)\,d\xi\,d\zeta\,dx\,dy\,ds\,.
\end{align*}
Similarly, we reformulate $\mathcal{J}_{6,7,2}$ as 
\begin{align*}
    \mathcal{J}_{6,7,2} =  & -\,\frac{1}{\sqrt{\eps}}\mathbb{E}\int_0^t\int_{(\mathbb{T}^d)^2}\int_{\R^{d+1}}({\tt \Phi}(\sqrt{\eps}w^{\eps, \eta}(s,y+z)+1) - {\tt \Phi}(\sqrt{\eps}w^{\eps, \eta}(s,y) +1))\notag \\
    & \hspace{2cm} \times\rho_\lambda(\xi - w^{\eps, \eta}(s,y)) \varrho_\delta(x-y){\tt f}_1(x,s,\xi)\gamma(z)\,dz\,d\xi\,dx\,dy\,ds \notag \\
    & + \mathbb{E}\int_0^t\int_{(\mathbb{T}^d)^2}\int_{\R^2}{\tt f}_1\Bar{\tt f}_2\rho(\xi-\zeta){\tt \Phi}'(\sqrt{\eps}\zeta +1)(-\Delta)^\theta[\varrho(x-\cdot)](y)\,d\xi\,d\zeta\,dx\,dy\,ds\,.
\end{align*}
Thus, we have 
\begin{align*}
& \mathcal{C}_2 + \sum_{i=1}^2 \mathcal{J}_{6,7,i} \notag \\
 &=\,\frac{1}{\sqrt{\eps}}\mathbb{E}\int_0^t\int_{(\mathbb{T}^d)^2}\int_{\R^{d}}\Big(\int_{w^{\eps, \eta}(s,y)}^{\infty}\rho_\lambda(w^\eps(s,x) - \zeta)\,d\zeta\Big)({\tt \Phi}(\sqrt{\eps}w^\eps(s, x+z)+1) \notag \\
 & \hspace{4cm}- {\tt \Phi}(\sqrt{\eps}w^\eps(s,x) +1)) \varrho_\delta(x-y)\gamma(z)\,dz\,dx\,dy\,ds \notag \\ & -\,\frac{1}{\sqrt{\eps}}\mathbb{E}\int_0^t\int_{(\mathbb{T}^d)^2}\int_{\R^{d}}\Big(\int^{w^{\eps}(s,x)}_{-\infty}\rho_\lambda(\xi - w^{\eps, \eta}(s,y))\,d\xi\Big)({\tt \Phi}(\sqrt{\eps}w^{\eps, \eta}(s, y+z)+1)\notag \\
 & \hspace{5cm}- {\tt \Phi}(\sqrt{\eps}w^{\eps, \eta}(s,y) +1)) \varrho_\delta(x-y)\gamma(z)\,dz\,dx\,dy\,ds\equiv \mathcal{K}\,. 
\end{align*}
To estimate $\mathcal{K}$, we consider a special function $\beta_\lambda(r):= \lambda\beta(\frac{r}{\lambda})$, where $\beta(r)$ is a $ C^\infty(\R)$ function satisfying  $\beta(0) = 0,~ \beta(r) = \beta(-r)$ and
\begin{align*}
\beta'(r) = - \beta'(r),~\beta_\lambda^{\prime\prime}(r) = \rho_\lambda(r),~~
\beta^\prime(r)=
\begin{cases} 
-1,\quad &\text{if} ~ r\le -1,\\
\in [-1,1], \quad &\text{if}~ |r|<1,\\
+1, \quad &\text{if} ~ r\ge 1.
\end{cases}
\end{align*}
We reformulate $\mathcal{K}$ as sum of two terms $\mathcal{K}_1$ and $\mathcal{K}_2$, where 
\begin{align*}
  \mathcal{K}_1  = &\,\frac{1}{\sqrt{\eps}}\mathbb{E}\int_0^t\int_{(\mathbb{T}^d)^2}\int_{|z| > \bar{r}}\beta'_\lambda(w^{\eps}(s,x)- w^{\eps, \eta}(s,y))\Big(\big({\tt \Phi}(\sqrt{\eps}w^\eps(s, x+z)+1) \notag \\ 
  & -{\tt \Phi}(\sqrt{\eps}w^{\eps, \eta}(s, y+z)+1)\big)- \big({\tt \Phi}(\sqrt{\eps}w^\eps(s,x) +1))- {\tt \Phi}(\sqrt{\eps}w^{\eps, \eta}(s,y) +1)\big)\Big)\notag \\ &\times\varrho_\delta(x-y)\gamma(z)\,dz\,dx\,dy\,ds\,, \notag \\
    \mathcal{K}_2:&=\frac{1}{\sqrt{\eps}}\mathbb{E}\int_0^t\int_{(\mathbb{T}^d)^2}\int_{|z| \le \bar{r}}\beta'_\lambda(w^{\eps}(s,x)- w^{\eps, \eta}(s,y))\Big(\big({\tt \Phi}(\sqrt{\eps}w^\eps(s, x+z)+1)\notag \\
    &-{\tt \Phi}(\sqrt{\eps}w^{\eps, \eta}(s, y+z)+1)\big) - \big({\tt \Phi}(\sqrt{\eps}w^\eps(s,x) +1)- {\tt \Phi}(\sqrt{\eps}w^{\eps, \eta}(s,y) +1)\big)\Big)\notag \\&\times\varrho_\delta(x-y)\gamma(z)\,dz\,dx\,dy\,ds \,.
\end{align*}
Re-formulating $\mathcal{K}_1$, we have 
\begin{align}
    \mathcal{K}_1 = &\,\frac{1}{\sqrt{\eps}}\mathbb{E}\int_0^t\int_{(\mathbb{T}^d)^2}\int_{|z| > \bar{r}}\beta'_\lambda(w^{\eps}(s,x)- w^{\eps, \eta}(s,y))\big({\tt \Phi}(\sqrt{\eps}w^\eps(s, x+z)+1) \notag \\ &-{\tt \Phi}(\sqrt{\eps}w^{\eps, \eta}(s, y+z)+1)\big) - \text{sgn}(w^{\eps}(s,x)- w^{\eps, \eta}(s,y))\notag \\ &\times\big({\tt \Phi}(\sqrt{\eps}w^\eps(s,x) +1))- {\tt \Phi}(\sqrt{\eps}w^{\eps, \eta}(s,y) +1)\big)\varrho_\delta(x-y)\gamma(z)\,dz\,dx\,dy\,ds \notag \\
    +\,&\,\frac{1}{\sqrt{\eps}}\mathbb{E}\int_0^t\int_{(\mathbb{T}^d)^2}\int_{|z| > \bar{r}}\varrho_\delta\big(\text{sgn}(w^{\eps}(s,x)- w^{\eps, \eta}(s,y))-\beta'_\lambda(w^{\eps}(s,x)- w^{\eps, \eta}(s,y))\big)\notag \\ & \times\big({\tt \Phi}(\sqrt{\eps}w^\eps(s,x) +1))- {\tt \Phi}(\sqrt{\eps}w^{\eps, \eta}(s,y) +1)\big)\Big)\gamma(z)\,dz\,dx\,dy\,ds \notag =: \mathcal{K}_{1,1} + \mathcal{K}_{1,2}
\end{align}
Since $|\beta'_\lambda| \le 1$ and ${\tt \Phi}$ is non-decreasing function, it is easy to observe that
\begin{align}
    \mathcal{K}_{1,1} \le &\frac{1}{\sqrt{\eps}}\mathbb{E}\int_0^t\int_{(\mathbb{T}^d)^2}\int_{|z| > \bar{r}}\big|{\tt \Phi}(\sqrt{\eps}w^\eps(s, x+z)+1) -{\tt \Phi}(\sqrt{\eps}w^{\eps, \eta}(s, y+z)+1)\big|\notag \\&-\big|{\tt \Phi}(\sqrt{\eps}w^\eps(s,x) +1))- {\tt \Phi}(\sqrt{\eps}w^{\eps, \eta}(s,y) +1)\big|\varrho_\delta(x-y)\gamma(z)\,dz\,dx\,dy\,ds \le 0\, \notag 
\end{align}
where in the second term we have used the change of variables i.e. $x \mapsto x+z, y \mapsto y+z$ and $z \mapsto -z$. Using the properties of sign function and $\beta'_\lambda$ along with \ref{A2}, we bound the term $\mathcal{K}_{1,2}$.
\begin{align}
    \mathcal{K}_{1,2} \le  &\frac{C}{\sqrt{\eps}}\mathbb{E}\int_0^t\int_{(\mathbb{T}^d)^2}\int_{|z| > \bar{r}}\big|\text{sgn}(w^{\eps}(s,x)- w^{\eps, \eta}(s,y))-\beta'_\lambda(w^{\eps}(s,x)- w^{\eps, \eta}(s,y))\big|\notag \\ &\hspace{4cm}\times \sqrt{\eps}|w^\eps(s,x)- w^{\eps, \eta}(s,y)|\varrho_\delta(x-y)\gamma(z)\,dz\,dx\,dy\,ds
    \notag \\
    \le & C(t)2\lambda\int_{|z| > \bar{r}}\gamma(z)\,dz \le C\bar{r}^{-2\theta}\lambda. \notag
\end{align}
Next we estimate $\mathcal{K}_2$. Note that, since ${\tt \Phi}' \ge 0$ and $\beta''_\lambda \ge 0$, for any $a,b,k \in \R$,
\begin{align}\label{inq:identity-beta-phi}
   & \frac{1}{\sqrt{\eps}}\beta'_\lambda(b-k)[{\tt \Phi}(\sqrt{\eps}a+1)-{\tt \Phi}(\sqrt{\eps}b+1)] \\
   &\le \int_k^a \beta'_\lambda(r-k){\tt \Phi}'(\sqrt{\eps}r +1)\,dr -  \int_k^b \beta'_\lambda(r-k){\tt \Phi}'(\sqrt{\eps}r +1)\,dr.\notag
\end{align}
Using \eqref{inq:identity-beta-phi}, we get
\begin{align*}
    \mathcal{K}_2 \le  &\,\mathbb{E}\int_0^t\int_{(\mathbb{T}^d)^2}\int_{|z| \le \bar{r}}\bigg(\int_{ w^{\eps, \eta}(s,y)}^{w^\eps(s, x+z)}\beta'_\lambda(r- w^{\eps, \eta}(s,y)){\tt \Phi}'(\sqrt{\eps}r +1)\,dr \notag \\ & - \int_{ w^{\eps,\eta}(s,y)}^{w^\eps(s, x)}\beta'_\lambda(r- w^{\eps, \eta}(s,y)){\tt \Phi}'(\sqrt{\eps}r +1)\,dr \bigg)\varrho_\delta(x-y)\gamma(z)\,dz\,dx\,dy\,ds \notag \\
     & +\mathbb{E}\int_0^t\int_{(\mathbb{T}^d)^2}\int_{|z| \le \bar{r}}\bigg(\int_{w^\eps(s, x)}^{ w^{\eps, \eta}(s,y+z)}\beta'_\lambda(r- w^{\eps}(s,y)){\tt \Phi}'(\sqrt{\eps}r +1)\,dr \notag \\ &- \int_{w^\eps(s, x)}^{ w^{\eps,\eta}(s,y)}\beta'_\lambda(r- w^{\eps}(s,y)){\tt \Phi}'(\sqrt{\eps}r +1)\,dr \bigg)\varrho_\delta\gamma(z)\,dz\,dx\,dy\,ds \notag =: \mathcal{K}_{2,1} + \mathcal{K}_{2,2}\,.
\end{align*}
An application of Lemma \ref{lem:bound-w-eps-eta}, the boundedness of $\beta^\prime_\lambda(\cdot)$, \ref{A2} and Taylor's expansion yields 
\begin{align*}
\mathcal{K}_{2,1} \le C\frac{\Bar{r}^{2-2\theta}}{\delta^2},\quad \mathcal{K}_{2,2} \le C\frac{\Bar{r}^{2-2\theta}}{\delta^2}\,.
\end{align*}
Combining all these estimations in \eqref{eq:w-eps-1}, we finally have
\begin{align}
&\mathbb{E}\int_0^T\int_{(\mathbb{T}^d)}\int_{\R} {\tt f}_1^{+}(x,t,\xi)\bar{\tt f}_2^{+}(x,t,\xi)\,d\xi\,dx\,dt 
 \le  \, \int_{\mathbb{T}}\int_{\R}{\tt f}_{1,0}(x,\xi)\bar{\tt f}_{2,0}(x,\xi) \,d\xi\,dx
  + |\mathcal{E}_0(\delta, \lambda)| \notag \\ 
  & + T\int_0^T\mathcal{E}_t(\delta, \lambda)\,dt 
   + C(T)\big(\sqrt{\eps}\lambda\delta^{-1} + \delta^2 + \sqrt{\eps}\lambda + \eta\delta^{-2} + \Bar{r}^{-2\theta}\lambda + \Bar{r}^{2-2\theta}\delta^{-2}\big) \notag
\end{align}
which then implies 
\begin{align}
   & \mathbb{E}\|\big(w^\eps-w^{\eps, \eta}\big)^+\|_{L^1([0,T];L^1(\mathbb{T}^d))}  \le  |\mathcal{E}_0(\delta, \lambda)| + T\int_0^T\mathcal{E}_t(\delta, \lambda)\,dt  \notag \\
    & \hspace{1cm}+C(T)\big(\sqrt{\eps}\lambda\delta^{-1} + \delta^2 + \sqrt{\eps}\lambda + \eta\delta^{-2} + \Bar{r}^{-2\theta}\lambda + \Bar{r}^{2-2\theta}\delta^{-2}\big). \notag
\end{align}
where $\mathcal{E}_t(\delta, \lambda)$ is defined similar to \eqref{eq:Upsilon}.
Choosing $\Bar{r} = \delta^a$ for some $a \in \big(\frac{1}{1-\theta}, \frac{b}{2\theta}\big)$, $\lambda = \delta^b$ for some $b > max\big\{1, \frac{2\theta}{1-\theta}\big\}$ and $\delta = \eta^\frac{1}{4}$, we get
\begin{align}
    &\mathbb{E}\|\big(w^\eps-w^{\eps, \eta}\big)^+\|_{\mathcal{E}_1} \le  |\mathcal{E}_0(\delta, \delta^b)| + T\int_0^T\mathcal{E}_t(\delta, \delta^b)\,dt \notag \\
    & \qquad +
    C(T)\big(\sqrt{\eps}\delta^{b-1} + \delta^2 + \sqrt{\eps}\delta^b + \eta^\frac{1}{2} + \delta^{b - 2\theta} + \delta^{\Bar{a}}\big), \notag
\end{align}
for some $\Bar{a} > 0$. Similarly, working on $\Bar{\tt f}_1^{\pm}{\tt f}_2^{\pm}$, one has 
\begin{align}
    &\mathbb{E}\|\big(w^\eps-w^{\eps, \eta}\big)^-\|_{L^1([0,T];L^1(\mathbb{T}^d))}\notag \\ \le & |\mathcal{E}_0(\delta, \delta^b)| + T\int_0^T\mathcal{E}_t(\delta, \delta^b)\,dt +
    C(T)\big(\sqrt{\eps}\delta^{b-1} + \delta^2 + \sqrt{\eps}\delta^b + \eta + \delta^{b - 2\theta} + \delta^{\Bar{a}}\big), \notag
\end{align}
Combining these inequality and sending $\eta \rightarrow 0$ in the resulting inequality, we obtain \eqref{eq:result-prop-clt-1}
\end{proof}
Our next goal is to estimate the the second term of right hand side of \eqref{inq:triangle-CLT}. In this regard, let us denote by $\nu^{\eps,\eta}$ the joint law of $(w^{\eps,\eta}, W)$ in the space $\mathbb{U}:= L^2([0,T]; L^2(\mathbb{T}^d)) \times C([0,T]; \mathcal{H}_0)$.
\begin{cor}\label{cor:tightness-w-eps-eta}The sequence $\{\nu^{\eps, \eta}\}$ is tight on $(\mathbb{U}, \mathcal{B}(\mathbb{U})),$ where $\mathcal{B}(\mathbb{U})$ is the Borel $\sigma$-algebra of $\mathbb{U}$.
\end{cor}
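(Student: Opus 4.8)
The plan is to prove tightness by establishing it for the two marginals of $\nu^{\eps,\eta}$ separately and then combining them, with $\eta>0$ kept frozen and $\eps\to 0$ throughout. The second marginal is immediate: $W$ is one fixed $\mathcal H$-cylindrical Wiener process whose law is a single Radon probability measure on the Polish space $C([0,T];\mathcal H_0)$, hence tight; pick for each $\kappa>0$ a compact set $\widetilde K_\kappa\subset C([0,T];\mathcal H_0)$ with $\mathbb P(W\in\widetilde K_\kappa)\ge 1-\kappa$. So the whole content lies in the first marginal, i.e. in showing that the family of laws of $w^{\eps,\eta}$, $0<\eps<1$, is tight on $L^2([0,T];L^2(\mathbb{T}^d))$.

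For this I would combine two uniform-in-$\eps$ a priori bounds already at hand. From Remark~\ref{rem:about-phi-theta} (that is, \eqref{eq:bound-w-eps-eta-gronwall} with $p=2$) one has $\eta\,\mathbb E\int_0^T\|\nabla w^{\eps,\eta}\|_{L^2(\mathbb{T}^d)}^2\,dr\le C$, which together with $\sup_{t\in[0,T]}\mathbb E\|w^{\eps,\eta}(t)\|_{L^2(\mathbb{T}^d)}^2\le C$ gives $\sup_{0<\eps<1}\mathbb E\big[\|w^{\eps,\eta}\|_{L^2([0,T];H^1(\mathbb{T}^d))}^2\big]\le C_\eta<\infty$; and from Lemma~\ref{lem:compactness-w-eps-eta}, $\sup_{0<\eps<1}\mathbb E\big[\|w^{\eps,\eta}\|_{W^{\beta,2}([0,T];H^{-1}(\mathbb{T}^d))}\big]<\infty$ for $\beta\in(0,\tfrac12)$. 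Then I would invoke Theorem~\ref{thm:compactness} with $\mathcal Y_0=H^1(\mathbb{T}^d)$, $\mathcal Y=L^2(\mathbb{T}^d)$, $\mathcal Y_1=H^{-1}(\mathbb{T}^d)$: these are reflexive Hilbert spaces and the embedding $H^1(\mathbb{T}^d)\hookrightarrow L^2(\mathbb{T}^d)$ is compact by Rellich's theorem, so $\overline{\mathcal Y}:=L^2([0,T];H^1(\mathbb{T}^d))\cap W^{\beta,2}([0,T];H^{-1}(\mathbb{T}^d))$ embeds compactly into $L^2([0,T];L^2(\mathbb{T}^d))$.

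The tightness of the first marginal then follows by a Chebyshev argument: for $R>0$ the closure $K_R$ in $L^2([0,T];L^2(\mathbb{T}^d))$ of the ball $\{g:\|g\|_{\overline{\mathcal Y}}\le R\}$ is compact, and $\mathbb P(w^{\eps,\eta}\notin K_R)\le R^{-1}\,\mathbb E\|w^{\eps,\eta}\|_{\overline{\mathcal Y}}\le C_\eta R^{-1}$ uniformly in $\eps$, so a suitable choice $R=R(\kappa,\eta)$ gives $\inf_{0<\eps<1}\mathbb P(w^{\eps,\eta}\in K_R)\ge 1-\kappa$. Since $K_R\times\widetilde K_\kappa$ is compact in $\mathbb U$ and $\nu^{\eps,\eta}\big(\mathbb U\setminus(K_R\times\widetilde K_\kappa)\big)\le 2\kappa$ uniformly in $\eps$, the family $\{\nu^{\eps,\eta}\}$ is tight on $(\mathbb U,\mathcal B(\mathbb U))$. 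The only subtle point — and the reason this is carried out with $\eta$ frozen — is that the $L^2([0,T];H^1(\mathbb{T}^d))$ bound is $\eta$-dependent and degenerates as $\eta\downarrow0$, while the compensating fractional estimate in \eqref{eq:bound-w-eps-eta-gronwall} is too weak (of order $\eps$) to replace it; hence uniformity in $\eta$ cannot be obtained this way, and it is not needed since the subsequent Skorokhod argument is performed with $\eta$ fixed. Apart from this observation, the proof is a direct assembly of Theorem~\ref{thm:compactness}, Lemma~\ref{lem:compactness-w-eps-eta} and \eqref{eq:bound-w-eps-eta-gronwall}, and requires no new estimate.
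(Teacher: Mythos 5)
Your proof is correct and follows essentially the same route as the paper: the compact embedding of $L^2([0,T];H^1(\mathbb{T}^d))\cap W^{\beta,2}([0,T];H^{-1}(\mathbb{T}^d))$ into $L^2([0,T];L^2(\mathbb{T}^d))$ from Theorem \ref{thm:compactness}, the uniform bounds from \eqref{eq:bound-w-eps-eta-gronwall} and Lemma \ref{lem:compactness-w-eps-eta}, and a Markov-inequality argument. You are merely more explicit than the paper about the (trivial) tightness of the Wiener-process marginal and about the $\eta$-dependence of the $L^2([0,T];H^1)$ bound, both of which are accurate observations.
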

\begin{proof}Using Theorem \ref{thm:compactness}, we get $L^2([0,T];H^1(\mathbb{T}^d)) \cap W^{\beta, 2}([0,T]; H^{-1}(\mathbb{T}^d))$ is compactly embedded in $L^2([0,T]; L^2(\mathbb{T}^d))$. So for any $M > 0$, the set
$$K_M = \Big\{w^{\eps,\eta} \in L^2([0,T]; L^2(\mathbb{T}^d)) : \|w^{\eps,\eta}\|_{L^2([0,T];H^1)} + \|w^{\eps,\eta}\|_{W^{\beta, 2}([0,T]; H^{-1})} \le M\Big\}$$
is a compact subset of $ L^2([0,T]; L^2(\mathbb{T}^d))$. An application of Markov's inequality along with Lemma \ref{lem:compactness-w-eps-eta} gives
\begin{align*}
    &\underset{M \rightarrow \infty}{\text{lim}}\,\underset{\eps > 0}{\text{sup}}\,\mathbb{P}(w^{\eps, \eta} \notin K_M)\notag \\ &\,= \underset{M \rightarrow \infty}{\text{lim}}\,\underset{\eps > 0}{\text{sup}}\,\mathbb{P}\big(\|w^{\eps,\eta}\|_{L^2([0,T];H^1(\mathbb{T}^d))} + \|w^{\eps,\eta}\|_{W^{\beta, 2}([0,T]; H^{-1}(\mathbb{T}^d))} > M\big) \notag \\&\,\le \underset{M \rightarrow \infty}{\text{lim}}\frac{1}{M}\,\underset{\eps > 0}{\text{sup}}\,\mathbb{E}\Big[\|w^{\eps,\eta}\|_{L^2([0,T];H^1(\mathbb{T}^d))} + \|w^{\eps,\eta}\|_{W^{\beta, 2}([0,T]; H^{-1}(\mathbb{T}^d))}\Big] = 0.
\end{align*}
Thus, $\{\nu^{\eps, \eta}\}$ is tight on $(\mathbb{U}, \mathcal{B}(\mathbb{U})).$
\end{proof}
 We apply Prokhorov compactness theorem to the family of laws $\{\nu^{\eps,\eta}\}_{\eps}$ and the modified version of Skorokhod representation theorem \cite[Theorem C.1]{Potential-2018} to have the following result.
 \begin{lem}
 Passing to a subsequence of $\{\eps\}$, still denotes by $\{\eps\}$, there exist a new probability space $(\Tilde{\Omega}, \Tilde{\mathcal{F}}, \Tilde{\mathbb{P}})$ and random variables $(\Tilde{w}^{\eps, \eta}, \Tilde{W}^\eps)$ and $(\Tilde{w}^\eta, \Tilde{W})$ taking values in $\mathbb{U}$ such that 
 \begin{itemize}
     \item[i)]$\forall \eps > 0,$ $(\Tilde{w}^{\eps, \eta}, \Tilde{W}^\eps)$ and $(w^{\eps, \eta}, W)$   have the same law $\nu^{\eta, \eps}$ on $\mathbb{U}$,
     \item[ii)] $(\Tilde{w}^{\eps, \eta}, \Tilde{W}^\eps) \rightarrow (\Tilde{w}^{ \eta}, \Tilde{W})$ in $\mathbb{U}$ $\,\Tilde{\mathbb{P}}$-a.s., as$\quad\eps \rightarrow 0.$
     \item[iii)] $\Tilde{W}^\eps(\Tilde{\omega}) = \Tilde{W}(\Tilde{\omega}),\,$ $\forall \Tilde{\omega} \in \Tilde{\Omega}$.
 \end{itemize}     
 \end{lem}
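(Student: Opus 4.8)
The plan is to combine the tightness obtained in Corollary~\ref{cor:tightness-w-eps-eta} with Prokhorov's theorem and a form of the Skorokhod representation theorem that keeps the driving Wiener process fixed along the approximating sequence, so that the resulting a.s.-convergent copies are all driven by one and the same noise.

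First I would recall that, by Corollary~\ref{cor:tightness-w-eps-eta}, the family of joint laws $\{\nu^{\eps,\eta}\}_{\eps}$ of the pairs $(w^{\eps,\eta},W)$ is tight on the Polish space $\mathbb{U}=L^2([0,T];L^2(\mathbb{T}^d))\times C([0,T];\mathcal{H}_0)$. By Prokhorov's theorem this family is relatively compact in the topology of weak convergence of probability measures on $\mathbb{U}$; hence one can extract a subsequence --- which, abusing notation, is still denoted $\{\eps\}$ --- and a probability measure $\nu^\eta$ on $\mathbb{U}$ along which $\nu^{\eps,\eta}$ converges weakly to $\nu^\eta$ as $\eps\to 0$.

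Next I would invoke the modified Skorokhod representation theorem \cite[Theorem~C.1]{Potential-2018}. The point is that the second marginal of $\nu^{\eps,\eta}$, namely the law of the cylindrical Wiener process $W$ on $C([0,T];\mathcal{H}_0)$, does not depend on $\eps$; under this circumstance the quoted version produces a single new probability space $(\Tilde\Omega,\Tilde{\mathcal{F}},\Tilde{\mathbb{P}})$ supporting random variables $(\Tilde w^{\eps,\eta},\Tilde W^\eps)$ and $(\Tilde w^\eta,\Tilde W)$ with values in $\mathbb{U}$ for which $(\Tilde w^{\eps,\eta},\Tilde W^\eps)$ has law $\nu^{\eps,\eta}$ for every $\eps$, $(\Tilde w^{\eps,\eta},\Tilde W^\eps)\to(\Tilde w^\eta,\Tilde W)$ in $\mathbb{U}$ $\Tilde{\mathbb{P}}$-a.s., and $\Tilde W^\eps(\Tilde\omega)=\Tilde W(\Tilde\omega)$ for all $\Tilde\omega\in\Tilde\Omega$; in particular $(\Tilde w^\eta,\Tilde W)$ has law $\nu^\eta$ and $\Tilde W$ is again a cylindrical Wiener process over $\mathcal{H}$. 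These are precisely assertions i), ii) and iii).

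The crux --- more a structural requirement than a genuine difficulty --- is that one must use this strengthened version rather than the classical Skorokhod theorem: the classical statement would only deliver copies $\Tilde W^\eps$ converging $\Tilde{\mathbb{P}}$-a.s. to $\Tilde W$, which is not enough for the subsequent step, where $\Tilde w^\eta$ must be identified as a weak (martingale) solution of \eqref{eq:viscous-star-FSCL} driven by the \emph{single} noise $\Tilde W$; keeping $\Tilde W^\eps\equiv\Tilde W$ is exactly what eventually permits passage to the limit in the stochastic integral. I would also note, without going through the details, that since each $(\Tilde w^{\eps,\eta},\Tilde W^\eps)$ has the same law as $(w^{\eps,\eta},W)$, all the uniform bounds of Lemma~\ref{lem:bound-w-eps-eta-1}, Lemma~\ref{lem:compactness-w-eps-eta} and Remark~\ref{rem:about-phi-theta} transfer verbatim to $\Tilde w^{\eps,\eta}$, and that $\Tilde w^{\eps,\eta}$ satisfies on the new space the analogue of \eqref{eq:w-eps-eta} with respect to the (augmented) filtration generated by $\Tilde W^\eps$ --- facts recorded here for use in the next step, whose verification is routine.
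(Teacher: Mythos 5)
Your proposal is correct and follows exactly the route the paper takes: tightness from Corollary \ref{cor:tightness-w-eps-eta}, Prokhorov's theorem to extract a weakly convergent subsequence of the joint laws $\nu^{\eps,\eta}$, and the modified Skorokhod representation theorem of \cite[Theorem C.1]{Potential-2018} (applicable because the second marginal, the law of $W$, is independent of $\eps$) to obtain the a.s.\ convergent copies with $\Tilde W^\eps\equiv\Tilde W$. Your added remarks on why the classical Skorokhod theorem would not suffice and on the transfer of the uniform estimates are consistent with how the paper uses the lemma afterwards.
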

\noindent Moreover, let $\{\Tilde{\mathcal{F}}_t\}$ be the filtration generated by $\Tilde{w}^{\eps, \eta}$ and $\Tilde{W}$. Then $\Tilde{W}$ is a $(\Tilde{\mathcal{F}}_t)$-adapted, $\mathcal{H}$-valued cylindrical Wiener process and $\Tilde{w}^{\eps, \eta}$ is a square integrable $(\Tilde{\mathcal{F}}_t)$-predictable stochastic process. Since $(\Tilde{w}^{\eps, \eta}, \Tilde{W}^\eps)$ and $(w^{\eps, \eta}, W)$ have the same law, by an adapted step wise approximations
of It\^o integral, we have that $\Tilde{w}^{\eps, \eta}$ is the solution to equation \eqref{eq:w-eps-eta} on $(\Tilde{\Omega}, \Tilde{\mathcal{F}},  \Tilde{\mathbb{P}}, (\Tilde{\mathcal{F}}_t),\Tilde{W}^\eps)$. Also, $\Tilde{w}^{\eps, \eta}$ satisfies the uniform estimations mentioned in Lemmas \ref{lem:bound-w-eps-eta-1}-\ref{lem:compactness-w-eps-eta} and \eqref{eq:bound-w-eps-eta-gronwall}.
\subsubsection{\textbf{Identification of limit}}\label{sec:Identification-of-limit}
Observe that, since $\tilde{\mathbb{P}}$-a.s., $\tilde{w}^{\eps, \eta} \rightarrow \Tilde{w}^{\eta}$ in $L^2([0,T]\times\mathbb{T}^d)$, by Vitali convergence theorem together with 
Lemmas \ref{lem:bound-w-eps-eta-1},
\begin{align}
\tilde{\mathbb{E}}\Big[\| \Tilde{w}^{\eps,\eta}- \Tilde{w}^{\eta}\|_{L^2([0,T]; L^2(\mathbb{T}^d))}^2\Big] \goto 0\,. \label{conv:1-new}
\end{align}
 Since $\Tilde{w}^{\eps, \eta}$ is a solution of  \eqref{eq:w-eps-eta}, it holds that for any test function $\psi \in C_c^\infty(\mathbb{T}^d)$, $t\in [0,T]$
\begin{align}\label{eq:tilde-w-eps-eta}
     &\langle\Tilde{w}^{\eps, \eta}(t), \psi\rangle = \int_0^t\big\langle F'(\Bar{\theta}\sqrt{\eps}\Tilde{w}^{\eps, \eta} +1)\Tilde{w}^{\eps, \eta}, \nabla \psi \big\rangle\,dr  \\ &- \int_0^t\big\langle  {\tt \Phi}'(\Bar{\theta}\sqrt{\eps}\Tilde{w}^{\eps, \eta} +1)\Tilde{w}^{\eps, \eta}, (-\Delta)^\theta[\psi]\big\rangle\,dr \notag \\ &+ \eta\int_0^t \langle\Tilde{w}^{\eps, \eta}, \Delta\psi\rangle\,dr +  \int_0^t\big\langle{\tt h}(\sqrt{\eps}\Tilde{w}^{\eps,\eta}+1),\psi\big\rangle \,d\tilde{W}(r),\notag  \quad\Tilde{\mathbb{P}}\text{-a.s.,}
\end{align}
 for some $\Bar{\theta} \in (0,1).$ Observe that, in view of  \eqref{conv:1-new}, $\Tilde{\mathbb{E}}\big|\langle\Tilde{w}^{\eps, \eta} -\Tilde{w}^{\eta}, \psi\rangle\big| \rightarrow 0$ and
\begin{align*}
& \eta\,\Tilde{\mathbb{E}}\Big|\int_0^t \langle\Tilde{w}^{\eps, \eta}-\Tilde{w}^{\eta} , \Delta\psi\rangle\,dr\Big|  \le C(\eta, \psi, T) \Big\{\Tilde{\mathbb{E}}\int_0^T \|\Tilde{w}^{\eps, \eta} - \Tilde{w}^{, \eta}\|_{L^2(\mathbb{T}^d)}^2\,dr\Big\}^\frac{1}{2}  \rightarrow 0\,.
\end{align*}
Now, by triangle inequality 
\begin{align*}
    &\Tilde{\mathbb{E}} \Big|\int_0^t\big\langle F'(\Bar{\theta}\sqrt{\eps}\Tilde{w}^{\eps, \eta} +1)\Tilde{w}^{\eps, \eta} - F'(1)\Tilde{w}^{\eta}, \nabla \psi \big\rangle\,dr\Big|\notag \\
    &\le  \Tilde{\mathbb{E}} \Big| \int_0^t\big\langle F'(\Bar{\theta}\sqrt{\eps}\Tilde{w}^{\eps, \eta} +1)\Tilde{w}^{\eps, \eta} - F'(1)\Tilde{w}^{\eps, \eta}, \nabla \psi \big\rangle\,dr\Big| \notag \\ &+ \Tilde{\mathbb{E}}\Big|\int_0^t\big\langle F'(1)\Tilde{w}^{\eps, \eta}-F'(1)\Tilde{w}^{\eta}, \nabla \psi \big\rangle\,dr\Big| =: \Tilde{\mathcal{A}_1} + \Tilde{\mathcal{A}_2}.
\end{align*}
Using the boundedness of $F'$ and $F''$, a-priori estimates of $\Tilde{w}^{\eps, \eta}$ and 
\eqref{conv:1-new}, we obtain 
\begin{align*}
&\Tilde{\mathcal{A}_1}\le C\sqrt{\eps}\|F''\|_{L^\infty}\|\nabla\psi\|_{L^\infty}\underset{0 \le t \le T}{\text{sup}}\Tilde{\mathbb{E}}\big[\|\Tilde{w}^{\eps, \eta}\|_{L^2(\mathbb{T}^d)}^2\big] \le C\sqrt{\eps} \rightarrow 0, \notag \\   
&\Tilde{\mathcal{A}_2}\le C\|F'\|_{L^\infty}\|\nabla\psi\|_{L^\infty} \Big(\Tilde{\mathbb{E}}\int_0^T \|\Tilde{w}^{\eps, \eta} -\Tilde{w}^{\eta}\|_{L^2(\mathbb{T}^d)}^2 \,dr\Big)^\frac{1}{2} \rightarrow 0\,.
\end{align*}
Regarding the fractional term, we have
\begin{align*}
    &\Tilde{\mathbb{E}} \Big|\int_0^t\big\langle  {\tt \Phi}'(\Bar{\theta}\sqrt{\eps}\Tilde{w}^{\eps, \eta} +1)\Tilde{w}^{\eps, \eta} - {\tt \Phi}'(1)\Tilde{w}^{\eps}, (-\Delta)^\theta[\psi]\big\rangle\,dr\Big|\notag \\
    & \le \Tilde{\mathbb{E}} \Big| \int_0^t\big\langle  {\tt \Phi}'(\Bar{\theta}\sqrt{\eps}\Tilde{w}^{\eps, \eta} +1)\Tilde{w}^{\eps, \eta} - {\tt \Phi}'(1)\Tilde{w}^{\eps, \eta}, (-\Delta)^\theta[\psi]\big\rangle\,dr\Big| \notag \\ &\hspace{1cm}+\Tilde{\mathbb{E}} \Big| \int_0^t\big\langle {\tt \Phi}'(1)\Tilde{w}^{\eps, \eta}- {\tt \Phi}'(1)\Tilde{w}^{\eta}, (-\Delta)^\theta[\psi]\big\rangle\,dr\Big| =: \Tilde{\mathcal{B}}_1 + \Tilde{\mathcal{B}}_2.
\end{align*}
By using the boundedness property of ${\tt \Phi}'$ and ${\tt \Phi}''$  along with \eqref{fractionlbound-gamma}, we have
\begin{align*}
 &\Tilde{\mathcal{B}}_1  \le \sqrt{\eps}C\|{\tt \Phi}''\|_{L^\infty}\|(-\Delta)^\theta[\psi]\|_{L^\infty}\Big(\underset{0 \le t \le T}{\text{sup}}\Tilde{\mathbb{E}}\big[\|\Tilde{w}^{\eps, \eta}\|_{L^2(\mathbb{T}^d)}^2\big] \Big)\le C\sqrt{\eps} \rightarrow 0, \notag \\
 &\Tilde{\mathcal{B}}_2  \le C\|{\tt \Phi}'\|_{L^\infty}\|(-\Delta)^\theta[\psi]\|_{L^\infty} \Big(\Tilde{\mathbb{E}}\int_0^T \|\Tilde{w}^{\eps, \eta} - \Tilde{w}^{\eta}\|_{L^2(\mathbb{T}^d)}^2\,dr\Big)^\frac{1}{2} \rightarrow 0\,.
\end{align*}
Using It\^o isometry, Cauchy-Schwartz inequality, a-priori estimates on $\Tilde{w}^{\eps, \eta}$ and \ref{A3}, we get
\begin{align*}
&\Tilde{\mathbb{E}}\Big|\int_0^t\big\langle{\tt h}(\sqrt{\eps}\Tilde{w}^{\eps,\eta}+1) - {\tt h}(1),\psi\big\rangle d\Tilde{W}(s) \Big|
\notag \\ &\le C(\psi)\Big(\Tilde{\mathbb{E}}\int_0^T\|{\tt h}(\sqrt{\eps}\Tilde{w}^{\eps,\eta}+1) - {\tt h}(1)\|_{\mathcal{L}_2(\mathcal{H}, L^2(\mathbb{T}^d))}^2 \,dt\,\Big)^{\frac{1}{2}}\notag \\ &  \le \sqrt{\eps}C(\psi,T)\Big(\Tilde{\mathbb{E}}\underset{0 \le t \le T}{\text{sup}}\|\Tilde{w}^{\eps,\eta}\|_{L^2(\mathbb{T}^d)}^2\Big)^{\frac{1}{2}} \le C\sqrt{\eps} \rightarrow 0, \quad \text{as} \quad  \eps \rightarrow 0.
\end{align*} 
In view of above convergence result sending $\eps \rightarrow 0$ in \eqref{eq:tilde-w-eps-eta}, we get, for $t\in [0,T]$,
\begin{align}
    &\langle\Tilde{w}^{\eta}(t), \psi\rangle = \int_0^t\big\langle F'(1)\Tilde{w}^{\eta}, \nabla \psi \big\rangle\,dr - \int_0^t\big\langle  {\tt \Phi}'(1)\Tilde{w}^{\eta}, (-\Delta)^\theta[\psi]\big\rangle\,dr\notag \\ &+ \eta\int_0^t \langle\Tilde{w}^{ \eta}, \Delta\psi\rangle\,dr +  \int_0^t\big\langle{\tt h}(1),\psi\big\rangle d\tilde{W}(r), \quad \Tilde{\mathbb{P}}-\text{a.s}, \notag
\end{align}
In other words, $\Tilde{w}^{\eta}$ is a martingale solution of \eqref{eq:viscous-star-FSCL}.
\subsubsection{\textbf{Pathwise Solutions}}
We wish to show that $w^{\eps, \eta}$ converges to the unique strong solution of \eqref{eq:viscous-star-FSCL}. 
Thanks to the pathwise uniqueness ~(see, \cite[Section 3.2]{AC_2}) and existence of martingale solution, we make use of Gy\"{o}ngy-Krylov characterization of convergence in probability \cite{Gyongy-96} to infer the existence of pathwise solution. In this regard, we recall the following results from \cite{Gyongy-96}. 
\begin{prop}\label{prop:Gyongy}Let $\mathbb{M}$ be a Polish space equipped with the Borel $\sigma$-algebra. A sequence of $\mathbb{M}$- valued random variables $\{Y_n: n \in \N\}$ convergence in probability if and only if for every subsequene of joint laws $\{\mu_{n_k,m_k}: k \in \N\}$, there exists a further subsequence which converges weakly to a probability measure $\mu$ such that
\begin{align}
 \mu\big((x,y) \in \mathbb{M}\times\mathbb{M}: x=y \big) = 1. \notag  
\end{align}
\end{prop}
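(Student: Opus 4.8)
The plan is to establish both implications, working with a fixed complete metric $d$ that induces the topology of the Polish space $\mathbb{M}$ and relying on the standard fact that in such a space a sequence converges in probability if and only if it is Cauchy in probability. For the necessity (``only if''), suppose $Y_n \to Y$ in probability for some $\mathbb{M}$-valued random variable $Y$. Then for any subsequences $n_k, m_k \to \infty$ the pair $(Y_{n_k}, Y_{m_k})$ converges to $(Y,Y)$ in probability as an $\mathbb{M}\times\mathbb{M}$-valued random variable, hence in distribution, so the joint laws $\mu_{n_k,m_k}$ converge weakly to the law $\mu$ of $(Y,Y)$. Since $\mathbb{P}((Y,Y) \in D) = 1$ for $D := \{(x,x) : x \in \mathbb{M}\}$, we have $\mu(D) = 1$; and as every subsequence of a weakly convergent sequence converges to the same limit, the ``further subsequence'' required in the statement may be taken to be the entire subsequence. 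This direction is routine.

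For the sufficiency (``if''), I would argue by contradiction. Assume $\{Y_n\}$ does not converge in probability; by completeness of $(\mathbb{M},d)$ it is then not Cauchy in probability, so there exist $\varepsilon_0 > 0$, $\delta_0 > 0$ and strictly increasing sequences $n_k, m_k \to \infty$ with $\mathbb{P}\big(d(Y_{n_k},Y_{m_k}) > \varepsilon_0\big) \ge \delta_0$ for every $k$. Apply the hypothesis to the subsequence of joint laws $\{\mu_{n_k,m_k}\}$: along a further subsequence, which I do not relabel, $\mu_{n_k,m_k} \rightharpoonup \mu$ weakly with $\mu(D) = 1$. The set $F := \{(x,y) \in \mathbb{M}\times\mathbb{M} : d(x,y) \ge \varepsilon_0\}$ is closed and disjoint from the diagonal $D$, so $\mu(F) = 0$, and the portmanteau theorem in its closed-set form yields $\limsup_k \mu_{n_k,m_k}(F) \le \mu(F) = 0$. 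However $\mu_{n_k,m_k}(F) = \mathbb{P}\big(d(Y_{n_k},Y_{m_k}) \ge \varepsilon_0\big) \ge \mathbb{P}\big(d(Y_{n_k},Y_{m_k}) > \varepsilon_0\big) \ge \delta_0 > 0$ for all $k$, a contradiction. Hence $\{Y_n\}$ converges in probability, completing the proof; the argument follows \cite{Gyongy-96}.

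The only points requiring care — and the ones I would flag as the main obstacle — are, first, the justification that on a complete metric space the failure of convergence in probability forces the failure of the Cauchy property in probability (this uses the extraction of an almost surely convergent subsequence from any sequence that is Cauchy in probability), and second, the use of the \emph{closed-set} version of the portmanteau theorem: one needs an \emph{upper} bound on $\limsup_k \mu_{n_k,m_k}(F)$, which the open-set version does not provide, and this is precisely why the closed set $F = \{d \ge \varepsilon_0\}$ is used in place of $\{d > \varepsilon_0\}$. Everything else is bookkeeping.
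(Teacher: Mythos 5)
Your proof is correct. The paper itself gives no argument for this proposition --- it is simply recalled from Gy\"ongy--Krylov \cite{Gyongy-96} --- and your two-sided argument (necessity via convergence in distribution of the pair $(Y_{n_k},Y_{m_k})$ to $(Y,Y)$, sufficiency by contradiction using the equivalence of convergence and Cauchyness in probability on a complete metric space together with the closed-set form of the portmanteau theorem applied to $F=\{d\ge\varepsilon_0\}$) is the standard proof of that lemma; the two delicate points you flag are exactly the right ones and are handled correctly.
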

 Let us denote $\mathrm{X} = \{v: v \in L^2([0,T]; L^2(\mathbb{T}^d))\}$. We consider the collections of joint laws of $(w^{\eps, \eta}, w^{\Bar{\eps},\eta})$ on $\mathrm{X} \times \mathrm{X}$ and denote it by $\nu^{\eps, \Bar{\eps}}_\eta$. The extend path space is defined as $\Bar{\mathrm{X}} = \mathrm{X} \times \mathrm{X} \times C([0,T]; \mathcal{H}_0).$ Let the law of $W$ be $\nu_W$ and $\nu^{\eps, \Bar{\eps}}$ be the joint law of $(w^{\eps, \eta}, w^{\Bar{\eps},\eta}, W)$. Similar to  Corollary \ref{cor:tightness-w-eps-eta}, we see that the joint law $\{\nu^{\eps, \Bar{\eps}}\}$ is tight on $\Bar{\mathrm{X}}.$ By Prokhorov theorem to the family $\{\nu^{\eps, \Bar{\eps}}\}_{\eps, \Bar{\eps}}$, we get it is relatively weakly compact. Thus, passing to a weakly convergent subsequence, still denoted $\nu^{\eps, \Bar{\eps}}$ and its limit law $\nu^\eta,$  applying modified version of Skorokhod representation theorem, we obtain a new probability space $(\Bar{\Omega}, \Bar{\mathcal{F}}, \Bar{\mathbb{P}}),$ and a subsequence $(\Hat{w}^{\eps, \eta},\check{w}^{\Bar{\eps}, \eta}, \Bar{W}^{\eps,\Bar{\eps}})$ converges to $(\Hat{w}^{\eta},\check{w}^{\eta}, \Bar{W})$ $\Bar{\mathbb{P}}$-a.s. in $\Bar{\mathrm{X}}$. Moreover, for any $\eps, \Bar{\eps}$, $\Bar{W}^{\eps, \Bar{\eps}} = \Bar{W}$ and 
 $$\Bar{\mathbb{P}}\big((\Hat{w}^{\eps, \eta},\check{w}^{\Bar{\eps}, \eta}, \Bar{W}^{\eps,\Bar{\eps}}) \in \cdot\big) = \Bar{\nu}^{\eps, \Bar{\eps}} = \nu^{\eps, \Bar{\eps}} = \mathbb{P}\big((w^{\eps, \eta}, w^{\Bar{\eps}, \eta}, W)\in \cdot\big),\,\Bar{\mathbb{P}}\big((\hat{w}^\eta, \check{w}^\eta, \Bar{W}) \in \cdot\big) = \nu^\eta.$$
 To be precise, $\nu^{\eps, \Bar{\eps}}_\eta$ converges weakly to a measure $\nu$ such that $\nu = \Bar{\mathbb{P}}\big((\Hat{w}^\eta, \check{w}^\eta) \in \cdot\big).$ Observe that, applying homologous arguments as done in Subsection \ref{sec:Identification-of-limit} to $(\Hat{w}^{\eps, \eta}, \Bar{W}^{\eps})$, $(\Hat{w}^{\eta}, \Bar{W})$ and  $(\check{w}^{\Bar{\eps}, \eta}, \Bar{W}^{\Bar{\eps}})$, $(\check{w}^{\eta}, \Bar{W})$, we obtain that $\Hat{w}^\eta$ and $\check{w}^\eta$ are martingale solution to \eqref{eq:viscous-star-FSCL} with initial data $\Hat{w}_0^\eta = 0 = \check{w}_0^\eta.$ Thanks to uniqueness of solution of \eqref{eq:viscous-star-FSCL}, we conclude that $\Hat{w}^\eta = \check{w}^\eta$ in $\mathrm{X}$, $\Bar{\mathbb{P}}$-a.s.. Thus,
 $$\nu\big((x,y) \in \mathrm{X} \times \mathrm{X}: x=y\big) = \Bar{\mathbb{P}}(\Hat{w}^\eta = \check{w}^\eta \quad \text{in} \quad \mathrm{X} ) = 1.$$
 In view of the above discussion and recalling Proposition \ref{prop:Gyongy}, we conclude that the original sequence $w^{\eps, \eta}$ defined in the probability space $(\Omega, \mathcal{F}, \mathbb{P})$ convergence in probability in the topology of $\mathrm{X}$ to a random variable $w^\eta$ i.e., $\|w^{\eps, \eta} -w^\eta\|_{L^2([0,T];L^2(\mathbb{T}^d))} \rightarrow 0$ as $\eps \rightarrow 0$ $\mathbb{P}$-a.s. 
 Again, using Vitali convergence theorem along with a-priori estimates and recalling $w^{\eps, \eta} = \frac{u^{\eps, \eta} - \hat{u}^\eta}{\sqrt{\eps}}$, we have, for fixed $\eta > 0$
 \begin{align}
\underset{\eps \rightarrow 0}{\text{lim}}\,\mathbb{E}\Big[\Big\|\frac{u^{\eps, \eta} - \hat{u}^\eta}{\sqrt{\eps}} - w^\eta \Big\|_{L^2([0,T]; L^2(\mathbb{T}^d))}\Big] = 0\,. \label{conv:2-new}
 \end{align}
 A similar line of arguments as done in Section \ref{sec:Identification-of-limit} yields that $w^\eta$ is a solution to \eqref{eq:viscous-star-FSCL}. However, thanks to the unique solution of \eqref{eq:viscous-star-FSCL}, we have $w^\eta = u^{\star, \eta}.$ Thus, from \eqref{conv:2-new}, we get 
 \begin{align}\label{eq:clt-2-eps-0} 
 &\underset{\eps \rightarrow 0}{\text{lim}}\,\mathbb{E}\Big[\Big\|\frac{u^{\eps, \eta} - \hat{u}^\eta}{\sqrt{\eps}} - u^{\star, \eta}\Big\|_{\mathcal{E}_1}\Big] = 0.  
 \end{align}
 Combining \eqref{eq:strong-cong-star-eta}, \eqref{eq:clt-2-eps-0} and Proposition \ref{prop:double-varibale-1st-term-clt} in \eqref{inq:triangle-CLT}, one may easily conclude the desired result of Theorem \ref{thm:clt}.
\section{Moderate Deviation Principle}\label{sec:MDP}
\subsection{Proof of Theorem \ref{thm:mdp}} To establish Theorem \ref{thm:mdp}, we need to validate Condition \ref{cond:2}.  Thanks to Section \ref{subsec:Validation-of-continuity-of-g-nut}, part ${\rm ii)}$ of Condition \ref{cond:2} holds.
Thus, we only need to verify the part ${\rm i)}$ of Condition \ref{cond:2}. For any $\{\ell_\eps\} \in \mathcal{A}_N$, we consider
\begin{align}\label{eq:mdp-controlled-eq}
    \begin{cases}
    \displaystyle d\tilde{v}^\eps + \text{div}\Bar{F}(\tilde{v}^\eps)\,dt + (-\Delta)^\theta[\Bar{{\tt \Phi}}(\Tilde{v}^\eps)](x)\,dt \\= {\tt h}\big(\sqrt{\eps}\Lambda(\eps)\tilde{v}^\eps + \Hat{u}\big){\ell}_\eps\,dt + \Lambda(\eps)^{-1}{\tt h}\big(\sqrt{\eps}\Lambda(\eps)\tilde{v}^\eps + \Hat{u}\big)\,dW(t),\quad
    \tilde{v}^\eps(0) = 0, 
    \end{cases} 
\end{align}
Observe that, \eqref{eq:mdp-controlled-eq} is a special case of \eqref{eq:small-noise-ldp} with $\hat{u} = 1$. Thus, there exists a unique kinetic solution $\tilde{v}^\eps$ of \eqref{eq:mdp-controlled-eq} satisfying \eqref{inq:ldp-cp} and the associated kinetic formulation. 
Recall that $\Tilde{v}^\eps = \mathcal{\tt G}^\eps\Big(W(\cdot) + \Lambda(\eps)\int_0^{\cdot}{\ell}_\eps(s)\,ds\Big)$ and $z_{{\ell}_\eps} = \mathcal{\tt G}^0\Big(\int_0^{\cdot}{\ell}_\eps(s)\,ds\Big)$ are kinetic solutions of \eqref{eq:mdp-controlled-eq} and \eqref{eq:mdp-skeleton} with ${\ell}$ replaced by ${\ell}_\eps$ respectively. To complete the proof of Theorem \ref{thm:mdp}, we need to prove  that
\begin{align}
    \|\Tilde{v}^\eps - z_{{\ell}_\eps}\|_{\mathcal{E}_1} \rightarrow 0 \quad \text{in probability}. \label{eq:Mdp-conv-p}
\end{align}
Due to lack of symmetry we can't directly establish \eqref{eq:Mdp-conv-p} by applying doubling of variables technique to $\Tilde{v}^\eps$ and $z_{{\ell}_\eps}$. To resolve this technical issue, we approach in same manner as done in the proof CLT and introduce some auxiliary  parabolic approximation equations. To be more precise, we consider the following  viscous approximation of \eqref{eq:mdp-controlled-eq}.
\begin{align}\label{eq:mdp-controlled-eq-viscous}
 &d\tilde{v}^{\eps, \eta} + \text{div}\Bar{F}(\tilde{v}^{\eps, \eta})\,dt + (-\Delta)^\theta[\Bar{{\tt \Phi}}(\Tilde{v}^{\eps, \eta})](x)\,dt  \\&=  \eta\Delta\tilde{v}^{\eps, \eta} + {\tt h}\big(\sqrt{\eps}\Lambda(\eps)\tilde{v}^{\eps, \eta} + \Hat{u}\big){\ell}_\eps\,dt + \Lambda(\eps)^{-1}{\tt h}\big(\sqrt{\eps}\Lambda(\eps)\tilde{v}^{\eps, \eta} + \Hat{u}\big)\,dW(t), \notag
\end{align}
with $\tilde{v}^{\eps, \eta}(0) = 0.$ In view of well-possedness for viscous problem  from \cite[Section 3.2]{AC_2} and Subsection \ref{eq:Existence and uniqueness of viscous solution} along with Girsanov theorem one can conclude the existence and uniqueness of \eqref{eq:mdp-controlled-eq-viscous} with a solution $\tilde{v}^{\eps, \eta}$. Thanks to Subsection \ref{eq:Existence and uniqueness of viscous solution}, we have the well-posedness of following approximate PDE.
\begin{align}
    &dz_{{\ell}_\eps}^\eta +  \text{div}(F'(\Hat{u})z_{{\ell}_\eps}^\eta)\,dt + (-\Delta)^\theta[{\tt \Phi}'(\Hat{u})z_{{\ell}_\eps}^\eta](x)\,dt = \eta\Delta z_{{\ell}_\eps}^\eta +{\tt h}(\hat{u}){\ell}_\eps(t)\,dt,\label{eq:mdp-skeleton-viscous}
\end{align}
with $z_{{\ell}_\eps}^\eta(0) = 0$ and it has a unique solution $z_{{\ell}_\eps}^\eta$. With $\tilde{v}^{\eps, \eta}$ and  $z_{{\ell}_\eps}^\eta$ in hand, we have
\begin{align}
    &\mathbb{E}\|\tilde{v}^{\eps}-z_{{\ell}_\eps}\|_{\mathcal{E}_1} \le  \mathbb{E}\|\tilde{v}^{\eps}-\tilde{v}^{\eps, \eta}\|_{\mathcal{E}_1}+ \mathbb{E}\|\tilde{v}^{\eps, \eta} - z_{{\ell}_\eps}^\eta\|_{\mathcal{E}_1} + \mathbb{E}\| z_{{\ell}_\eps}^\eta - z_{{\ell}_\eps}\|_{\mathcal{E}_1}\,. \notag 
    \end{align}
By Proposition \ref{prop:strong-convergence} and well-possedness of \eqref{eq:mdp-skeleton-viscous}, we have $\underset{\eps \in (0,1)}{\text{sup}}\underset{\eta \rightarrow 0}{\text{lim}}\mathbb{E}\| z_{{\ell}_\eps}^\eta - z_{{\ell}_\eps}\|_{\mathcal{E}_1} = 0$. Following the proof of Proposition \ref{prop:double-varibale-1st-term-clt}, it is easy to observe that, for fixed $\eps>0$, $\mathbb{E}\|\tilde{v}^{\eps}-\tilde{v}^{\eps, \eta}\|_{\mathcal{E}_1} \rightarrow 0$ as $\eta \rightarrow 0$. Merely a consequence of obtained results in LDP and CLT and using the deviation scaling property 
$\sqrt{\eps} \Lambda(\eps) \goto 0$ as $\eps \goto 0$, we get $\mathbb{E}\|\tilde{v}^{\eps, \eta} - z_{{\ell}_\eps}^\eta\|_{\mathcal{E}_1} \rightarrow 0$ as $\eps \rightarrow 0.$ This completes the proof of Theorem \ref{thm:mdp}.

\vspace{0.1cm}

\noindent{\bf Acknowledgement:} 
The first author would like to acknowledge the financial support by CSIR, India. The second author is supported by Department of Science and Technology, Govt. of India-the INSPIRE fellowship~(IFA18-MA119).


\end{document}